\crefname{assumption}{Assumption}{Assumptions}
\Crefname{assumption}{Assumption}{Assumptions}
\crefname{section}{Section}{Sections}
\crefname{subsection}{Section}{Sections}
\Crefname{subsection}{Section}{Sections}
\title{Parallel-in-time solution of scalar nonlinear conservation laws\thanks{A published version of this preprint is available at \url{https://doi.org/10.1137/24M1630268}.
\funding{Los Alamos Laboratory report number LA-UR-25-24409. 
This work was performed under the auspices of the U.S. Department of Energy by Lawrence Livermore National Laboratory under Contract DE-AC52-07NA27344 (LLNL-JRNL-858106).
This work was supported in part by the U.S. Department of Energy, Office of Science, Office of Advanced Scientific Computing Research, Applied Mathematics program, and by NSERC of Canada.
}}}
\author{O. A. Krzysik\thanks{Department of Applied Mathematics, University of Waterloo, Waterloo, Ontario, Canada. \textit{Present address:} Theoretical Division, Los Alamos National Laboratory, Los Alamos, New Mexico, USA
  (\email{okrzysik@lanl.gov}, \url{https://orcid.org/0000-0001-7880-6512}).}
\and
H. De Sterck\thanks{Department of Applied Mathematics, University of Waterloo, Waterloo, Ontario, Canada} 
  (\email{hans.desterck@uwaterloo.ca}, \url{https://orcid.org/0000-0002-1641-932X}).
\and R. D. Falgout\thanks{Center for Applied Scientific Computing, Lawrence Livermore National Laboratory, Livermore, California, USA 
  (\email{falgout2@llnl.gov}, \url{https://orcid.org/0000-0003-4884-0087}).}
  \and J. B. Schroder\thanks{Department of Mathematics and Statistics, University of New Mexico, Albuquerque, New Mexico, USA
  (\email{jbschroder@unm.edu}, \url{https://orcid.org/0000-0002-1076-9206}).}
}
\DeclareMathOperator{\diag}{diag} 
\Crefname{subsection}{Section}{Sections}
\renewcommand{\d}[0]{\ensuremath{\operatorname{d}\!}} 
\newcommand{\wh}[1]{\widehat{#1}} 								  
\newcommand{\wt}[1]{\widetilde{#1}}
\newcommand{\bb}[1]{\bar{\bm{#1}}}
\renewcommand\tableofcontents{%
\@starttoc{toc}%
}
\begin{document}

\allowdisplaybreaks

%
%

%
%
%



\maketitle

\begin{abstract}
	We consider the parallel-in-time solution of scalar nonlinear conservation laws in one spatial dimension. 
	The equations are discretized in space with a conservative finite-volume method using weighted essentially non-oscillatory (WENO) reconstructions, and in time with high-order explicit Runge-Kutta methods.
	The solution of the global, discretized space-time problem is sought via a nonlinear iteration that uses a novel linearization strategy in cases of non-differentiable equations.
	Under certain choices of discretization and algorithmic parameters, the nonlinear iteration coincides with Newton's method, although, more generally, it is a preconditioned residual correction scheme.
	At each nonlinear iteration, the linearized problem takes the form of a certain discretization of a linear conservation law over the space-time domain in question. 
	An approximate parallel-in-time solution of the linearized problem is computed with a single multigrid reduction-in-time (MGRIT) iteration, however, any other effective parallel-in-time method could be used in its place.
	The MGRIT iteration employs a novel coarse-grid operator that is a modified conservative semi-Lagrangian discretization and generalizes those we have developed previously for non-conservative scalar linear hyperbolic problems.
	Numerical tests are performed for the inviscid Burgers and Buckley--Leverett equations.
	For many test problems, the solver converges in just a handful of iterations with convergence rate independent of mesh resolution, including problems with (interacting) shocks and rarefactions.
%
\end{abstract}

\begin{keywords}
	parallel-in-time, MGRIT, Parareal, multigrid, Burgers, Buckley--Leverett, WENO
\end{keywords}

\begin{AMS}
	65F10, 65M22, 65M55, 35L03
\end{AMS}

\section{Introduction}
\label{sec:introduction}

Over the last six decades a wide variety of parallel-in-time methods for ordinary and partial differential equations (ODEs and PDEs) have been developed \cite{Gander2015,Ong_Schroder_2020}.
A key challenge for parallel-in-time methods is their apparent lack of robustness for hyperbolic PDEs, and for advection-dominated PDEs more broadly.
Many of these solvers are documented to perform inadequately on advective problems, where they are typically either non-convergent, or their convergence is slow and not robust with respect to solver and/or problem parameters \cite{Gander_Vandewalle_2007,
Dai_Maday_2013,
Steiner_etal_2015,
Dobrev_etal_2017,
Schmitt_etal_2018,
Schroder_2018,
Ruprecht_2018,
Howse_etal_2019,
DeSterck_etal_2021,
Southworth_etal_2021,
KrzysikThesis2021}. 
In particular this is true for the iterative multilevel-in-time solver, multigrid reduction-in-time (MGRIT) \cite{Falgout_etal_2014}, and for the iterative two-level-in-time solver Parareal \cite{Lions_etal_2001}. In contrast, these solvers typically converge rapidly for diffusion-dominated PDEs.

Recently in \cite{DeSterck_etal_2023_MOL,DeSterck_etal_2023_SL,DeSterck_etal_2025_LFA} we developed novel MGRIT/Parareal solvers for scalar, \textit{linear} hyperbolic PDEs that converge substantially more robustly than existing solvers. 
Most existing approaches use a coarse-grid problem that is based on discretizing directly the underlying PDE, without regard to the fine-grid discretization.
In contrast, we use a semi-Lagrangian discretization of the PDE to ensure that coarse-level information propagates correctly along characteristic curves, but, crucially, the discretization is modified with a correction term to approximately account for the truncation error of the fine-grid problem.
The end result is a coarse-grid discretization which more faithfully matches the fine-grid discretization, leading to fast and robust convergence of the solver.
This coarse-level correction technique is based on that devised in \cite{Yavneh_1998} to improve performance of classical geometric multigrid on steady state advection-dominated PDEs. In fact, the issue leading to non-robustness in the steady case, an inadequate coarse-grid correction of smooth \textit{characteristic components} \cite{Brandt_1981,Yavneh_1998}, is also responsible, at least in part, for convergence issues of MGRIT/Parareal on advective problems \cite{DeSterck_etal_2025_LFA}.

In this paper, we develop parallel-in-time solvers for \textit{nonlinear} hyperbolic PDEs.
Specifically, we consider solving discretizations of time-dependent, one-dimensional, scalar, \textit{nonlinear conservation laws} of the form
\begin{align} \label{eq:cons-law} \tag{cons}
\frac{\partial u}{\partial t} + \frac{\partial f(u)}{\partial x}  = 0, 
\quad (x, t) \in (-1, 1) \times (0, T],
\quad u(x, 0) = u_0(x),
\end{align}
with solution $u = u(x,t)$ and \textit{flux function} $f = f(u)$. For simplicity we consider periodic spatial boundary conditions $u(-1, t) = u(1, t)$, although other boundary conditions could be used.
Our numerical tests consider the Burgers and Buckley--Leverett equations with solutions containing interacting shock and rarefaction waves.

Some previous work has considered parallel-in-time methods for PDEs of this form. 
For example, \cite{Howse_etal_2019} applied MGRIT to the inviscid Burgers equation, with a coarse-grid problem based on directly discretizing the PDE;
while the solver converged for 1st-order accurate discretizations, it did not lead to speed-up in parallel tests, and it diverged when applied to higher-order discretizations. 
In \cite{Danieli_MacLachlan_2023}, MGRIT was applied to high-order accurate discretizations of nonlinear hyperbolic PDEs; however, the approach appears to require a fine-grid time-step size so small that the CFL limit is not violated on the coarsest grid, so that the method is not practical.
In \cite{Nielsen_etal_2018}, Parareal was applied to a high-order discretization of a hyperbolic system, where the ``coarse'' problem was a cheaper discretization instead of one coarsened in time. While this did result in some modest speed-up in parallel, the robustness of the algorithm with respect to parameter or problem choice is unclear.
In \cite{Liu_etal_2023}, discretized PDEs of the form \eqref{eq:cons-law} were solved with non-smooth optimization algorithms by reformulating the underlying PDE as a constrained optimization problem. 
While the solver in \cite{Liu_etal_2023} is parallel-in-time, the convergence rate is slow relative to the work per iteration such that it is unlikely to be competitive with sequential time-stepping.

In this work we focus on multilevel-in-time methods, namely MGRIT. We note, however, that some alternative approaches have been developed recently which do not seem to suffer for some hyperbolic problems, at least to the same extent, as MGRIT and Parareal do (when used with naive direct coarse-grid discretizations) \cite{Gander_Guttel_2013,
Gander_etal_2018_paraexp-nonlin,
Goddard-Wathen-2019-all-at-once-wave,
Gander_etal_2019_direct_wave,
Gander_Wu_2020,
Liu_Wu_2020,
Danieli_Wathen_2021,
Liu_etal_2022,
Liu-Wu-2022-sinc-nystrom,
Hon-SC-2023-block-precond-wave}. 
With the exception of \cite{Gander_Guttel_2013,Gander_etal_2018_paraexp-nonlin}, these approaches target temporal parallelism by diagonalizing in time, often using discrete-Fourier-transform-like techniques.
While some of these methods have been applied to certain quasilinear hyperbolic-like PDEs
(wave equations with nonlinear reaction terms \cite{Gander_etal_2018_paraexp-nonlin,Gander_Wu_2020,Liu_Wu_2020}; viscous Burgers equation \cite{Wu_Zhou_2021}),
we are unaware of their application to PDEs of the form \eqref{eq:cons-law}.
Thus there is a need for effective parallel-in-time methods applied to nonlinear conservation laws, which we address in this paper.

The methodology used here for solving \eqref{eq:cons-law} employs a global linearization of the discretized problem and uses multigrid (i.e., MGRIT) to approximately solve the linearized problem at each nonlinear iteration. 
Specifically, the linearized problems correspond to certain non-standard discretizations of the \textit{linear conservation law}
\begin{align}
\label{eq:cons-lin} \tag{cons-lin}
\frac{\partial e}{\partial t} + \frac{\partial }{\partial x} \big( \alpha(x, t) e \big)  &= 0, 
\quad (x, t) \in (-1, 1) \times (0, T],
\quad e(x, 0) = e_0(x),
\end{align}
with solution $e(x,t)$, flux function $f(e, x, t) = \alpha(x, t) e$, and periodic spatial boundary conditions $e(-1, t) = e(1, t)$.
To solve these linearized problems with MGRIT, we adapt our existing methodology for (non-conservative) linear hyperbolic PDEs described in  \cite{DeSterck_etal_2023_MOL,DeSterck_etal_2023_SL}.
We emphasize, however, that any other parallel-in-time method effective on linearized problems of the form \eqref{eq:cons-lin} could be used in place of MGRIT.

Global linearization paired with multigrid as an inner solver is widely used in the context of (steady) elliptic PDEs, as in Newton--multigrid, for example \cite{Briggs_etal_2000,Trottenberg_etal_2001}, and has been considered in the parallel-in-time literature \cite{Benedusi_etal_2016,Duennebacke_etal_2021}.
The approach contrasts with fully nonlinear multigrid, known as the full approximation scheme (FAS) \cite{Brandt1977}, which, in essence, is what was used in \cite{Nielsen_etal_2018,Howse_etal_2019,Danieli_MacLachlan_2023}.
In principle, our existing MGRIT methodology for linear hyperbolic problems  \cite{DeSterck_etal_2023_MOL,DeSterck_etal_2023_SL,DeSterck_etal_2025_LFA} could be extended in a FAS-type approach to solve \eqref{eq:cons-law} by using a modified semi-Lagrangian discretization of \eqref{eq:cons-law} on the coarse grid.
However, it is not obvious how to develop a coarse-grid semi-Lagrangian discretization for \eqref{eq:cons-law} when the PDE solution contains shocks, because the coarse grid requires large time-step sizes. 
That is, as far as we are aware, known semi-Lagrangian methods capable of solving \eqref{eq:cons-law} have an Eulerian-style CFL limit $\delta t \lesssim h$ when the solution contains shocks  \cite{Huang_etal_2016,Huang_Arbogast_2016,Cai_etal_2021}.
In contrast, semi-Lagrangian methods for linear problems of type \eqref{eq:cons-lin} do not have a CFL limit.
Having said that, we are aware of discretizations developed by LeVeque \cite{Leveque1982,LeVeque1985} that can solve \eqref{eq:cons-law} with large time-steps, even though the cost per time step is relatively large.
Application of these discretizations in a FAS setting could be investigated in future work.

The remainder of this paper is organized as follows.
\Cref{sec:discretization} details the PDE discretizations we use. 
\Cref{sec:nonlin-scheme} presents the nonlinear iteration scheme to solve the discretized problems, and \cref{sec:linearization} details the linearization procedure. 
\Cref{sec:mgrit} develops an MGRIT iteration for approximately solving the linearized problems, and numerical results are then given in \cref{sec:num-res}.
Conclusions are drawn in \Cref{sec:conclusion}.
Supplementary materials are also included which, primarily, describe some additional details of the MGRIT solver.
The MATLAB code used to generate the results in this manuscript can be found at \url{https://github.com/okrzysik/pit-nonlinear-hyperbolic} (v1.1.0).

\section{PDE discretization}
\label{sec:discretization}

This section gives an overview of the discretizations used for \eqref{eq:cons-law} and \eqref{eq:cons-lin}: Explicit Runge-Kutta (ERK) time integration combined with the finite-volume (FV) method in space.
For more detailed discussion on these discretizations see, e.g., \cite{Shu_1998,Shu2009,Hesthaven_2017}.

Since we are concerned with both the nonlinear \eqref{eq:cons-law} and linear \eqref{eq:cons-lin} PDEs, we consider discretizing a conservation law with flux (possibly) depending explicitly on $u$, $x$ and $t$, $f = f(u, x, t)$.
However, to improve readability, we omit for the most part any explicit dependence on $x$ and $t$ and write $f = f(u)$.

Begin by discretizing the spatial domain $x \in [-1, 1]$ into $n_x$ FV cells of width $h$. The $i$th cell is ${\cal I}_i = [x_{i - 1/2}, x_{i + 1/2}]$, with $x_{i \pm 1/2} = x_i \pm h/2$.
Integrating the PDE over ${\cal I}_i$ gives the local conservation relation,
\begin{align} 
\label{eq:cons-integral-form}
\frac{\d \bar{u}_i}{\d t} 
=
-
\frac{
   f \big( u(x_{i+1/2}) \big) 
- f \big( u(x_{i-1/2}) \big)
}{h},
\quad
\bar{u}_i(t) := \frac{1}{h} \int_{{\cal I}_i} u(x, t) \d x,
\end{align}
in which $\bar{u}_i$ is the cell-average of the exact solution over the $i$th cell.
The vector of cell averages is written as $\bb{u} = (\bar{u}_1, \ldots, \bar{u}_{n_x})^\top \in \mathbb{R}^{n_x}$.
Next, \eqref{eq:cons-integral-form} is approximated by replacing the physical flux $f$ with the numerical flux: 
$
f \big( u(x_{i+1/2} ) \big) 
\approx 
\wh{f} 
\big( 
u_{i+1/2}^-, u_{i+1/2}^+
\big)
=: \wh{f}_{i+1/2}.
$
Specific details about our choice of numerical flux follow in \cref{sec:LFF}.
The numerical flux takes as inputs $u_{i+1/2}^- = u_{i+1/2}^-(\bar{\bm{u}})$ and $u_{i+1/2}^+ = u_{i+1/2}^+(\bar{\bm{u}})$ which are reconstructions of the solution at $x = x_{i+1/2}$ based on cell averages of $u$ in neighboring cells.
The spatial accuracy of the scheme is determined by the accuracy of these reconstructions, with further details on this procedure given in \cref{sec:reconstruction}.
%

Plugging the numerical flux into \eqref{eq:cons-integral-form} gives the semi-discretized scheme
\begin{align} \label{eq:FV-MOL-ODEs}
\frac{\d \bar{u}_i}{\d t} 
\approx
-
\frac{\wh{f}_{i+1/2} - \wh{f}_{i-1/2}}{h}
=: \big( L(\bar{\bm{u}}) \big)_i, 
\quad i = 1, \ldots, n_x.
\end{align}
The operator $L \colon \mathbb{R}^{n_x} \to \mathbb{R}^{n_x}$ represents the spatial discretization.
%
%
The ODE system \eqref{eq:FV-MOL-ODEs} is then approximately advanced forward in time using an ERK method.
The simplest time integration is the forward Euler method:
\begin{align} \label{eq:ERK1}
\bar{\bm{u}}^{n+1} = \bar{\bm{u}}^{n} + \delta t L (\bar{\bm{u}}^{n}) =: F( \bar{\bm{u}}^{n}).
\end{align}
Here, $\bb{u}^{n} \approx \bb{u}(t_n)$ is the numerical approximation.
The most commonly used time integration method for high-order discretizations of hyperbolic PDEs is the so-called optimal 3rd-order strong-stability preserving Runge-Kutta method:
\begin{subequations} \label{eq:ERK3}
\begin{align}
\label{eq:ERK3-a}
\bar{\bm{u}}^{n, 1} &= F( \bar{\bm{u}}^{n} ),
\\
\label{eq:ERK3-b}
\bar{\bm{u}}^{n, 2} &= \frac{3}{4} \bar{\bm{u}}^{n} + \frac{1}{4} F \big( \bar{\bm{u}}^{n,1}),
\\
\label{eq:ERK3-c}
\bar{\bm{u}}^{n+1} &= \frac{1}{3}\bar{\bm{u}}^{n} + \frac{2}{3} F \big( \bar{\bm{u}}^{n,2} \big).
\end{align}
\end{subequations}
In either case of \eqref{eq:ERK1} or \eqref{eq:ERK3}, the application of an ERK method to the ODE system \eqref{eq:FV-MOL-ODEs} results in a fully discrete system of equations of the form
\begin{align} \label{eq:one-step-disc}
\bar{\bm{u}}^{n+1} 
= 
\Phi(\bar{\bm{u}}^n),
\quad n = 0, 1, \ldots, n_t-2,
\end{align}
with $\Phi \colon \mathbb{R}^{n_x} \to \mathbb{R}^{n_x}$ the \textit{time-stepping operator} carrying out some ERK scheme.
In either case, the time discretization is ``one step,'' meaning that the solution at $t_{n+1}$ is computed only from the solution at the previous time step, $t_n$.
In this work, the time domain $t \in [0, T]$ of \eqref{eq:cons-law} and \eqref{eq:cons-lin} is assumed to be discretized with $n_t$ points equispaced by a distance $\delta t$.

When 1st- or 3rd-order FV spatial discretizations are used, they are paired with \eqref{eq:ERK1} or \eqref{eq:ERK3}, respectively.
We do not present results for discretizations with order of accuracy greater than three; note that our methodology is extensible to higher-order discretizations but we have not rigorously tested it on such problems.
In all cases, the constant time-step size $\delta t$ is chosen such that $c_{\max} := \max_{u \in [\min(u_0(x)), \max(u_0(x))]} |f'(u)| \frac{\delta t}{h}$ is slightly smaller than one.\footnote{When sequential time-stepping, for efficiency reasons, one would typically adapt $\delta t$ at each step so as to step close to the CFL limit rather than fix $\delta t$ as a constant for all steps as we do here. 
We expect that such adaptive time-stepping could be incorporated into our approach by utilising nested iteration methodology, as has been done with MGRIT for diffusion problems \cite{Falgout_etal_2019_BDF,Falgout-etal-2021-richardson}.}
%

\subsection{Numerical flux}
\label{sec:LFF}

In this work the numerical flux in \eqref{eq:FV-MOL-ODEs} is chosen as the Lax--Friedrichs (LF) flux \cite[Section 12.5]{LeVeque_2004}:
\begin{align} \label{eq:LFF}
\wh{f}_{i+1/2}
&= 
\frac{1}{2}
\Big[
\Big( 
f \big( u_{i+1/2}^{-} \big) 
+ 
f \big(u_{i+1/2}^{+} \big) \Big) 
+ \nu_{i+1/2} 
\big( 
u_{i+1/2}^{-}, u_{i+1/2}^{+} 
\big)
\big( 
u_{i+1/2}^{-} - u_{i+1/2}^{+} 
\big)
\Big].
\end{align}
In \eqref{eq:LFF}, $\nu_{i+1/2} = \nu_{i+1/2} 
\big( 
u_{i+1/2}^{-}, u_{i+1/2}^{+} 
\big)$ controls the strength of the numerical dissipation in the spatial discretization.
We consider both \textit{global} Lax--Friedrichs (GLF) and \textit{local} Lax--Friedrichs (LLF) fluxes, corresponding to $\nu_{i+1/2}$ being chosen globally over the spatial domain and locally for each cell interface, respectively.
The LLF flux is less dissipative than the GLF flux, resulting in sharper approximations at discontinuities, especially for low-order reconstructions.
%

In cases of a GLF flux, we write \eqref{eq:LFF} as $\wh{f}^{\rm (global)}_{i+1/2}$, and we take
\begin{align} \label{eq:LFF-global}
\nu_{i+1/2} = \nu^{(\rm global)} := 
\max 
\limits_{
\substack{w \in [\min(u_0(x)), \max(u_0(x))] \\ x \in [-1,1]}
} 
|f'(w)|.
\end{align}
That is, $\nu_{i+1/2}$ is constant over the entire space-time domain, and is therefore independent of the local reconstructions $u_{i+1/2}^{\pm}$.
In cases of a LLF flux we write \eqref{eq:LFF} as $\wh{f}^{\rm (local)}_{i+1/2}$, and choose the dissipation parameter according to
\begin{align} \label{eq:LFF-local}
\nu_{i+1/2} 
= 
\nu_{i+1/2}^{\rm (local)}
:= \max \limits_{w \in 
[
\min (u_{i+1/2}^{-}, u_{i+1/2}^{+} ), 
\max (u_{i+1/2}^{-}, u_{i+1/2}^{+} )
]} 
|f'(w)|.
\end{align}
If $f$ is convex (i.e., $f''$ does not change sign), as in the case of the Burgers equation (see \eqref{eq:burgers}), for example, then \eqref{eq:LFF-local} simplifies to
$
\max 
\big( 
	\big| f' \big(u_{i+1/2}^{-} \big) \big|, 
	\big| f' \big(u_{i+1/2}^{+} \big) \big| 
\big)
$ \cite[p. 233]{LeVeque_2004}.
For non-convex $f$, as in the case of the Buckley--Leverett equation (see \eqref{eq:buck}), \eqref{eq:LFF-local} is less straightforward to compute.

Finally, let us consider the LF flux \eqref{eq:LFF} in the special case of the linear conservation law \eqref{eq:cons-lin} where the flux is $f(e, x, t) = \alpha(x,t) e$.
For this problem, we denote the flux as (omitting $t$ dependence for readability)
\begin{align} 
\wh{f}\big( e_{i+1/2}^{-}, e_{i+1/2}^{+}) 
&=
\frac{1}{2}
\left[
 \big( \alpha_{i+1/2}^{-} + \nu_{i+1/2} \big) e_{i+1/2}^{-}
+
\big( \alpha_{i+1/2}^{+} - \nu_{i+1/2} \big) e_{i+1/2}^{+} 
\right],
\\ 
\label{eq:LFF-lin}
&=: 
\wh{f}^{\textrm{lin}}\big( \alpha_{i+1/2}^{-}, e_{i+1/2}^{-}, \alpha_{i+1/2}^{+}, e_{i+1/2}^{+}, \nu_{i+1/2} \big).
\end{align}
In these expressions, $\alpha_{i+1/2}^{\pm} \approx \alpha(x_{i+1/2})$ are reconstructions of the interfacial wave-speed; direct evaluations of $\alpha(x_{i+1/2})$ are not used in this flux because later on we encounter situations where $\alpha$ is not known explicitly as a function of $x$, but is available only through reconstruction.
Furthermore, we specify later in \cref{sec:linearization-II} how the dissipation coefficient $\nu_{i+1/2}$ in \eqref{eq:LFF-lin} is chosen.
%

\subsection{WENO reconstructions}
\label{sec:reconstruction}

We now outline how the interface reconstructions $u_{i+1/2}^{\pm}$ that enter into the numerical flux \eqref{eq:LFF} are computed.
We begin by introducing polynomial reconstruction, for which further details can be found in Supplementary Materials Section \ref{SMsec:reconstruction-overview}.

Consider the reconstruction of a smooth function $u(x)$ based on its cell averages.
At a given $x \in {\cal I}_i$, let $q_{i}^{\ell}(x) \approx u(x)$ be a reconstruction polynomial, of degree at most $k-1$, depending on $k \geq 1$ cell averages of $u$ over the cells $S^{\ell}_i := \{  {\cal I}_{i - \ell}, \ldots, {\cal I}_{ i - \ell + (k - 1)} \}$, with \textit{left-shift} $\ell \in \{0, ..., k - 1\}$. 
For each cell ${\cal I}_i$, there are $k$ such polynomials since there are $k$ possible left-shifted stencils.
Polynomial reconstructions at the left and right interfaces of cell ${\cal I}_i$ using a reconstruction stencil with left-shift $\ell$ are then denoted by
\begin{align} \label{eq:reconstuct-lin}
u_{i,-1/2}^{\ell}
:=
q_{i}^{\ell}\big( x_{i-1/2} \big) 
\equiv
\big( \wt{R}^{\ell} \bb{u} \big)_i,
\quad
u_{i,+1/2}^{\ell}
:=
q_{i}^{\ell}\big( x_{i+1/2} \big) 
\equiv 
\big( R^{\ell} \bb{u} \big)_i.
\end{align}
That is, we define $\wt{R}^{\ell}, {R}^{\ell} \in \mathbb{R}^{n_x \times n_x}$ as the linear operators that reconstruct at left- and right-hand interfaces, respectively, of all $n_x$ cells.
To obtain high-order reconstructions of order $2k - 1$, one then takes \textit{weighted combinations} of the $k$ different $k$th-order reconstructions with different left-shifts $\ell$.
Specifically, we write
\begin{align} \label{eq:reconstruct-weighted}
u_{i,-1/2}  ( \bb{u} )
:=
\sum \limits_{\ell =0}^{k-1} 
\wt{b}^{\ell}_{i} ( \bb{u} )
\big( \wt{R}^{\ell} \bb{u} \big)_i, 
\quad
u_{i,+1/2}  ( \bb{u} )
:=
\sum \limits_{\ell = 0}^{k - 1}
b^{\ell}_{i} ( \bb{u} )
\big( R^{\ell} \bb{u} \big)_i,
\end{align}
using weights $\big\{ \wt{b}_i^{\ell}  ( \bb{u} ) \big\}$, $\big\{ {b}_i^{\ell}  ( \bb{u} ) \big\}$.
Then, the inputs to the flux \eqref{eq:LFF} are $u_{i+1/2}^- = u_{i,+1/2}$ and $u_{i+1/2}^+ = u_{i+1,-1/2}$. 
%
There are several choices for $\big\{ \wt{b}_i^{\ell}  ( \bb{u} ) \big\}$, $\big\{ {b}_i^{\ell}  ( \bb{u} ) \big\}$ in \eqref{eq:reconstruct-weighted}.
In the simplest case, they are taken as the so-called \textit{optimal linear weights}, $\big\{ \wt{d}^{\ell} \big\}$, $\big\{ {d}^{\ell} \big\}$, which are independent of $\bb{u}$, and of $i$, and are such that the reconstructions \eqref{eq:reconstruct-weighted} are fully $(2k-1)$-order accurate.
For $k > 1$, however, $\big\{ \wt{d}^{\ell} \big\}$, $\big\{ {d}^{\ell} \big\}$ are not suitable when $u(x)$ lacks sufficient regularity over the large stencil $\cup_{\ell = 0}^{k-1} S^{\ell}_i$ because they lead to spurious oscillations that do not reduce in size as the mesh is refined. 
Instead, the typical choice for hyperbolic conservation laws is to use so-called \textit{weighted essentially non-oscillatory} (WENO) weights, $\big\{ \wt{w}_i^{\ell} (\bb{u}) \big\}$, $\big\{ {w}_i^{\ell} (\bb{u}) \big\}$.
WENO weights are designed such that over stencils where $u$ is smooth they reduce to, as $h \to 0$, the optimal linear weights, and otherwise they adapt to eliminate contributions to the reconstructions from the specific cells where $u(x)$ is non-smooth.
%

\section{Nonlinear iteration scheme}
\label{sec:nonlin-scheme}

In this section, we describe our method for solving the system of equations \eqref{eq:one-step-disc}.
To this end, let us write the system of equations \eqref{eq:one-step-disc} in the form of a residual equation:
\begin{align} \label{eq:non-lin-sys}
\bm{r}(\bb{u}) 
:= 
\bm{b} - {\cal A}(\bb{u}) 
\equiv
\begin{bmatrix}
\bb{u}^0 \\
\bm{0} \\
\vdots \\
\bm{0}
\end{bmatrix}
-
\begin{bmatrix}
& I & \\
&- \Phi( \, \cdot \, ) & I \\
& & \ddots & \ddots  \\
& & & - \Phi( \, \cdot \, ) & I
\end{bmatrix}
\begin{bmatrix}
\bb{u}^{0} \\
\bb{u}^{1} \\
\vdots \\
\bb{u}^{n_t-1}
\end{bmatrix}
= \bm{0}.
\end{align}
Here $\bb{u} = \big( \bb{u}^{0}, \bb{u}^{1}, \ldots, \bb{u}^{n_t-1} \big)^\top \in \mathbb{R}^{n_x n_t}$ is created from concatenating the cell-averages  across the time domain, ${\cal A} \colon \mathbb{R}^{n_x n_t} \to \mathbb{R}^{n_x n_t}$ is the nonlinear space-time discretization operator we seek to invert, and $\bm{b} \in \mathbb{R}^{n_x n_t}$ contains the initial data for the problem.
For some approximate solution $\bb{u}_k \approx \bb{u}$, with iteration index $k$, the residual \eqref{eq:non-lin-sys} is denoted as $\bm{r}_k := \bm{r}(\bb{u}_k)$.

In this paper, we iteratively solve \eqref{eq:non-lin-sys} using the linearly preconditioned residual correction scheme outlined in \cref{alg:richardson}.
The remainder of this section discusses in greater detail the development of \cref{alg:richardson}; however, we first use it to contextualize the key contributions of this paper.
In \cref{alg:richardson}, $P_k$ is a certain linearization of the space-time discretization ${\cal A}$ in \eqref{eq:non-lin-sys}, and the development of this linearization in \cref{sec:linearization} is the first key contribution of this work; specifically, $P_k$ is realized as a certain non-standard discretization of the linear conservation law \eqref{eq:cons-lin}.
In \cref{alg:richardson} linear systems of the form $P_k \bb{e}_k^{\textrm{lin}} = \bm{r}_k$ are solved either directly with sequential time-stepping or approximately with MGRIT.
We again emphasize that the use of MGRIT here is optional, and that any parallel-in-time solver which is effective on problems of the form \eqref{eq:cons-lin} could be used in its place.
In this work we focus on the MGRIT solution option in \cref{alg:richardson} since we are concerned with parallel-in-time solvers. 
The development of an effective MGRIT solver for $P_k$ is the second key contribution of this work, and is the subject of \cref{sec:mgrit}.
Our MGRIT solver is based on a novel, modified FV semi-Lagrangian coarse-grid discretization of \eqref{eq:cons-lin} which is an extension of our modified finite-difference (FD) semi-Lagrangian discretizations in \cite{DeSterck_etal_2023_SL,DeSterck_etal_2023_MOL,DeSterck_etal_2025_LFA} for non-conservative linear hyperbolic PDEs.
The efficacy of our solution methodology is demonstrated in \cref{sec:num-res} by way of numerical tests on challenging nonlinear problems containing shock and rarefaction waves.

\begin{algorithm}[t!]
  \caption{Preconditioned residual correction scheme for \eqref{eq:non-lin-sys}. Input: $\bb{u}_0 \approx \bb{u}$.
  \label{alg:richardson}}
  \begin{algorithmic}[1]
  \While{$\Vert \bm{r}_k \Vert > \textrm{tol}$}
  	\State{${\bb{u}}_{k} \gets \textrm{relax on }{\cal A}(\bb{u}_k) \approx \bm{b}$}\Comment{Nonlinear F-relaxation}\label{ln:nonlin-relax}
  		\State{${\bb{r}}_{k} \gets \bm{b} - {\cal A} ({\bb{u}}_{k})$}\Comment{Compute residual}
  	\If{exact linear solve} \Comment{Solve for linearized error}
	  	\State{${\bb{e}}_k^{\textrm{lin}} \gets P_k^{-1} {\bm{r}}_k$} \Comment{Direct solve (sequential in time)}\label{ln:direct-solve}
  	\ElsIf{approx. linear solve}
	\State{${\bb{e}}_k^{\textrm{lin}} \gets \textrm{MGRIT} (P_k, {\bm{r}}_k )$} \Comment{Approx. solve (parallel in time)}\label{ln:approx-solve} 
	\EndIf
	\State{$\bb{u}_{k+1} \gets {\bb{u}}_k + {\bb{e}}_k^{\textrm{lin}}$}\Comment{Compute new iterate}
	\EndWhile 
  \end{algorithmic}
\end{algorithm}

The remainder of this section now motivates and discusses finer details of the scheme outlined in \cref{alg:richardson}. We seek an iteration of the form $\bb{u}_{k+1} \approx \bb{u}_k + \bb{e}_k$, where $\bb{e}_k := \bb{u} - \bb{u}_k$ is the \textit{nonlinear} algebraic error. 
Assuming that the residual \eqref{eq:non-lin-sys} is sufficiently smooth at $\bb{u}_k$ we can expand it about this point to get
$\bm{r}( \bb{u}_k + \bb{e}_k ) 
= 
\bm{r}(\bb{u}_k) + \nabla_{\bb{u}_k} \bm{r}( \bb{u}_k ) \, \bb{e}_k + {\cal O}( \Vert \bb{e}_k \Vert^2 )
=
\bm{r}_k - \nabla_{\bb{u}_k} {\cal A}( \bb{u}_k ) \, \bb{e}_k + {\cal O}( \Vert \bb{e}_k \Vert^2 )
$.
Now let $P_k \approx \nabla_{\bm{u}_k} {\cal A}( \bb{u}_k ) \in \mathbb{R}^{n_x n_t \times n_x n_t}$ be an approximation to the Jacobian of ${\cal A}$ in \eqref{eq:non-lin-sys} at the point $\bb{u}_k \approx \bb{u}$. 
Dropping squared error terms and setting the linearized residual equal to zero results in the linearly preconditioned residual correction scheme:
\begin{align} \label{eq:lpnri}
\bb{u}_{k+1} = \bb{u}_k + P^{-1}_k \bm{r}_k, \quad k = 0, 1, \ldots
\end{align}
We can define $\bb{e}_k^{\textrm{lin}} := P^{-1}_k \bb{r} \approx \bb{e}_k$ as the \textit{linearized} algebraic error.
The choice $P_k = \nabla_{\bm{u}_k} {\cal A}( \bb{u}_k )$ in \eqref{eq:lpnri} recovers Newton's method, while in the more general case where $P_k \approx \nabla_{\bm{u}_k} {\cal A}( \bb{u}_k )$ we may call \eqref{eq:lpnri} a linearly preconditioned residual iteration.

Since ${\cal A}$ in \eqref{eq:non-lin-sys}, and consequently $\nabla_{\bb{u}} {\cal A}(\bb{u})$, corresponds to a space-time discretization utilizing a one-step method in time, we impose that $P_k$ has the corresponding sparsity structure:
\begin{align} \label{eq:Pk-def}
P_k
=
\begin{bmatrix}
& I \\
& - \Phi^{\textrm{lin}}( \bb{u}_k^{0}) & I \\
& & \ddots & \ddots  \\
& & & - \Phi^{\textrm{lin}}( \bb{u}_k^{n_t - 2}) & I
\end{bmatrix}
\in
\mathbb{R}^{n_x n_t \times n_x n_t}.
\end{align}
Here, the linear time-stepping operator $\Phi^{\textrm{lin}}( \bb{u}_k^n ) \in \mathbb{R}^{n_x \times n_x}$ arises from the affine approximation $\Phi( \bb{u}_k^n + \bb{e}_k^n ) \approx \Phi( \bb{u}_k^n ) + \Phi^{\textrm{lin}} ( \bb{u}_k^n )  \bb{e}_k^n $ that occurs during the linearization of the residual $\bm{r}(  \bb{u}_k^n + \bb{e}_k^n )$.\footnote{While $\Phi^{\textrm{lin}}( \bb{u}_k^n )$ can be interpreted as a matrix, it is never actually formed as one. Rather, where required, its action is computed by other means as described in the coming sections.}
If $\Phi$ is differentiable, choosing this linear time-stepping operator as its Jacobian, $\Phi^{\textrm{lin}}( \bb{u}_k^n ) = \nabla_{\bb{u}_k^n} \Phi( \bb{u}_k^n )$, results in an affine approximation of $\Phi( \bb{u} )$ that is tangent to $\Phi( \bb{u} )$ at $\bb{u}_k^n$. Moreover, \eqref{eq:lpnri} coincides with Newton's method in this case.
However, regardless of whether $\Phi^{\textrm{lin}}( \bb{u}_k^n )$ is this Jacobian, it still plays the role of the linear operator in an affine approximation of $\Phi( \bb{u} )$ about $\bb{u}_k^n$, so we refer to it as the linearized time-stepping operator.

We now discuss two reasons for introducing the approximation $\Phi^{\textrm{lin}}( \bb{u}^n_k ) \approx \nabla_{\bb{u}^n_k} \Phi( \bb{u}^n_k )$.
The first is that the true Jacobian $\nabla_{\bb{u}^n_k} \Phi( \bb{u}^n_k )$ may be unnecessarily expensive, from the perspective that a cheaper approximation can be used while still maintaining satisfactory convergence speed of the iteration \eqref{eq:lpnri}.
The second is that for some of the discretizations we consider, $\Phi$ is not differentiable. Specifically, in our tests this arises when using the LLF numerical flux \eqref{eq:LFF-local}, and also in certain circumstances we limit the solution (see Remark \ref{SMrem:BL-limiting} in the Supplementary Materials), leading to further non-smoothness.

In either of the above situations our strategy for defining $\Phi^{\textrm{lin}}( \bb{u}_n )$ can be written using the following formalism.
We introduce the nonlinear time-stepping operator $\wt{\Phi}( \bm{\alpha}, \bm{\beta} ) \approx \Phi( \bm{\alpha} )$ which is chosen so that it is differentiable in its first argument, and so that it is consistent, $\wt{\Phi}( \bm{\alpha}, \bm{\alpha} ) = \Phi( \bm{\alpha} )$.
Specifically, our approximations $\wt{\Phi}( \bm{\alpha}, \bm{\beta} )$ are based on factoring certain terms in $\Phi$ into those that we want to differentiate (and these are written as functions of $\bm{\alpha}$), and those which we do not (which are written as functions of $\bm{\beta}$).
Then, $\Phi^{\textrm{lin}}( \bb{u}^n_k )$ is defined as taking the derivative of $\wt{\Phi}$ with respect to its first argument and then evaluating the result at the point in question:
\begin{align} \label{eq:Phi-wt-def}
\Phi^{\textrm{lin}}( \bb{u}^n_k )
=
\nabla_{\bm{\alpha}} \wt{\Phi}( \bm{\alpha}, \bm{\beta} ) \big|_{ (\bm{\alpha}, \bm{\beta}) = (\bb{u}^n_k, \bb{u}^n_k) }.
\end{align}
In the coming sections we detail specifically how $\wt{\Phi}$ is chosen (see e.g., \eqref{eq:fhat-approx}).
Moving forward, to maintain a uniform and simple notation, we continue to refer to the linearized time-stepping operator $\Phi^{\textrm{lin}}( \bb{u}^n_k )$ in \eqref{eq:Phi-wt-def} as the ``Jacobian of $\Phi$'' and use the corresponding notation ``$\nabla \Phi$,'' also when the Jacobian does not exist.
In cases where it does not exist, ``Jacobian of $\Phi$'' and ``$\nabla \Phi$'' are to be understood in the sense described above and as written in \eqref{eq:Phi-wt-def}.

Finally, our numerical tests indicate that the linearized iteration \eqref{eq:lpnri} may benefit from the addition of nonlinear block relaxations of the type that would be done on \eqref{eq:non-lin-sys} if the system were to be solved with FAS MGRIT. Hence we add Line \ref{ln:nonlin-relax} in \cref{alg:richardson}. 
To this end, let $m \in \mathbb{N} \setminus \{ 1 \}$ induce a coarse-fine splitting (CF-splitting) of the time points such that every $m$th time point is a C-point, and all other time points are F-points. Then, nonlinear C- and F-relaxations update the current approximation $\bb{u}_k$ such that the nonlinear residual is zero at C- and F-points, respectively \cite{Howse_etal_2019}. 
A C-relaxation requires time-stepping to each C-point from its preceding F-point, and an F-relaxation requires time-stepping from each C-point to its succeeding $m-1$ F-points. 
In practice we only do an F-relaxation in Line \ref{ln:nonlin-relax}, but note that any stronger relaxation ending in F is also possible.
%
%

\section{Linearization}
\label{sec:linearization}

This section develops the linearization $P_k$ that is used in the nonlinear iteration scheme \cref{alg:richardson}, and is a key novelty of this work.
\Cref{sec:linearization-I,sec:linearization-II,sec:linearization-III} discuss linearizing the time integration method, the numerical flux function, and the reconstruction procedure, respectively. 
\Cref{sec:linearization-IV} provides a summary.

\subsection{Linearization I: Time integration}
\label{sec:linearization-I}

Recall from \eqref{eq:one-step-disc} that at time-step $n$ the solution is updated as $\bb{u}^{n+1} = \Phi(\bb{u}^n)$.
We now work through linearizing $\Phi(\bb{u}^n_k)$ about the point $\bb{u}^n_k$ to create $\Phi^{\textrm{lin}}(\bb{u}^n_k)$ used in \eqref{eq:Pk-def}.
To simplify notation, we drop the $k$ subscripts from $\bb{u}^n_k$.
Recall that we are concerned with the action of $\Phi^{\textrm{lin}}(\bb{u}^n)$ on some error vector $\bb{e}^n$, where we omit the $k$ and ``lin'' sub and superscripts from $\bb{e}^{\textrm{(lin)},n}_k$ to further simplify notation.
%

%
%

Considering first the forward Euler step \eqref{eq:ERK1}, the Jacobian applied to $\bb{e}^n$ is
\begin{align} \label{eq:ERK1-lin}
\big[ \nabla_{\bb{u}^n} \Phi(\bb{u}^n) \big] \bb{e}^n
=
\big[ \nabla_{\bb{u}^n} F(\bb{u}^n) \big] \bb{e}^n 
= 
\underbrace{
\big[ I + \delta t  \nabla_{\bb{u}^n} L(\bb{u}^n) \big]
}
_{=: F^{\textrm{lin}}(\bb{u}^n)} 
\bb{e}^n.
\end{align}
%
%
Note the matrix $F^{\textrm{lin}}(\bb{u}^n)$ represents the Jacobian of $F$ at the point $\bb{u}^n$ (which will be approximated in further steps, unless we consider Newton's method).

Now we consider the 3rd-order ERK method \eqref{eq:ERK3}.
Recall that the application of \eqref{eq:ERK3} gives rise to the two auxiliary vectors $\bb{u}^{n,1}, \bb{u}^{n,2}$.
To linearize $\Phi$ defined by \eqref{eq:ERK3}, we work through the scheme in reverse order beginning with \eqref{eq:ERK3-c}:
\begin{align} 
\label{eq:ERK3-lin-a}
\nabla_{\bb{u}^{n}} \Phi( \bb{u}^{n} ) \bb{e}^n
=
\frac{1}{3} \bb{e}^n 
+ 
\frac{2}{3} \nabla_{\bb{u}^{n}} F \big( \bb{u}^{n,2} \big) \bb{e}^n
=
\frac{1}{3} \bb{e}^n 
+ 
\frac{2}{3} F^{\textrm{lin}} \big( \bb{u}^{n,2} \big) 
\underbrace{
\nabla_{\bb{u}^{n}} \bb{u}^{n, 2} \bb{e}^n}_{=: \bb{e}^{n, 2}}.
\end{align}
Here the chain rule for Jacobians has been used to re-express $\nabla_{\bb{u}^{n}} F ( \bb{u}^{n,2} )$ due to the fact that $\bb{u}^{n,2}$ is a function of $\bb{u}^n$.
Considering \eqref{eq:ERK3-b} to evaluate $\bb{e}^{n, 2}$ and then similarly \eqref{eq:ERK3-a} we find
\begin{align}
\label{eq:ERK3-lin-b}
\bb{e}^{n, 2}
=
\frac{3}{4} \bb{e}^n 
+ 
\frac{1}{4} F^{\textrm{lin}} \big( \bb{u}^{n,1} \big) 
\underbrace{
\nabla_{\bar{\bm{u}}^{n}} \bb{u}^{n, 1} \bb{e}^n
}_{=: \bb{e}^{n, 1}},
\quad
\textrm{and}
\quad
\bb{e}^{n, 1}
=
F^{\textrm{lin}} ( \bb{u}^{n} ) \bb{e}^n.
\end{align}
%

\subsection{Linearization II: Numerical flux}
\label{sec:linearization-II}

The previous section discussed linearization of the ERK methods \eqref{eq:ERK1-lin} and \eqref{eq:ERK3-lin-a}/\eqref{eq:ERK3-lin-b} using the Jacobian of the spatial discretization $L$.
This section considers the first step in computing this Jacobian, as it relates to the numerical flux function; the Jacobian of reconstructions is considered next in \cref{sec:linearization-III}.
To simplify notation further, we drop temporal superscripts.
In the following, the row vector $\nabla_{\bar{\bm{u}}} a_i (\bar{\bm{u}}) \in \mathbb{R}^{n_x}$ is the gradient of the function $a_i \colon \mathbb{R}^{n_x} \to \mathbb{R}$ with respect to $\bar{\bm{u}} \in \mathbb{R}^{n_x}$.

From \eqref{eq:FV-MOL-ODEs}, $\big( L(\bar{\bm{u}}) \big)_i := - \frac{\wh{f}_{i+1/2} - \wh{f}_{i-1/2}}{h}$, with $\wh{f}_{i \pm 1/2}$ the two-point numerical flux function described in \cref{sec:LFF}.
Thus, linearizing $L$ requires linearizing $\wh{f}_{i \pm 1/2}$, and we have, in terms of the error,
\begin{align}
- \nabla_{\bb{u}} \big( L(\bar{\bm{u}}) \big)_i \bb{e} 
= 
\frac{\nabla_{\bar{\bm{u}}} \wh{f}_{i+1/2} (\bar{\bm{u}}) \bb{e} 
-  
\nabla_{\bar{\bm{u}}} \wh{f}_{i-1/2} (\bar{\bm{u}}) \bb{e}}
{h}.
\end{align}
Recall that the two-point LF flux \eqref{eq:LFF} is defined as $\wh{f}_{i+1/2} = \wh{f}_{i+1/2}(u_{i+1/2}^{-},u_{i+1/2}^{+})$, where $u_{i+1/2}^{\pm}$ are reconstructions of $u(x_{i+1/2})$ computed from $\bb{u} \in \mathbb{R}^{n_x}$.
Generally, we can express the gradient of $\wh{f}_{i+1/2}$ with respect to $\bb{u}$ in terms of its action on the column vector $\bar{\bm{e}} \in \mathbb{R}^{n_x}$.

First we consider the GLF flux $\wh{f}_{i+1/2}^{\rm (global)}$ corresponding to the LF flux \eqref{eq:LFF} with global dissipation parameter $\nu_{i+1/2} = \nu^{\rm (global)}$ chosen independently of the solution state $\bb{u}$, as in \eqref{eq:LFF-global}.
Applying the chain rule to \eqref{eq:LFF} in this case yields
\begin{align}
&\nabla_{\bar{\bm{u}}} \wh{f}_{i+1/2}^{\rm (global)} (\bar{\bm{u}}) \bb{e}
=
\nabla_{\bar{\bm{u}}} \wh{f}^{\rm (global)} \big( u_{i+1/2}^{-} (\bb{u}), u_{i+1/2}^{+} (\bb{u}) \big) 
\bb{e},
\\
&=
\left(
\frac{\partial \wh{f}^{\rm (global)}}{\partial u_{i+1/2}^{-}} 
\nabla_{\bar{\bm{u}}} u_{i+1/2}^{-} (\bb{u})
+
\frac{\partial \wh{f}^{\rm (global)}}{\partial u_{i+1/2}^{+}} 
\nabla_{\bar{\bm{u}}} u_{i+1/2}^{+} (\bb{u}) 
\right) \bb{e},
\\
\begin{split}
&=
\frac{1}{2} 
\left(
f'\big( u_{i+1/2}^{-} \big)
+ 
\nu^{\rm (global)}
\right) 
\Big( 
\nabla_{\bar{\bm{u}}} u_{i+1/2}^{-} (\bb{u}) \bb{e}
\Big)
\\
&\hspace{8ex} +
\frac{1}{2} 
\left( 
f'\big( u_{i+1/2}^{+} \big)
-
\nu^{\rm (global)}
\right)
\Big(
\nabla_{\bar{\bm{u}}} u_{i+1/2}^{+} (\bb{u})  \bb{e}
\Big)
\end{split},
\\
\label{eq:gLLF-linearization}
\begin{split}
&=
\wh{f}^{\textrm{lin}} 
\Big( 
f'\big( u_{i+1/2}^{-} \big), 
\nabla_{\bar{\bm{u}}} u_{i+1/2}^{-} (\bb{u}) \bb{e}, 
f'\big( u_{i+1/2}^{+} \big), 
\nabla_{\bar{\bm{u}}} u_{i+1/2}^{+} (\bb{u}) \bb{e}, 
\nu^{\rm (global)}
\Big).
\end{split}
\end{align}
In \eqref{eq:gLLF-linearization} $\wh{f}^{\textrm{lin}}$ is the LF flux defined in \eqref{eq:LFF-lin} for the linear conservation law \eqref{eq:cons-lin}.
Thus, the action of linearized GLF flux \eqref{eq:LFF} with global dissipation \eqref{eq:LFF-global} on $\bb{e}$ in fact corresponds to a LF flux for the linear conservation law \eqref{eq:cons-lin} evaluated at $\bb{e}$ where:
\begin{enumerate}

\item The wave-speed $\alpha_{i+1/2}^{\pm}$ at cell boundaries in the linear conservation law is the wave-speed of the nonlinear conservation law \eqref{eq:cons-law} at the linearization point, $\alpha_{i+1/2}^{\pm} = f'(u_{i+1/2}^{\pm})$

\item Reconstructions of $e(x_{i+1/2})$ are computed using linearizations of the reconstruction used in the nonlinear problem, $e^{\pm}_{i+1/2} = \nabla_{\bar{\bm{u}}} u_{i+1/2}^{\pm} (\bb{u}) \bar{\bm{e}}$ 

\item The dissipation coefficient $\nu^{\rm (global)}$ used in the linear problem is the same as that in the nonlinear problem.

\end{enumerate}

Second we consider the LLF flux $\wh{f}_{i+1/2}^{\rm (local)} (\bb{u})$, corresponding to the LF flux \eqref{eq:LFF} with dissipation coefficient $\nu_{i+1/2} = \nu_{i+1/2}^{\rm (local)}(\bb{u})$ chosen locally at every cell interface according to \eqref{eq:LFF-local}.
The issue here is that $\wh{f}_{i+1/2}^{\rm (local)} (\bb{u})$ is not differentiable for some $\bb{u} \in \mathbb{R}^{n_x}$, because $\nu_{i+1/2}^{\rm (local)} (\bb{u})$ is not. 
Therefore we pursue an alternative linearization strategy based on the discussion in \cref{sec:nonlin-scheme}.
Notice in \eqref{eq:LFF} that the non-smooth term, $\nu_{i+1/2} (\bb{u}) \big( u_{i+1/2}^{-} (\bb{u}) - u_{i+1/2}^{+} (\bb{u}) \big)$, is at least a factor ${\cal O}(h)$ smaller than the smooth terms in the flux, $f( u_{i+1/2}^{-} (\bb{u}) )$ and $f(u_{i+1/2}^{+} (\bb{u}) )$, when the solution is sufficiently smooth.
We therefore anticipate that an effective linearization will not need to accurately capture this term. 
Accordingly, we linearize the non-differentiable component here with a Picard-style linearization using the framework presented in \cref{sec:nonlin-scheme}.
Using the linearization formalism introduced in \eqref{eq:Phi-wt-def}, we define the following approximation to $\wh{f}_{i+1/2}^{\rm (local)} (\bm{\alpha})$:
\begin{align} \label{eq:fhat-approx}
\begin{split}
\wt{f}_{i+1/2}^{\rm (local)} (\bm{\alpha}, \bm{\beta}) 
&:= 
\frac{1}{2} \Big[ f( u_{i+1/2}^{-} (\bm{\alpha}) ) + f(u_{i+1/2}^{+} (\bm{\alpha})) \Big] \\
&\hspace{8ex}
+ \frac{1}{2} \nu_{i+1/2} (\bm{\beta}) 
\Big[ u_{i+1/2}^{-} (\bm{\alpha}) - u_{i+1/2}^{+} (\bm{\alpha}) \Big]
\end{split}.
\end{align}
Then the action of the linearized flux is given by
\begin{align}
\label{eq:lLLF-linearization}
\begin{split}
&\nabla_{\bar{\bm{u}}} \wh{f}_{i+1/2}^{\rm (local)} (\bar{\bm{u}})
\bb{e}
=
\nabla_{\bm{\alpha}} \wt{f}_{i+1/2}^{\rm (local)} (\bm{\alpha}, \bm{\beta})|_{ (\bm{\alpha}, \bm{\beta}) = (\bb{u}, \bb{u}) } 
\bb{e}
=
\\
&\wh{f}^{\textrm{lin}} 
\Big( 
f'\big( u_{i+1/2}^{-} \big), 
\nabla_{\bar{\bm{u}}} u_{i+1/2}^{-} (\bb{u})  \bb{e}, 
f'\big( u_{i+1/2}^{+} \big), 
\nabla_{\bar{\bm{u}}} u_{i+1/2}^{+} (\bb{u}) \bb{e}, 
\nu_{i+1/2}^{\rm (local)}(\bb{u})
\Big).
\end{split}
\end{align}
Here, the linearized function $\wh{f}^{\textrm{lin}}$ is defined in \eqref{eq:LFF-lin}, and is the numerical flux function for the linear problem \eqref{eq:cons-lin}, just as for the GLF case in \eqref{eq:gLLF-linearization}. 
Moreover, the dissipation in the numerical flux for this linearized problem is also the one from the nonlinear problem, just as in \eqref{eq:gLLF-linearization}, although unlike in \eqref{eq:gLLF-linearization} the dissipation now varies per cell interface.

A common approach when applying Newton-like methods to non-smooth equations is to additively split them into smooth and non-smooth components, and then base the approximate Jacobian on that of the smooth components \cite{Chen_Yamamoto_1989,Heinkenschloss_etal_1992,Coffey_etal_2003}.
Such an approach is not directly applicable in the above LLF case because the resulting linearized problem would be numerically unstable due its lack of dissipation. 
%

\subsection{Linearization III: Reconstructions}
\label{sec:linearization-III}

In this section, we describe the linearization of the reconstructions used in the linearized numerical flux operators \eqref{eq:gLLF-linearization} and \eqref{eq:lLLF-linearization}.
Recall that \eqref{eq:gLLF-linearization} and \eqref{eq:lLLF-linearization} evaluate the linear LF flux \eqref{eq:LFF-lin} with reconstructions 
\begin{align} \label{eq:e-pm-linearized-def}
e_{i+1/2}^{\pm} 
= 
\nabla_{\bar{\bm{u}}} u_{i+1/2}^{\pm} (\bb{u}) \bb{e},
\end{align}
where $u_{i+1/2}^{\pm}$ are reconstructions of $u(x)$ at $x_{i+1/2}$ based on $\bar{\bm{u}}$.
Now consider the gradients of these reconstructions based on \eqref{eq:reconstruct-weighted}, recalling that $u_{i+1/2}^- = u_{i,+1/2}$, and $u_{i+1/2}^+ = u_{i+1,-1/2}$.
Applying the chain and product rules to \eqref{eq:reconstruct-weighted} gives
\begingroup 
\begin{align} \label{eq:weighted-reconstruct-grad-neg}
\nabla_{\bb{u}}  u^-_{i+1/2} ( \bb{u} ) \bb{e}
&=
\underbrace{
\sum \limits_{\ell =0}^{k-1} 
b_i^{\ell} ( \bb{u} ) 
\big( R^{\ell} \bb{e} \big)_i
}_{=: R_i^{1}}
+
\underbrace{
\sum \limits_{\ell =0}^{k-1} 
\big( R^{\ell} \bb{u} \big)_i
\big(
\nabla_{\bb{u}} \, b_i^{\ell} ( \bb{u} ) \bb{e}
\big)_{i}
}_{=: R_i^{2}},
\\
\label{eq:weighted-reconstruct-grad-pos}
\nabla_{\bb{u}} u^+_{i+1/2} ( \bb{u} ) \bb{e}
&=
\underbrace{
\sum \limits_{\ell =0}^{k-1} 
\wt{b}_{i+1}^{\ell} ( \bb{u} ) 
\big( \wt{R}^{\ell} \bb{e} \big)_{i+1}
}_{=: \wt{R}_{i+1}^{1}}
+
\underbrace{
\sum \limits_{\ell =0}^{k-1} 
\big( \wt{R}^{\ell} \bb{u} \big)_{i+1}
\big(
\nabla_{\bb{u}} \, \wt{b}_{i+1}^{\ell} ( \bb{u} ) \bb{e}
\big)_{i+1}
}_{=: \wt{R}_{i+1}^{2}}.
\end{align}
\endgroup
Recall that $\big\{ b_i^{\ell} \big\}_{\ell = 0}^{k-1}$ and $\big\{ \wt{b}_{i+1}^{\ell} \big\}_{\ell = 0}^{k-1}$ are the (potentially) $\bb{u}$-dependent weights used to combine the $k$ linear reconstructions $\big\{ ({R}^{\ell} \bb{u} )_i \big\}_{\ell = 0}^{k-1}$ and $ \big\{ (\wt{R}^{\ell} \bb{u} )_{i+1} \big\}_{\ell = 0}^{k-1}$, respectively, as defined in \eqref{eq:reconstuct-lin}.
The expressions \eqref{eq:weighted-reconstruct-grad-neg}, \eqref{eq:weighted-reconstruct-grad-pos} are the sum of two terms which we now discuss.

\underline{$R_i^{1}, \wt{R}_{i+1}^{1}$:} From \eqref{eq:reconstuct-lin}, $\big( R^{\ell} \bb{e} \big)_i$ and $\big( \wt{R}^{\ell} \bb{e} \big)_{i+1}$ are standard $k$th-order \textit{linear} reconstructions of $e$ on stencils with left shift $\ell$.
Thus, $R_i^{1}$ and $\wt{R}_{i+1}^{1}$ are \textit{weighted} reconstructions of $e$, with weights $b, \wt{b}$ being the same as those used for $u_{i+1/2}^{-}$ and $u_{i+1/2}^{+}$, respectively.

\underline{$R_i^{2}, \wt{R}_{i+1}^{2}$:} The values $\big( \nabla_{\bb{u}} \, {b}_{i}^{\ell} ( \bb{u} )  \bb{e}
\big)_{i}$ and $\big(
\nabla_{\bb{u}} \, \wt{b}_{i+1}^{\ell} ( \bb{u} ) \bb{e}
\big)_{i+1}$ are reconstructions of $e$, with weights based on gradients of $ b_i, \wt{b}_{i+1}$ at the point $\bb{u}$.
Then $R_i^{2}$ and $\wt{R}_{i+1}^{2}$ take weighted combinations of these reconstructions, where the combination weights are the reconstructions $\big( R^{\ell} \bb{u} \big)_i$ and $\big( \wt{R}^{\ell} \bb{u} \big)_{i+1}$ from \eqref{eq:reconstuct-lin}.

Now consider two different choices for the weights $b, \wt{b}$ in \eqref{eq:weighted-reconstruct-grad-neg} and \eqref{eq:weighted-reconstruct-grad-pos}: Optimal linear weights, and WENO weights.

\textbf{\underline{Optimal linear weights:}} That is, ${b}_{i+1}^{\ell} = {d}^{\ell}$, $\wt{b}_{i+1}^{\ell} = \wt{d}^{\ell}$.
We have \cite{Shu_1998}, 
\begin{align}
&k = 1: \quad d^{0} = 1, 
\quad
\wt{d}^{0} = 1, \\
&k = 2: \quad \big( d^{0}, d^{1} \big) =  \Big( \frac{2}{3}, \frac{1}{3} \Big), 
\quad
\big( \wt{d}^{0}, \wt{d}^{1} \big) = \Big( \frac{1}{3}, \frac{2}{3} \Big).
\end{align}
In this case $R_i^{1}, \wt{R}_{i+1}^{1}$ are simply linear, $(2k-1)$-order reconstructions of $e$.
Moreover, the second terms in \eqref{eq:weighted-reconstruct-grad-neg} and \eqref{eq:weighted-reconstruct-grad-pos} vanish, $R_i^{2} = \wt{R}_{i+1}^{2} = 0$, because the weights are independent of $\bb{u}$.
Recall that these optimal linear weights are used in a 1st-order accurate FV method. However, for higher-order discretizations, these weights are not suitable when the solution lacks regularity because they result in spurious oscillations. 

\textbf{\underline{WENO weights:}} 
Here, $b_i^{\ell} = w_i^{\ell}$, $\wt{b}^{\ell}_{i+1} = \wt{w}^{\ell}_{i+1}$.
We consider the so-called \textit{classical} WENO weights  \cite{Jiang_Shu_1996}, which are given by
\begin{align} 
\label{eq:WENO-weight-neg}
w^{\ell}_{i} (\bar{\bm{u}}) &= \frac{\alpha_{i}^{\ell} (\bar{\bm{u}})}{\sum_{\ell = 0}^{k-1} \alpha_{i}^{\ell} (\bar{\bm{u}})}, 
\quad
\alpha_{i}^{\ell} (\bar{\bm{u}}) = \frac{d^{\ell}}{ \big(\epsilon + \beta_{i}^{\ell} (\bar{\bm{u}}) \big)^2},
\\
\label{eq:WENO-weight-pos}
\wt{w}^{\ell}_{i+1} (\bar{\bm{u}}) &= \frac{\wt{\alpha}_{i+1}^{\ell} (\bar{\bm{u}}) }{\sum_{\ell = 0}^{k-1} \wt{\alpha}_{i+1}^{\ell} (\bar{\bm{u}})}, 
\quad
\wt{\alpha}_{i+1}^{\ell} (\bar{\bm{u}}) = \frac{\wt{d}^{\ell}}{ \big(\epsilon + \beta_{i+1}^{\ell} (\bar{\bm{u}}) \big)^2}, 
\end{align}
with $\epsilon = 10^{-6}$.
The quantity $\beta^{\ell}$ reflects the smoothness of $u(x)$ over the $\ell$th stencil. 
For $k = 1$, $\beta = 0$.
For $k = 2$ we have \cite{Shu_1998},
\begin{align}
\beta^{0}_i(\bb{u}) = ( \bar{u}_{i+1} - \bar{u}_{i} )^2, 
\quad 
\beta^{1}_i(\bb{u}) = (  \bar{u}_{i} - \bar{u}_{i-1} )^2.
\end{align}
Clearly the WENO weights \eqref{eq:WENO-weight-neg}, \eqref{eq:WENO-weight-pos} are differentiable; 
note that the choice of WENO weights is not unique, and other choices exist which may not be differentiable \cite{Castro-etal-2011-WENO-Z}.

In summary, if optimal linear weights are used to discretize \eqref{eq:cons-law}, then \eqref{eq:weighted-reconstruct-grad-neg} and \eqref{eq:weighted-reconstruct-grad-pos} are straightforward to compute.
Otherwise, if WENO weights are used there are several options for computing or approximating \eqref{eq:weighted-reconstruct-grad-neg} and \eqref{eq:weighted-reconstruct-grad-pos}. 
We could consider computing the gradients in \eqref{eq:weighted-reconstruct-grad-neg} and \eqref{eq:weighted-reconstruct-grad-pos} exactly; however, this is costly both in terms of FLOPs and memory requirements because the gradients of \eqref{eq:WENO-weight-pos} and \eqref{eq:WENO-weight-neg} are complicated.
Thus, in our numerical tests, we instead approximately compute \eqref{eq:weighted-reconstruct-grad-neg} and \eqref{eq:weighted-reconstruct-grad-pos} via the finite difference
\begin{align} \label{eq:weighted-reconstruct-grad-FD}
\nabla_{\bb{u}}  u^\pm_{i+1/2} ( \bb{u} ) \bb{e}
\approx
\frac{u^\pm_{i+1/2} ( \bb{u} + \mu \bb{e} ) - u^\pm_{i+1/2} ( \bb{u} ) }{\mu},
\end{align}
where $\mu \in \mathbb{R}$ is a small parameter. If $\mu$ is chosen appropriately, we find that the convergence of the nonlinear solver is indistinguishable from when \eqref{eq:weighted-reconstruct-grad-neg} and \eqref{eq:weighted-reconstruct-grad-pos} are computed exactly.
We find $\mu = 0.1$ is sufficient, since taking $\mu$ much smaller appears to lead to significant round-off error and results in the solver stalling.
We refer to \eqref{eq:weighted-reconstruct-grad-FD} as an approximate Newton linearization of the reconstructions.

Another approach for approximating \eqref{eq:weighted-reconstruct-grad-neg} and \eqref{eq:weighted-reconstruct-grad-pos} is to take just the first of the two terms:
\begin{align} \label{eq:weighted-reconstruct-grad-zero}
\nabla_{\bb{u}}  u^-_{i+1/2} ( \bb{u} ) \bb{e}
\approx
R_i^{1},
\quad
\textrm{and}
\quad
\nabla_{\bb{u}} u^+_{i+1/2} ( \bb{u} ) \bb{e}
\approx
\wt{R}_{i+1}^{1}.
\end{align}
We refer to this as a Picard linearization of the reconstructions, and it is much less expensive than computing the full directional derivative.\footnote{It is possible to write this linearization using the formalism introduced in \eqref{eq:Phi-wt-def}. 
E.g., consider the following approximation to the reconstruction in
\eqref{eq:reconstruct-weighted}: 
$u_{i,-1/2}  ( \bm{\alpha}, \bm{\beta} )
:=
\sum_{\ell =0}^{k-1} 
\wt{b}^{\ell}_{i} ( \bm{\beta} )
\big( \wt{R}^{\ell} \bm{\alpha} \big)_i.
$
Then $\nabla_{\bm{\alpha}} u_{i,-1/2}  ( \bm{\alpha}, \bm{\beta} ) \big|_{ (\bm{\alpha}, \bm{\beta}) = (\bb{u}, \bb{u}) } = R_i^{1}$
}
This linearization is motivated by the fact that where $u$ is sufficiently smooth, the WENO weights approach, as $h \to 0$, the optimal linear weights, and, thus, they become only weakly dependent on $\bb{u}$. More specifically, where $u$ is sufficiently smooth \cite{Shu_1998},
$
w^{\ell}_i = d^{\ell} + {\cal O}(h^{k-1}), 
\wt{w}_{i+1}^{\ell} = \wt{d}^{\ell} + {\cal O}(h^{k-1})
$.
Thus, we might expect that the terms neglected in the linearization \eqref{eq:weighted-reconstruct-grad-zero}, i.e., those containing gradients of  WENO weights, may be small across much of the space-time domain except for isolated regions of non-smoothness, e.g., at shocks, and especially so for higher-order methods (i.e., larger $k$).
Our numerical results tend to show that the linearization \eqref{eq:weighted-reconstruct-grad-zero} is less robust than \eqref{eq:weighted-reconstruct-grad-FD}; however, acceptable convergence can often still be attained with the less expensive \eqref{eq:weighted-reconstruct-grad-zero}.
%

\subsection{Linearization IV: Summary}
\label{sec:linearization-IV}

We now summarize the results of \cref{sec:linearization-I,sec:linearization-II,sec:linearization-III}, and explain how to invert the linearized space-time operator $P_k$ serial in time.
Recall from \cref{alg:richardson} that at iteration $k$, given the approximation $\bb{u}_k \approx \bb{u}$ we evaluate the associated residual $\bm{r}_k$ according to \eqref{eq:non-lin-sys}.
At a high level, this residual corresponds to evaluating a quantity of the form
\begin{align}
\bm{r}_k = - \textrm{discretization} \bigg( \frac{\partial \, \cdot \,}{\partial t} + \frac{\partial f( \, \cdot \, )}{\partial x} \bigg) \bb{u}_k ,
\end{align}
by which we mean applying the ERK+FV discretization from \cref{sec:discretization} to the current iterate $\bb{u}_k$.
Then, we compute the updated solution $\bb{u}_{k+1} = \bb{u}_k + \bb{e}^{\textrm{lin}}_k$, where $\bb{e}^{\textrm{lin}}_k := P_k^{-1} \bm{r}_k$ is the linearized error. 
This linearized error is given by solving the system $P_k \bb{e}^{\textrm{lin}}_k = \bm{r}_k$, which, at a high level, corresponds to
\begin{align} \label{eq:con-linearized-problem}
P_k
\bb{e}^{\textrm{lin}}_k
=
\textrm{discretization} \bigg( 
\frac{\partial  \, \cdot \,}{\partial t} + \frac{\partial}{\partial x} \Big( f'(\bb{u}_k)  \, \cdot \, \Big) \bigg)
\bb{e}^{\textrm{lin}}_k 
 = 
 \bm{r}_k.
\end{align}
The linearized PDE in \eqref{eq:con-linearized-problem} has the same structure as the linear conservation law \eqref{eq:cons-lin} with linear wave-speed $\alpha(x, t) = f'(u)$.
However, the discretization in \eqref{eq:con-linearized-problem} is non-standard in the sense that it is not a direct application of the ERK+FV discretization from \cref{sec:discretization} to \eqref{eq:cons-lin}.
Rather, \eqref{eq:con-linearized-problem} is a discretization of \eqref{eq:cons-lin} arising from linearizing the ERK+FV discretization of \eqref{eq:cons-law} about the current iterate $\bb{u}_k$.

We now summarize how to invert $P_k$ serial in time.
Using the structure of $P_k$ in \eqref{eq:Pk-def}, it follows that, at time $t_{n+1}$, the solution of \eqref{eq:con-linearized-problem} is obtained by $\bb{e}^{n+1}_k = \Phi^{\textrm{lin}}(\bb{u}_k^n) \bb{e}^{n}_k + \bm{r}_k^{n+1}$, where we have dropped the superscript ``lin'' from the vectors $\bb{e}^{n}_k, \bb{e}^{n+1}_k$ for readability. The action of $\Phi^{\textrm{lin}}(\bb{u}_k^n)$ on $\bb{e}^{n}_k$ is as follows:
\begin{itemize}

\item The time discretization of \eqref{eq:cons-lin} is a linearized version of the same time integration method used for the nonlinear PDE \eqref{eq:cons-law}, be that \eqref{eq:ERK1-lin} or \eqref{eq:ERK3-lin-a}/\eqref{eq:ERK3-lin-b}. 
If \eqref{eq:ERK3-lin-a}/\eqref{eq:ERK3-lin-b} is used, the below steps are to be repeated for each stage of the method. 

\item The spatial discretization of \eqref{eq:cons-lin} is based on the linear LF flux \eqref{eq:LFF-lin}.
Specifically, according to \eqref{eq:gLLF-linearization} and \eqref{eq:lLLF-linearization}, the linear flux \eqref{eq:LFF-lin} is evaluated using interfacial wave-speeds given by $\alpha_{i+1/2}^{\pm} = f'(u_{i+1/2}^{\pm})$, where  $u_{i+1/2}^{\pm}$ are reconstructions of $u(x_{i+1/2})$ computed from the current iterate $\bb{u}^n_k$.

\item The dissipation coefficient $\nu_{i+1/2}$ in the linear flux \eqref{eq:LFF-lin} is equal to the dissipation coefficient computed from the current iterate $\bb{u}^n_k$, be it global as in \eqref{eq:LFF-global} or local as in \eqref{eq:LFF-local}, see \eqref{eq:gLLF-linearization} or \eqref{eq:lLLF-linearization}, respectively.

\item The reconstructions $e_{i+1/2}^{\pm}$, defined in \eqref{eq:e-pm-linearized-def}, and used in the linear flux \eqref{eq:LFF-lin}, are computed from the cell averages in $\bb{e}^{n}_k$. Specifically, reconstruction rules are applied to $\bb{e}^{n}_k$ based on linearizations of those used to compute $u_{i+1/2}^{\pm}$. 
For 1st-order reconstructions these rules are independent of $\bb{u}^n_k$, so that the standard 1st-order reconstruction can be used. 
For WENO reconstructions, the linearized reconstruction can be based on the Newton linearization with numerical Jacobian \eqref{eq:weighted-reconstruct-grad-FD}, or on the Picard linearization \eqref{eq:weighted-reconstruct-grad-zero}.

\end{itemize}

\begin{remark}[Newton vs. Picard iteration]
The linearization strategy outlined in this section is rather complicated, and it is natural to ask whether it needs to be this complicated. For example, one could instead conceive of a simpler Picard iteration based on the quasi-linear form of the PDE \eqref{eq:cons-law}. 
Recall that wherever the solution of \eqref{eq:cons-law} is differentiable, the PDE is equivalent to $\frac{\partial u}{\partial t} + f'(u) \frac{\partial u}{\partial x}  = 0$.
Given some approximation $u_k \approx u$, a residual correction scheme can then be formed by solving a linearized version of $\frac{\partial} {\partial t} (u_k + e_k ) + f'(u_k + e_k ) \frac{\partial}{\partial x}(u_k + e_k ) = 0$ arising from replacing the wave-speed with the approximation: $f'(u_k + e_k) \approx f'(u_k)$.
At the discrete level, this leads to a linearized problem of the form
\begin{align} \label{eq:picard-linearized-problem}
\mathrm{discretization} \bigg( 
\frac{\partial  \, \cdot \,}{\partial t} + f'(\bb{u}_k) \frac{\partial \, \cdot \,}{\partial x}  \bigg)
\bb{e}^{\mathrm{lin}}_k 
 = 
 \bm{r}_k.
\end{align}
This is similar to the linearized problem \eqref{eq:con-linearized-problem} that we actually solve, with the difference being that the PDE on the left-hand side here is in non-conservative form.
We have also tested this non-conservative linearization approach  \eqref{eq:picard-linearized-problem} instead of \eqref{eq:con-linearized-problem}. Those tests showed some potential for problems with smooth solutions, but were less successful for problems with shocks. 
\end{remark}

\section{Approximate MGRIT solution of linearized problems}
\label{sec:mgrit}

In this section, we discuss the approximate MGRIT solution of the linearized problems that arise at every iteration of \cref{alg:richardson}.
At iteration $k$ of \cref{alg:richardson}, recall that the linearized system $P_k \bb{e}^{\textrm{lin}}_k = \bm{r}_k$ is solved to determine the linearized error $\bb{e}^{\textrm{lin}}_k$. 
The iteration index $k$ is not important throughout this section, so we drop it for readability.

As detailed in \cref{sec:linearization}, the blocks in the system matrix $P$ take the form of a certain, non-standard discretization of the linear conservation law \eqref{eq:cons-lin}.
To simplify notation, in this section we write the linearized time-stepping operator $\Phi^{\textrm{lin}}( \bb{u}^n )$ as $\Phi^{n}$, where we recall from \cref{sec:nonlin-scheme} that $\bb{u}^n$ is the linearization point,
and the linearized error $\bb{e}^{\textrm{lin}}$ is written as $\bb{e}$.\footnote{Note, $\Phi^{n}$ is not to be confused with the nonlinear time-stepping operator $\Phi$ defined in \cref{sec:discretization}, and the linearized error here $\bb{e}$ is not to be confused with the true ``nonlinear'' error $\bb{e}$ described in \cref{sec:nonlin-scheme} even though they now use the same notation.}
Considering the structure of $P$ in \eqref{eq:Pk-def}, the $(n+1)$st block row of the system $P \bb{e} = \bm{r}$ is $\bb{e}^{n+1} - \Phi^{n} \bb{e}^{n} = \bm{r}^{n+1}$.

The remainder of this section is structured as follows.
A brief overview of MGRIT is given in \cref{sec:mgrit-summary} and our proposed MGRIT coarse-grid operator is described in \cref{sec:Psi-overview}.
\Cref{sec:SL} discusses the coarse-grid semi-Lagrangian method. 
Many of the finer details underlying the coarse-grid operator are described in the supplementary materials and these will be referenced as required.
%

\subsection{MGRIT overview}
\label{sec:mgrit-summary}

We now give a brief overview of a two-level MGRIT algorithm as it applies to the linear problem at hand; see \cite{Falgout_etal_2014} for more details on MGRIT.
Consider the fine-grid linear system $P \bb{e} = \bm{r}$ consisting of $n_t$ time points. 
For a given $\bb{e}_j \approx \bb{e}$, with $j$ the iteration index of the MGRIT iterative process, the associated fine-grid residual is $\bm{\delta}_j := \bm{r} - P \bb{e}_j$.
Given $m \in \mathbb{N}$, define a CF-splitting of the fine time grid as already explained in the context of \cref{alg:richardson}. 

Define a coarse-grid space-time matrix $P^{\Delta} \in \mathbb{R}^{\frac{n_x n_t}{m} \times \frac{n_x n_t}{m}}$ with the same block bidiagonal structure as $P$ in \eqref{eq:Pk-def}, such that the $(n+1)$st block row of its action on a coarse-grid vector $\bm{z} \in \mathbb{R}^{\frac{n_x n_t}{m}}$ is $\big[P^{\Delta} \bm{z}\big]^{n+1} = \bm{z}^{n+1} - \Psi^{n} \bm{z}^n$.
Here, $\Psi^{n}$ is the (linear) \textit{coarse-grid time-stepping operator} evolving vectors from time $t_n \to t_n + m \delta t$, and should be such that $\Psi^{n} \approx \Psi_{\rm ideal}^{n} :=
 \prod_{k = 0}^{m - 1} \Phi^{n+k}$, with $\Psi_{\rm ideal}^{n}$ being the \textit{ideal} coarse-grid operator.
Fast MGRIT convergence hinges on $\Psi^{n}$ being a sufficiently accurate approximation to $\Psi_{\rm ideal}^{n}$ \cite{Dobrev_etal_2017,Southworth_2019,DeSterck_etal_2025_LFA}, while obtaining speed-up in a parallel environment also necessitates that its action is significantly cheaper than that of $\Psi_{\rm ideal}^{n}$.
As described in \cref{sec:introduction}, the design of effective coarse-grid operators in the context of hyperbolic PDEs is a long-standing issue for MGRIT and Parareal.

A two-level MGRIT iteration is given by pre-relaxation on the fine grid followed by coarse-grid correction with $P^{\Delta}$. 

\subsection{Coarse-grid operator}
\label{sec:Psi-overview}

Since $\Phi^{n}$ is a one-step discretization of a linear hyperbolic PDE, to design a suitable coarse-grid operator we leverage our recent work on this topic  \cite{DeSterck_etal_2023_SL,DeSterck_etal_2023_MOL,DeSterck_etal_2025_LFA}.
In these works, we propose coarse-grid operators which are \textit{modified semi-Lagrangian discretizations} of the underlying hyperbolic PDE.
A key distinction with \cite{DeSterck_etal_2023_SL,DeSterck_etal_2023_MOL,DeSterck_etal_2025_LFA} is that they considered \textit{FD} discretizations of \textit{non-conservative} linear hyperbolic PDEs, while in this work we consider \textit{FV} discretizations of \textit{conservative} linear hyperbolic PDEs to solve nonlinear problems.
Based on \cite{DeSterck_etal_2023_SL,DeSterck_etal_2023_MOL,DeSterck_etal_2025_LFA},  we propose a coarse-grid operator of the form
\begin{align} \label{eq:Psi}
\Psi^{n}
=
\Big(
I 
+ 
{\cal T}_{\rm ideal}^{n}
-
{\cal T}_{\rm direct}^{n}
\Big)^{-1}
 {\cal S}^{n, m\delta t}_{p, 1_*}
 \approx
  \Psi_{\rm ideal}^{n}.
\end{align}
Here, ${\cal S}^{n, m\delta t}_{p, 1_*}$ is a conservative, FV-based discretization of \eqref{eq:cons-lin} evolving solutions from time $t_n \to t_n + m \delta t$, and has, in a certain sense to be clarified below, orders of accuracy $p$ in space and one in time.
We take $p$ as the formal order of the FV discretization described in \cref{sec:discretization}, i.e., $p = 2k-1$.
Furthermore, ${\cal T}_{\rm direct}^{n}$ is an approximation of the operator associated with the leading-order term in the truncation error of ${\cal S}^{n, m\delta t}_{p, 1_*}$. Further details on ${\cal S}^{n, m\delta t}_{p, 1_*}$ and ${\cal T}_{\rm direct}^{n}$ are given in \cref{sec:SL}.
Similarly, ${\cal T}_{\rm ideal}^{n}$ in \eqref{eq:Psi} is an approximation of the operator associated with the leading-order term in the truncation error of $\Psi^{n}_{\rm ideal}$, and is discussed further below.
The coarse-grid operator \eqref{eq:Psi} has the same structure as the coarse-grid operators proposed in \cite{DeSterck_etal_2023_SL,DeSterck_etal_2023_MOL} in the sense of being a semi-Lagrangian discretization modified so that its truncation error better approximates that of the ideal coarse-grid operator.

The main complicating factor in this work, differentiating it from that in \cite{DeSterck_etal_2023_SL,DeSterck_etal_2023_MOL,DeSterck_etal_2025_LFA}, is that the fine-grid operator is not associated with a standard discretization of \eqref{eq:cons-lin}, and this makes estimating its truncation error difficult. 
That is, recall from \cref{sec:linearization-IV} that $\Phi^n$ is a discretization of \eqref{eq:cons-lin} arising from the linearization of an ERK+FV discretization of the underlying nonlinear PDE \eqref{eq:cons-law}.

Our approach for estimating ${\cal T}_{\rm ideal}^{n}$ is lengthy and details can be found in the supplementary materials.
In essence, we first develop an approximate truncation error analysis in Supplementary Materials Section \ref{SMsec:ideal-err-est} for a linear ERK+FV-type discretization of the PDE \eqref{eq:cons-lin}. 
Then in Section \ref{SMsec:MGRIT-linear-standard} this analysis is applied to estimate the truncation error of a standard linear ERK+FV discretization of \eqref{eq:cons-lin}. Numerical results are presented there to show that the approach results in a fast MGRIT solver despite the analysis being largely heuristic.
This work generalizes \cite{DeSterck_etal_2023_MOL}, which considered (fine-grid) method-of-lines discretizations only for constant wave-speed problems.
Finally, in Section \ref{SMsec:ideal-err-est-linearized} the coarse-grid operator developed in Section \ref{SMsec:MGRIT-linear-standard} is heuristically extended to the case at hand where the discretization of \eqref{eq:cons-lin} corresponds to a linearization of an ERK+FV discretization of the nonlinear PDE \eqref{eq:cons-law}.

Ultimately, the heuristic analysis results in the following choice of ${\cal T}_{\rm ideal}^{n}$ in \eqref{eq:Psi}:
\begin{align} \label{eq:T-ideal}
{\cal T}_{\rm ideal}^{n}
=
\begin{cases}
{\cal D}_1 \diag 
\bigg( 
\sum \limits_{j = 0}^{m-1} 
\bm{\beta}^{n+j} 
\bigg) 
{\cal D}_1^\top, 
\quad k = 1,
\\[3ex]
{\cal D}_1
\bigg(
\diag
\bigg( 
\sum \limits_{j = 0}^{m-1} \bm{\beta}^{1,n+j} 
\bigg) 
{\cal D}_2^{1}
-
\diag
\bigg( 
\sum \limits_{j = 0}^{m-1} \bm{\beta}^{0,n+j} 
\bigg) 
{\cal D}_2^{0}
\bigg),
\quad k = 2.
\end{cases}
\end{align} 
Here, $({\cal D}_1 \bb{e})_i = (\bar{e}_i - \bar{e}_{i-1})/h$, $({\cal D}_2^{0} \bb{e})_{i} = (\bar{e}_i - 2\bar{e}_{i+1} + \bar{e}_{i+2})/h^2$, and $({\cal D}_2^{1} \bb{e})_{i} = (\bar{e}_{i-1} - 2 \bar{e}_{i} + \bar{e}_{i+1})/h^2$.
Furthermore, the vectors $\bm{\beta}^{n}, \bm{\beta}^{0, n}, \bm{\beta}^{1, n} \in \mathbb{R}^{n_x}$ are defined element-wise by
\begin{subequations}
\label{eq:beta-k}
\begin{align}
\beta_{i}^{n}
&:=
\left[
- \frac{1}{2} \Big( \delta t \, \alpha^{n}_{i+1/2} \Big)^2
+
\frac{h \delta t}{2} \nu^{n}_{i+1/2}
\right],
\\
\label{eq:beta-k=2}
\beta_{i}^{*, n}
&:=
\frac{3}{4} \left[
s_{\rm RK}
\,
\frac{1}{24 h} \Big( \delta t \, \alpha^{n}_{i+1/2} \Big)^4
+
s_{\rm FV}
\,
\frac{h^2 \delta t }{12} \nu^{n}_{i+1/2} 
\right]
\gamma_i^{*, n}.
\end{align}
\end{subequations}
In \eqref{eq:beta-k}, $\alpha^{n}_{i+1/2}$ is an approximation to wave-speed $\alpha(x,t)$ of the linear conservation law \eqref{eq:cons-lin} at $x = x_{i+1/2}$. 
%
Recalling from \cref{sec:linearization-IV} that we are discretizing \eqref{eq:cons-lin} with $\alpha(x, t) = f'(u)$, 
we choose $\alpha^{n}_{i+1/2} = \frac{f'(u_{i+1/2}^-) + f'(u_{i+1/2}^+)}{2}$ in which $u_{i+1/2}^{\pm}$ are reconstructions of $u(x_{i+1/2})$ based on the either $\bb{u}^n$ or $\bb{u}^{n+1}$.
Likewise, $\nu_{i+1/2}^{n}$ in \eqref{eq:beta-k} is the LFF dissipation coefficient in \eqref{eq:LFF} arising from either $\bb{u}^n$ or $\bb{u}^{n+1}$.

Further used in \eqref{eq:beta-k=2} is the short hand notation
\begin{align} \label{eq:gamma-def}
\gamma_i^{0, n} := 2 \wt{b}_{i+1}^{0, n} 
+
b_{i}^{0, n},
\quad
\gamma_i^{1, n} := \wt{b}_{i+1}^{1, n}
+
2 b_{i}^{1, n},
\end{align}
where, recalling from \eqref{eq:reconstruct-weighted} the reconstruction coefficients $\big\{ b_i^{\ell}( \bb{u} ) \big\}_{\ell = 1}^{2k - 1}$, $\big\{ \wt{b}_i^{\ell}(\bb{u}) \big\}_{\ell = 1}^{2k - 1}$, here, $b_i^{\ell, n}$ and $\wt{b}_{i}^{\ell, n}$ correspond to reconstruction coefficients $b_i^{\ell}$ and $\wt{b}_i^{\ell}$, respectively, arising from either $\bb{u}_k^n$ or $\bb{u}_k^{n+1}$.
Recall that for a 1st-order reconstruction, $\big\{ b_i^{\ell} \big\}, \big\{\wt{b}_i^{\ell} \big\}$ are always independent of $\bb{u}$, while for higher-order discretizations they depend on $\bb{u}$ only if WENO reconstructions are used.

In our numerical tests, we do not typically find a significant difference whether the time-dependent quantities in \eqref{eq:beta-k} are based on $\bb{u}^n$ or $\bb{u}^{n+1}$. 
An exception, however, is in our Buckley--Leverett test problems for order three (when $k = 2$), where we find significantly better MGRIT convergence if the WENO weights, i.e., the $\{ b_i^{\ell, n} \}$ and $\{ \wt{b}_i^{\ell, n} \}$ in \eqref{eq:gamma-def}, are based on $\bb{u}^{n+1}$ rather than $\bb{u}^{n}$.
As such, $k = 2$ numerical tests in \cref{sec:num-res} (including for Burgers), compute all time-dependent quantities in \eqref{eq:beta-k} based on $\bb{u}^{n+1}$; $k=1$ tests compute time-dependent quantities in \eqref{eq:beta-k} based on $\bb{u}^{n}$.

The parameters $s_{\rm RK}, s_{\rm FV} \in \mathbb{R}$ in \eqref{eq:beta-k=2} are not motivated by our heuristic truncation error analysis, i.e., the analysis predicts $s_{\rm RK} = s_{\rm FV} = 1$, but are added because we observe empirically that choosing them different from $1$ may improve solver performance for the non-convex Buckley--Leverett problem.
Specifically, we find these parameters are crucial for obtaining a scalable solver in our Buckley--Leverett tests.
In all $k = 2$ numerical tests in \cref{sec:num-res} we set $s_{\rm RK} = \frac{16}{3}$ and $s_{\rm FV} = \frac{4}{3}$.
For Burgers problems, the solver does not seem overly sensitive to these parameters, e.g., $s_{\rm RK} = \frac{16}{3}$ and $s_{\rm FV} = \frac{4}{3}$ do not result in significantly better convergence than $s_{\rm RK} = s_{\rm FV} = 1$.

Finally, note that the truncation operator \eqref{eq:T-ideal} is designed such that if optimal linear reconstruction weights are used in the discretization of \eqref{eq:cons-law} are used rather than WENO weights then it reduces to the coarse-grid operator developed in Supplementary Materials Section \ref{SMsec:MGRIT-linear-standard} for standard linear ERK+FV discretizations of \eqref{eq:cons-lin}.
%

\subsection{Semi-Lagrangian discretization and its truncation error}
\label{sec:SL}

Here we discuss the semi-Lagrangian method ${\cal S}^{n, m\delta t}_{p, 1_*}$ used in \eqref{eq:Psi} to discretize \eqref{eq:cons-lin}, and then we describe the choice of its truncation error approximation ${\cal T}_{\rm direct}^{n}$ from \eqref{eq:Psi}.
Recall that ${\cal S}^{n, m\delta t}_{p, 1_*}$ differs from the semi-Lagrangian discretizations we used in  \cite{DeSterck_etal_2023_SL,DeSterck_etal_2023_MOL,DeSterck_etal_2025_LFA} because it is an FV discretization of a conservative hyperbolic PDE rather than an FD discretization of a non-conservative hyperbolic PDE.

Applying the discretization to \eqref{eq:cons-lin} on the time interval $[t_n, t_{n} + \delta t]$ results in the following approximate update for the cell average of $e(x)$ in cell ${\cal I}_i$:
\begin{align} \label{eq:SL-FV-scheme}
\bar{e}_i^{n+1} 
\approx
\bar{e}_i^{n} - \frac{\wt{f}_{i+1/2} - \wt{f}_{i-1/2}}{h} 
= 
\big( {\cal S}_{p,r}^{n, \delta t} \bar{\bm{e}}^n \big)_i.
\end{align}
Here $\wt{f}_{i+1/2}$ is the numerical flux function that defines the semi-Lagrangian scheme---note the difference in notation with the numerical flux $\wh{f}_{i+1/2}$ from \cref{sec:discretization}. 
The time-stepping operator ${\cal S}_{p,r}^{n, \delta t}$ indicates the method has accuracies of order $p$ and $r$ for the two numerical procedures that it approximates (translating to order $p$ in ``space,'' and order $r$ in ``time'').

The numerical flux $\wt{f}_{i+1/2}$ in \eqref{eq:SL-FV-scheme} is chosen so that the scheme in \eqref{eq:SL-FV-scheme} approximates the following local  conservation relationship satisfied by the exact solution of \eqref{eq:cons-lin} (see also \cref{fig:SL-FV-conservation}):
\begin{align} \label{eq:cons-lin-SL-cons-local}
\bar{e}_i^{n+1} 
=
\frac{1}{h}
\int_{\wt{\xi}_{i - 1/2}}^{\wt{\xi}_{i+1/2}} e(x, t_n) \d x
=
\bar{e}_i^{n}
-
\frac{1}{h}
\left(
\int_{\wt{\xi}_{i + 1/2}}^{x_{i+1/2}} e(x, t_n) \d x
-
\int_{\wt{\xi}_{i - 1/2}}^{x_{i-1/2}} e(x, t_n) \d x
\right).
\end{align}
Here, $x = \wt{\xi}_{i \pm 1/2}$ are \textit{departure points} at time $t_n$ of the space-time FV cell defined on $t \in [t_n, t_{n+1}]$ that at time $t_{n+1}$ \textit{arrive} at points $x = x_{i \pm 1/2}$, and otherwise at intermediate times has lateral boundaries with slopes $\frac{\d x}{\d t} = \alpha(x, t)$. 
This relation follows from an application of the divergence theorem to \eqref{eq:cons-lin}, and serves as the basis for the semi-Lagrangian discretizations designed in \cite{Huang_etal_2012,Huang_Arbogast_2016}. 
%
The departure point $\wt{\xi}_{i+1/2} := \xi_{i+1/2}(t_n)$ can be computed by solving the following final-value problem for the characteristic curve that is the right-hand boundary of the deformed space-time FV cell:
\begin{align} \label{eq:depart-SL-FV}
\frac{\d \xi_{i+1/2}}{\d t} = \alpha(\xi_{i+1/2}, t), 
\quad 
t \in [t_n, t_n + \delta t], 
\quad 
\xi_{i+1/2}(t_n + \delta t) = x_{i+1/2}.
\end{align}

\begin{figure}[t!]
\centerline{
\fbox{
\includegraphics[scale=0.8]{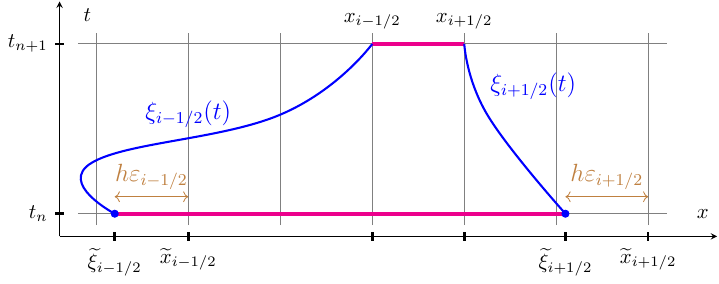}
}
}
\caption{Schematic illustration of the conservation property \eqref{eq:cons-lin-SL-cons-local} that underlies the FV semi-Lagrangian scheme \eqref{eq:SL-FV-scheme}: The integrals of the solution of \eqref{eq:cons-lin} over the bold magenta lines are equal.
The east-bounding characteristic curve satisfies the final-value problem \eqref{eq:depart-SL-FV}, and the west-bounding characteristic curve satisfies the same problem with the arrival point shifted from $x_{i+1/2}$ to $x_{i-1/2}$. 
At the departure time $t_n$, the departure points $\wt{\xi}_{i\pm 1/2} := \xi_{i \pm 1/2}(t_n)$ are decomposed into the sum of their east neighbouring cell interfaces $\wt{x}_{i \pm 1/2}$ and their mesh-normalized distances $\varepsilon_{i \pm 1/2}$ from these interfaces, $\wt{\xi}_{i\pm 1/2} = \wt{x}_{i \pm 1/2} - h \varepsilon_{i \pm 1/2}$.
\label{fig:SL-FV-conservation}
}
\end{figure}

Based on \eqref{eq:cons-lin-SL-cons-local}, the numerical flux $\wt{f}_{i+1/2}$ in \eqref{eq:SL-FV-scheme} is then defined by an approximation of the integral in \eqref{eq:cons-lin-SL-cons-local}.
Specifically, 
\begin{align} \label{eq:SL-FV-num-flux}
\int_{\wt{\xi}_{i + 1/2}}^{x_{i+1/2}} e(x, t_n) \d x
\approx
\int_{\wt{\xi}_{i + 1/2}}^{x_{i+1/2}} R_{i + 1/2}[ \bar{\bm{e}}^n ](x) \d x
=:
\wt{f}_{i+1/2}.
\end{align}
Here, $R_{i + 1/2}[\bar{\bm{e}}^n](x) \approx e(x, t_n)$ is a cell-average-preserving \textit{piecewise polynomial reconstruction} of $e(x, t_n)$; see Supplementary Materials Section \ref{SMsec:SL-flux} for further discussion.

Since we apply the above described semi-Lagrangian method on the coarse grid, we need to approximately solve \eqref{eq:depart-SL-FV} on the coarse grid to estimate coarse-grid departure points. To do so, we first estimate departure points on the fine grid and then combine these using the linear interpolation and backtracking strategy we describe in \cite[Section 3.3.1]{DeSterck_etal_2023_SL}.
To estimate departure points on the fine grid at time $t = t_n$, we approximately solve \eqref{eq:depart-SL-FV} with a single forward Euler step, which requires the wave-speed function $\alpha(x,t)$ at the arrival point $(x,t) = (x_{i+1/2}, t_{n} + \delta t)$.
Recall (see \cref{sec:linearization}) that the wave-speed in the linearized problem is given by that of the nonlinear problem \eqref{eq:cons-law} frozen at the current nonlinear iterate in \cref{alg:richardson}. Thus, we take the linear wave-speed at the arrival point as the average of the two reconstructions that we have available at this point: $\alpha(x_{i+1/2},t_n + \delta t) \approx \big[ f'(u_{i+1/2}^-) + f'(u_{i+1/2}^+) \big]/2$, where $u_{i+1/2}^{\pm}$ are reconstructions of the current linearization point at $(x, t) = (x_{i+1/2}, t_n + \delta t)$.

In our numerical experiments, we use the following expression for the semi-Lagrangian truncation error operator in \eqref{eq:Psi}:
\begin{align} \label{eq:T-direct}
{\cal T}_{\rm direct}^{n}
=
-h^{p+1}
{\cal D}_1
\diag 
\big( 
g_{p+1}
(
\bm{\varepsilon}
)
\big) 
{\cal D}_p^\top.
\end{align}
This expression is based on a heuristic truncation error analysis for the semi-Lagrangian method applied to \eqref{eq:cons-lin}; further details can be found in Supplementary Materials Section \ref{SMsec:SL-extra}. 
%
%
Here, $\bm{\varepsilon} = \big( \varepsilon_{1+1/2}, \ldots \varepsilon_{n_x + 1/2} \big) \in \mathbb{R}^{n_x}$, with $\varepsilon_{i+1/2}$ the mesh-normalized difference between the $i$th departure point (over the coarse-grid time-step $t_n \to t_n + m \delta t$) and its east-neighboring mesh point; see the fine-grid example in \cref{fig:SL-FV-conservation}.
The function $g_{p+1}$ is a degree $p+1$ polynomial (see \eqref{eq:SL-g-poly-def}) and is applied component-wise to $\bm{\varepsilon}$.
As previously, $({\cal D}_1 \bb{e})_i = (\bar{e}_i - \bar{e}_{i-1})/h$, and ${\cal D}_p$, for $p$ odd, is a 1st-order accurate FD discretization of a $p$th derivative on a stencil with a 1-point bias to the left.
Finally, we remark that \eqref{eq:T-direct} has a structure similar to the truncation error operator we used in \cite{DeSterck_etal_2023_SL}, for FD discretizations of non-conservative, variable-wave-speed problems, which was of the form $-h^{p+1}
\diag 
\big( 
g_{p+1}
(
\bm{\varepsilon}
)
\big) 
{\cal D}_{p+1}$.

\section{Numerical results}
\label{sec:num-res}

In this section, we numerically test the solver described in \cref{alg:richardson}, utilizing the linearization procedure from \cref{sec:linearization} and the linearized MGRIT solver from \cref{sec:mgrit}.
Test problems and numerical details are discussed in \cref{sec:model-probs}, with numerical tests then given in \cref{sec:num-tests}, followed by a discussion of speed-up potential in \cref{sec:speed-up}.

\subsection{Test problems and numerical setup}
\label{sec:model-probs}

We consider two conservation laws of type \eqref{eq:cons-law}, namely:
\begin{align} 
\label{eq:burgers} \tag{B}
\frac{\partial u}{\partial t} + \frac{\partial f(u)}{\partial x} &= 0,
\quad
f(u) = \frac{u^2}{2},
\\
\label{eq:buck} \tag{BL}
\frac{\partial u}{\partial t} + \frac{\partial f(u)}{\partial x}  &= 0,
\quad
f(u) = \frac{4 u^2}{4 u^2 + (1 - u)^2},
\end{align}
with periodic boundary conditions in space.
Equation \eqref{eq:burgers} is the Burgers equation and serves as the simplest nonlinear hyperbolic PDE, and \eqref{eq:buck} is the Buckley--Leverett equation and can have much richer solution structure than Burgers equation owing to its non-convex flux $f$---see \cite[Section 16.1.1]{LeVeque_2004} for detailed discussion on the solution structure of \eqref{eq:buck}.
For both PDEs, we use the square wave initial condition $u_0(x) = 1$ for $x \in (-0.5, 0)$ and $u_0(x) = 0$ for $x \in (-1, 1) \setminus (-0.5, 0)$.
We choose this initial data because it gives rise to both rarefaction and shock waves.
Additional numerical tests using a smooth initial condition can be found in Supplementary Materials Section \ref{SMsec:num-res-additional-u0}, where solver convergence is qualitatively similar to that for the square wave initial condition.
We solve \eqref{eq:burgers} on the time domain $t \in [0, 4]$, and \eqref{eq:buck} on $t \in [0, 2]$; solution plots are shown in \cref{fig:test-prob}.
Note that for \eqref{eq:buck}, when the LLF flux \eqref{eq:LFF} with  \eqref{eq:LFF-local} is used, reconstructions are always limited to be within the physical range $u \in [0,1]$: Any $u_{i+1/2}^{\pm} > 1$ are mapped to $u_{i+1/2}^{\pm} = 1$, and any $u_{i+1/2}^{\pm} < 0$ are mapped to $u_{i+1/2}^{\pm} = 0$; see Supplementary Materials Remark \ref{SMrem:BL-limiting} for further discussion.

\newcommand{\fd}{./figures/}
\newcommand{\vs}{1}
\newcommand{\hs}{1}
\newcommand\fs{0.35}

\begin{figure}[t!]
\centerline{
\includegraphics[scale=\fs]{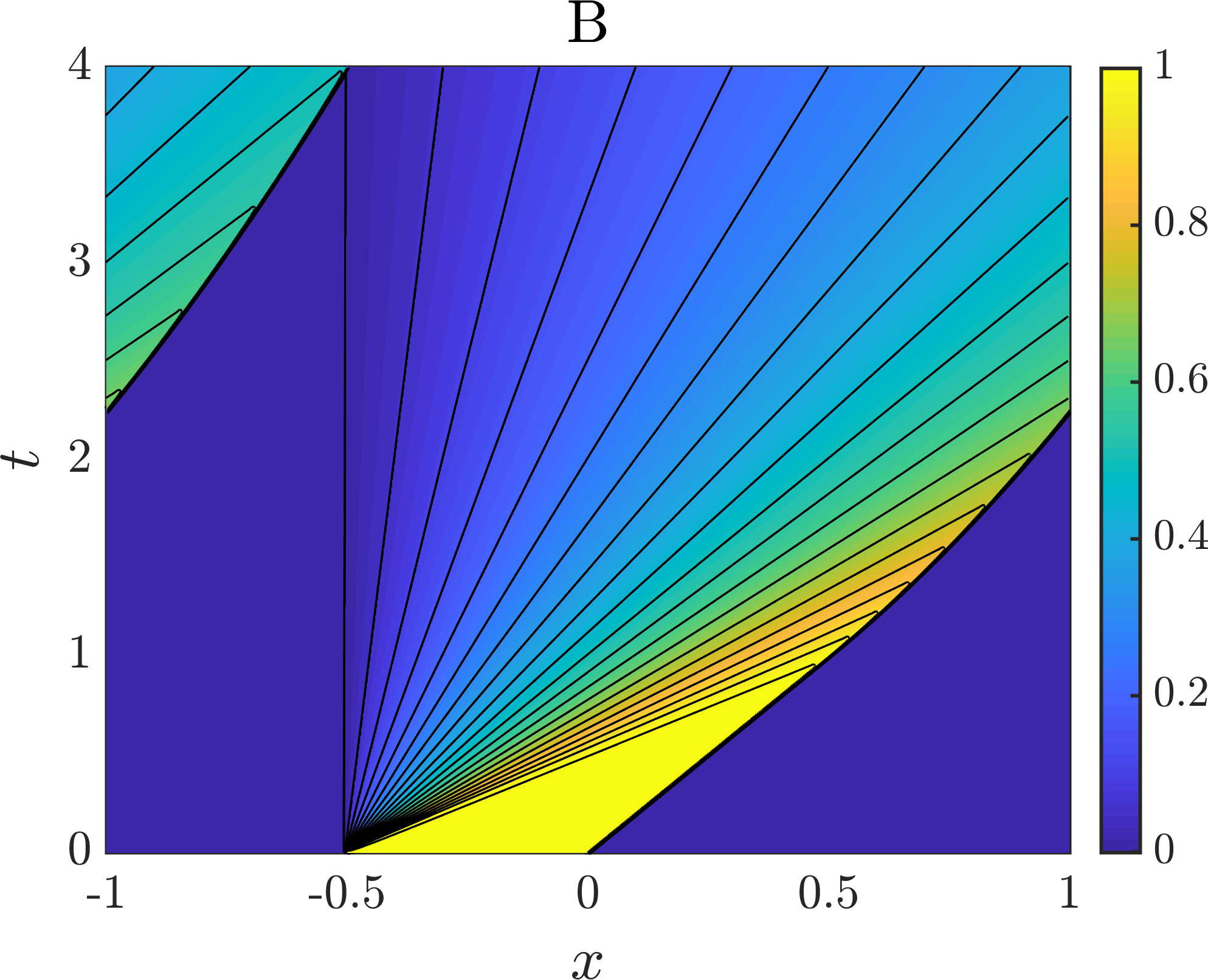}
\hspace{\hs ex}
\includegraphics[scale=\fs]{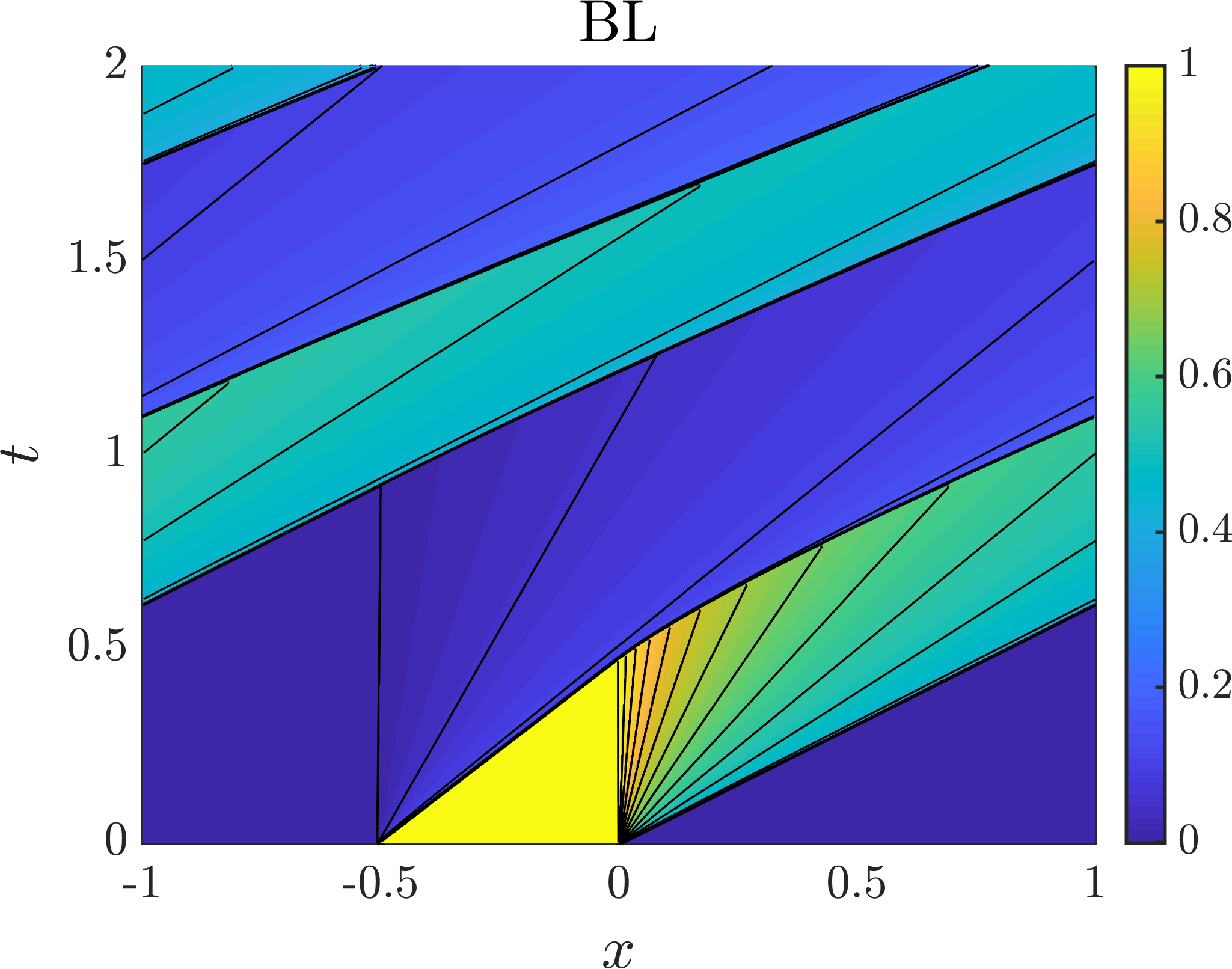}
}
\vspace{\vs ex}
\centerline{
\includegraphics[scale=\fs]{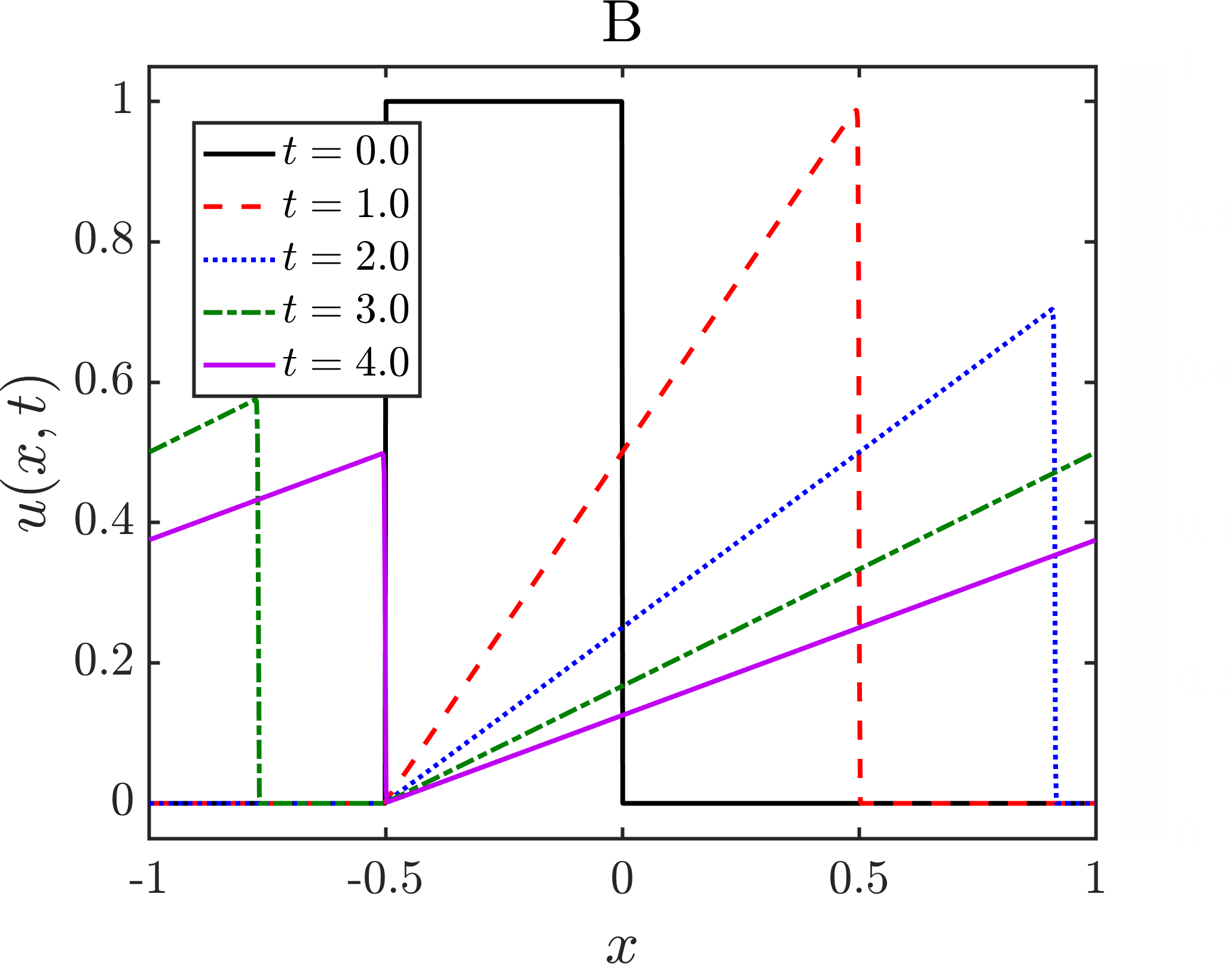}
\hspace{\hs ex}
\includegraphics[scale=\fs]{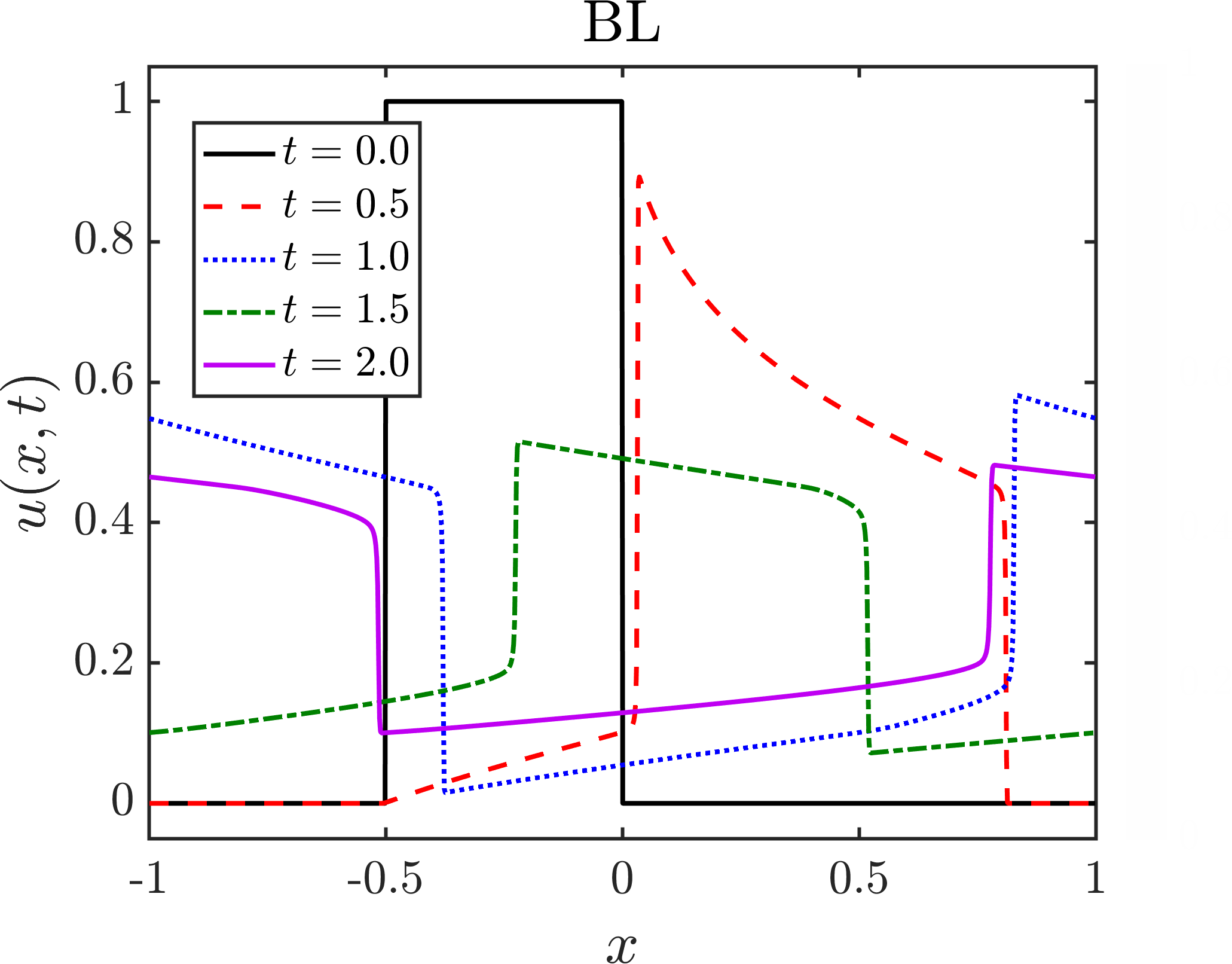}
}
\caption{Solution plots for numerical test problems: Burgers equation \eqref{eq:burgers} (left column) and Buckley--Leverett equation \eqref{eq:buck} (right column).
Plots in the top row are space-time contours, with the black lines being 20 contour levels evenly spaced between 0 and 1.
Plots in the bottom row are solution cross-sections at the times indicated in the legends.
For the Burgers equation, the initial discontinuity at $x = -0.5$ propagates as a rarefaction wave with speed $1$ while the one at $x = 0$ propagates as a shock wave with speed $1/2$. At $t = 1$ these two waves merge and propagate at a reduced speed thereafter (see Supplementary Material Section \ref{SMsec:over-solving} for further details).
For the Buckley--Leverett equation, both discontinuities propagate as compound waves: They contain both a shock and an attached rarefaction.
\label{fig:test-prob}
}
\end{figure}

We now discuss some numerical details.
Each problem is solved on a sequence of meshes to understand the scalability of the solver with respect to problem size.
For 1st-order discretizations, we consider meshes with $n_x = 64,$ 128, 256, 512, 1024, 2048, 4096 spatial FV cells. For \eqref{eq:burgers}, the corresponding temporal meshes have $n_t = 161,$ 321, 641, 1281, 2561, 5121, 10241 points, respectively, and for \eqref{eq:buck}, the corresponding temporal meshes have $n_t = 188,$ 375, 748, 1494, 2986, 5971, 11941 points, respectively.
For 3rd-order discretizations we consider meshes with $n_x = 64,$ 128, 256, 512, 1024, 2048 spatial FV cells.
The initial nonlinear iterate on a given space-time mesh, that is, for a fixed $n_x$ and $n_t$, is obtained by linear interpolation of the solution to the same problem on a mesh with $n_x/2$ FV cells in space and $n_t/2$ points in time.

All tests use nonlinear F-relaxation with CF-splitting factor $m = 8$ (Line \ref{ln:nonlin-relax} in \cref{alg:richardson}).
In all tests, a single MGRIT iteration is used to approximately solve each linearized problem (Line \ref{ln:approx-solve} in \cref{alg:richardson}), with the initial MGRIT iterate set equal to the corresponding right-hand side vector of the linearized system, i.e., the current nonlinear residual. 
We perform a single F-relaxation for the MGRIT pre-relaxation using a coarsening factor of $m = 8$.
This MGRIT initialization strategy is equivalent, although computationally cheaper, to using a zero initial MGRIT iterate followed by pre CF-relaxation on the first MGRIT iteration on the fine level.\footnote{For $\bm{r}_k \approx \bm{0}$, the linearization error is small, such that the linearized problem is a highly accurate model for the nonlinear residual equation \eqref{eq:non-lin-sys}. 
Thus, for sufficiently small $\bm{r}_k$, our relaxation and initial iterate strategy is effectively the same as if no nonlinear relaxation were applied and MGRIT was applied to the linearized problem with FCF-relaxation. Note that FCF-relaxation is typically used in the context of MGRIT for linear hyperbolic problems, with F-relaxation sometimes not sufficient for obtaining a convergent solver \cite{DeSterck_etal_2023_MOL}.}
For simplicity, we consider only two-level MGRIT; however, the interested reader can find multilevel results for first-order discretizations in Supplementary Materials Section \ref{SMsec:num-res-additional-multilevel}, where the multilevel algorithm proceeds analogously to those in our previous works using modified semi-Lagrangian coarse operators \cite{KrzysikThesis2021,DeSterck_etal_2023_SL,DeSterck_etal_2023_MOL}.

The approximate truncation error correction at coarse-grid time steps, i.e., the matrix inversion in \eqref{eq:Psi}, is applied exactly via LU factorization. 
In our previous work for linear problems \cite{DeSterck_etal_2023_MOL,DeSterck_etal_2023_SL}, and for the linear results in Supplementary Materials Section \ref{SMsec:MGRIT-linear-standard}, these linear systems were approximately solved using a small number of GMRES iterations. 
Numerical tests (not shown here) indicate that GMRES is possibly less effective for the current linearized problems, and we hope to address this in future work.
%

\subsection{Results}
\label{sec:num-tests}

\renewcommand{\fd}{./figures/}
\renewcommand{\hs}{2}
\renewcommand{\vs}{1}
\renewcommand\fs{0.335}
\begin{figure}[t!]
\centerline{
\includegraphics[scale=\fs]{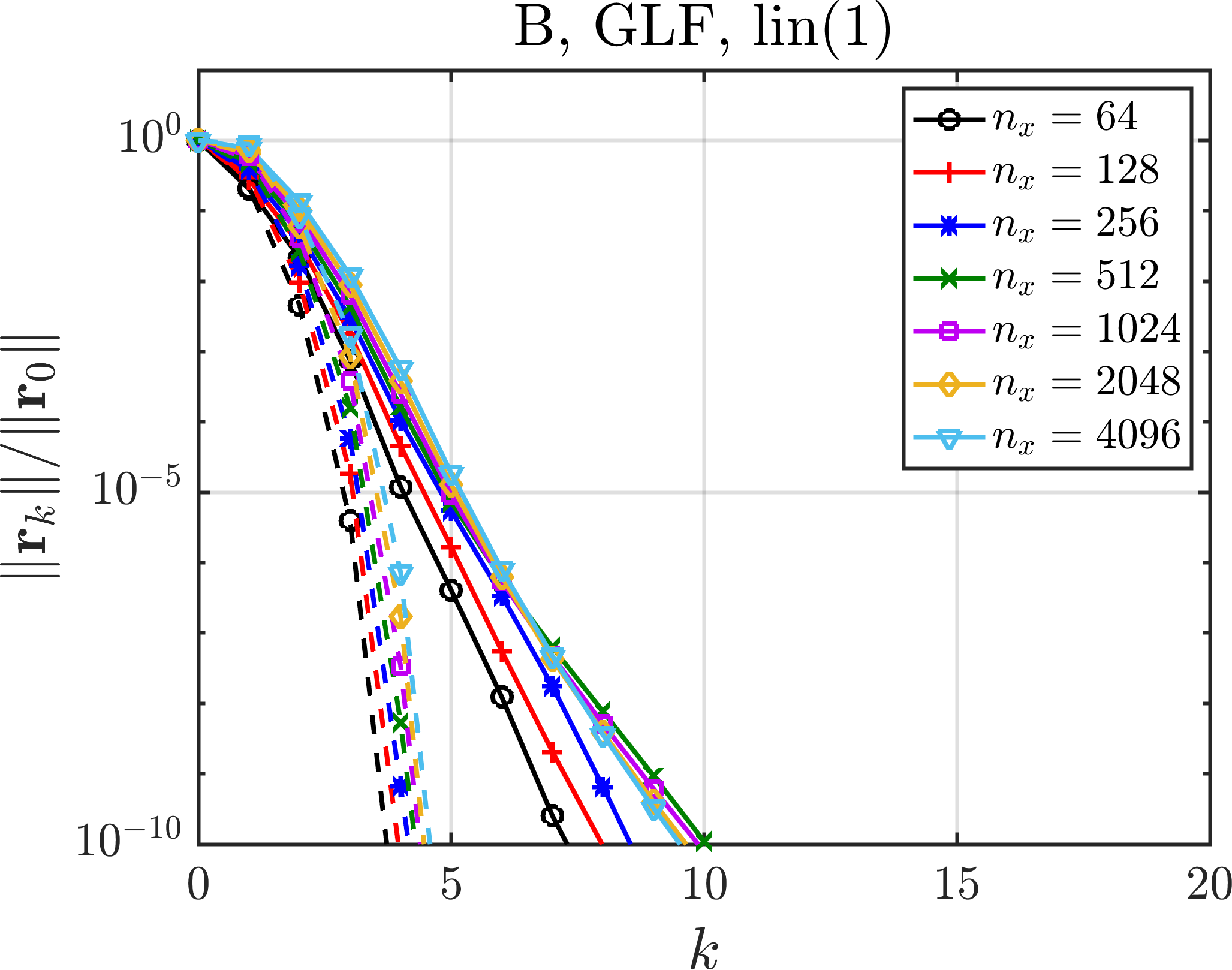}
\hspace{\hs ex}
\includegraphics[scale=\fs]{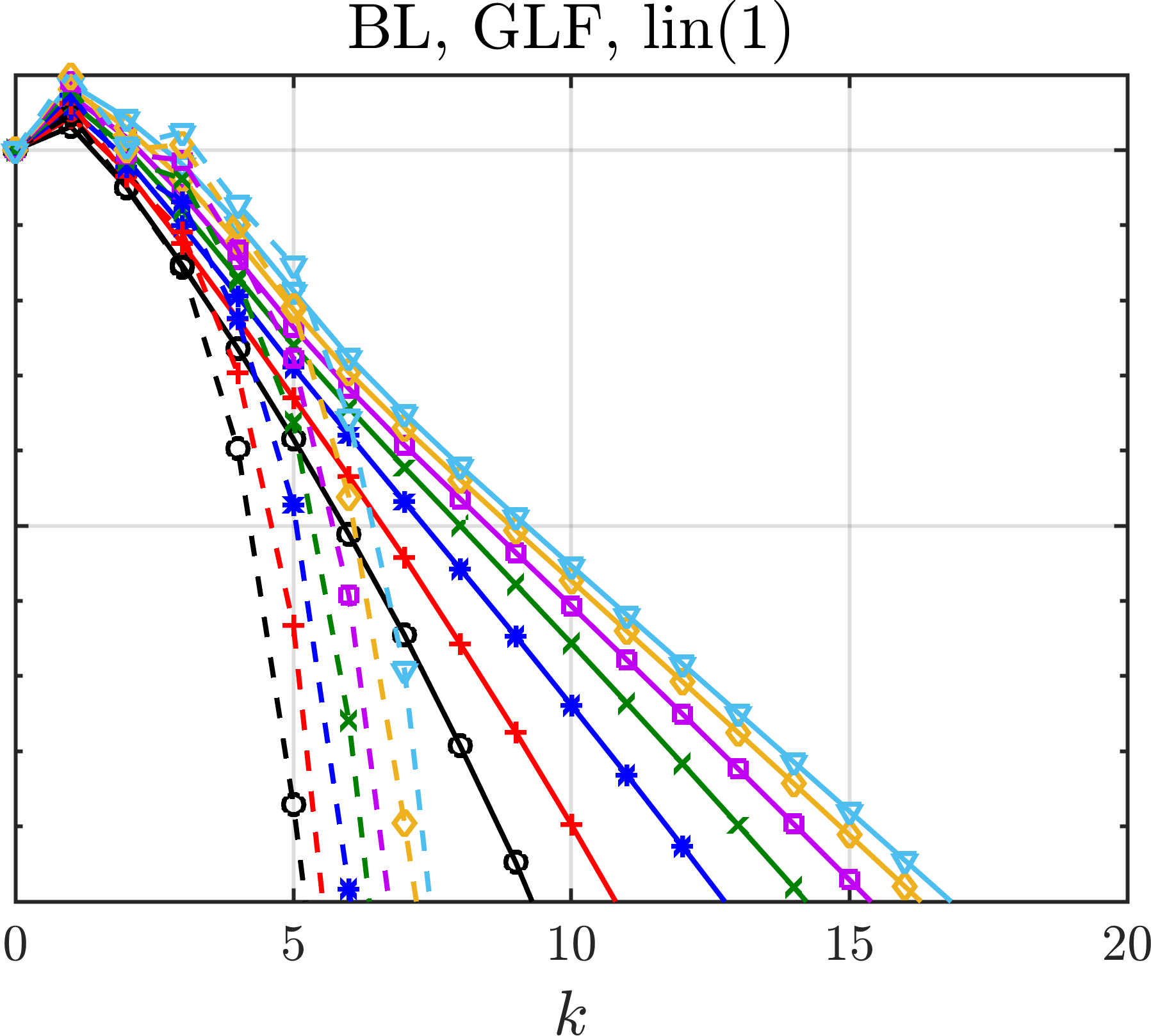}
}
\vspace{\vs ex}
\centerline{
\includegraphics[scale=\fs]{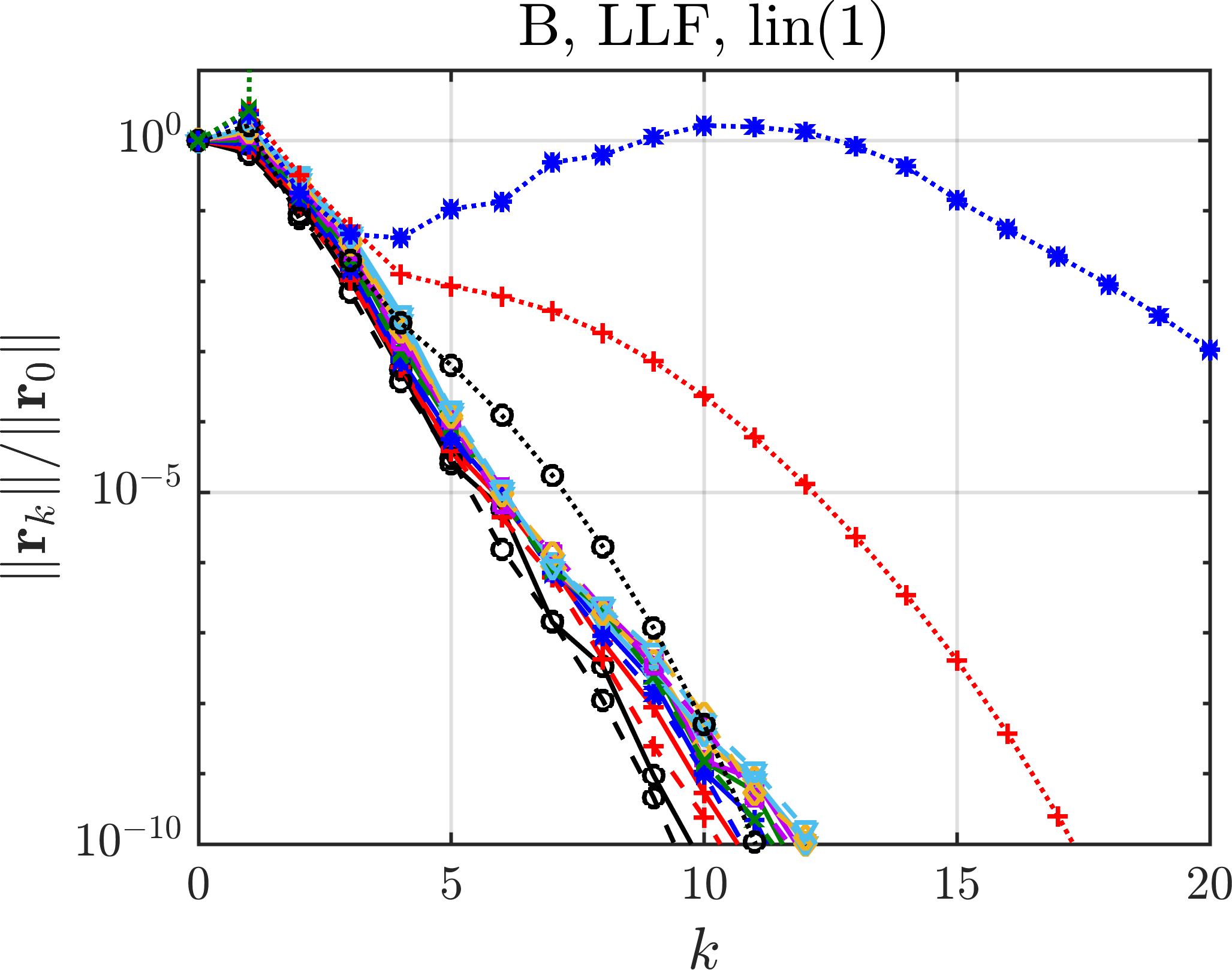}
\hspace{\hs ex}
\includegraphics[scale=\fs]{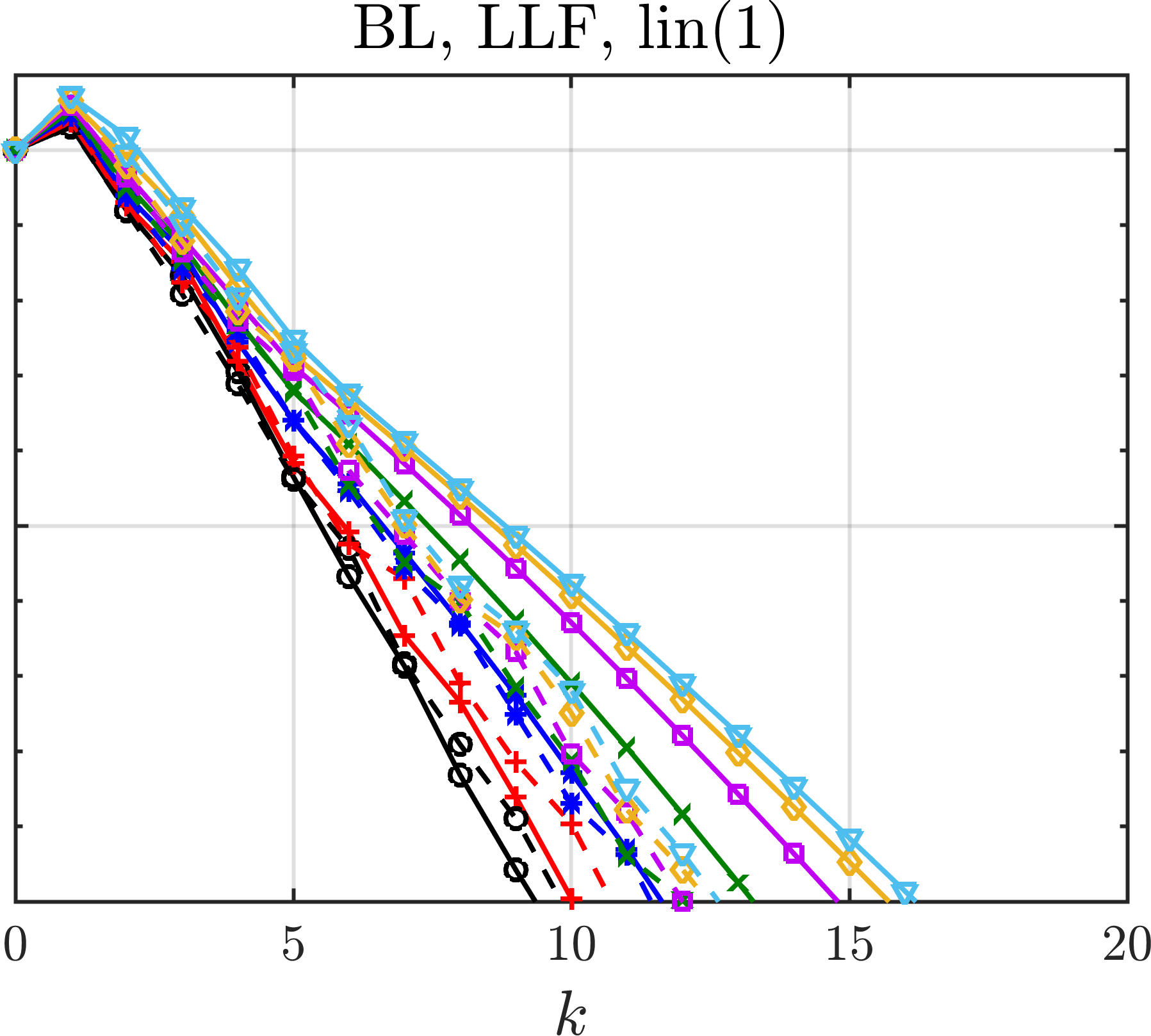}
}
\caption{Residual convergence histories for the nonlinear solver \cref{alg:richardson} applied to 1st-order accurate discretizations.
Solid lines correspond to linear systems being approximately solved by one MGRIT iteration, and broken lines correspond to linear systems being solved exactly. 
Left column: Burgers \eqref{eq:burgers}. Right column: Buckley--Leverett \eqref{eq:buck}.
The GLF numerical flux is used in the top row, and the LLF numerical flux in the bottom row.
All solves use nonlinear F-relaxation with $m = 8$ (Line \ref{ln:nonlin-relax} in \cref{alg:richardson}).
Dotted lines in the bottom left plot show residual histories for $n_x = 64, 128, 256, 1024$ corresponding to using the MGRIT coarse-grid operator \eqref{eq:Psi} without a truncation error correction, that is, ${\cal T}_{\rm ideal}^{n} = {\cal T}_{\rm direct}^{n} = 0$; note that the $k=2$ data point for $n_x = 1024$ is not visible since $\Vert \bm{r}_2 \Vert / \Vert \bm{r}_0 \Vert \approx 10^{76}$.
\label{fig:num-res-inexact-1st}
}
\end{figure}

\renewcommand{\fd}{./figures/}
\renewcommand{\hs}{2}
\renewcommand{\vs}{1}
\renewcommand\fs{0.335}
\begin{figure}[t!]
\centerline{
\includegraphics[scale=\fs]{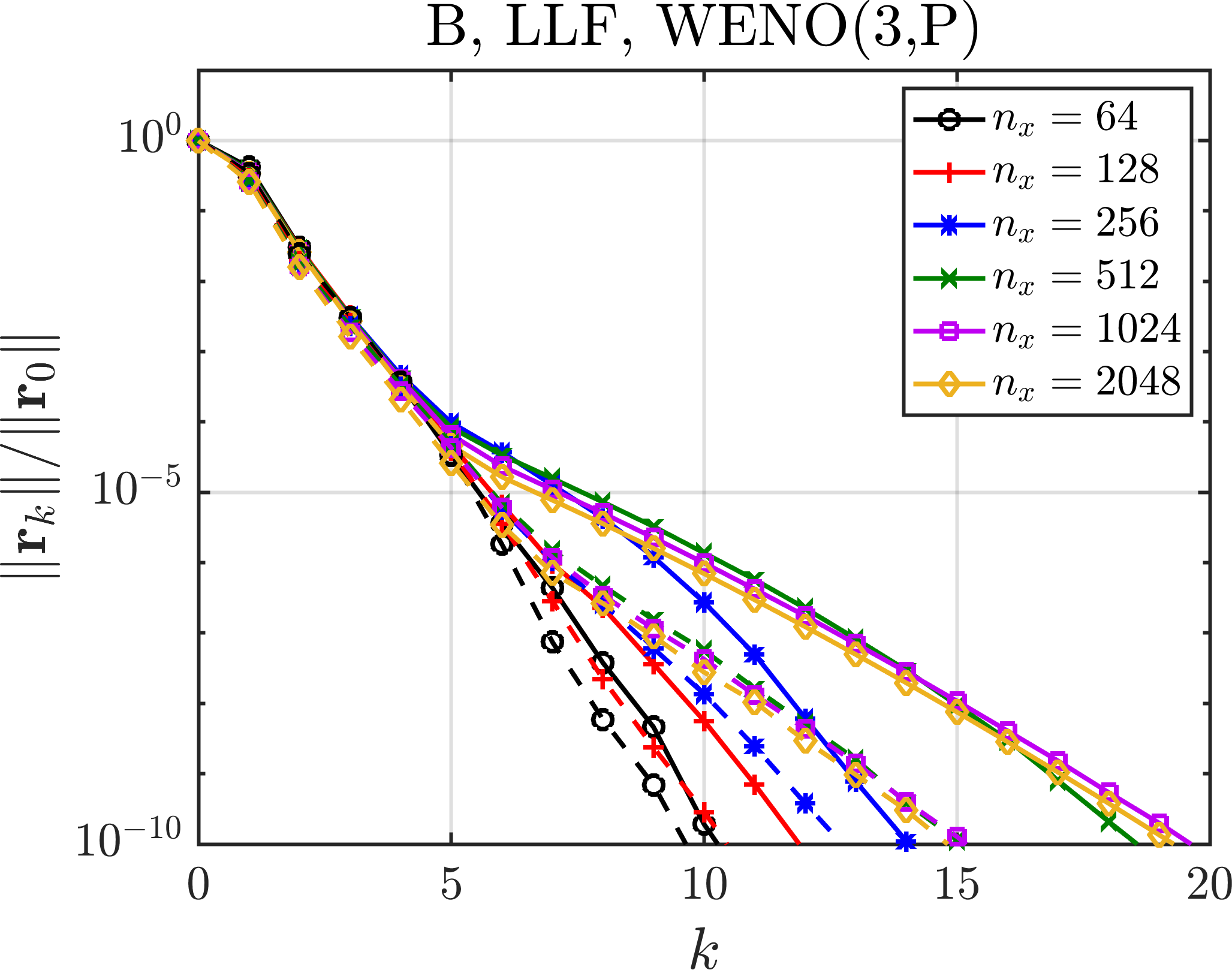}
\hspace{\hs ex}
\includegraphics[scale=\fs]{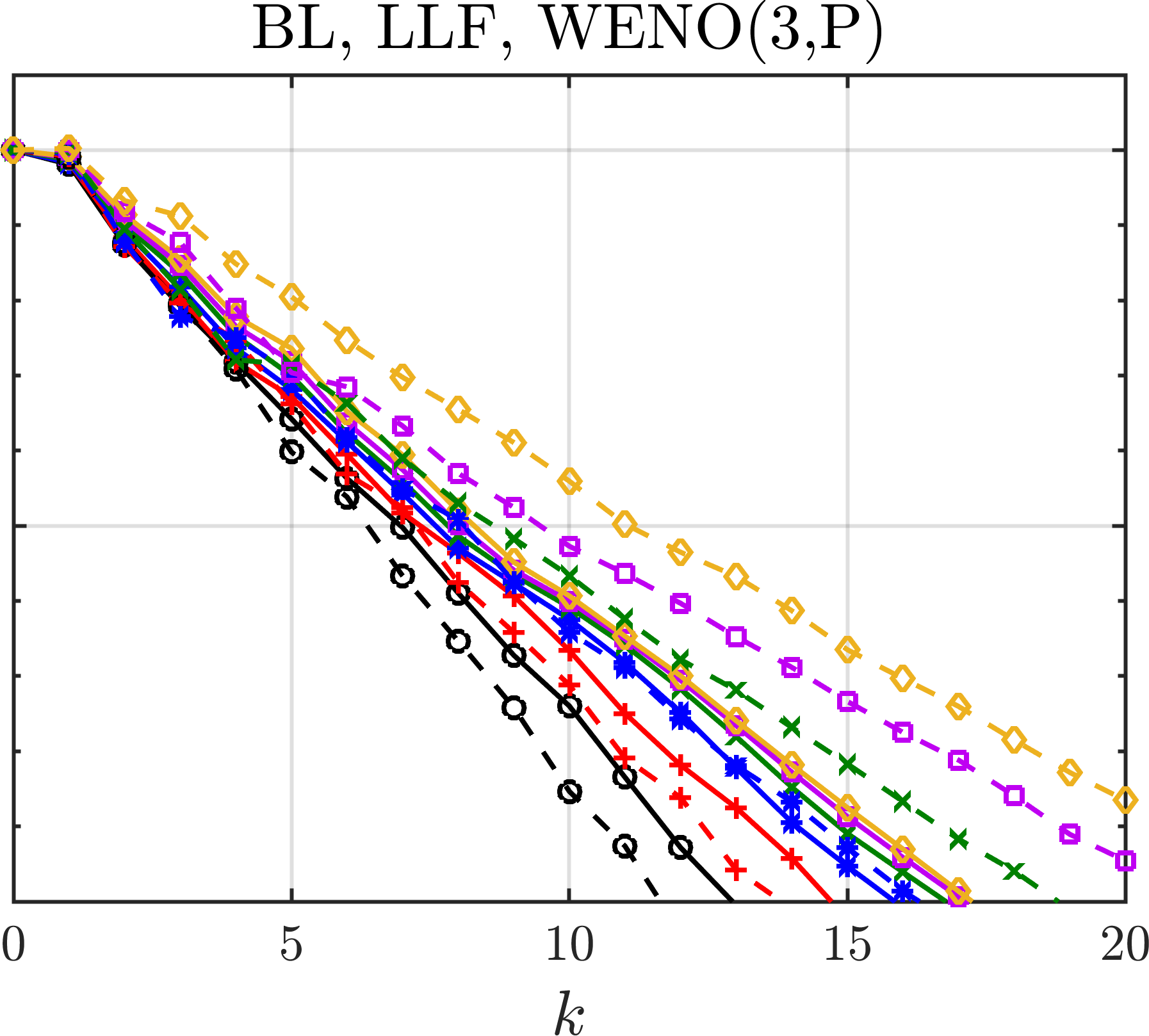}
}
\vspace{\vs ex}
\centerline{
\includegraphics[scale=\fs]{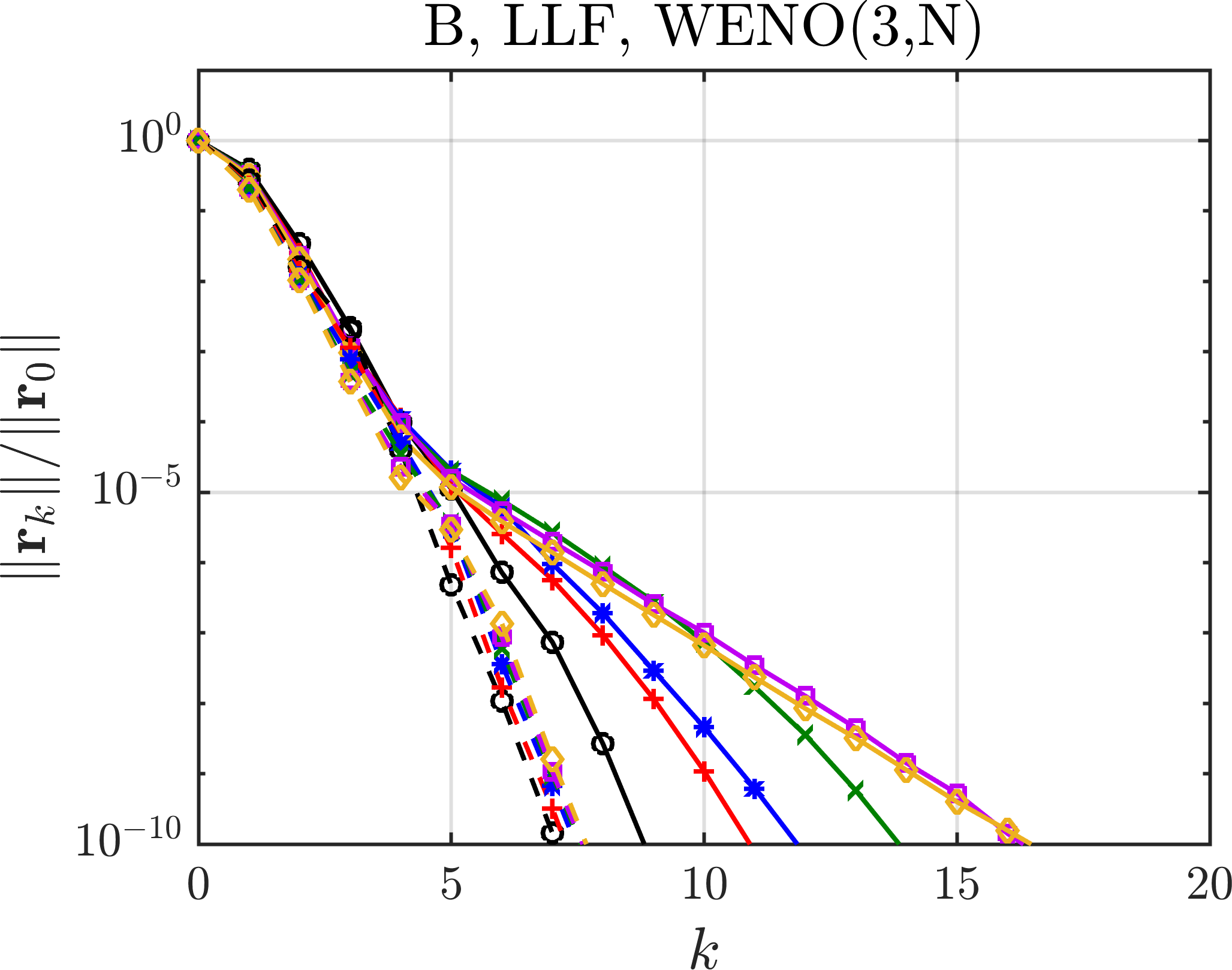}
\hspace{\hs ex}
\includegraphics[scale=\fs]{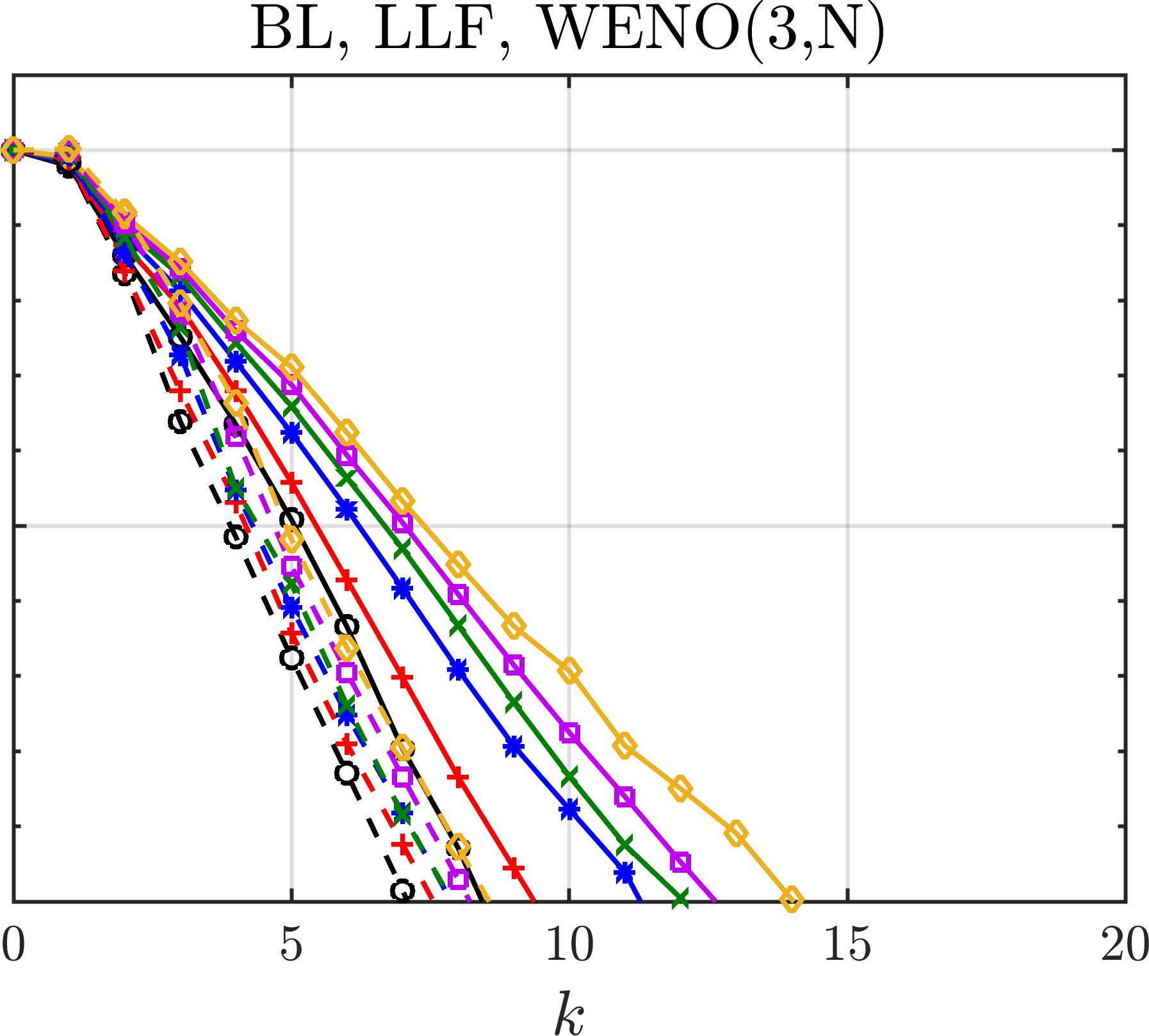}
}
\caption{Residual convergence histories for the nonlinear solver \cref{alg:richardson} applied to 3rd-order accurate discretizations using LLF numerical fluxes.
Solid lines correspond to linear systems being approximately solved by one MGRIT iteration, and broken lines correspond to linear systems being solved exactly. 
Left column: Burgers \eqref{eq:burgers}. Right column: Buckley--Leverett \eqref{eq:buck}.
All solves use nonlinear F-relaxation with $m = 8$ (Line \ref{ln:nonlin-relax} in \cref{alg:richardson}).
Tow row: Reconstructions are linearized with \eqref{eq:weighted-reconstruct-grad-zero}, corresponding to Picard linearization of the WENO weights.
Bottom row: Reconstructions are linearized with \eqref{eq:weighted-reconstruct-grad-FD}, corresponding to (approximate) Newton linearization of the WENO weights.
\label{fig:num-res-inexact-3rd-LLF}
}
\end{figure}

If \cref{alg:richardson} is to be effective when approximate solves of the linearized systems are used, it must converge adequately in the best-case setting that direct solves are used for the linearized systems (that is, when Line \ref{ln:direct-solve} is executed rather than Line \ref{ln:approx-solve}).
Numerical tests confirming the efficacy of \cref{alg:richardson} in the special case of direct linear solves can be found in Supplementary Materials Section \ref{SMsec:num-res-additional}.

We now consider plots of residual convergence histories for the nonlinear solver \cref{alg:richardson}.
In all cases the two-norm of the space-time residual is shown relative to its initial value.
The solver is iterated until: 20 iterations are performed or the relative residual falls below $10^{-10}$.

Solid lines in \cref{fig:num-res-inexact-1st} show nonlinear residual histories for 1st-order accurate discretizations.
For reference, broken lines in these plots show residuals when the linearized problems are solved directly.
For problems using the GLF flux (top row), the convergence rate reduces to linear when approximate solves are used; superlinear convergence could be maintained by applying more than a single MGRIT iteration, as with inexact Newton methods \cite{Eisenstat_Walker_1996}. 
Quite remarkably, for the Burgers LLF problem (bottom left), we are able to maintain the same convergence rate as when direct solves are used. 
For the non-convex Buckley--Leverett LLF problem (bottom right), we see some deterioration.
In any event, for all problems using inexact linear solves, iteration counts asymptotically tend to a constant as the mesh is refined.
Included in the bottom left plot are dotted lines showing residual histories corresponding to using the MGRIT coarse-grid operator \eqref{eq:Psi} without a truncation correction. Clearly, the truncation error correction in \eqref{eq:Psi} is a key component of our solution methodology. 

\Cref{fig:num-res-inexact-3rd-LLF} shows results for 3rd-order discretizations using LLF numerical flux functions. Note results for GLF numerical fluxes are omitted since they are qualitatively similar.
As previously, solid lines correspond to approximate parallel-in-time linear solves, and broken lines to direct linear solves.
For the Burgers problems (left column), convergence of the nonlinear solver deteriorates when approximate MGRIT solves are used instead of direct solves.
However, the number of iterations appears to be asymptoting to a constant as the mesh is refined.
For the Buckley--Leverett problems (right column), iteration counts also seem to be roughly constant when approximate MGRIT solves are introduced.
Quite remarkably, the iteration counts even improve for the largest problems in  cases of Picard linearization (top right), compared to direct linear solves. We speculate that the single MGRIT iteration here may be acting somewhat akin to a line search, in the sense that it is not accurately resolving components of the linearized error that poorly approximate the true nonlinear error.

\begin{remark}[Over-solving] \label{rem:os}
Solving to residual tolerances as tight as those in \cref{fig:num-res-inexact-1st,fig:num-res-inexact-3rd-LLF} is important for demonstrating stability of the algorithm\footnote{Typically, when applied to advection-dominated problems, multigrid-in-time methods are divergent on a set of error modes when used with naive direct coarse-grid discretizations. If these error modes are not present or only weakly present in the initial iterate, the method will appear to converge on initial iterations, but will diverge on later iterations due to exponential growth of these problematic error modes \cite{DeSterck_etal_2025_LFA,DeSterck_etal_2023_MOL,Gander_Lunet_2020}.} and understanding the mesh-dependence of the asymptotic convergence rate.
However, such tight tolerances likely do not result in more accurate approximations to the underlying PDE solution.
For shocked scalar problems, the discretizations we consider in this work do not converge in the $L^{\infty}$-norm, and their convergence rate in the discrete $L^{1}$-norm is, at best, ${\cal O}(h)$ for strictly convex fluxes, and more generally is ${\cal O}(\sqrt{h})$  \cite{Harabetian1988,Tang_Teng_1997,Teng_Zhang_1997}.
Thus, the practical value of iterating until the algebraic residual is point-wise small globally is questionable.
Supplementary Materials Section \ref{SMsec:over-solving} considers a Burgers problem with $n_x = 1024$, showing that the approximation after just two nonlinear iterations is as good an approximation to the true PDE solution as the sequential time-stepping solution is.
\end{remark}

\subsection{Speed-up potential}
\label{sec:speed-up}

We again stress that our chief goal in this paper is developing and verifying a new parallel-in-time strategy for difficult nonlinear problems, and that this alone represents significant work.
With this is mind, we now briefly discuss the potential of our method to provide speed-up over sequential time-stepping when it is implemented in parallel. 
In our previous work for linear problems \cite{DeSterck_etal_2023_MOL,DeSterck_etal_2023_SL}, when solving to tight residual tolerances, we achieved speed-up factors on the order of two to 12 times, depending on the problem. 
%
%
We anticipate our method developed here for nonlinear problems is capable of generating similar or slightly smaller speed-ups provided that the truncation error correction can be approximated with a cheap iterative method (i.e., not LU factorization).
Our reasoning is as follows.
The nonlinear convergence rates here are comparable to the linear convergence rates in \cite{DeSterck_etal_2023_MOL,DeSterck_etal_2023_SL}.
We note also that a comparable amount of work per iteration is done on the fine grid in each approach. Here we apply nonlinear F-relaxation, a nonlinear residual evaluation at C-points, and an F-relaxation on the linearized problem, while in \cite{DeSterck_etal_2023_MOL,DeSterck_etal_2023_SL} FCF-relaxation was used.

An extra cost in the current setting is that the linearized problem changes at every outer nonlinear iteration, and, so, the terms in the truncation error correction \eqref{eq:T-ideal} get updated at every iteration. Similarly, the coarse-grid characteristics are updated every iteration. We suspect that the updates for both quantities could be frozen after a few iterations, but we have not tested this.
In addition, our current approach has significant memory demands relative to our previous approaches for linear problems \cite{DeSterck_etal_2023_MOL,DeSterck_etal_2023_SL}. This is because the solution, dissipation coefficients, wave-speeds and WENO weights (if used) must be stored for all points in space-time, including for intermediate Runge-Kutta stages.
Future work will develop a parallel implementation of our solver.
%

\section{Conclusion and future outlook}
\label{sec:conclusion}

We have developed an iterative solver for space-time discretizations of scalar nonlinear hyperbolic conservation laws in one spatial dimension.
The solver uses a global linearization and solves at each iteration a space-time discretized linear conservation law to obtain an error correction.
The efficacy of the solver is demonstrated by its ability to solve high-order, WENO-based discretizations in a small number of iterations.
This includes problems with non-differentiable discretizations, problems with  shocks and rarefaction waves, and even PDEs with non-convex fluxes such as the Buckley--Leverett equation.

The nonlinear solver can be made parallel-in-time by replacing direct solves of the linearized problems with inexact parallel-in-time solves.
We consider using a single MGRIT iteration for this purpose, although other linear parallel-in-time methods could be considered.
The MGRIT solution approach generalizes those we have developed previously for non-conservative linear advection equations.
In many test problems the convergence of the nonlinear solver when paired with inner MGRIT solves is fast, and has mesh independent convergence rates.
For example, for certain Burgers problems with interacting shock and rarefaction waves, the residual norm is decreased by 10 orders of magnitude with just 10 MGRIT iterations.

This work leaves open many avenues for future research. An immediate next step is to develop a parallel implementation; extensions to problems in multiple spatial dimensions are also important, and are likely to be based on the multi-dimensional MGRIT methodology developed in \cite{DeSterck_etal_2023_SL}.
In \cite{Krzysik_etal_2025_systems} we discuss the extension of this work to one-dimensional hyperbolic systems, including the acoustic equations and the Euler equations of gas dynamics.

\section*{Acknowledgments}

Critical feedback from anonymous referees is gratefully acknowledged.

\bibliographystyle{siamplain}
\bibliography{nonlinear-scalar-1d-bib}


\pagebreak

\setcounter{section}{0}
\setcounter{equation}{0}
\setcounter{figure}{0}
\setcounter{table}{0}
\setcounter{page}{1}
\makeatletter
\renewcommand{\thesection}{SM\arabic{section}}
\renewcommand{\theequation}{SM\arabic{equation}}
\renewcommand{\thefigure}{SM\arabic{figure}}
\renewcommand{\thetable}{SM\arabic{table}}
\renewcommand{\thepage}{SM\arabic{page}}

\thispagestyle{plain} 

\headers{SUPPLEMENTARY MATERIALS: PinT for nonlinear conservation laws}{H. De Sterck, R. D. Falgout, O. A. Krzysik, J. B. Schroder}

\begin{center}
    \textbf{\normalsize\MakeUppercase{
    Supplementary Materials:
    Parallel-in-time solution of scalar nonlinear conservation laws}} 
    \vspace{6ex}
\end{center}


These Supplementary Materials are structured as follows.
\Cref{SMsec:reconstruction-overview} provides an introduction to the topic of polynomial reconstruction.
\Cref{SMsec:num-res-additional,SMsec:num-res-additional-u0,SMsec:num-res-additional-multilevel} present further numerical tests.
\Cref{SMsec:SL-extra} develops error estimates for the FV semi-Lagrangian method described in Section \ref{sec:SL}.
\Cref{SMsec:ideal-err-est} develops error estimates for method-of-lines discretizations applied to a simplified linear conservation law.
\Cref{SMsec:MGRIT-linear-standard} presents an MGRIT solver for the linear conservation law \eqref{eq:cons-lin} when it is discretized (on the fine grid) with standard linear method-of-lines discretizations.
\Cref{SMsec:ideal-err-est-linearized} develops an expression for the ideal truncation error in the coarse-grid operator \eqref{eq:Psi} where the fine-grid problem is a linearized method-of-lined discretization.
\Cref{SMsec:over-solving} considers the issue of over-solving the discretized problem for a Burgers problem.

\section{Overview of polynomial reconstruction}
\label{SMsec:reconstruction-overview}

In this section, we give a brief overview of polynomial reconstruction.
Polynomial reconstruction is a key ingredient of the method-of-lines discretization described in Section \ref{sec:discretization}, and of the semi-Lagrangian discretization discussed in Section \ref{sec:SL}.
Moreover, the error estimates we develop for these discretizations (see \cref{SMsec:ideal-err-est,SMsec:SL-extra}) require some understanding of this reconstruction process. 
More detailed descriptions on polynomial reconstruction can be found in the excellent works by Shu \cite{Shu_1998,Shu2009}.

Recall that the $j$th FV cell is defined by ${\cal I}_j = [x_{j - 1/2}, x_{j + 1/2}]$.
Now consider the following reconstruction problem: \textit{Given the cell averages $( \ldots, \bar{u}_{-1}, \bar{u}_0, \bar{u}_1, \ldots )$ of the function $u(x)$ in cells $( \ldots, {\cal I}_{-1}, {\cal I}_0, {\cal I}_1, \ldots )$, can we approximately reconstruct the function $u(x)$ for $x \in {\cal I}_0$?}
The answer is yes. Specifically, we seek a polynomial function $q(x) \approx u(x)$.
Since only the cell averages of $u(x)$ are known, these are used to select $q(x)$.

To this end, consider the following stencil of $k \geq 1$ contiguous cells
\begin{align} \label{eq:SL-FV-stencil-S}
S^{\ell} = \{ {\cal I}_{- \ell}, \ldots, {\cal I}_r \},
\end{align}
in which the left-most cell is ${\cal I}_{- \ell}$, and the right-most cell is ${\cal I}_r$. 
In this notation, $\ell, r \geq 0$, and we have that $\ell + r + 1 = k$. 
We say that $S^{\ell}$ has a \textit{left shift} of $\ell$.
The number of cells $k$ determines the order of accuracy of the reconstruction. For example, a 1st-order reconstruction uses the stencil $S^{0} = \{ {\cal I}_0 \}$, i.e., $\ell = 0$, $r = 0$, and a (centered) 3rd-order reconstruction uses the stencil $S^{1} = \{ {\cal I}_{-1}, {\cal I}_0, {\cal I}_1\}$, i.e., $\ell = r = 1$.
Note the cells of $S^{\ell}$ in \eqref{eq:SL-FV-stencil-S} cover the interval $x \in [x_{-\ell-1/2}, x_{r + 1/2}]$.

Now, we seek a polynomial $q_{k-1}(x) \approx u(x)$, $x \in [x_{-\ell-1/2}, x_{r + 1/2}]$, of degree at most $k-1$ such that it recovers the cell averages of the function $u(x)$ over the $k$ cells in the stencil $S$:
\begin{align} \label{eq:qk-implicit-def}
\frac{1}{h} \int_{x_{j-1/2}}^{x_{j+1/2}} q_{k-1}(x) \d x
=
\frac{1}{h} \int_{x_{j-1/2}}^{x_{j+1/2}} u(x) \d x
=:
\bar{u}_j,
\quad
\textrm{for}
\quad j = -\ell, \ldots, r.
\end{align}
Clearly, $q_{k-1}(x) \approx u(x)$.

Next, it is instructive to consider the \textit{primitive} of $u(x)$, which we denote as $U(x)$:
\begin{align}
U(x) = \int_{- \infty}^{x} u(\zeta) \d \zeta,
\quad
\textrm{and}
\quad
U'(x) = u(x).
\end{align}
Note that the lower limit on the integral here is arbitrary, and has no real significance.
It follows that cell averages of $u$ can be written in terms of differences of its primitive evaluated at cell interfaces:
\begin{align} \label{eq:primitive-differences}
\bar{u}_j
:=
\frac{1}{h} \int_{x_{j-1/2}}^{x_{j+1/2}} u(x) \d x
=
\frac{1}{h}
\left(
U(x_{j + 1/2}) - U(x_{j - 1/2})
\right),
\quad
\textrm{for}
\quad
j = -\ell, \ldots, r.
\end{align}

Now let us consider a second polynomial $Q_{k}(x)$, $x \in [x_{-\ell-1/2}, x_{r + 1/2}]$, of degree at most $k$ that is the unique polynomial interpolating the primitive $U(x)$ at the $k+1$ interface points in $S^{\ell}$:
\begin{align} \label{eq:primitive-interp-conds}
Q_k(x_{j + 1/2}) = U(x_{j + 1/2}), 
\quad
\textrm{for}
\quad
j = -\ell-1, \ldots, r.
\end{align}
Since $Q_k(x)$ interpolates the primitive $U(x)$ at cell interfaces, it follows from \eqref{eq:primitive-differences} that differences of it can be used to give cell averages of $u$:
\begin{align} 
\bar{u}_j
&:=
\frac{1}{h}
\left(
U(x_{j + 1/2}) - U(x_{j - 1/2})
\right)
=
\frac{1}{h}
\left(
Q_k(x_{j + 1/2}) - Q_k(x_{j - 1/2})
\right),
\\
\label{eq:primitive-differences-II}
&=
\frac{1}{h} \int_{x_{j-1/2}}^{x_{j+1/2}} Q_k'(x) \d x,
\end{align}
where $Q_k'(x)$ is the derivative of $Q_k$.
Setting \eqref{eq:primitive-differences-II} equal to \eqref{eq:qk-implicit-def} we identify that the original polynomial $q_{k-1}(x) \approx u(x)$ being sought is nothing but the derivative of $Q_k(x)$: 
\begin{align} \label{eq:interp-poly-derivative}
q_{k-1}(x) \equiv Q_k'(x).
\end{align}
With this knowledge in hand, using the tools of polynomial interpolation, it is possible to explicitly construct the polynomial $q_{k-1}$, although we do not provide further details about this here; see \cite{Shu_1998,Shu2009}.

By standard interpolation theory, we know that $Q_k(x)$ approximates $U(x)$ to order $k+1$, i.e., $Q_k(x) = U(x) + {\cal O}(h^{k+1})$ for $x$ in the interpolation stencil. 
In greater detail, we have the following classic error estimate (see, e.g., \cite[Theorem 3.1.1]{Davis_1975}). 
\begin{lemma}[Error estimate for polynomial interpolation]
\label{SMlem:standard-poly-interp-est}
Let $Q_k(x)$ be the unique polynomial of degree at most $k$ that interpolates the function $U(x)$ at the $k+1$ nodes $\{ x_{j+1/2} \}_{j = -\ell-1}^{r}$, $r = k - \ell -1$.
Suppose that $U$ is at least $k+1$ times differentiable over the interpolation interval $x \in [x_{-\ell - 1/2}, x_{r+1/2}]$.
Then,
\begin{align} \label{eq:primitive-interp-error}
U(x) - Q_k(x)
=
\frac{1}{(k+1)!}
\prod \limits_{j = -\ell-1}^{r} \big(x - x_{j + 1/2} \big) U^{(k+1)} ( \varphi(x) ),
\quad 
x \in [x_{-\ell - 1/2}, x_{r+1/2}],
\end{align}
with $\varphi(x) \in (x_{-\ell - 1/2}, x_{r+1/2})$ some (unknown) point in the interior of the stencil, depending on $x$, $U$, and the interpolation nodes.
\end{lemma}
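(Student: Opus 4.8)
The plan is to reproduce the classical Rolle's theorem argument built around an auxiliary function, since the statement is exactly the standard interpolation error formula. First I would dispose of the trivial case: at any interpolation node $x = x_{j+1/2}$ both sides of \eqref{eq:primitive-interp-error} vanish, because $Q_k$ matches $U$ there by \eqref{eq:primitive-interp-conds} and the nodal product on the right carries the factor $(x - x_{j+1/2})$. So I fix an arbitrary $\bar{x} \in [x_{-\ell-1/2}, x_{r+1/2}]$ that is \emph{not} one of the $k+1$ nodes and introduce the nodal polynomial
\[
\omega(x) := \prod_{j=-\ell-1}^{r} \big( x - x_{j+1/2} \big),
\]
which is monic of degree $k+1$ and satisfies $\omega(\bar{x}) \neq 0$.

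Next I would define the auxiliary function
\[
g(t) := U(t) - Q_k(t) - \frac{U(\bar{x}) - Q_k(\bar{x})}{\omega(\bar{x})}\, \omega(t)
\]
and count its zeros. By construction $g$ vanishes at the $k+1$ interpolation nodes (where both $U - Q_k$ and $\omega$ vanish) and additionally at $t = \bar{x}$, yielding at least $k+2$ distinct zeros in $[x_{-\ell-1/2}, x_{r+1/2}]$. Applying Rolle's theorem repeatedly, $g'$ then has at least $k+1$ zeros, $g''$ at least $k$, and so on, until $g^{(k+1)}$ has at least one zero, which I name $\varphi = \varphi(\bar{x})$; the nesting of the Rolle subintervals forces $\varphi$ into the open interval $(x_{-\ell-1/2}, x_{r+1/2})$.

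The final step is to evaluate the top derivative directly. Since $\deg Q_k \le k$ its $(k+1)$th derivative is identically zero, and since $\omega$ is monic of degree $k+1$ we have $\omega^{(k+1)} \equiv (k+1)!$, so
\[
g^{(k+1)}(t) = U^{(k+1)}(t) - \frac{U(\bar{x}) - Q_k(\bar{x})}{\omega(\bar{x})}\,(k+1)!.
\]
Setting $t = \varphi$ and using $g^{(k+1)}(\varphi) = 0$ rearranges to $U(\bar{x}) - Q_k(\bar{x}) = \frac{1}{(k+1)!}\,\omega(\bar{x})\,U^{(k+1)}(\varphi)$, which is the claimed identity since $\bar{x}$ was an arbitrary non-node. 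The hypotheses used are precisely that $U$ is $(k+1)$ times differentiable on the interpolation interval, as stated. The only point requiring genuine care — and the main obstacle in making the argument rigorous — is the bookkeeping in the Rolle cascade: one must confirm the zeros stay distinct and that each differentiation reduces the guaranteed zero count by exactly one, so that $\varphi$ lands strictly inside the stencil rather than merely in its closure.
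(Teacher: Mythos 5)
Your proof is correct: the Rolle cascade applied to the auxiliary function $g(t) = U(t) - Q_k(t) - \tfrac{U(\bar{x}) - Q_k(\bar{x})}{\omega(\bar{x})}\,\omega(t)$ is the standard argument, and the zero-counting bookkeeping you flag does go through since the $k+2$ distinct zeros of $g$ force, by repeated application of Rolle's theorem, a zero of $g^{(k+1)}$ strictly inside the convex hull of the nodes and $\bar{x}$. The paper does not prove this lemma at all --- it simply cites it as the classical interpolation error estimate (Davis, Theorem 3.1.1) --- and your argument is exactly the textbook proof of that cited result, so there is nothing to reconcile.
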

%

\section{Additional numerical results: Direct solves of linearized systems}
\label{SMsec:num-res-additional}

%
\renewcommand{\fd}{./figures/}
\renewcommand{\hs}{2}
\renewcommand{\vs}{1}
\renewcommand\fs{0.335}
\begin{figure}[b!]
\centerline{
\includegraphics[scale=\fs]{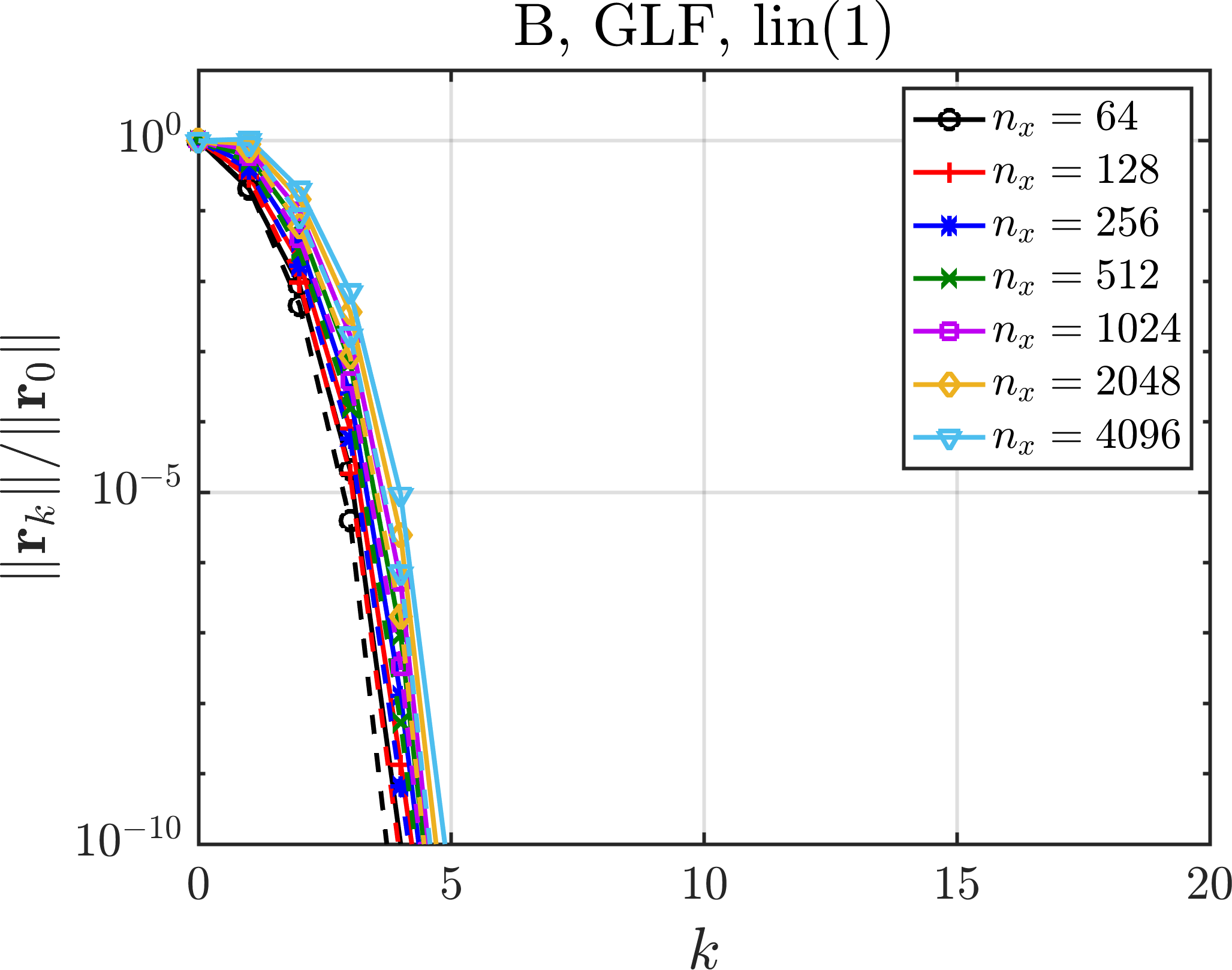}
\hspace{\hs ex}
\includegraphics[scale=\fs]{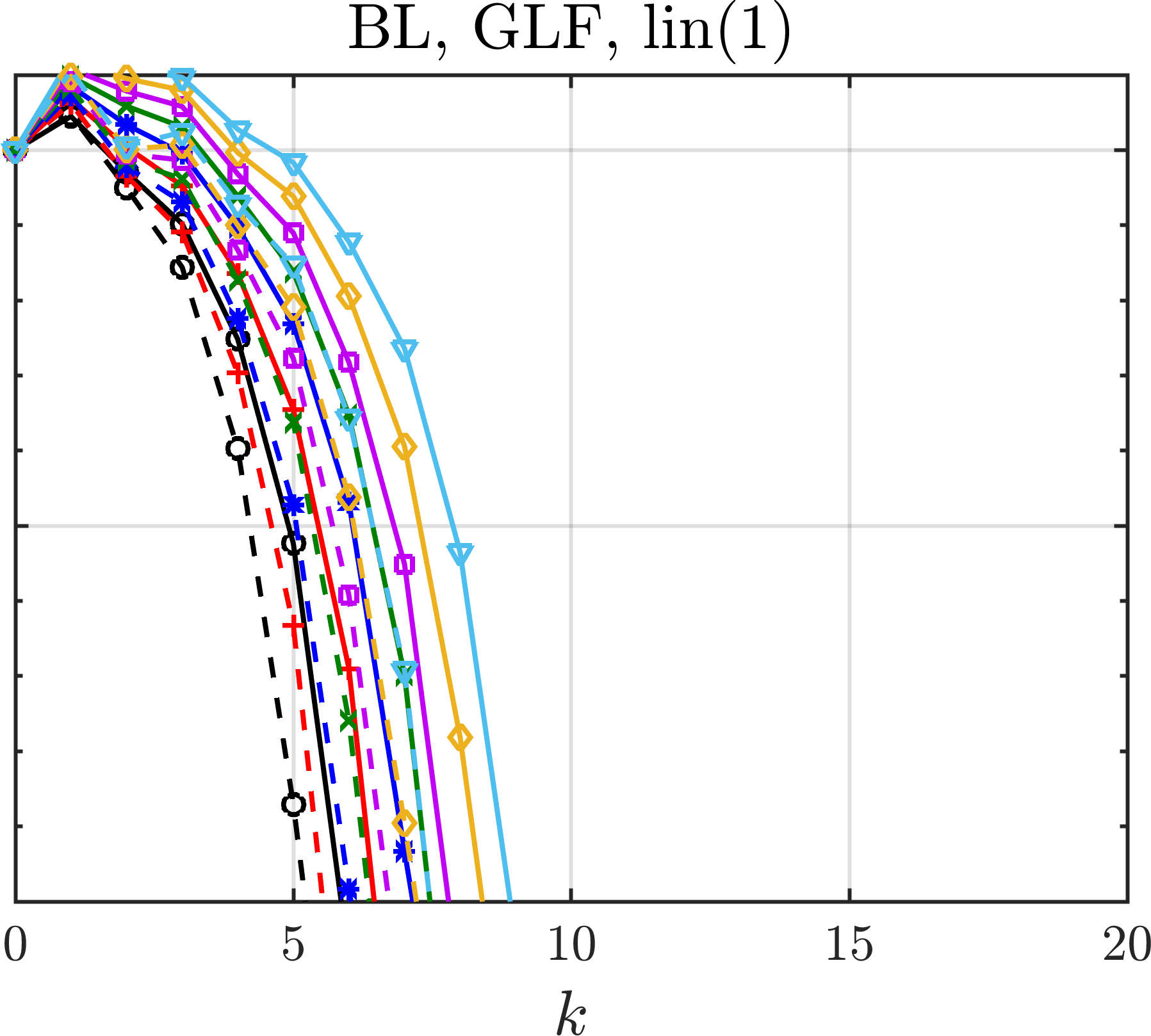}
}
\vspace{\vs ex}
\centerline{
\includegraphics[scale=\fs]{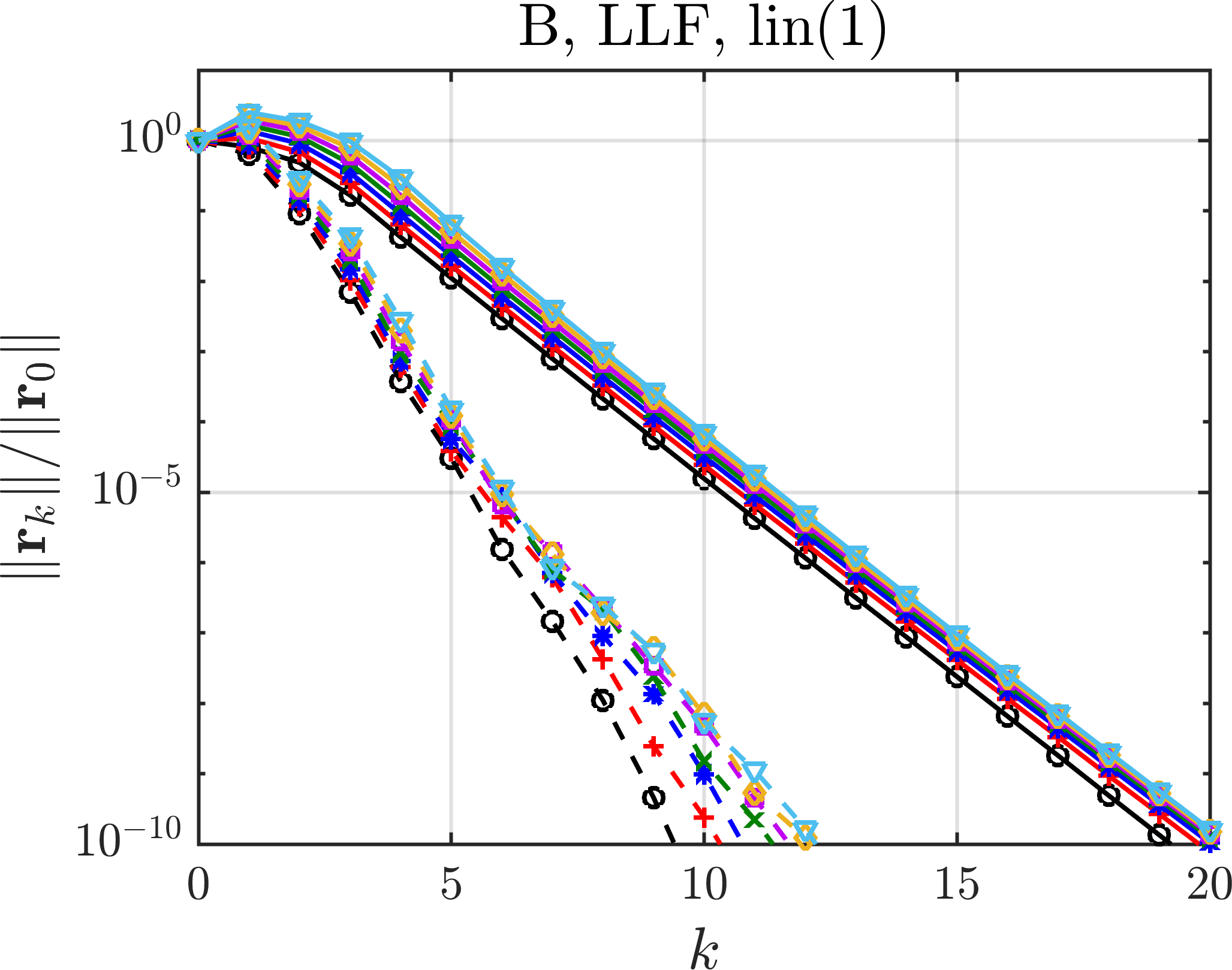}
\hspace{\hs ex}
\includegraphics[scale=\fs]{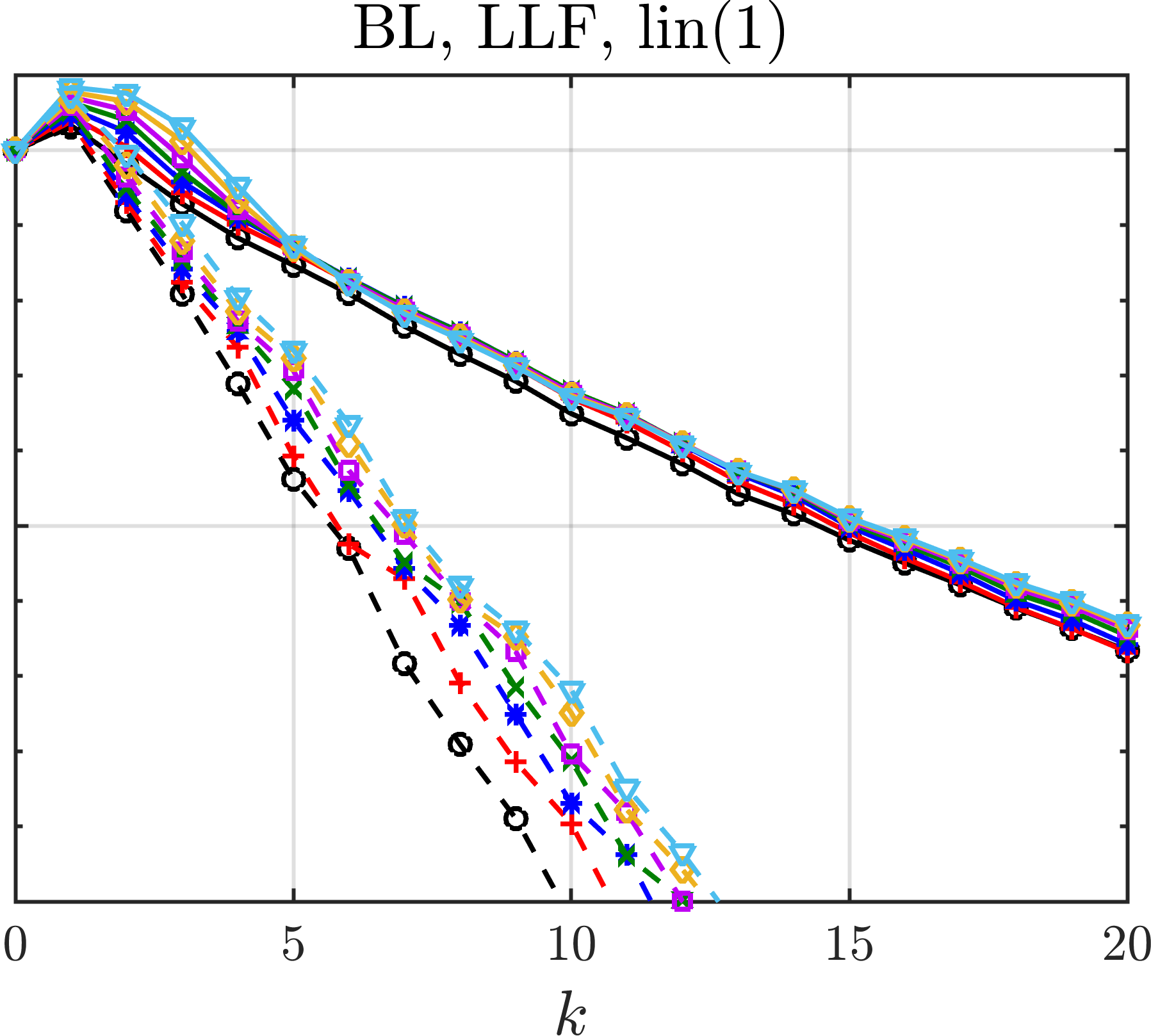}
}
\caption{Residual convergence histories for the nonlinear solver Algorithm \ref{alg:richardson} when the linearized problems are solved directly.
Left column: Burgers \eqref{eq:burgers}. Right column: Buckley--Leverett \eqref{eq:buck}.
The GLF numerical flux is used in the top row, and the LLF numerical flux in the bottom row.
Solid lines correspond to no nonlinear relaxation, and broken lines to nonlinear F-relaxation using a CF-splitting factor $m = 8$.
All discretizations are 1st-order accurate.
\label{SMfig:num-res-exact-1st}
}
\end{figure}

\begin{remark}[Limiting for Buckley--Leverett]
\label{SMrem:BL-limiting}
For the Buckley--Leverett problem, when computing the LLF dissipation \eqref{eq:LFF-local}, if $w \geq 0$, and $w \leq 1$, then the optimization problem is simpler to solve than if $w$ is contained in some arbitrary interval (due to the locations of the various local extrema of $|f'(w)|$).
Furthermore, the PDE itself arises in a context where the physically relevant range for $u$ is $[0,1]$ \cite[Section 16.1.1]{LeVeque_2004}. 
For these reasons, in our numerical implementation of the LLF flux for the Buckley--Leverett problem, reconstructions are always limited such that any $u_{i+1/2}^{\pm} > 1$ are mapped to $u_{i+1/2}^{\pm} = 1$, and any $u_{i+1/2}^{\pm} < 0$ are mapped to $u_{i+1/2}^{\pm} = 0$.
While the PDE itself obeys a maximum principle, the higher-order numerical discretizations described in Section \ref{sec:discretization} do not (WENO reconstructions can produce solutions that violate global extrema, even if only by an amount of ${\cal O}(h)$), and the iterative solver described in Section \ref{sec:nonlin-scheme} can produce iterates that violate this maximum principle. 
%
\end{remark}

\renewcommand{\fd}{./figures/}
\renewcommand{\hs}{2}
\renewcommand{\vs}{1}
\renewcommand\fs{0.335}
\begin{figure}[b!]
\centerline{
\includegraphics[scale=\fs]{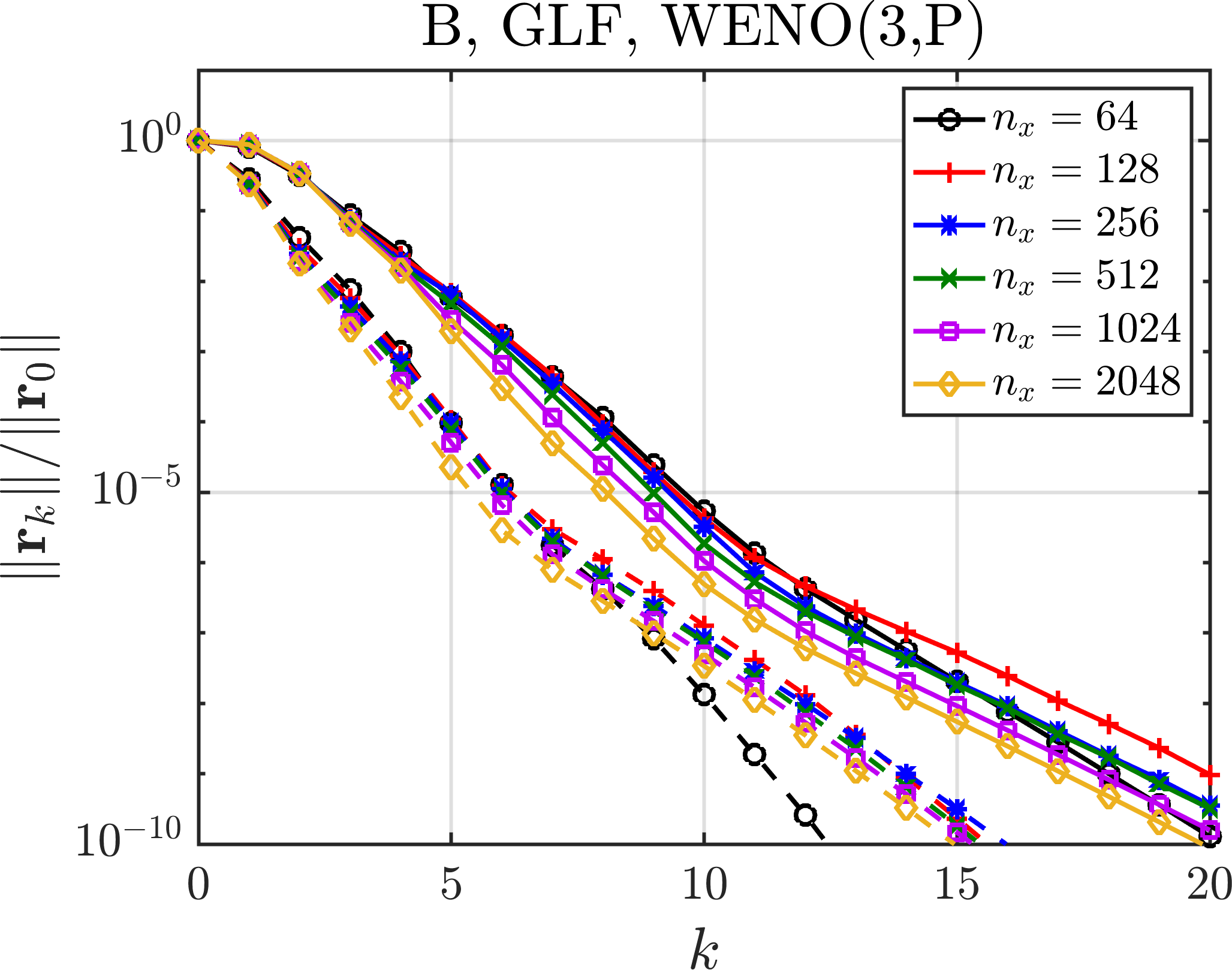}
\hspace{\hs ex}
\includegraphics[scale=\fs]{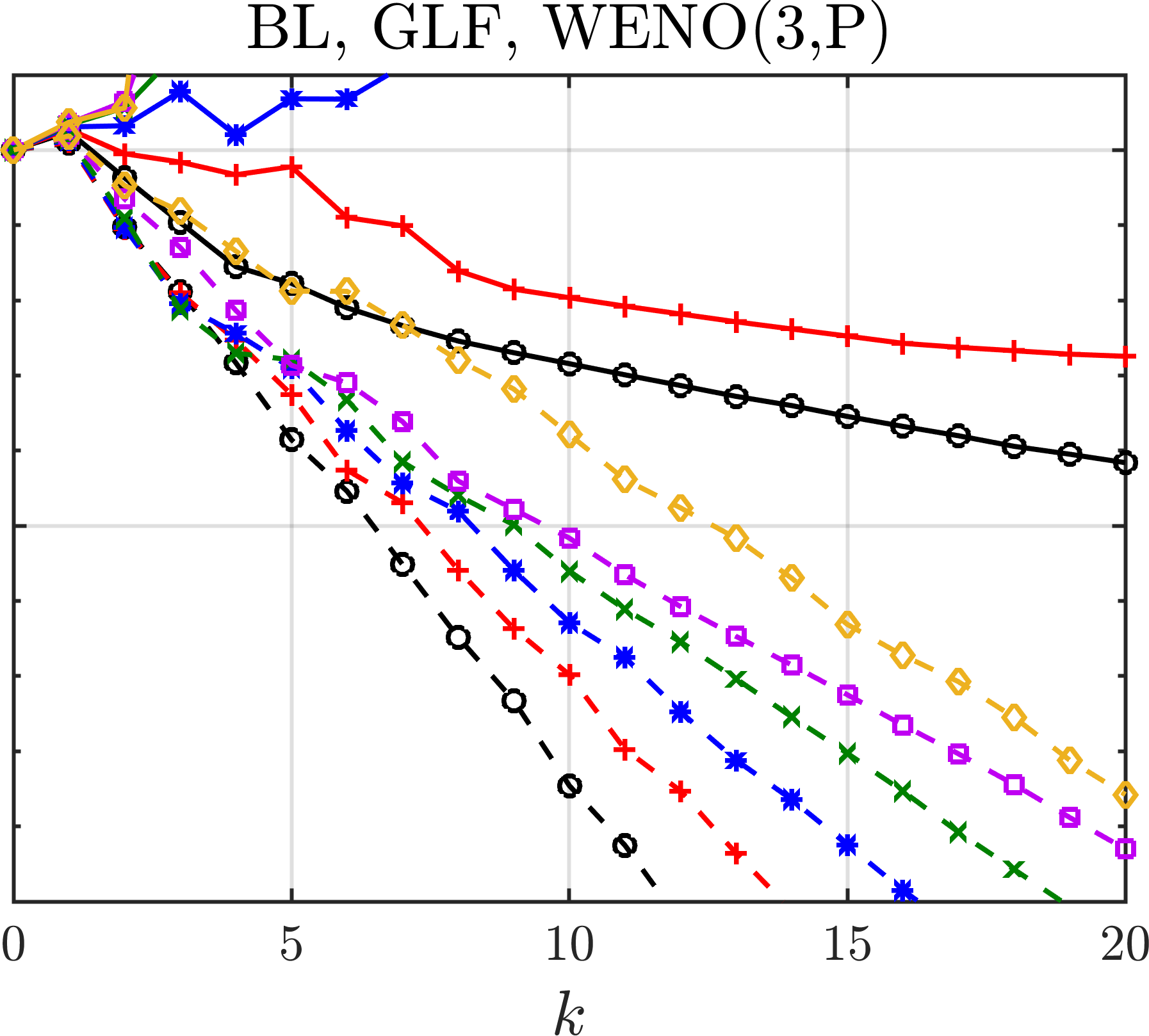}
}
\vspace{\vs ex}
\centerline{
\includegraphics[scale=\fs]{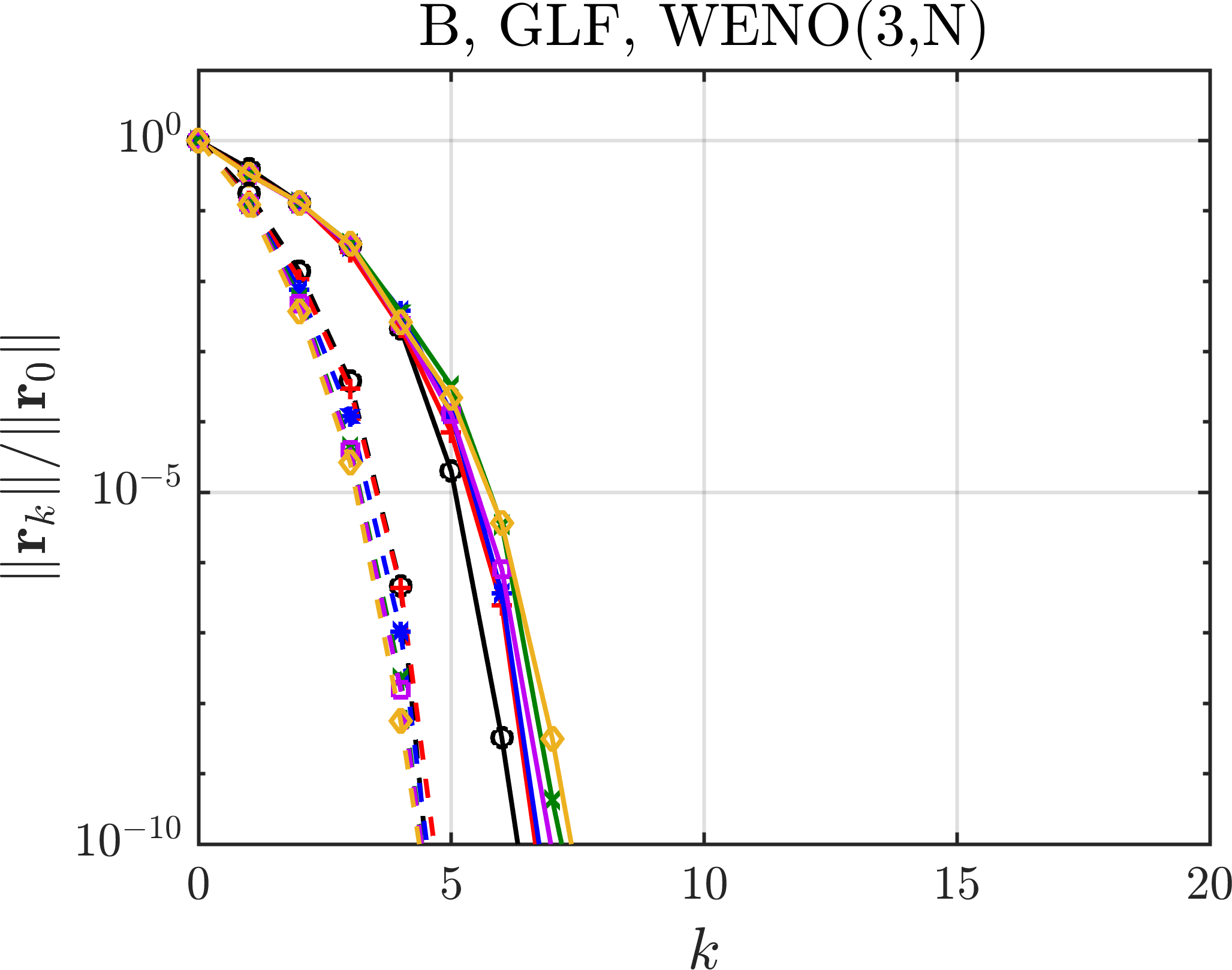}
\hspace{\hs ex}
\includegraphics[scale=\fs]{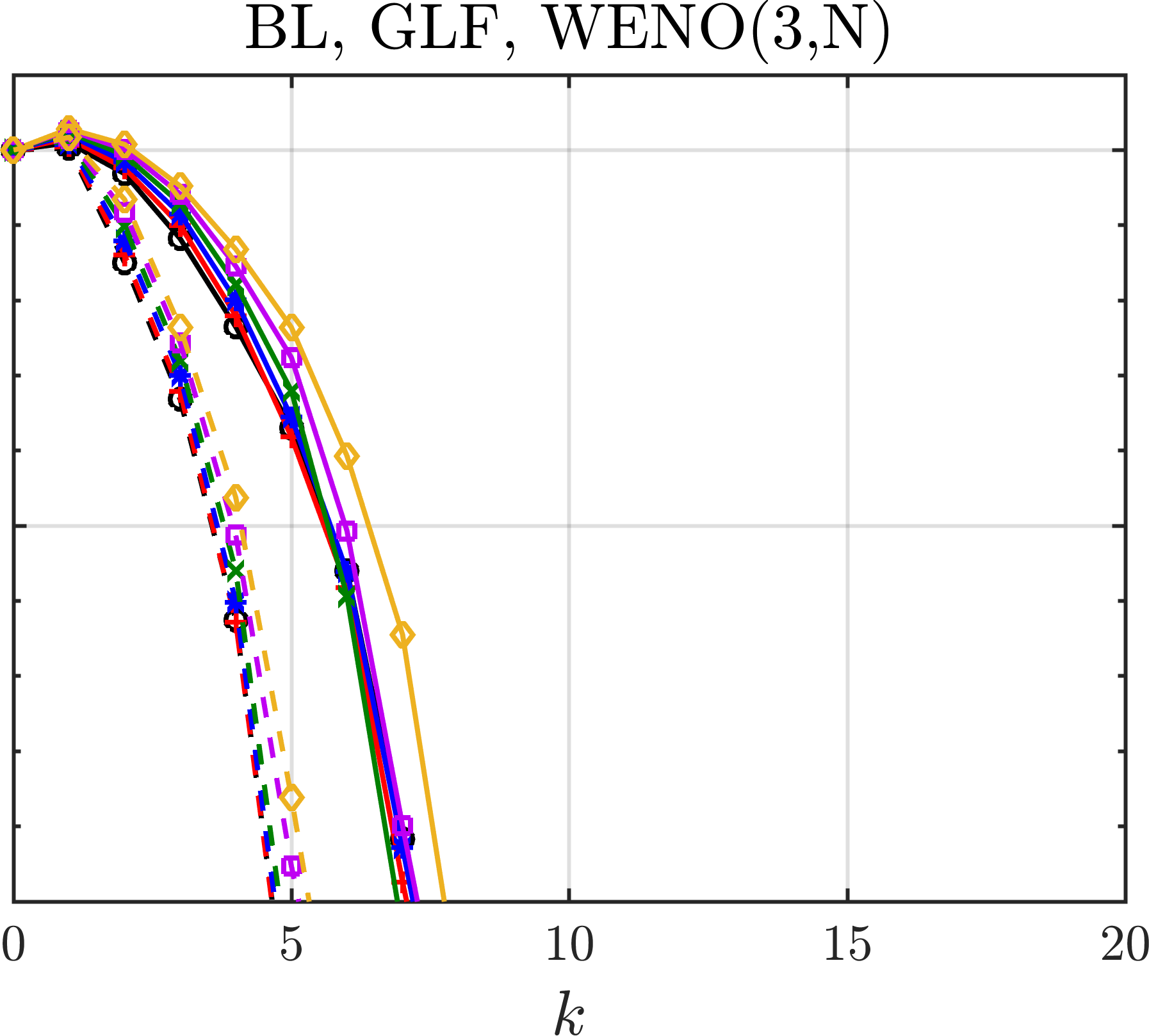}
}
\caption{Residual convergence histories for the nonlinear solver Algorithm \ref{alg:richardson} when the linearized problems are solved directly.
All discretizations are 3rd-order accurate and use a GLF numerical flux; see \cref{SMfig:num-res-exact-3rd-LLF} for LLF fluxes.
Left column: Burgers \eqref{eq:burgers}. Right column: Buckley--Leverett \eqref{eq:buck}.
Broken lines correspond to nonlinear F-relaxation with CF-splitting factor $m = 8$, and solid lines to no nonlinear relaxation.
Top row: Reconstructions are linearized with \eqref{eq:weighted-reconstruct-grad-zero}, corresponding to Picard linearization of the WENO weights.
Bottom Row: Reconstructions are linearized with \eqref{eq:weighted-reconstruct-grad-FD}, corresponding to (approximate) Newton linearization of the WENO weights.
\label{SMfig:num-res-exact-3rd-GLF}
}
\end{figure}

This section presents results for Algorithm \ref{alg:richardson} in the special case that  the linearized problems are solved directly with sequential time-stepping.
While our end goal is temporal parallelism, it is first important to understand the convergence behaviour of the nonlinear solver in the best-case setting of direct linear solves. 
Regardless of the linearized solver, the iterative solution of discretized nonlinear hyperbolic PDEs is itself a rather complicated subject, with convergence of Newton-type methods often being an issue \cite{Gottlieb_Mullen_2003,Coffey_etal_2003}.

We consider plots of residual convergence histories for the nonlinear solver Algorithm \ref{alg:richardson}.
In all cases the two-norm of the space-time residual is shown relative to its initial value.
The solver is iterated until: 20 iterations are performed or the relative residual falls below $10^{-10}$, or it increases past $10^{3}$.

\Cref{SMfig:num-res-exact-1st} considers 1st-order discretizations. The convergence rate is superlinear for problems using the GLF flux (top row), consistent with the solver coinciding with Newton's method for these problems when nonlinear relaxation (Line \ref{ln:nonlin-relax} in Algorithm \ref{alg:richardson}) is not used.
In contrast, convergence rates for problems using the LLF flux (bottom row) are linear, consistent with the flux being non-smooth.
Adding nonlinear relaxation does not impact significantly the convergence speed for the GLF examples because the solver already converges rapidly without it. However, relaxation dramatically improves convergence rates for the LLF examples, particularly for Buckley--Leverett. 
%

%
\renewcommand{\fd}{./figures/}
\renewcommand{\hs}{2}
\renewcommand{\vs}{1}
\renewcommand\fs{0.325}
\begin{figure}[t!]
\centerline{
\includegraphics[scale=\fs]{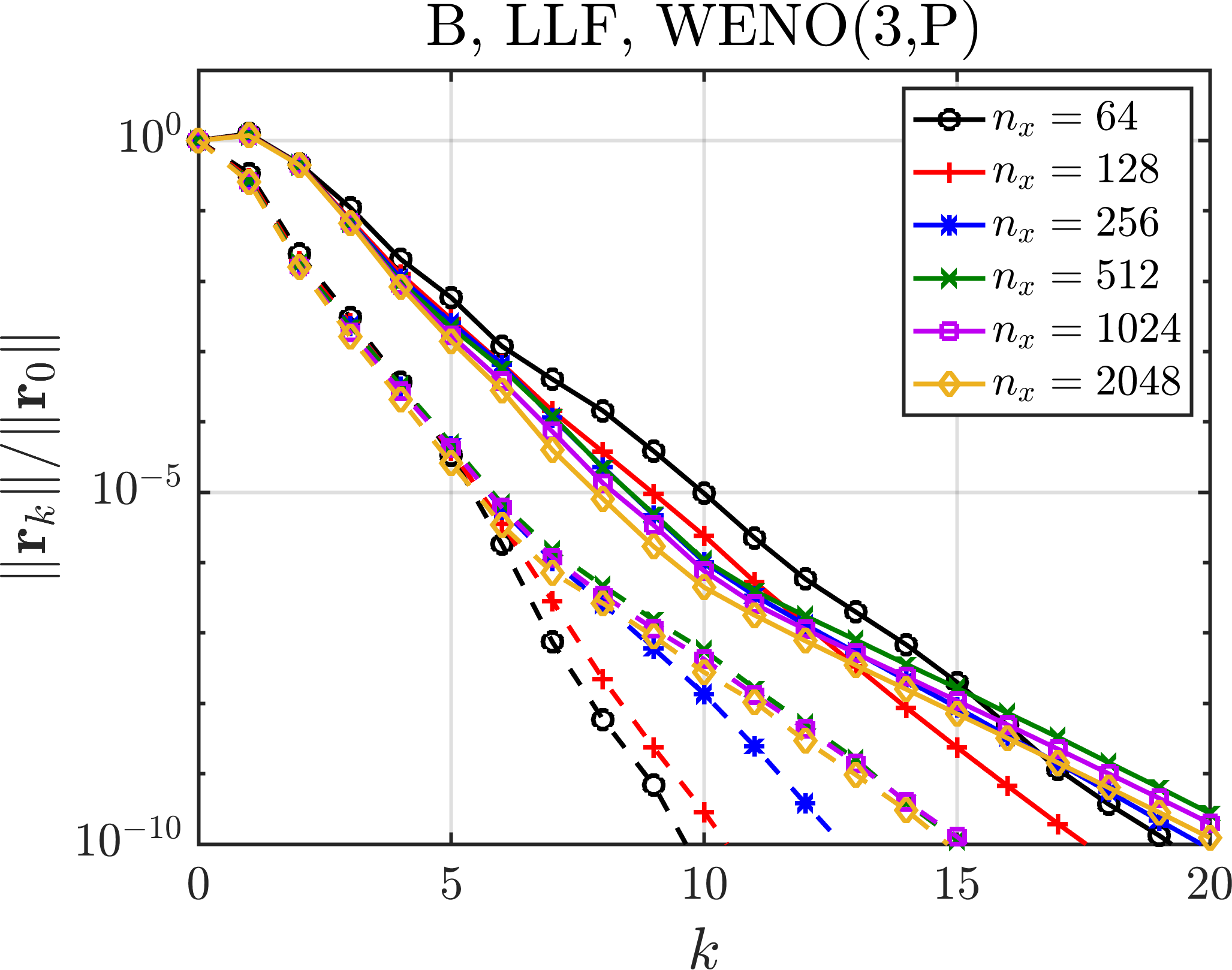}
\hspace{\hs ex}
\includegraphics[scale=\fs]{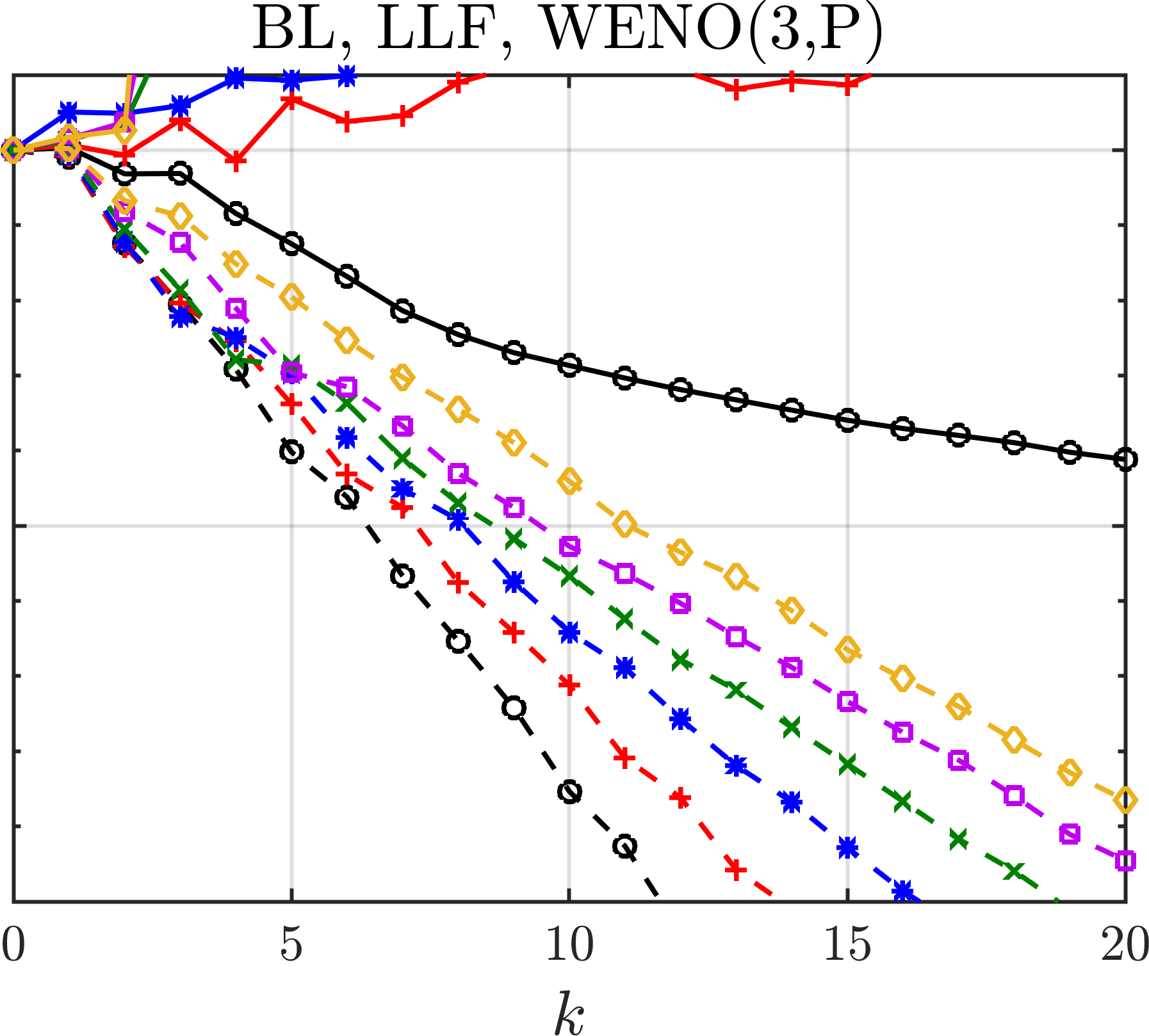}
}
\vspace{\vs ex}
\centerline{
\includegraphics[scale=\fs]{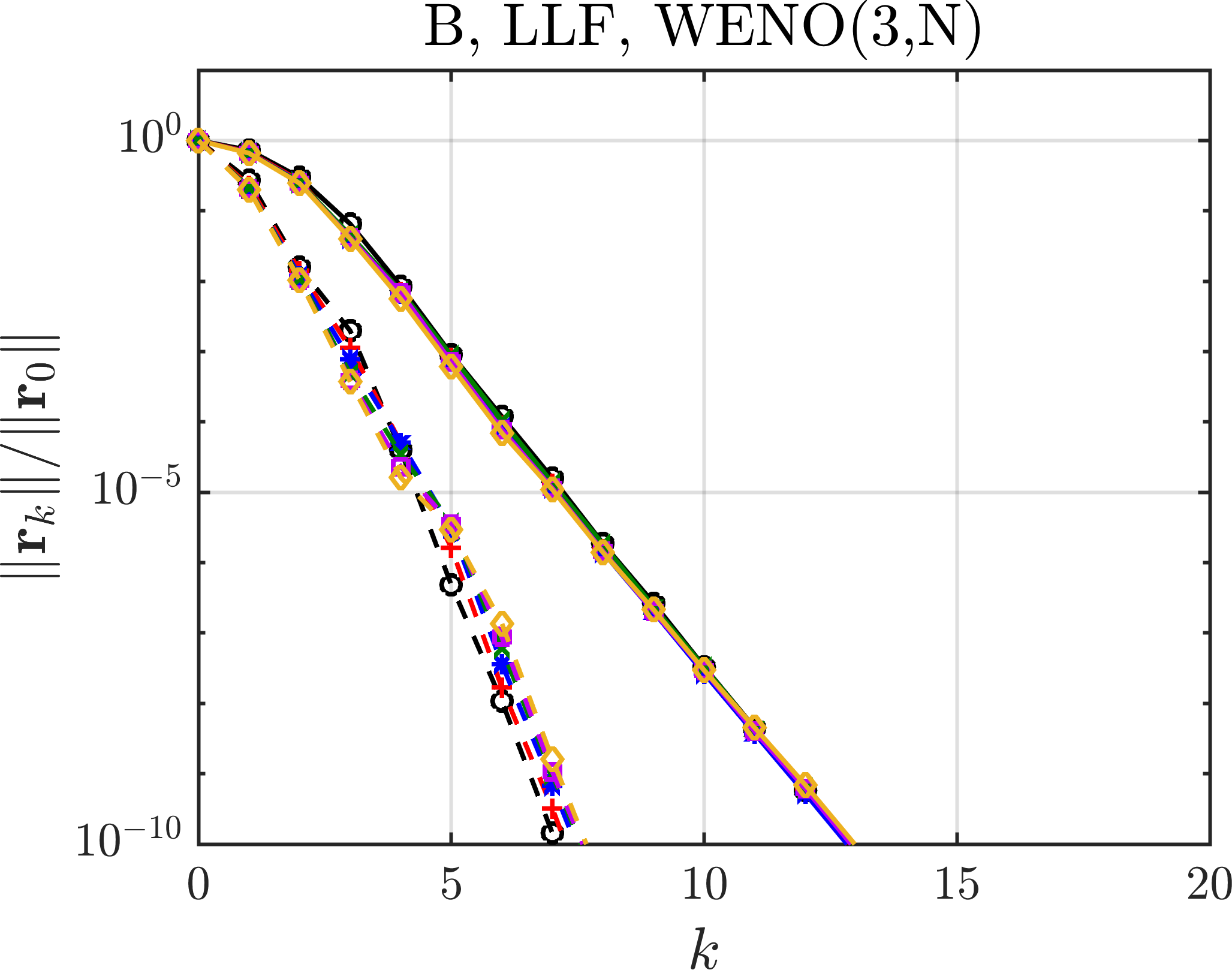}
\hspace{\hs ex}
\includegraphics[scale=\fs]{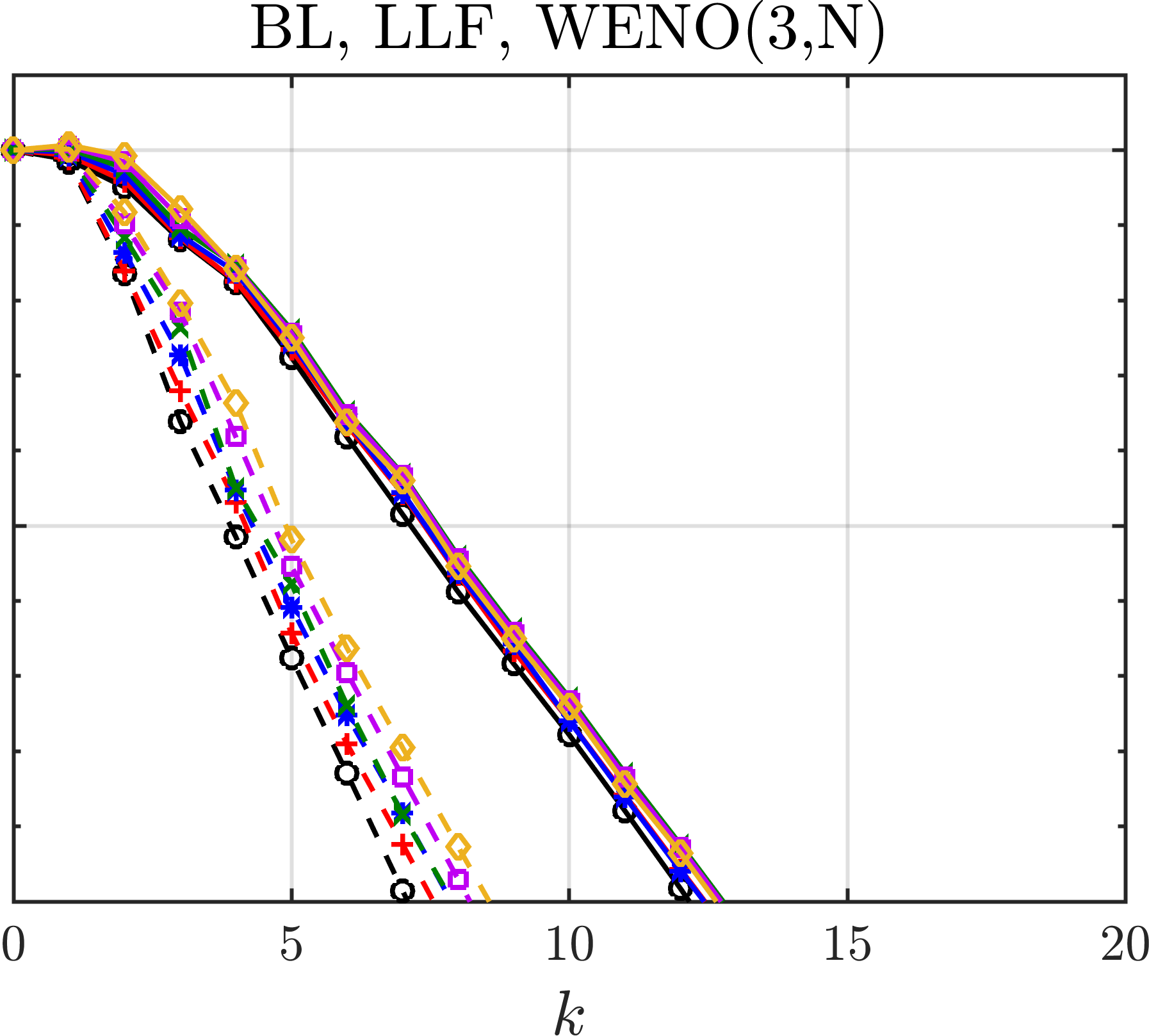}
}
\caption{Convergence histories for the same set up as in \cref{SMfig:num-res-exact-3rd-GLF} except that LLF numerical fluxes are used rather than GLF numerical fluxes.
\label{SMfig:num-res-exact-3rd-LLF}
}
\end{figure}

\Cref{SMfig:num-res-exact-3rd-LLF,SMfig:num-res-exact-3rd-GLF} consider 3rd-order discretizations.
Recall that for the 3rd-order discretizations there are solution-dependent WENO weights, with both approximate Newton \eqref{eq:weighted-reconstruct-grad-FD} and Picard linearization \eqref{eq:weighted-reconstruct-grad-zero} possible.
In all cases, the Newton linearization yields faster and more robust convergence than the Picard linearization; however, recall that the Newton linearization is more expensive than the Picard linearization.
In all cases, adding nonlinear F-relaxation improves the convergence rate, although the improvement is less pronounced than in the bottom row of \cref{SMfig:num-res-exact-1st} for 1st-order LLF discretizations.
One exception here is the Buckley--Leverett problem when Picard linearization of the WENO weights is used, where, without nonlinear relaxation the solver diverges and with it the solver converges quickly.
Note that it would be possible to avoid the divergence for this problem by employing a line search, but we want to avoid this due to the relatively high costs of residual evaluations.

Overall, we find that the nonlinear solver converges quickly, including with mesh-independent iteration counts in many cases, even for non-differentiable discretizations. 
Note that we have also tested the solver on Burgers problems in which a shock forms from smooth initial conditions (not shown here), and the solver then appears to, in general, converge faster than for the Burgers problem considered in Figure \ref{fig:test-prob}.

\section{Additional numerical tests: Different initial condition}
\label{SMsec:num-res-additional-u0}

\newcommand{\fdone}{./figures/}
\newcommand{\fdtwo}{./figures/}
\renewcommand{\vs}{2}
\renewcommand{\hs}{1}
\renewcommand\fs{0.35}

\begin{figure}[b!]
\centerline{
\includegraphics[scale=\fs]{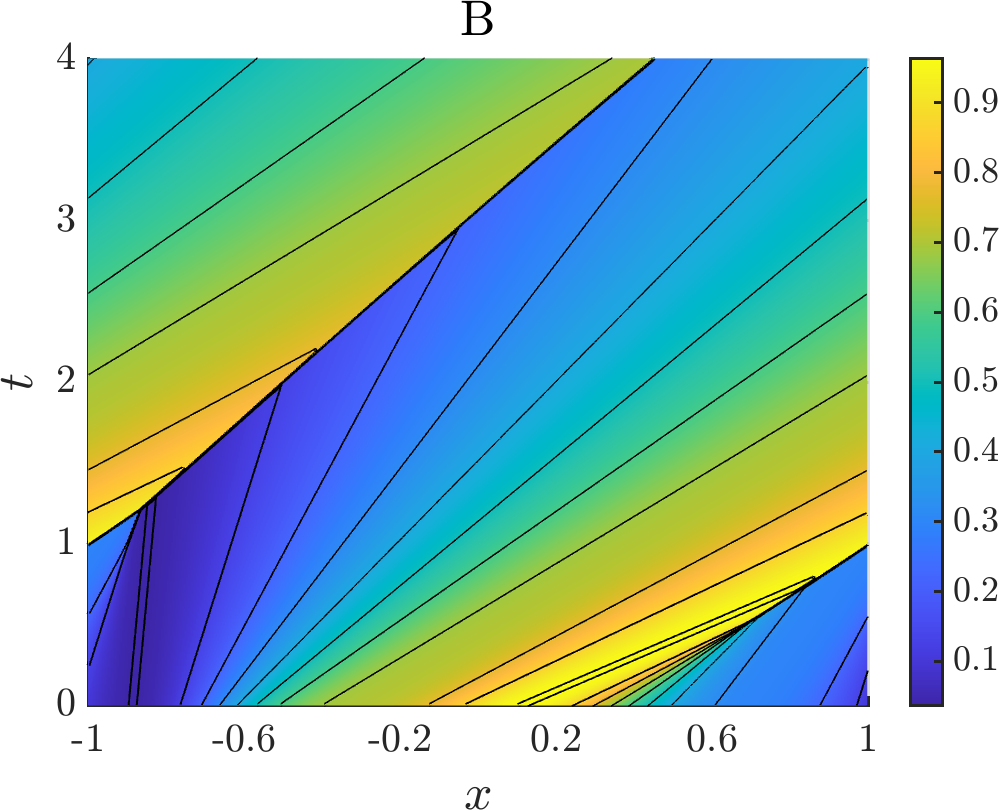}
\hspace{\hs ex}
\includegraphics[scale=\fs]{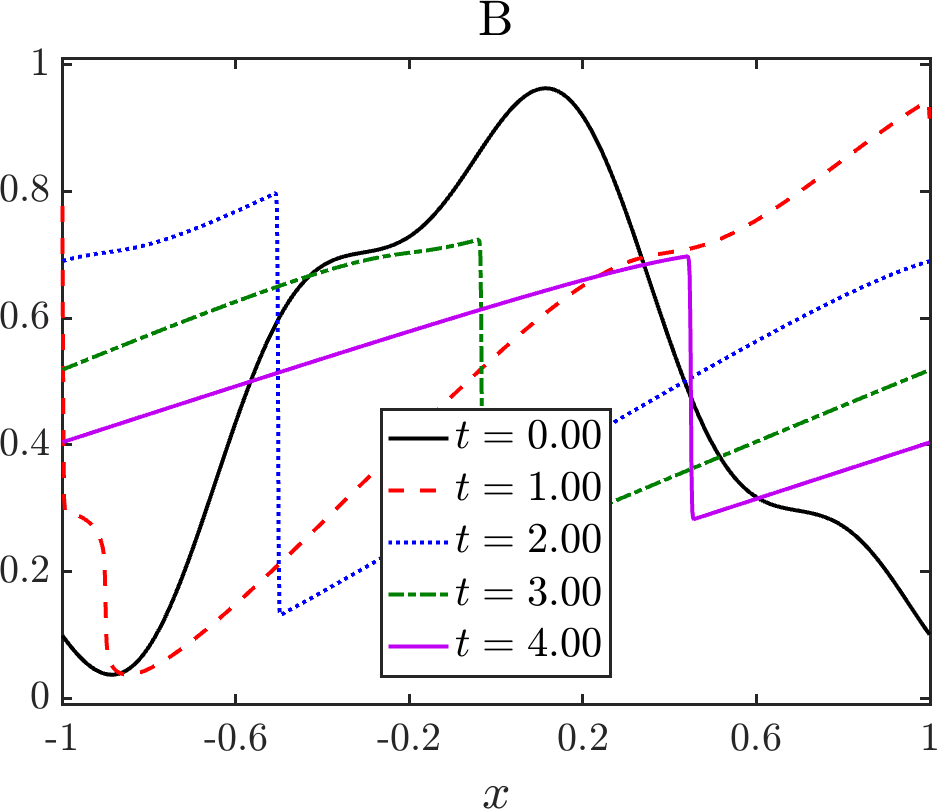}
}
\vspace{\vs ex}
\centerline{
\includegraphics[scale=\fs]{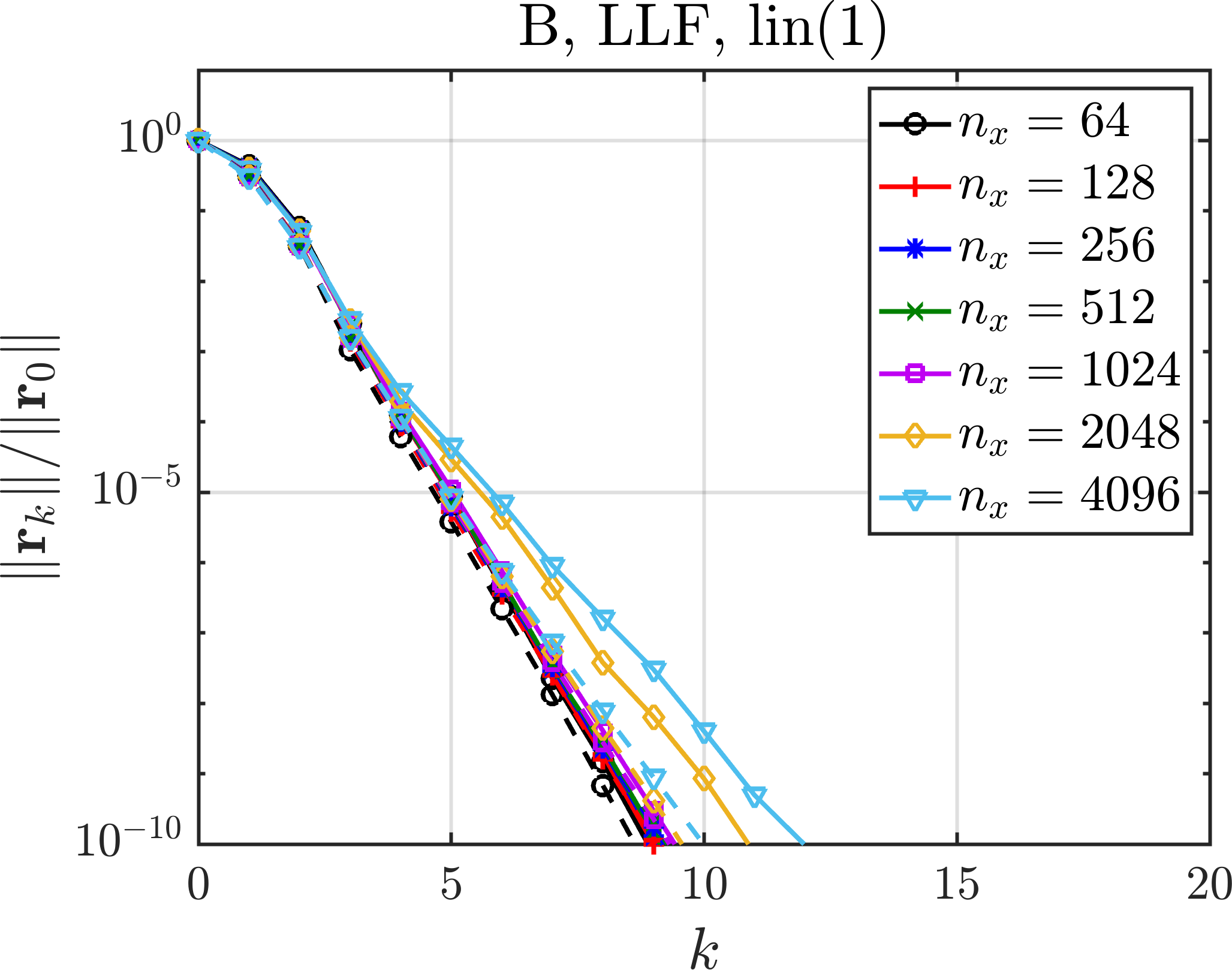}
\hspace{\hs ex}
\includegraphics[scale=\fs]{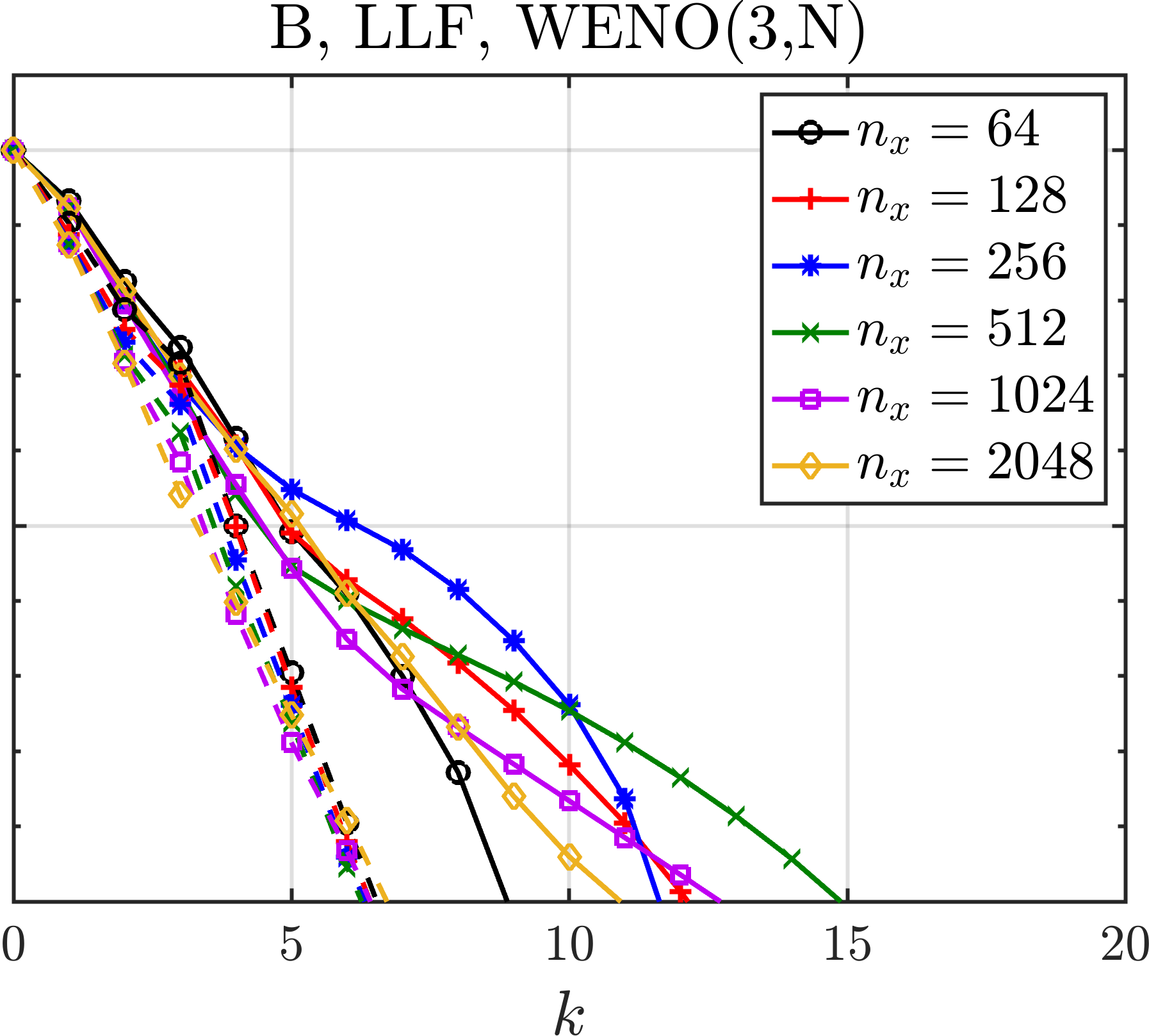}
}
\caption{
Additional numerical tests for Burgers equation \eqref{eq:burgers} using a smooth initial condition. 
Top row: Space-time contours, with the black lines being 20 contour levels evenly spaced between 0 and 1 (left); cross-sections of the solution at the times indicated in the legend (right).
Bottom row: Residual convergence histories for the nonlinear solver Algorithm \ref{alg:richardson} applied to a 1st-order discretization (left), and a 3rd-order discretization (right).
Solid lines correspond to linear systems being approximately solved by one two-level MGRIT iteration, and broken lines correspond to linear systems being solved exactly. 
\label{SMfig:num-res-extra-u0-B}
}
\end{figure}

In this section we consider additional numerical tests to complement those from Section \ref{sec:model-probs}, considering a smooth initial condition in contrast to the square wave that was used there.
Specifically, we use the initial condition ${u_0(x) = 0.5 + 0.4 \cos(\pi x) + 0.1 \sin (3 \pi x)}$.
The numerical setup for these tests is the same as described in Section \ref{sec:model-probs}.
We again consider the Burgers equation \eqref{eq:burgers} and the Buckley--Leverett equation \eqref{eq:buck}.
We solve \eqref{eq:burgers} on the time domain $t \in [0,4]$, and on a sequence of space-time meshes with resolutions $(n_x, n_t) \in \{ (64, 78), (128, 309), (256, 617), (512, 1233), (1024, 2465), (2048, 4929), \\ (4096, 9857) \}$; solution plots are shown in the top row of Figure \ref{SMfig:num-res-extra-u0-B}.
We solve \eqref{eq:buck} on the time domain $t \in [0,2]$, and on a sequence of space-time meshes with resolutions $(n_x,n_t) = \{ (64, 188),
(128,  375),
(256, 748),
(512, 1494),
(1024, 2986),
(2048, 5971), \\
(4096, 11941) \}$; solution plots are shown in the top row of Figure \ref{SMfig:num-res-extra-u0-BL}.

For each problem we consider both 1st- and 3rd-order discretizations, but consider only the LLF flux, and for the 3rd-order WENO discretization we consider only the Newton linearization (c.f., Figures \ref{fig:num-res-inexact-1st} and \ref{fig:num-res-inexact-3rd-LLF} where the GLF flux was considered and also Picard linearization of the WENO discretization).

Residual convergence histories for \eqref{eq:burgers} are shown in the bottom row of Figure \ref{SMfig:num-res-extra-u0-B}, and those for \eqref{eq:buck} are shown in the bottom row of Figure \ref{SMfig:num-res-extra-u0-BL}.
Considering the convergence histories, we see largely the same trends as occurred for the square wave initial condition considered in Section \ref{sec:model-probs}; see Figures \ref{fig:num-res-inexact-1st} and \ref{fig:num-res-inexact-3rd-LLF}.
That is, when direct solves are used for the linearized space-time problems,  the solver converges in a small number of iterations with mesh-independent convergence rates.
When the linearized space-time problems are solved inexactly using a single two-level MGRIT iteration, there seems to be some slight deterioration in convergence speed in most cases; however, in all cases, the nonlinear iteration counts appear to be bounded as the space-time mesh is refined.
Moreover, iteration counts for this initial condition are qualitatively similar to those for the square wave considered in the main paper.

\begin{figure}[t!]
\centerline{
\includegraphics[scale=\fs]{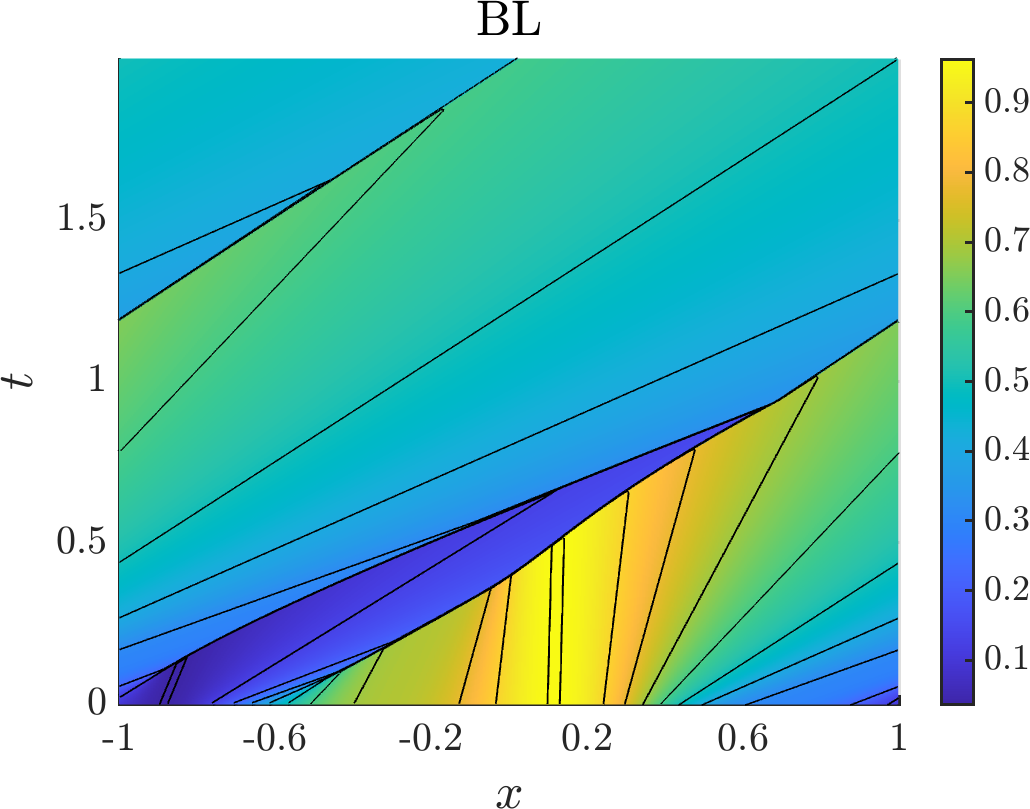}
\hspace{\hs ex}
\includegraphics[scale=0.335]{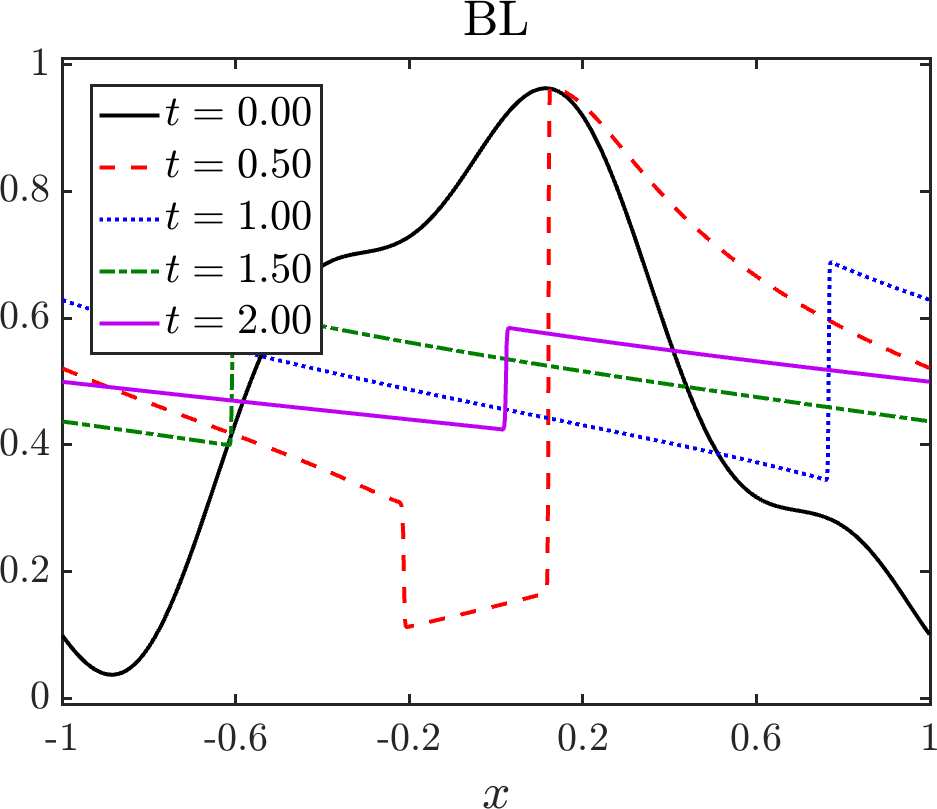}
}
\vspace{\vs ex}
\centerline{
\includegraphics[scale=\fs]{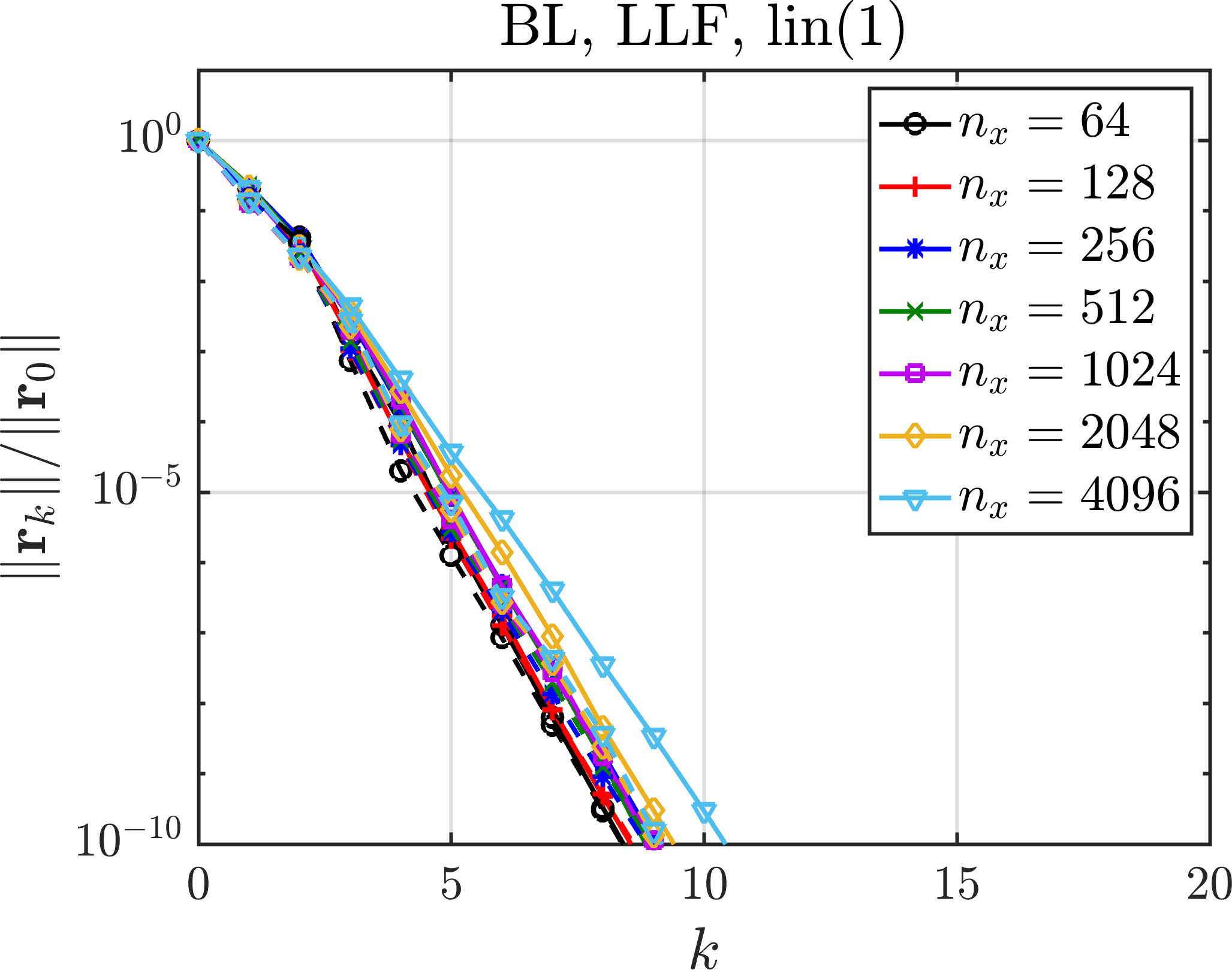}
\hspace{\hs ex}
\includegraphics[scale=\fs]{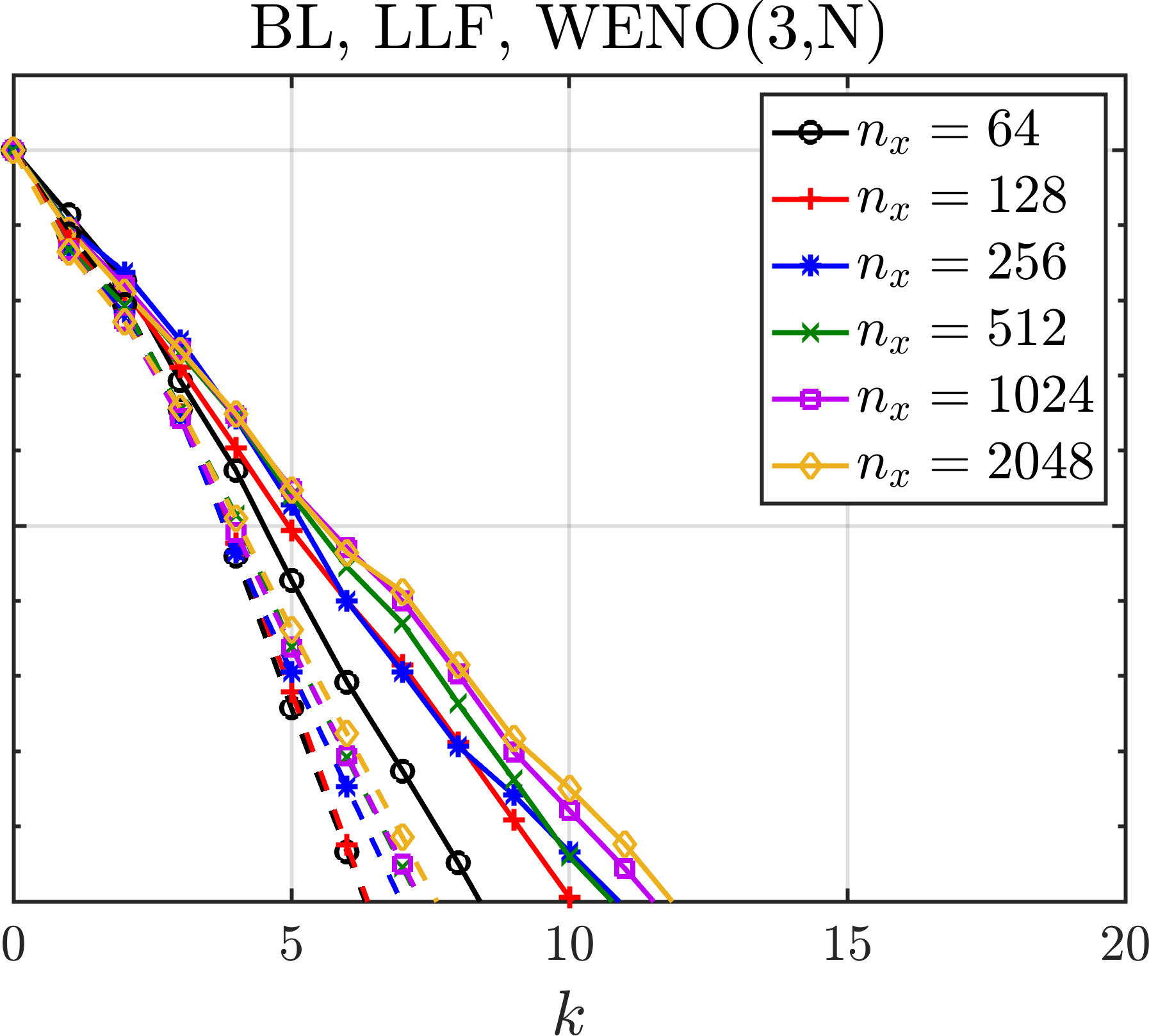}
}
\caption{
Additional numerical tests for Buckley--Leverett equation \eqref{eq:buck} using a smooth initial condition. 
Top row: Space-time contours, with the black lines being 20 contour levels evenly spaced between 0 and 1 (left); cross-sections of the solution at the times indicated in the legend (right).
Bottom row: Residual convergence histories for the nonlinear solver Algorithm \ref{alg:richardson} applied to a 1st-order discretization (left), and a 3rd-order discretization (right).
Solid lines correspond to linear systems being approximately solved by one two-level MGRIT iteration, and broken lines correspond to linear systems being solved exactly. 
\label{SMfig:num-res-extra-u0-BL}
}
\end{figure}
%

\section{Additional numerical tests: Multilevel MGRIT}
\label{SMsec:num-res-additional-multilevel}

Since there are many complicated details in the main paper, when employing MGRIT to approximately solve the linearized systems we used only a two-level method for simplicity's sake.
Here we discuss the extension of this MGRIT method to the multilevel setting, relying on recursively applying the two-level method, analogous to our multilevel extensions of modified semi-Lagrangian schemes in prior works \cite{DeSterck_etal_2023_SL,DeSterck_etal_2023_MOL,KrzysikThesis2021}.

To keep things as simple as possible, we consider multilevel extensions for first-order discretizations only, i.e., $p = k = 1$. The methodology can be extended to the third-order discretizations considered in the main paper, but it is more complicated because the truncation-error correction uses weighted stencils, as in the $k = 2$ case shown in \eqref{eq:T-ideal}.
Some of the notation from the main paper is now slightly changed since we need to account explicitly for the level index $\ell \in \mathbb{N}_0$, with $\ell = 0$ denoting the finest level.
Let us write $\Phi^{(n)}_{\ell}$ as the MGRIT time-stepping operator that steps from $t_n \to t_n + m^{\ell}  \delta t$, since on level $\ell$ the time-step size is $m^{\ell} \delta t$.
Let ${\cal S}^{(n, m^{\ell} \delta t)}$ be a first-order semi-Lagrangian discretization taking one step of size $m^{\ell} \delta t$ to step from $t_n \to t_n + m^{\ell} \delta t$.
Moreover, let $\bm{\varepsilon}_{\ell}^n \in \mathbb{R}^{n_x}$ be the vector of mesh-normalized departure points that the semi-Lagrangian method ${\cal S}^{(n, m^{\ell} \delta t)}$ uses to advance the solution from $t_n$ to $t_n + m^{\ell} \delta t$.

The level $\ell = 1$ time-stepping operator from the main paper is given by \eqref{eq:Psi}, and for first-order discretizations it simplifies as
\begin{align} \label{eq:Phi_1}
\Phi_{1}^{(n)}
&=
\Big(
I 
+ 
{\cal T}_{\mathrm{ideal}, 1}^{n}
-
{\cal T}_{\mathrm{direct}, 1}^{n}
\Big)^{-1}
{\cal S}^{(n, m\delta t)}
\\
&=
\Big(
I 
+ 
{\cal D}_1 \diag( \bm{\delta}^{n}_{1} ) {\cal D}_1^\top
\Big)^{-1}
{\cal S}^{(n, m\delta t)}, 
\quad
\bm{\delta}^{n}_{1}
=
h^2 g_2( \bm{\varepsilon}^n_1 )
+
\sum_{j = 0}^{m - 1} \bm{\beta}^{n+j},
\end{align}
in which the coefficients $\bm{\delta}^{n}_{1}$ are obtained from plugging in ${\cal T}_{\mathrm{ideal}, 1}^{n}$ from \eqref{eq:T-ideal}, and ${\cal T}_{\mathrm{direct}, 1}^{n}$ from \eqref{eq:T-direct}.
That is, $\bm{\delta}^{n}_{1}$ accounts approximately for the difference in the leading-order truncation error terms between the ``ideal'' and ``direct'' methods used to cover the interval $[t_n, t_n + m \delta t]$, with the former being $m$ linearized method-of-lines steps of size $\delta t$, and the latter being one semi-Lagrangian step of size $m \delta t$.

In the forthcoming derivation, on coarser levels $\ell > 1$, we approximate $m$ smaller semi-Lagrangian steps with a single large semi-Lagrangian step using the truncation-error-based approximation \eqref{eq:Psi}.
That is, on levels $\ell > 1$ we invoke the approximation (where the product is assumed to expand leftwards with increasing $k$)
\begin{align} 
\prod_{k = 0}^{m-1}
{\cal S}^{(n + k, m^{
\ell-1} \delta t)}
\approx
\Big(
I 
+ 
{\cal T}_{\mathrm{ideal}, \ell}^{n}
-
{\cal T}_{\mathrm{direct}, \ell}^{n}
\Big)^{-1}
{\cal S}^{(n, m^{\ell}\delta t)},
\end{align}
where now the ``ideal'' method on the interval $[t_n, t_n + m^{\ell} \delta t]$ is $m$ semi-Lagrangian steps of size $m^{\ell-1} \delta t$, and the ``direct'' method is one semi-Lagrangian step of size $m^{\ell} \delta t$.
The right hand side of \eqref{eq:T-direct} gives the general form of our approximation of the semi-Lagrangian truncation error.
Plugging this in gives, for $\ell > 1$,
\begin{align}
{\cal T}_{\mathrm{ideal}, \ell}^{n}
-
{\cal T}_{\mathrm{direct}, \ell}^{n}
&=
\sum_{k = 0}^{m-1}
-h^2
{\cal D}_1
\diag 
\big( 
g_{2}
(
\bm{\varepsilon}_{\ell-1}^{n+k}
)
\big) 
{\cal D}_1^\top
+
h^2
{\cal D}_1
\diag 
\big( 
g_{2}
(
\bm{\varepsilon}_{\ell}^{n}
)
\big)
{\cal D}_1^\top,
\\
&=
{\cal D}_1
\diag (
\bm{\Delta}_{\ell}^n
)
{\cal D}_1^\top,
\quad
\bm{\Delta}_{\ell}^n := h^2 \Big[ g_{2}
(
\bm{\varepsilon}_{\ell}^{n}
)
-
\sum_{k = 0}^{m-1}
\big( 
g_{2}
(
\bm{\varepsilon}_{\ell-1}^{n+k}
)
\big)
\Big].
\end{align}
That is, $\bm{\Delta}_{\ell}^n$ accounts approximately for the difference in the leading-order truncation error terms between using $m$ semi-Lagrangian steps of size $m^{\ell - 1} \delta t$ to cover the interval $[t_n, t_n + m^{\ell} \delta t]$ vs. one semi-Lagrangian step of size $m^{\ell} \delta t$.

To generalize the level $\ell = 1$ operator \eqref{eq:Phi_1} to the multilevel setting, we now consider the ideal operator on level $\ell = 2$ and the following sequence of approximations to it:
\begin{align}
\Phi_{{{\mathrm{ideal}, 2}}}^{(n)}
&:=
\Phi^{(n + m-1)}_1
\, \cdots \,
\Phi^{(n + 1)}_1
\Phi^{(n)}_1,
\\
&=
\prod_{k = 0}^{m-1}
\Big[
\Big(
I 
+ 
{\cal D}_1 \diag( \bm{\delta}^{n + k}_{1} ) {\cal D}_1^\top
\Big)^{-1}
{\cal S}^{(n + k, m\delta t)}
\Big],
\\
&\approx
\left[
\prod_{k = 0}^{m-1}
\Big(
I 
+ 
{\cal D}_1 \diag( \bm{\delta}^{n + k}_{1} ) {\cal D}_1^\top
\Big)^{-1}
\right]
\left[
\prod_{k = 0}^{m-1}
{\cal S}^{(n + k, m\delta t)}
\right],
\\
&\approx
\left[
\prod_{k = 0}^{m-1}
\Big(
I 
+ 
{\cal D}_1 \diag( \bm{\delta}^{n + k}_{1} ) {\cal D}_1^\top
\Big)^{-1}
\right]
\left[
\Big(
I 
+ 
{\cal D}_1 \diag( \bm{\Delta}^{n}_{2} ) {\cal D}_1^\top
\Big)^{-1}
{\cal S}^{(n, m^2\delta t)}
\right],
\\
\label{eq:Phi_2}
&\approx
\left[
I 
+ 
{\cal D}_1 \diag
\left( 
\bm{\Delta}^{n}_{2} + \sum_{k = 0}^{m-1} \bm{\delta}^{n + k}_{1} 
\right)
{\cal D}_1^\top
\right]^{-1}
{\cal S}^{(n, m^2\delta t)}
:=
\Phi^{(n, m^2 \delta t)}_2.
\end{align}
We now explain the above approximations. The first is that the $m$ semi-Lagrangian operators commute with the $m$ truncation error terms.
The next approximation is to replace $m$ semi-Lagrangian steps of size $m \delta t$ by one single semi-Lagrangian step of size $m^2 \delta t$ and to include the leading-order truncation error terms.
The final approximation has two steps: The first is to replace the $m$ products with a sum (analogous to the usual approximation one makes for MGRIT applied to a diffusion problem \cite{Falgout_etal_2014}), and the second is to pull the two matrices under the same inverse and neglect higher-order terms from their product; see \cite{DeSterck_etal_2023_SL,DeSterck_etal_2023_MOL,KrzysikThesis2021}.

Based on \eqref{eq:Phi_1} and \eqref{eq:Phi_2} we propose the following coarse-level operator on level $\ell > 0$:
\begin{align} \label{eq:Phi_ell}
\Phi^{(n, m^{\ell} \delta t)}_{\ell}
=
\Big[
I 
+ 
{\cal D}_1 \diag
( 
\bm{\sigma}^{n}_{\ell}
)
{\cal D}_1^\top
\Big]^{-1}
{\cal S}^{(n, m^{\ell}\delta t)},
\quad
\ell \in \mathbb{N},
\end{align}
where the coefficient $\bm{\sigma}^{n}_{\ell}$ is defined recursively by
\begin{align}
\bm{\sigma}^{n}_{\ell}
=
\begin{cases}
\bm{\delta}^{n}_{1}, \quad &\ell = 1,
\\[1ex]
\bm{\Delta}^{n}_{\ell}
+
\sum  \limits_{k = 0}^{m-1}
\bm{\sigma}^{n + k}_{\ell - 1}, \quad &\ell > 1.
\end{cases}
\end{align}

Now we present multilevel numerical results using the coarse-level operator \eqref{eq:Phi_ell}, extending several of the two-level results from Section \ref{sec:num-res} of the main paper.
To generate a multievel hierarchy, we recursively coarsen in time by a factor of $m = 8$ until doing so would result in a temporal coarse mesh with fewer than two points. The finest temporal resolution for the Burgers problem is $n_t = 10241$ and for the Buckley--Leverett problem it is $n_t = 11941$; in either case, the MGRIT hierarchy for each problem has 5 levels.
We use \emph{one} MGRIT V-cycle per inner iteration, analogous to how \emph{one} two-level iteration was used in the main paper.
Our MGRIT relaxation strategy is level dependent, using F-relaxation on the finest MGRIT level, as for the two-level solves used in the main text, and then FCF-relaxation on all coarser levels. 

Numerical results are shown in \cref{SMfig:num-res-multilevel}, with multilevel results shown in the top row, and the bottom row showing the two-level results from the bottom row of Figure \ref{fig:num-res-inexact-1st} from the main paper.
For the Burgers problem (left) there is no deterioration in convergence when moving from two-level to multilevel.
For the Buckley--Leverett problem there is a slight deterioration in iteration count for the finest resolution problems, where one or two more iterations are required to reach the halting tolerance.

\renewcommand{\fd}{./figures/}
\renewcommand{\hs}{2}
\renewcommand{\vs}{1}
\renewcommand\fs{0.335}
\begin{figure}[t!]
\centerline{
\includegraphics[scale=\fs]{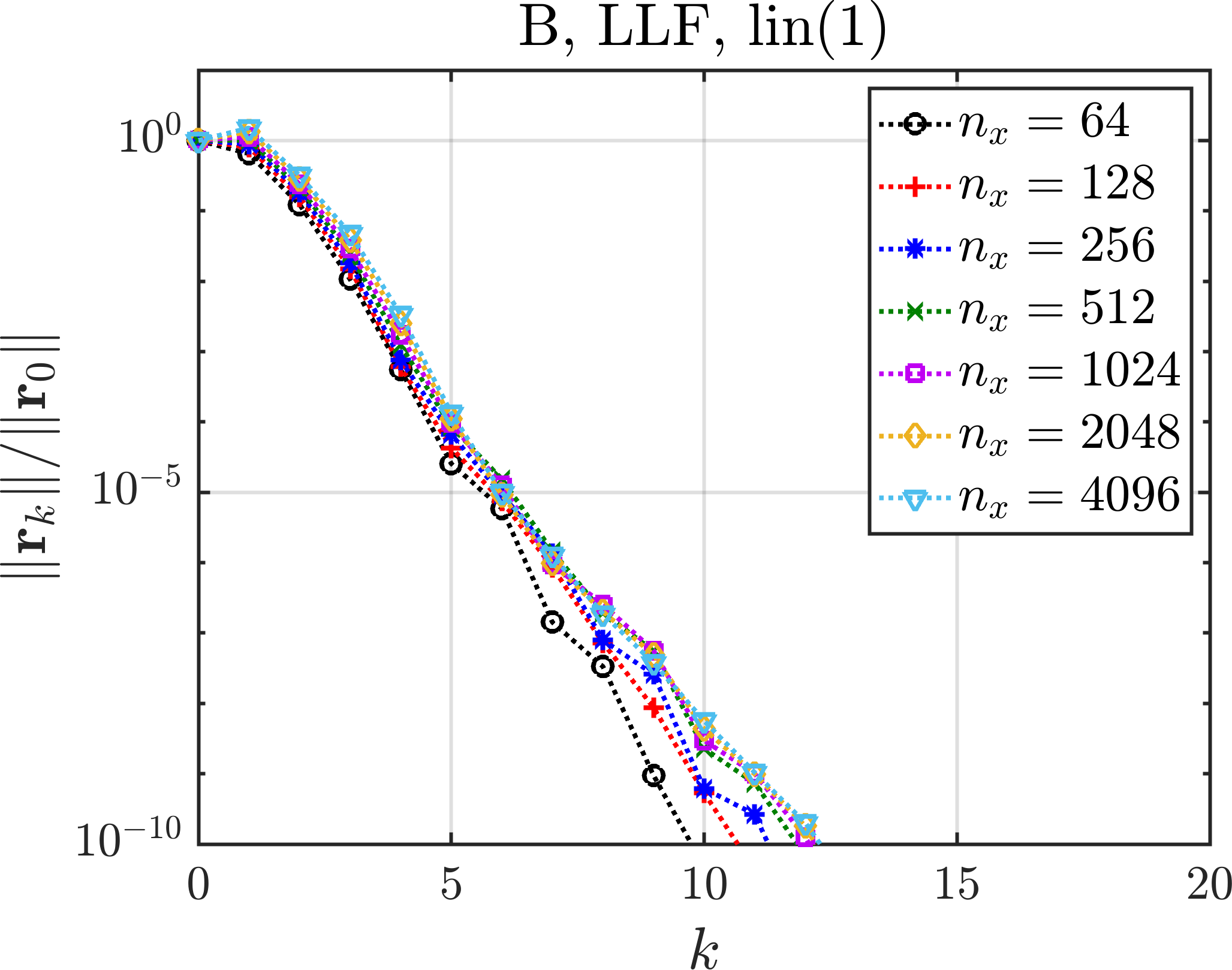}
\hspace{\hs ex}
\includegraphics[scale=\fs]{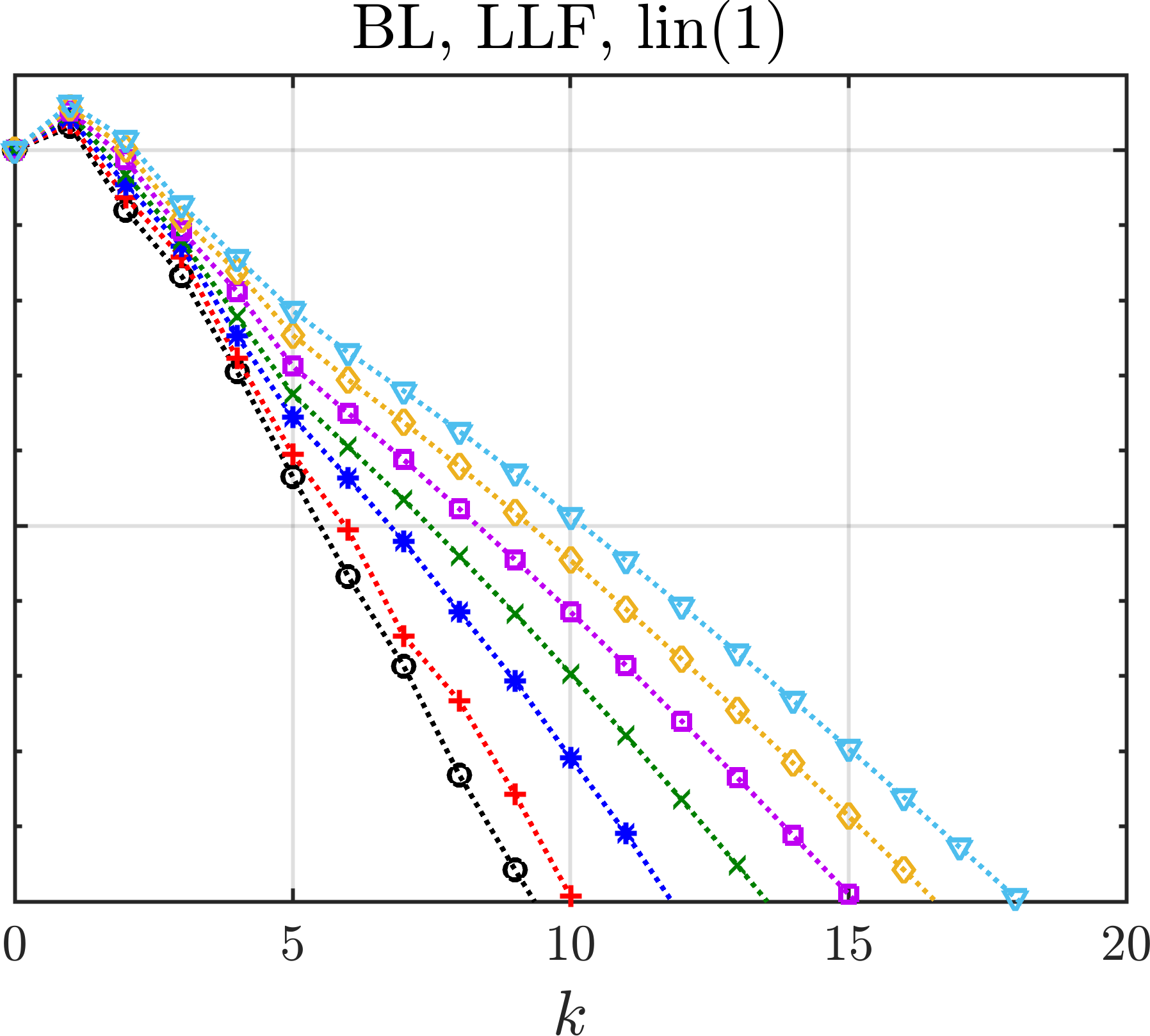}
}
\vspace{\vs ex}
\centerline{
\includegraphics[scale=\fs]{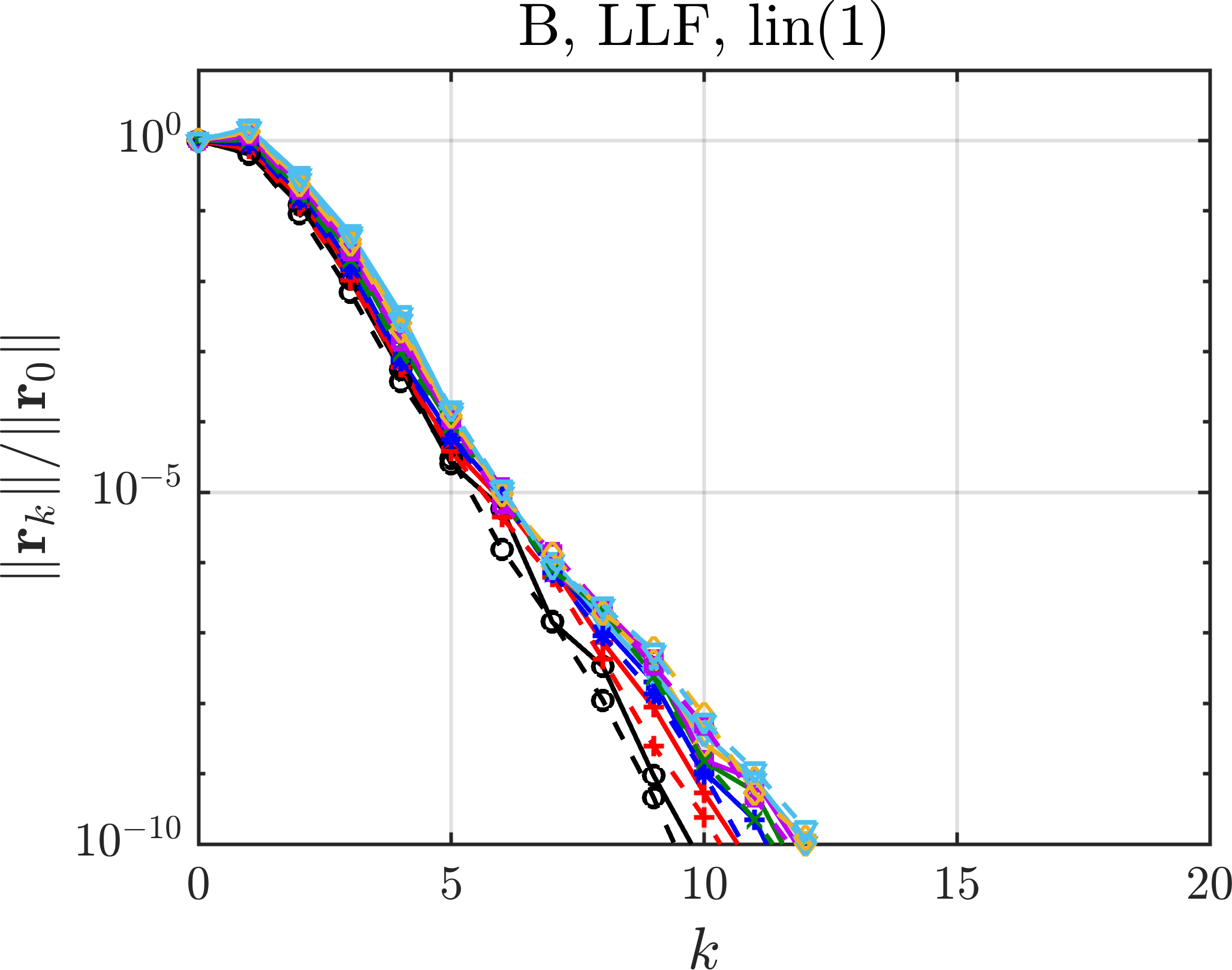}
\hspace{\hs ex}
\includegraphics[scale=\fs]{\fd BL3-LLF-lin1-F8-DIRECT-MGRIT}
}
\caption{
Residual convergence histories for the nonlinear solver Algorithm \ref{alg:richardson} applied to 1st-order accurate discretizations using the LLF flux.
Left column: Burgers \eqref{eq:burgers}. Right column: Buckley--Leverett \eqref{eq:buck}.
Top row: Linearized systems are solved approximately with one MGRIT \underline{V-cycle}.
Bottom row: Solid lines correspond to linear systems being approximately solved by one \underline{two-level} MGRIT iteration, and broken lines correspond to linear systems being solved exactly. 
\label{SMfig:num-res-multilevel}
}
\end{figure}
%

\section{Error estimate for semi-Lagrangian method and approximate truncation error}
\label{SMsec:SL-extra}

In this section, we consider further details of the conservative, FV semi-Lagrangian method outlined in Section \ref{sec:SL}.
\Cref{SMsec:SL-flux} presents relevant details for how the numerical flux is implemented, \cref{SMsec:SL-flux-trun-err} develops a truncation error estimate, and then \cref{SMsec:SL-trun-err} describes how this estimate is used to develop the expression for the associated truncation error operator ${\cal T}_{\rm direct}^{n}$ used in the coarse-grid operator \eqref{eq:Psi}.

\subsection{Further details on numerical flux}
\label{SMsec:SL-flux}

Recall from \eqref{eq:SL-FV-num-flux} the numerical flux $\wt{f}_{i+1/2} := \int_{\wt{\xi}_{i + 1/2}}^{x_{i+1/2}} R_{i + 1/2}[ \bar{\bm{e}}^n ](x) \d x$.
Implementationally, the flux is split into two integrals
\begin{align} \label{eq:SL-flux-split}
\wt{f}_{i+1/2} 
= 
\int_{\wt{\xi}_{i + 1/2}}^{\wt{x}_{i+1/2}} R_{i + 1/2}[ \bar{\bm{e}}^n ](x) \d x
+
\int_{\wt{x}_{i + 1/2}}^{x_{i+1/2}} R_{i + 1/2}[ \bar{\bm{e}}^n ](x) \d x,
\end{align}
where, recall, $\wt{x}_{i+1/2}$ is the FV cell interface immediately to the east of the departure point $\wt{\xi}_{i + 1/2}$; see Figure \ref{fig:SL-FV-conservation}.
The second integral in \eqref{eq:SL-flux-split} is evaluated exactly, noting that it is simply a sum of cell averages across the cells in the interval $x \in [\wt{x}_{i+1/2}, x_{i+1/2}]$. 
The first integral in \eqref{eq:SL-flux-split} is more complicated.
Recall that $R_{i + 1/2}[ \bar{\bm{e}}^n ](x)$ is a polynomial reconstruction of $e^n(x)$ and it is constructed by enforcing that it preserves cell averages of $e^n(x)$ across the FV cells used in its stencil.
For the remainder of this discussion, let us drop temporal subscripts since they are not important.

At this point, it is useful to be familiar with the polynomial reconstruction discussion in \cref{SMsec:reconstruction-overview}.

Recall from Section \ref{sec:SL} that we consider semi-Lagrangian methods of order $p = 2k-1$ in space, with $k = 1, 2, \ldots$.
To this end, let $R_{i + 1/2}[\bb{e}^n](x)$ be a reconstruction polynomial using a stencil consisting of $p = 2k - 1$ cells. 
Further, suppose the stencil is centered about the cell containing the departure point. 
For simplicity of notation, suppose that the cell containing the departure point, i.e., $x \in [\wt{x}_{i+1/2} - h, \wt{x}_{i+1/2}]$, is cell ${\cal I}_0$. Then, the cells in the stencil are $\{ {\cal I}_{-(k-1)}, \ldots, {\cal I}_{k-1} \}$.\footnote{In the notation of \cref{SMsec:reconstruction-overview}, the stencil has a total number of cells $k \mapsto 2k -1 = p$, and a left shift of $\ell = k - 1$.}
Define $E(x)$ as the primitive of $e(x)$, i.e., $E(x) = \int_{-\infty}^{x} e(\zeta) \d \zeta$ and $E'(x) = e(x)$.
Then, based on \cref{SMsec:reconstruction-overview}, we know that $R_{i + 1/2}[\bb{e}](x)$ is a polynomial of degree at most $p - 1$, and that it is the derivative of an interpolating polynomial $Q_{p}(x)$ of degree at most $p$,
\begin{align}
R_{i + 1/2}[\bb{e}](x) \equiv Q_{p}'(x),
\end{align}
where, $Q_p(x)$ interpolates $E(x)$ at the $p+1$ cell interfaces in the stencil,
\begin{align} \label{eq:SL-flux-interpolation-conditions}
Q_{p}(x_{j \pm 1/2}) = E(x_{j \pm 1/2}), \quad j = -(k-1), \ldots, k-1.
\end{align}
%

\subsection{Error estimate}
\label{SMsec:SL-flux-trun-err}

The estimate we develop herein and its proof share close parallels with the error estimate we developed previously in \cite[Lemma 3.1]{DeSterck_etal_2023_SL} for a \textit{non-conservative, FD semi-Lagrangian method}; however, the estimate itself and its proof are fundamentally different due to the fact that we are now considering a semi-Lagrangian method that is both conservative and based on finite volumes.
The full estimate is split across \cref{SMlem:SL-trunc-est,SMlem:SL-flux-est}.
Note that the proof of \cref{SMlem:SL-flux-est} in particular requires to have read \cref{SMsec:SL-flux}.

\begin{lemma}[Error estimate for semi-Lagrangian method] \label{SMlem:SL-trunc-est}
Let ${\cal S}^{t_n, \delta t}_{p, r} \in \mathbb{R}^{n_x \times n_x}$ be the FV semi-Lagrangian time-stepping operator defined in \eqref{eq:SL-FV-scheme} that approximately advances the solution of \eqref{eq:cons-lin} from $t_n \to t_n + \delta t$. 
Let ${\cal S}^{t_n, \delta t}_{p, \infty}$ denote the aforementioned semi-Lagrangian discretization that locates departure points exactly at time $t_n$.
Define $\bb{e}(t) \in \mathbb{R}^{n_x}$ as the vector composed of cell averages of the exact solution of the PDE \eqref{eq:cons-lin} at time $t$.
Then, the local truncation error of ${\cal S}^{t_n, \delta t}_{p, \infty}$ can be expressed as
\begin{align} \label{eq:SL-FV-trunc-est}
\tau^n_i 
&:= 
\Big(
\bb{e}(t_{n+1}) - {\cal S}^{t_n, \delta t}_{p, \infty} \bb{e}(t_n) \Big)_i
=
\frac{\wt{E}_{i+1/2}^n - \wt{E}_{i-1/2}^n}{h},
\end{align}
where $\wt{E}_{i+1/2}^n$ is the local error of the 
numerical flux $\wt{f}_{i+1/2}$ (see \eqref{eq:SL-FV-num-flux}):
\begin{align} \label{eq:SL-FV-trunc-E-def}
\wt{E}_{i+1/2}^n 
&:=
\int_{\wt{\xi}_{i + 1/2}}^{\wt{x}_{i + 1/2}}
R_{i + 1/2}[\bb{e}(t_n)](x) - e(x, t_n) \d x.
\end{align}
\end{lemma}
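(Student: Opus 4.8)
The plan is to compute the local truncation error directly, by subtracting the one-step update of ${\cal S}^{t_n,\delta t}_{p,\infty}$ applied to the exact cell averages $\bb{e}(t_n)$ from the exact cell average $\bb{e}(t_{n+1})$, and to observe that both quantities are already in flux-difference form, so their difference is a discrete divergence of \emph{flux} errors. First I would record the exact evolution: since ${\cal S}^{t_n,\delta t}_{p,\infty}$ uses exactly located departure points, the exact cell average $\bar e_i(t_{n+1})$ is given precisely by the local conservation relationship \eqref{eq:cons-lin-SL-cons-local}. Writing $f^{\mathrm{exact}}_{i+1/2} := \int_{\wt\xi_{i+1/2}}^{x_{i+1/2}} e(x,t_n)\,\d x$ for the exact flux, this reads $\bar e_i(t_{n+1}) = \bar e_i(t_n) - \big(f^{\mathrm{exact}}_{i+1/2} - f^{\mathrm{exact}}_{i-1/2}\big)/h$. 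By the definition of the scheme in \eqref{eq:SL-FV-scheme}, the numerical update on the same data is $\big({\cal S}^{t_n,\delta t}_{p,\infty}\bb{e}(t_n)\big)_i = \bar e_i(t_n) - \big(\wt f_{i+1/2} - \wt f_{i-1/2}\big)/h$, with $\wt f_{i\pm1/2}$ the semi-Lagrangian numerical flux \eqref{eq:SL-FV-num-flux} evaluated on the exact cell averages. Subtracting, the $\bar e_i(t_n)$ terms cancel and I obtain the conservative, telescoping form $\tau^n_i = \big[(\wt f_{i+1/2} - f^{\mathrm{exact}}_{i+1/2}) - (\wt f_{i-1/2} - f^{\mathrm{exact}}_{i-1/2})\big]/h$, which already matches the right-hand side of \eqref{eq:SL-FV-trunc-est} once each per-interface flux error is identified with $\wt E^n$.

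The one substantive step is therefore to show that the flux error satisfies $\wt f_{i+1/2} - f^{\mathrm{exact}}_{i+1/2} = \wt E^n_{i+1/2}$. I would write this error as $\int_{\wt\xi_{i+1/2}}^{x_{i+1/2}} \big(R_{i+1/2}[\bb{e}(t_n)](x) - e(x,t_n)\big)\,\d x$ and apply the flux splitting \eqref{eq:SL-flux-split} at the interface $\wt x_{i+1/2}$ lying immediately east of the departure point. Over the whole-cell portion $[\wt x_{i+1/2}, x_{i+1/2}]$ the numerical flux is, by construction (see \cref{SMsec:SL-flux}), the exact sum of cell averages; since these are the cell averages of the \emph{exact} solution, that portion equals $\int_{\wt x_{i+1/2}}^{x_{i+1/2}} e(x,t_n)\,\d x$ exactly and thus contributes no error. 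Equivalently, this is the cell-average-preserving property of the reconstruction $R_{i+1/2}$ (see \cref{SMsec:reconstruction-overview}): over any full cell in its stencil, the integral of $R_{i+1/2}$ reproduces that cell average, hence the exact cell integral. Consequently only the fractional interval $[\wt\xi_{i+1/2}, \wt x_{i+1/2}]$ survives, which is exactly $\wt E^n_{i+1/2}$ as defined in \eqref{eq:SL-FV-trunc-E-def}.

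The main, and essentially only, obstacle is this localization: recognizing that the reconstruction error integrates to zero over every whole cell, so that the per-interface flux error collapses onto the single sub-cell interval between the departure point and its neighbouring mesh interface. Everything else — the conservation relation, the flux splitting, and the telescoping cancellation — is routine bookkeeping, and the argument requires no regularity assumptions beyond those needed for the integrals to be well defined (regularity enters only later, in \cref{SMlem:SL-flux-est}, when $\wt E^n_{i+1/2}$ is estimated via interpolation theory).
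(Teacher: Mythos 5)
Your proposal is correct and follows essentially the same route as the paper's proof: both rest on the local conservation relation \eqref{eq:cons-lin-SL-cons-local} to put the exact update into flux-difference form (the paper introduces it via an add-and-subtract of $e(x,t_n)/h$ over the fractional intervals, you invoke it up front), and both then localize each per-interface flux error to $[\wt{\xi}_{i+1/2},\wt{x}_{i+1/2}]$ by observing that the whole-cell portion of the numerical flux reproduces the exact cell averages and so contributes nothing. The only difference is the order of the bookkeeping, not the substance.
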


\begin{proof}
To aid with readability, let us drop all $t_n$ and $\delta t$ superscripts.
Using the definition of the numerical flux \eqref{eq:SL-FV-num-flux}, plug the exact PDE solution into the numerical scheme \eqref{eq:SL-FV-scheme} to get the truncation error
\begin{align} 
\tau^n_i
&=
\bar{e}_i(t_{n+1}) 
-
\bar{e}_i(t_{n})
+ 
\frac{
\int_{\wt{\xi}_{i + 1/2}}^{x_{i + 1/2}}
R_{i + 1/2}[\bb{e}(t_n)](x) \d x
-
\int_{\wt{\xi}_{i - 1/2}}^{x_{i - 1/2}}
R_{i - 1/2}[\bb{e}(t_n)](x) \d x
}
{h}.
\end{align}
In the above equation, replace $\bar{e}_i(t_{n})$ with its definition, and add and subtract $e(x, t_n) / h$ integrated over $x \in [\wt{\xi}_{i - 1/2}, x_{i-1/2}]$ and over $x \in [x_{i + 1/2}, \wt{\xi}_{i + 1/2}]$ to get (note the use of $\int_{a}^{b} = - \int_{b}^{a}$)
\begin{align} 
\begin{split}
&\tau^n_i
=
\left[
\bar{e}_i(t_{n+1}) 
-
\frac{
\int_{\wt{\xi}_{i - 1/2}}^{x_{i - 1/2}} e(x, t_{n}) \d x
+
\int_{x_{i - 1/2}}^{x_{i + 1/2}} e(x, t_{n}) \d x
+
\int_{x_{i + 1/2}}^{\wt{\xi}_{i + 1/2}} e(x, t_{n}) \d x 
}
{h}
\right]+
\\
&
\frac{
\Big(
\int_{\wt{\xi}_{i + 1/2}}^{x_{i + 1/2}}
R_{i + 1/2}[\bb{e}(t_n)](x)  -  e(x, t_n) \d x
\Big)
-
\Big(
\int_{\wt{\xi}_{i - 1/2}}^{x_{i - 1/2}}
R_{i - 1/2}[\bb{e}(t_n)](x) - e(x, t_n) \d x 
\Big)
}
{h}.
\end{split}
\end{align}
The term inside the closed parentheses vanishes due to the local conservation property \eqref{eq:cons-lin-SL-cons-local} satisfied by the exact PDE solution.
Now split the first of the two remaining integrals over $x \in [\wt{\xi}_{i+1/2}, \wt{x}_{i+1/2}]$ and $x \in [\wt{x}_{i+1/2}, x_{i+1/2}]$; next, recall from discussion in \cref{SMsec:SL-flux} that the integral over $[\wt{x}_{i+1/2}, x_{i+1/2}]$ will be zero (it is an integral over an integer number of FV cells).
Carrying out the analogous steps for the second integral and then invoking the definition of the error $\wt{E}_{i+1/2}^n$ from \eqref{eq:SL-FV-trunc-E-def} gives the claim.
\end{proof}

\begin{lemma}[Error estimate for semi-Lagrangian numerical flux]
\label{SMlem:SL-flux-est}
Suppose the assumptions of \cref{SMlem:SL-trunc-est} hold, and that the numerical flux is evaluated as described in \cref{SMsec:SL-flux}.
Further suppose that the solution $e(x,t)$ of \eqref{eq:cons-lin} is sufficiently smooth, and that the wave-speed function in \eqref{eq:cons-lin} varies only in time, $\alpha(x, t) \equiv \alpha(t)$.
Then, the error \eqref{eq:SL-FV-trunc-E-def} of the numerical flux satisfies
\begin{align} \label{eq:SL-flux-est}
\wt{E}_{i+1/2}^n = - h^{p+1} g_{p+1}(\varepsilon_{i+1/2}) \frac{\partial^{p} e}{\partial x^p} \bigg|_{(x_{i+1/2}, t_{n+1})} + {\cal O}(h^{p+2}),
\end{align}
where $g_{p+1}$ is the degree $p+1$ polynomial 
\begin{align} \label{eq:SL-g-poly-def}
g(z) := \frac{1}{(p+1)} \prod \limits_{j = -k}^{k-1} ( z + j ).
\end{align}

\end{lemma}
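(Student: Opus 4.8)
The plan is to exploit the primitive-function representation of the reconstruction established in \cref{SMsec:SL-flux}, which lets the flux-error integral collapse to a pair of boundary evaluations. Recall from \eqref{eq:interp-poly-derivative} that $R_{i+1/2}[\bb{e}](x) \equiv Q_p'(x)$, where $Q_p$ is the degree-$p$ polynomial interpolating the primitive $E(x)$ of $e(x,t_n)$ at the $p+1 = 2k$ cell interfaces of the stencil, and $E'(x) = e(x,t_n)$. Writing the integrand in \eqref{eq:SL-FV-trunc-E-def} as $R_{i+1/2}[\bb{e}](x) - e(x,t_n) = Q_p'(x) - E'(x) = \big(Q_p - E\big)'(x)$, the fundamental theorem of calculus gives $\wt{E}_{i+1/2}^n = \big[(Q_p - E)(\wt{x}_{i+1/2})\big] - \big[(Q_p - E)(\wt{\xi}_{i+1/2})\big]$.

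First I would dispatch the boundary term at $\wt{x}_{i+1/2}$. By construction (see \cref{SMsec:SL-flux}), $\wt{x}_{i+1/2}$ is one of the $p+1$ interpolation interfaces in \eqref{eq:SL-flux-interpolation-conditions}, so $Q_p(\wt{x}_{i+1/2}) = E(\wt{x}_{i+1/2})$ and this term vanishes. What remains is exactly the interpolation error of $Q_p$ for $E$ evaluated at the departure point, $\wt{E}_{i+1/2}^n = E(\wt{\xi}_{i+1/2}) - Q_p(\wt{\xi}_{i+1/2})$. I would then invoke the classical estimate \cref{SMlem:standard-poly-interp-est} with $U \mapsto E$ and node set $\{x_{m+1/2}\}_{m=-k}^{k-1}$, expressing this error as a constant multiple of $\prod_{m=-k}^{k-1}(\wt{\xi}_{i+1/2} - x_{m+1/2})\, E^{(p+1)}(\varphi)$ for some $\varphi$ interior to the stencil.

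The next step is to evaluate the nodal product explicitly. Using the departure-point decomposition $\wt{\xi}_{i+1/2} = \wt{x}_{i+1/2} - h\varepsilon_{i+1/2}$ together with $x_{m+1/2} = \wt{x}_{i+1/2} + mh$, each factor becomes $-h(\varepsilon_{i+1/2} + m)$, so the product factors as a multiple of $h^{p+1}\prod_{m=-k}^{k-1}(\varepsilon_{i+1/2}+m)$, which is precisely a multiple of $h^{p+1} g_{p+1}(\varepsilon_{i+1/2})$ with $g_{p+1}$ the polynomial \eqref{eq:SL-g-poly-def}. Since $E^{(p+1)} = \partial^p e / \partial x^p$, this already yields the stated $h^{p+1}$ scaling and the polynomial $g_{p+1}(\varepsilon_{i+1/2})$.

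Finally, I would move the evaluation point of the derivative from $(\varphi, t_n)$, where $\varphi$ lies within an $\mathcal{O}(h)$-wide stencil, to the arrival point $(x_{i+1/2}, t_{n+1})$. Because the hypothesis $\alpha(x,t)\equiv\alpha(t)$ makes the exact solution of \eqref{eq:cons-lin} a pure, time-dependent translate of its initial profile, the spatial derivative $\partial^p e/\partial x^p$ at $t_n$ and at $t_{n+1}$ differ only by a translation, and a Taylor expansion in both the spatial offset ($|\varphi - x_{i+1/2}| = \mathcal{O}(h)$) and the temporal offset ($\delta t = \mathcal{O}(h)$) shows the replacement costs only $\mathcal{O}(h^{p+2})$, absorbed into the remainder. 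I expect this last step — justifying the evaluation-point shift and confirming it perturbs only the $\mathcal{O}(h^{p+2})$ term — to be the main obstacle, since it is where smoothness of $e$, the proportionality $\delta t = \mathcal{O}(h)$, and the spatially-constant-wave-speed assumption all enter; the earlier algebraic steps are routine once the primitive representation is in hand.
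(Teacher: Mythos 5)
Your overall route is the same as the paper's: write the integrand of \eqref{eq:SL-FV-trunc-E-def} as $(Q_p - E)'$, collapse the integral with the fundamental theorem of calculus, kill the boundary term at $\wt{x}_{i+1/2}$ because it is an interpolation node in \eqref{eq:SL-flux-interpolation-conditions}, apply \cref{SMlem:standard-poly-interp-est} to what remains, and evaluate the nodal product at $\wt{\xi}_{i+1/2} = \wt{x}_{i+1/2} - h\varepsilon_{i+1/2}$ to produce the factor $h^{p+1}g_{p+1}(\varepsilon_{i+1/2})$. Two points need attention. First, the sign. Your (standard) orientation of the fundamental theorem gives $\wt{E}_{i+1/2}^n = (Q_p - E)(\wt{x}_{i+1/2}) - (Q_p - E)(\wt{\xi}_{i+1/2}) = E(\wt{\xi}_{i+1/2}) - Q_p(\wt{\xi}_{i+1/2})$, whereas the paper's intermediate step \eqref{eq:SL-E-prim-diff} has the two boundary evaluations in the opposite order, $Q_p(\wt{\xi}_{i+1/2}) - E(\wt{\xi}_{i+1/2})$, and it is that orientation which produces the minus sign in \eqref{eq:SL-flux-est} after \cref{SMlem:standard-poly-interp-est} is inserted and $(-h)^{p+1} = h^{p+1}$ is used for odd $p$. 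Your write-up never pins the sign down (``a constant multiple of''), so as it stands it does not establish \eqref{eq:SL-flux-est}; carried through literally it yields $+h^{p+1}g_{p+1}(\varepsilon_{i+1/2})\,\partial^p e/\partial x^p$, the negative of the stated result. You must track the orientation explicitly and reconcile it with the statement; a direct check of the $p=1$, piecewise-constant case is a quick way to settle which sign is correct.

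Second, the evaluation-point shift at the end. You propose a joint Taylor expansion in the spatial offset $|\varphi - x_{i+1/2}|$ and the temporal offset, both assumed ${\cal O}(h)$. That is adequate for a CFL-limited step, but this lemma is applied to the coarse-grid semi-Lagrangian step of size $m\delta t$, for which the distance between the departure point $\wt{\xi}_{i+1/2}$ and the arrival point $x_{i+1/2}$ is ${\cal O}(m\delta t)$ and need not be small relative to $h$ --- the whole point of the semi-Lagrangian discretization is that it carries no CFL restriction. The paper instead differentiates the PDE $p$ times in $x$ (this is precisely where the hypothesis $\alpha(x,t)\equiv\alpha(t)$ enters) to conclude that $\partial^p e/\partial x^p$ is constant along characteristics, so its value at $(\wt{\xi}_{i+1/2}, t_n)$ equals its value at the arrival point $(x_{i+1/2}, t_{n+1})$ \emph{exactly}; the only approximation incurred is the ${\cal O}(h)$ move from the unknown stencil point $\varphi$ to $\wt{\xi}_{i+1/2}$ at fixed time $t_n$, and that is what generates the ${\cal O}(h^{p+2})$ remainder. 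Your observation that the solution is a time-dependent translate of its initial profile is exactly this fact; use it exactly rather than perturbatively.
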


\begin{proof}
For notational simplicity let us momentarily drop temporal dependence.
To begin, recall from \cref{SMsec:SL-flux} that $R_{i+1/2}[\bb{e}](x) \equiv Q_{p}'(x)$, with $Q_p(x)$ a polynomial interpolating $E(x)$, the primitive of $e(x)$, i.e., $e(x) = E'(x)$, at the $p+1$ cell interfaces in an interpolation stencil centered about the departure point $\wh{\xi}_{i+1/2}$.
Thus, by the fundamental theorem of calculus, the flux error \eqref{eq:SL-FV-trunc-E-def} can be expressed as
\begin{align} 
\wt{E}_{i+1/2} 
&:=
\int_{\wt{\xi}_{i + 1/2}}^{\wt{x}_{i + 1/2}}
R_{i + 1/2}[\bb{e}](x) - e(x) \d x,
\\
&=
\big[
Q_p(\wt{\xi}_{i + 1/2})
-
E(\wt{\xi}_{i + 1/2})
\big]
-
\big[
Q_p(\wt{x}_{i + 1/2})
-
E(\wt{x}_{i + 1/2})
\big],
\\
\label{eq:SL-E-prim-diff}
&=
Q_p(\wt{\xi}_{i + 1/2})
-
E(\wt{\xi}_{i + 1/2}),
\end{align}
with the last equation holding since $\wt{x}_{i + 1/2}$ is a point in the interpolation stencil.

Now to estimate \eqref{eq:SL-E-prim-diff} we use standard polynomial interpolation theory in the sense of \cref{SMlem:standard-poly-interp-est}.
To this end, we first restate the result of \cref{SMlem:standard-poly-interp-est} as it applies to the current context, i.e., with $U \mapsto E$, $k \mapsto 2k-1 = p$ and $\ell = k-1$.
Specifically, for any $x \in [\wt{x}_{i + 1/2} - k h,  \wt{x}_{i + 1/2} + (k-1) h]$, we have 
\begin{align} \label{eq:primitive-interp-error-E}
E(x) - Q_{p}(x)
=
\frac{1}{(p+1)!}
\prod \limits_{j = -k}^{k-1} \big[x - (\wt{x}_{i + 1/2} + jh) \big] E^{(p+1)} ( \varphi(x) ),
\end{align}
for $\varphi(x) \in [\wt{x}_{i + 1/2} - k h,  \wt{x}_{i + 1/2} + (k-1) h]$ an unknown point.
Now evaluate this error estimate at $x = \wt{x}_{i+1/2} - h \varepsilon_{i+1/2}$. 
First, let us reintroduce temporal dependence since it is now important. 
Noting that $\wt{x}_{i+1/2} - h \varepsilon_{i+1/2} - (\wt{x}_{i + 1/2} + jh) = -h( \varepsilon_{i+1/2} + j )$ and that $p$ is odd, we find from \eqref{eq:SL-E-prim-diff}
\begin{align} 
\wt{E}_{i+1/2}^n 
&=
-\frac{h^{p+1}}{(p+1)!}
\prod \limits_{j = -k}^{k-1} ( \varepsilon_{i+1/2} + j  ) \frac{\partial^{p+1} E}{\partial x^{p+1}}\bigg|_{(\varphi( \wt{\xi}_{i+1/2} ), t_n)},
\\
\label{eq:primitive-interp-error-E2}
&=
-h^{p+1} g_{p+1}(\varepsilon_{i+1/2}) \frac{\partial^{p} e}{\partial x^{p}} \bigg|_{(\varphi( \wt{\xi}_{i+1/2} ), t_n)},
\end{align}
with $g_{p+1}$ defined in \eqref{eq:SL-g-poly-def}.

Next, note that since $\varphi( \wt{\xi}_{i+1/2} ) \in [\wt{x}_{i + 1/2} - k h,  \wt{x}_{i + 1/2} + (k-1) h]$, we have $\varphi( \wt{\xi}_{i+1/2} ) = \wt{\xi}_{i+1/2} + {\cal O}(h)$ because $\wt{\xi}_{i+1/2} \in [\wt{x}_{i + 1/2} - h,  \wt{x}_{i + 1/2}]$. 
Therefore, by Taylor expansion, $\frac{\partial^{p} e}{\partial x^{p}} \big|_{(\varphi( \wt{\xi}_{i+1/2} ), t_n)} = \frac{\partial^{p} e}{\partial x^{p}} \big|_{( \wt{\xi}_{i+1/2}, t_n)} + {\cal O}(h)$.  
Finally, we note that differentiating the PDE in question, $\frac{\partial }{\partial t} (e) + \frac{\partial }{\partial x} (\alpha(t) e) = 0$, $p$ times with respect to $x$ gives $\frac{\partial }{\partial t} \big( \frac{\partial^{p} e}{\partial x^{p}} \big) + \frac{\partial }{\partial x} \big( \alpha(t) \frac{\partial^{p} e}{\partial x^{p}} \big) = 0$. Thus, the derivative $\frac{\partial^{p} e}{\partial x^{p}}$ is constant along the same characteristics as the PDE in question, and, so, $ \frac{\partial^{p} e}{\partial x^{p}} \big|_{( \wt{\xi}_{i+1/2}, t_n)} =  \frac{\partial^{p} e}{\partial x^{p}} \big|_{( x_{i+1/2}, t_{n+1})}$ because $(x_{i+1/2}, t_{n+1})$ is the arrival point associated with the departure point $( \wt{\xi}_{i+1/2}, t_n)$; see Figure \ref{fig:SL-FV-conservation}.
Inserting this into \eqref{eq:primitive-interp-error-E2} concludes the proof.

\end{proof}

\subsection{Approximating truncation error}
\label{SMsec:SL-trun-err}

In this section, we develop an expression for the truncation error operator ${\cal T}_{\rm direct}^{n}$ used in \eqref{eq:Psi}, corresponding to the truncation error of a single step of the coarse-grid semi-Lagrangian method.

Combining the results of \cref{SMlem:SL-trunc-est,SMlem:SL-flux-est}, we suppose that the truncation error for a single time-step of the coarse-grid semi-Lagrangian discretization method used in \eqref{eq:Psi} satisfies
\begin{align}
\tau^n_i 
&:= 
\Big(
\bb{e}(t_{n+1}) - {\cal S}^{t_n, m \delta t}_{p, 1_*} \bb{e}(t_n) \Big)_i
\\
\label{eq:SL-trunc-approx1}
&\approx
-
h^{p+1} 
\frac{
\displaystyle{
g_{p+1}(\varepsilon_{i+1/2}) \frac{\partial^{p} e}{\partial x^p} \bigg|_{(x_{i+1/2}, t_{n+1})}
-
g_{p+1}(\varepsilon_{i-1/2}) \frac{\partial^{p} e}{\partial x^p} \bigg|_{(x_{i-1/2}, t_{n+1})}
}
}{h},
\\
\label{eq:SL-trunc-approx2}
&\approx
-h^{p+1}
\Big(
{\cal D}_1
\diag 
\big(
g_{p+1} 
( 
\bm{\varepsilon}  
) 
\big)
{\cal D}_p^\top
\bb{e}(t_{n+1})
\Big)_i.
\end{align}
Here, $\bm{\varepsilon} = \big( \varepsilon_{1+1/2}, \ldots, \varepsilon_{n_x+1/2} \big) \in \mathbb{R}^{n_x}$, and the polynomial $g_{p+1}$ from \eqref{eq:SL-g-poly-def} is applied elementwise.
As previously, ${\cal D}_{j} \in \mathbb{R}^{n_x \times n_x}$ for $j$ odd is a 1st-order accurate FD approximation to the $j$th derivative of a periodic grid function, on a stencil that has a 1-point bias to the left.

The second approximation \eqref{eq:SL-trunc-approx2} arises simply from discretizing the $p$th-order derivatives in \eqref{eq:SL-trunc-approx1}. On the other hand, \eqref{eq:SL-trunc-approx1} arises from a series of approximations that are more difficult to justify. That being said, they are ultimately justified by the fact that they lead to an expression for ${\cal T}_{\rm direct}^{n}$ that results in fast MGRIT convergence. 
First, the error estimate \cref{SMlem:SL-trunc-est} assumed departure points are located exactly; however, in practice they are located approximately.
Second, the error estimate \cref{SMlem:SL-flux-est} applies only to spatially independent wave-speed functions $\alpha$; however, in practice the wave-speed function varies in space.
Third, the higher-order terms from \cref{SMlem:SL-flux-est} are assumed not to be important; however, without further analysis it is not clear that do not result in terms of the same order as those that have been retained.

Based on \eqref{eq:SL-trunc-approx2} and the above discussion, we implement the truncation error operator as
\begin{align} \label{eq:Tcal-SL}
{\cal T}_{\rm direct}^{n}
=
-h^{p+1}
{\cal D}_1
\diag 
\big(
g_{p+1} 
( 
\bm{\varepsilon}  
) 
\big)
{\cal D}_p^\top.
\end{align}
%

\section{Error estimates for linear(ized) method-of-lines discretizations}
\label{SMsec:ideal-err-est}


In this section, we present an error analysis for a linear method-of-lines discretization for a PDE that is an approximation to the linear conservation law \eqref{eq:cons-lin}.
The outcome of this analysis is that it is used to develop an expression for the approximate truncation error operator ${\cal T}_{\rm ideal}^{n}$ in the MGRIT coarse-grid operator \eqref{eq:Psi} that is used to solve certain linearized discretizations of \eqref{eq:cons-lin} (the development of ${\cal T}_{\rm ideal}^{n}$ is given later in \cref{SMsec:ideal-err-est-linearized}).
In addition, the analysis in this section is used to develop an MGRIT coarse-grid operator for solving standard, linear method-of-lines discretizations of \eqref{eq:cons-lin} (these developments come next in \cref{SMsec:MGRIT-linear-standard}).

The remainder of this section is structured as follows. 
\Cref{SMsec:cons-lin-approx} presents a simplified model PDE and describes its discretization.
\Cref{SMsec:trunc-est-MOL-cons-lin-approx} presents a spatial-discretization-agnostic truncation error estimate for the model problem and discretization.
\Cref{SMsec:est-reconstructions} presents error estimates for some standard linear reconstructions.
%

\subsection{A simplified linear PDE and its discretization}
\label{SMsec:cons-lin-approx}

While it is ultimately discretizations of the PDE \eqref{eq:cons-lin} that we solve with MGRIT, we do not develop truncation error estimates for this equation directly, but instead for a semi-autonomous approximation to it.\footnote{In essence, two key components in our analysis are computing the exact solution of a linear ODE system of the form $\d \bar{\bm{e}}(t) / \d t = L \bar{\bm{e}}(t)$, and expressing in closed-form the local truncation error of its Runge-Kutta approximation.
Both of these tasks are straightforward only when $L$ is time independent.}
Aspects of the analysis also apply to standard, linear method-of-lines discretizations of the PDE \eqref{eq:cons-lin}, enabling the development of an MGRIT solver for such problems; this scenario is considered in \cref{SMsec:MGRIT-linear-standard}.

To this end, let us introduce the short-hand ${\alpha^n(x) \equiv \alpha(x, t_n + \delta t \vartheta)}$, $n \in \mathbb{N}_0$, $\vartheta \in [0, 1]$, and define $e^n(t)$ as the solution of the following PDE holding on the local time interval $t \in (t_n, t_{n+1}]$, 
\begin{align}
\label{eq:cons-lin-space-vary} 
\frac{\partial e^n}{\partial t} + \frac{\partial }{\partial x} \big( \alpha^n(x) e^n \big) 
&= 0,
\quad (x, t) \in (-1, 1) \times (t_n, t_{n+1}],
\end{align}
where $\quad e^n(x, t_n) = e^{n-1}(x, t_n)$.
The PDE \eqref{eq:cons-lin-space-vary} arises from considering \eqref{eq:cons-lin} over the local time interval $t \in (t_n, t_{n+1}]$ and freezing the time-dependence of the wave-speed on this interval at the point $t_n + \delta t \vartheta$.\footnote{Our analysis is agnostic to the choice of $\vartheta \in [0, 1]$; however, when the results of this analysis are used heuristically to create an MGRIT coarse-grid operator, we find the choice of $\vartheta$ is extremely important in some cases. For example, in the MGRIT solution of the linearized systems arising from a 3rd-order WENO discretization of the Buckley--Leverett test problems from the main paper.}
We now outline the discretization for this problem, which tracks closely with the discretization outlined in Section \ref{sec:discretization}; however, they are some key differences. 
%
%

Integrating \eqref{eq:cons-lin-space-vary} over the discretized spatial domain gives the exact relation for $t \in (t_n, t_{n+1}]$
\begin{align}
\label{eq:cons-lin-space-vary-int} 
\frac{\d \bar{e}^n_i(t)}{\d t} = 
-\frac{f^n(x_{i+1/2}, e(x_{i+1/2})) - f^n(x_{i-1/2}, e(x_{i-1/2}))}{h},
\quad i \in \{1, \ldots, n_x \},
\end{align}
with flux $f^n(x, e) := \alpha^n(x) e^n(x)$ (omitting $t$ dependence in $e^n(x)$ for readability).

Next, the physical fluxes in \eqref{eq:cons-lin-space-vary-int} are approximated with numerical fluxes: \\ $f^n(x_{i+1/2}, e(x_{i+1/2})) \approx \wh{f}^n_{i+1/2}$. This results in the semi-discretization for $
t \in (t_n, t_{n+1}]$
\begin{align} \label{eq:cons-lin-space-vary-semi-disc} 
\frac{\d \bar{e}_i^{n}}{\d t} 
\approx
- 
\frac{\wh{f}^n_{i+1/2} - \wh{f}^n_{i-1/2}}{h}
=:
\big( \wh{L}^{n} \bar{\bm{e}}^{n} \big)_i, 
\quad i \in \{1, \ldots, n_x \},
\end{align}
where $\bb{e}^n = \big( \bar{e}_1^n, \ldots, \bar{e}_{n_x}^n \big)^\top \in \mathbb{R}^{n_x}$.
The numerical flux $\wh{f}^n_{i+1/2}$ introduced here is of the LF type for a linear problem given by \eqref{eq:LFF-lin}.
For the time being, we do not want to make any assumptions about the numerical flux, including how $e^n(x_{i+1/2})$ is reconstructed.\footnote{Later we consider several different reconstruction techniques including standard polynomial reconstruction and Picard-linearized WENO reconstructions. Therefore our goal is to develop a general error estimate that is agnostic to specifics of the reconstruction.}
That being said, the analysis requires us to make the following assumption.
\begin{assumption}[Error for general linear spatial discretization]
\label{ass:E-est}
Assume that the numerical flux function $\wh{f}^n_{i+1/2} = \wh{f}^n_{i+1/2}(\bb{e}^n) \approx \alpha^n(x_{i+1/2}) e^n(x_{i+1/2})$ introduced in \eqref{eq:cons-lin-space-vary-semi-disc} is linear in $\bb{e}^n$.
Further, assume that the numerical flux satisfies
\begin{align}
\wh{f}^n_{i+1/2}(\bb{e}^n) = \alpha^n(x_{i+1/2}) e^n(x_{i+1/2}) + \wh{E}^n_{i+1/2} (\bb{e}^n),
\end{align}
with the error $\wh{E}^n_{i+1/2} = \wh{E}^n_{i+1/2}(\bb{e}^n)$ also a linear function of $\bb{e}^n$.

Defining the linear difference operator $\wh{{\cal E}}^{n} \colon \mathbb{R}^{n_x} \to \mathbb{R}^{n_x}$ such that $\big(\wh{{\cal E}}^{n} \bb{e}^n \big)_i = \frac{\wh{E}^n_{i+1/2} - \wh{E}^n_{i-1/2}}{h}$, from the above relation, it follows that the right-hand side of \eqref{eq:cons-lin-space-vary-int} can be expressed as a linear function of $\bb{e}^n$ as follows:
\begin{align}
-
\frac{\alpha^n(x_{i+1/2}) e^n(x_{i+1/2}) - \alpha^n(x_{i-1/2}) e^n(x_{i-1/2})}{h} 
=
\big[ \big( \wh{L}^n + \wh{{\cal E}}^{n}\big) \bb{e}^n \big]_i,
\end{align}
with $\wh{L}^n$ the spatial discretization in \eqref{eq:cons-lin-space-vary-semi-disc}.
\end{assumption}
Once specific details of the reconstruction are specified, estimates can be made for the error term $\wh{E}_{i+1/2}$ (see, e.g., \cref{SMlem:num-flux-est}).
Since the flux is assumed linear in $\bb{e}^n$, it follows that the spatial discretization operator $\wh{L}^{n}$ from \eqref{eq:cons-lin-space-vary-semi-disc} is also assumed to be linear.

Next, the ODEs \eqref{eq:cons-lin-space-vary-semi-disc} are approximately advanced forwards in time with an ERK method resulting in
\begin{align} \label{eq:lin-cons-one-step}
\bar{\bm{e}}^{n+1} = \Phi^{n} \bar{\bm{e}}^{n},
\end{align}
with $\Phi^{n}$ the time-stepping operator, and $\bb{e}^{n} \approx \bb{e}(t_{n})$.
Since the ODEs \eqref{eq:cons-lin-space-vary-semi-disc} are linear and autonomous, the time-stepping operator can be written as
\begin{align} \label{eq:RK-stab-expansion}
\Phi^{n} = R \big( \delta t \wh{L}^{n} \big) = \exp \big( \delta t \wh{L}^{n} \big) + \wh{e}_{\rm RK} \big( \delta t \wh{L}^{n} \big)^{q+1} + {\cal O} \big( \big( \delta t \wh{L}^{n} \big)^{q+2} \big),
\end{align}
in which $R$ is the \textit{stability function} of the ERK method, and $q+1$ is the order of its local truncation error.
Here, we define $\wh{e}_{\rm RK}$ as the error constant of the method. For the forward Euler method \eqref{eq:ERK1}, $\wh{e}_{\rm RK} = -1/2$, and for the 3rd-order ERK method \eqref{eq:ERK3}, $\wh{e}_{\rm RK} = -1/4!$; see \cite{DeSterck_etal_2023_MOL} for further discussion.

\subsection{Truncation error estimate}
\label{SMsec:trunc-est-MOL-cons-lin-approx}

We now develop a truncation error estimate for the previously described discretization of the integral conservation law \eqref{eq:cons-lin-space-vary-int}.
First \cref{SMlem:exact-sol-int-con} presents an exact solution to \eqref{eq:cons-lin-space-vary-int}, and then \cref{SMlem:trunc-est-int-cons-law} gives a truncation error estimate for the discretization of it.


\begin{lemma}[Exact solution of integral form of conservation law]
\label{SMlem:exact-sol-int-con}
Assume that \cref{ass:E-est} holds, and let $\bb{e}^n(t)$ be the exact solution of the local conservation law \eqref{eq:cons-lin-space-vary-int} at time $t \in [t_n, t_{n+1}]$.
Then,
\begin{align} \label{eq:cons-lin-adv-spatially-vary-sol}
\bb{e}^{n}(t) = \exp \big[ (t - t_n) \big( \wh{L}^n +  {\cal E}^n \big) \big] \bb{e}^{n} (t_n), \quad t \in [t_n, t_n + \delta t],
\end{align}
where the linear operator $\wh{L}^n + \wh{{\cal E}}^{n}$ is as defined in \cref{ass:E-est}.
\end{lemma}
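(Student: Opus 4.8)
The plan is to reduce the claim to the elementary fact that a linear, constant-coefficient ODE system is solved by the matrix exponential, with the entire substance of the lemma being carried by \cref{ass:E-est}, which is precisely what renders the coefficient operator both linear and time-independent.

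First I would rewrite the right-hand side of the integral conservation law \eqref{eq:cons-lin-space-vary-int}. Since the physical flux appearing there is $f^n(x_{i+1/2}, e(x_{i+1/2})) = \alpha^n(x_{i+1/2}) e^n(x_{i+1/2})$, the closing display of \cref{ass:E-est} asserts exactly that the $i$th component of this right-hand side equals $\big[(\wh{L}^n + \wh{{\cal E}}^n)\bb{e}^n\big]_i$. Collecting these component-wise identities over $i = 1, \dots, n_x$ gives the vector ODE
\begin{align}
\frac{\d \bb{e}^n(t)}{\d t} = \big(\wh{L}^n + \wh{{\cal E}}^n\big) \bb{e}^n(t), \quad t \in (t_n, t_{n+1}],
\end{align}
with initial datum $\bb{e}^n(t_n)$ inherited from \eqref{eq:cons-lin-space-vary}.

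The crucial observation is that this system is \emph{autonomous}: both $\wh{L}^n$ and $\wh{{\cal E}}^n$ are built from the frozen wave-speed $\alpha^n(x) = \alpha(x, t_n + \delta t \vartheta)$, which carries no $t$-dependence over the interval in question, so the coefficient matrix $A := \wh{L}^n + \wh{{\cal E}}^n$ is a fixed element of $\mathbb{R}^{n_x \times n_x}$. I would then invoke the standard theory of linear constant-coefficient ODE systems: the unique solution of $\d \bb{e}^n/\d t = A \bb{e}^n$ with $\bb{e}^n(t_n)$ prescribed is $\bb{e}^n(t) = \exp[(t - t_n) A]\,\bb{e}^n(t_n)$, which is exactly the claimed formula. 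Uniqueness follows from the global Lipschitz property of the linear right-hand side via Picard--Lindel\"of.

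There is no genuine analytical obstacle here; the content of the lemma resides entirely in the modelling choice that precedes it. The only point I would stress in the write-up is that the semi-autonomous freezing of the wave-speed is what licenses the closed-form matrix exponential: had $\alpha$ retained its $t$-dependence, $A$ would be time-varying and the solution operator would instead be a time-ordered exponential, which would obstruct the downstream truncation-error expansion in \eqref{eq:RK-stab-expansion} (cf. the footnote in \cref{SMsec:cons-lin-approx} noting that both the exact-solution and Runge--Kutta steps are tractable only for time-independent $L$). I would therefore present the proof as a two-line invocation of \cref{ass:E-est} followed by the matrix-exponential formula, making explicit where time-independence of $A$ is used.
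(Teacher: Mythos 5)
Your proposal is correct and takes essentially the same route as the paper: both hinge on \cref{ass:E-est} to rewrite the right-hand side of \eqref{eq:cons-lin-space-vary-int} as the time-independent linear operator $\wh{L}^n + \wh{{\cal E}}^n$ acting on $\bb{e}^n$, after which the matrix exponential does the rest (the paper simply verifies the formula by differentiating it, whereas you derive it from the standard constant-coefficient ODE theory and additionally note uniqueness, which the paper leaves implicit).
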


\begin{proof}
To see that \eqref{eq:cons-lin-adv-spatially-vary-sol} indeed solves \eqref{eq:cons-lin-space-vary-int}, consider the $i$th component of \eqref{eq:cons-lin-adv-spatially-vary-sol}, differentiating both sides with respect to $t$ and then invoking the expression for $\wh{L}^n + \wh{{\cal E}}^{n}$ from \cref{ass:E-est}:
\begin{align}
\frac{\d \bar{e}_i^n}{\d t} 
&= 
\Big[ \big( \wh{L}^n + \wh{{\cal E}}^{n} \big) \exp \big[ (t - t_n) \big( \wh{L}^n + {\cal E}^n \big) \big] \bb{e}^{n} (t_n) \Big]_i,
\\
&=
\big[ \big( \wh{L}^n + \wh{{\cal E}}^{n} \big) \bb{e}^n \big]_i,
\\
&=
-
\frac{\alpha^n(x_{i+1/2}) e^n(x_{i+1/2}) - \alpha^n(x_{i-1/2}) e^n(x_{i-1/2})}{h}.
\end{align}
\end{proof}

We are now ready to present an error estimate for the fully discrete scheme \eqref{eq:lin-cons-one-step}. 
The steps in this proof track closely with those used in the proof of \cite[Lemma 3]{DeSterck_etal_2023_MOL}, in which we developed a truncation error estimates for non-conservative, FD, Runge-Kutta discretizations of constant wave-speed linear advection problems.

\begin{lemma}[Truncation error estimate for $\Phi^{n}$]
\label{SMlem:trunc-est-int-cons-law}
Assume that \cref{ass:E-est} holds, and let $\bb{e}^n(t)$ be the exact solution of the local conservation law \eqref{eq:cons-lin-space-vary-int} at time $t \in [t_n, t_{n+1}]$.
Assume this function is sufficiently smooth.
Then, the local truncation error of the one-step discretization \eqref{eq:lin-cons-one-step} at time $t_{n+1}$ can be written as
%
%
\begin{align} 
\label{eq:trunc-est-cons-lin-def}
\bm{\tau}^{n}
&:=
\bb{e}^n(t_{n+1})
- 
\Phi^{n}
\bb{e}^n(t_n),
\\
\label{eq:trunc-est-cons-lin}
&=
\Big( - \delta t \wh{{\cal E}}^{n} + \wh{e}_{\rm RK} \big( \delta t \wh{L}^{n} \big)^{q+1} \Big)
\bb{e}^{n}(t_{n+1}) 
+ 
{\cal O} \Big( \big( \delta t \wh{L}^{n} \big)^{q+2}, \big(\delta t \wh{{\cal E}}^{n} \big)^2 \Big),
\end{align}
in which $\wh{L}^{n}$ is the spatial discretization introduced in \eqref{eq:cons-lin-space-vary-semi-disc}, and $\wh{{\cal E}}^{n}$, as defined in \cref{ass:E-est}, is the associated spatial discretization error.
\end{lemma}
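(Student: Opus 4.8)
The plan is to combine the exact cell-average propagator supplied by \cref{SMlem:exact-sol-int-con} with the stability-function expansion \eqref{eq:RK-stab-expansion} of $\Phi^{n}$, and then to isolate the spatial-discretization error and the Runge--Kutta error by a first-order perturbation of a matrix exponential in the ``small'' operator $\delta t \wh{{\cal E}}^{n}$. First I would invoke \cref{SMlem:exact-sol-int-con}, which under \cref{ass:E-est} gives the exact one-step propagator $\bb{e}^n(t_{n+1}) = \exp\big[\delta t(\wh{L}^n + \wh{{\cal E}}^n)\big]\bb{e}^n(t_n)$. Substituting this into the definition \eqref{eq:trunc-est-cons-lin-def} turns the truncation error into the action of a pure operator difference,
\begin{equation}
\bm{\tau}^n = \Big( \exp\big[\delta t(\wh{L}^n + \wh{{\cal E}}^n)\big] - \Phi^n \Big)\bb{e}^n(t_n).
\end{equation}
Replacing $\Phi^n$ by its expansion \eqref{eq:RK-stab-expansion} then splits the bracket into the exponential difference $\exp[\delta t(\wh{L}^n + \wh{{\cal E}}^n)] - \exp(\delta t \wh{L}^n)$, the explicit temporal term involving $\wh{e}_{\rm RK}(\delta t \wh{L}^n)^{q+1}$, and a remainder of order $(\delta t \wh{L}^n)^{q+2}$.

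The central step is to expand the perturbed exponential to first order in $\delta t \wh{{\cal E}}^n$. Writing $A := \delta t \wh{L}^n$ and $B := \delta t \wh{{\cal E}}^n$ and collecting the power series of $\exp(A+B)$ by the number of factors of $B$, all terms with no factor of $B$ sum to $\exp(A)$, the unique lowest-order term linear in $B$ is $B$ itself, and every remaining term carries either two or more factors of $B$ or at least one factor of $B$ together with one or more factors of $A$. The same structure follows rigorously from the Duhamel identity $\exp(A+B) = \exp(A) + \int_0^1 \exp\big((1-s)A\big)\,B\,\exp\big(s(A+B)\big)\,\d s$, whose integrand contains exactly one explicit factor of $B$, exhibiting the remainder as ${\cal O}(B^2)$ once the kernel is expanded. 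Hence $\exp(A+B) - \exp(A) = B + {\cal O}(AB, B^2)$, and collecting the two surviving leading corrections — the spatial-error contribution proportional to $\delta t \wh{{\cal E}}^n$ and the temporal-error contribution involving $\wh{e}_{\rm RK}(\delta t \wh{L}^n)^{q+1}$ — yields the bracketed operator of \eqref{eq:trunc-est-cons-lin}. Finally I would convert the argument from $\bb{e}^n(t_n)$ to $\bb{e}^n(t_{n+1})$ via $\bb{e}^n(t_n) = \exp[-\delta t(\wh{L}^n + \wh{{\cal E}}^n)]\bb{e}^n(t_{n+1}) = (I + {\cal O}(A,B))\bb{e}^n(t_{n+1})$; multiplying this correction against the already first-order leading terms only produces contributions of the orders already discarded, so the expression is unchanged to the stated accuracy.

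The main obstacle is the order bookkeeping that licenses absorbing the mixed term $\delta t \wh{L}^n\,\delta t \wh{{\cal E}}^n$ into the remainder ${\cal O}\big((\delta t \wh{L}^n)^{q+2}, (\delta t \wh{{\cal E}}^n)^2\big)$, since this cross term is literally neither a pure power of $\delta t \wh{L}^n$ nor a pure power of $\delta t \wh{{\cal E}}^n$. Justifying this requires a size model in which an extra factor of $\delta t \wh{L}^n$ genuinely lowers the order when acting on the smooth exact solution, so that $\delta t \wh{L}^n\,\delta t \wh{{\cal E}}^n$ is subdominant to the retained $\delta t \wh{{\cal E}}^n$ term; this is precisely the heuristic element flagged in the surrounding text. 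The non-commutativity of $\wh{L}^n$ and $\wh{{\cal E}}^n$ is a related wrinkle, and it is the reason the Duhamel representation, rather than a naive $\exp(A+B) = \exp(A)\exp(B)$ factorization, is the appropriate tool for controlling the remainder cleanly.
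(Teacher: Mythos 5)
Your proposal is correct and follows essentially the same route as the paper's proof: both invoke \cref{SMlem:exact-sol-int-con} for the exact propagator $\exp\big[\delta t(\wh{L}^n+\wh{{\cal E}}^n)\big]$, insert the stability-function expansion \eqref{eq:RK-stab-expansion} of $\Phi^n$, and keep only the terms linear in $\delta t\wh{{\cal E}}^n$ and the leading Runge--Kutta defect, the only cosmetic difference being that the paper first writes $\bb{e}^n(t_n)=\exp\big[-\delta t(\wh{L}^n+\wh{{\cal E}}^n)\big]\bb{e}^n(t_{n+1})$ so the whole expansion is carried out against $\bb{e}^n(t_{n+1})$, whereas you expand at $t_n$ and convert at the end (your Duhamel remark simply makes explicit the step the paper performs as an unstated simplification, and your flagged cross-term $\delta t\wh{L}^n\,\delta t\wh{{\cal E}}^n$ is glossed over in exactly the same heuristic way there). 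Note that the bracket you obtain, $\delta t\wh{{\cal E}}^n-\wh{e}_{\rm RK}(\delta t\wh{L}^n)^{q+1}$, agrees with the paper's own proof computation and with the downstream use in \eqref{eq:trunc-est-int-cons-ideal}, but is the negative of the sign printed in the lemma statement \eqref{eq:trunc-est-cons-lin}, which appears to contain a sign typo.
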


\begin{proof}
Using \eqref{eq:cons-lin-adv-spatially-vary-sol}, the solution to \eqref{eq:cons-lin-space-vary-int} at time $t = t_{n}$ may be written as
\begin{align} \label{eq:trunc-est-proof-aux0}
\bb{e}^n(t_n) 
=
\exp \big( - \delta t ( \wh{L}^{n} + \wh{{\cal E}}^n ) \big) \bb{e}^n(t_{n+1}).
\end{align}
Plugging this into definition \eqref{eq:trunc-est-cons-lin-def} gives
\begin{align} \label{eq:trunc-est-proof-aux}
\bm{\tau}^{n}
=
\Big[
I
- 
\Phi^{n}
\exp \big( - \delta t ( \wh{L}^{n} + \wh{{\cal E}}^n ) \big)
\Big]
\bb{e}^n(t_{n+1}).
\end{align}
Next, plug in the series expansion for $\Phi^{n}$ given in \eqref{eq:RK-stab-expansion} and simplify to give (abusing notation by dropping the vector $\bb{e}^n(t_{n+1})$)
\begin{align}
\begin{split}
& I
- 
\Phi^{n}
\exp \big( - \delta t ( \wh{L}^{n} + \wh{{\cal E}}^n ) \big)
\\
&=
I
- 
\Big[ 
\exp \big( \delta t \wh{L}^{n} \big) + \wh{e}_{\rm RK} \big( \delta t \wh{L}^{n} \big)^{q+1} + {\cal O}\Big( \big( \delta t \wh{L}^{n} \big)^{q+2} \Big)
\Big] 
\exp \big( - \delta t ( \wh{L}^{n} + \wh{{\cal E}}^n ) \big),
\end{split}
\\
&=
\Big( \delta t \wh{{\cal E}}^{n} - \wh{e}_{\rm RK} \big( \delta t \wh{L}^{n} \big)^{q+1} \Big)
+ 
{\cal O} \Big( \big( \delta t \wh{L}^{n} \big)^{q+2}, \big( \delta t \wh{{\cal E}}^{n} \big)^2 \Big).
\end{align}
Plugging this final expressing into \eqref{eq:trunc-est-proof-aux} concludes the proof.
\end{proof}

From \cref{SMlem:trunc-est-int-cons-law} we want to develop a truncation error estimate for the ideal coarse-grid operator $\Psi_{\rm ideal}^{n, m \delta t} := \prod_{j = 0}^{m-1} \Phi^{n+j}$, i.e., $m$ fine-grid steps across the coarse-grid interval $t \in [t_n, t_{n+m}]$.
This can be developed rigorously using Taylor-series-type argument, but this is not necessary for our purposes since this analysis is heuristic anyway. 
Considering the form of \eqref{eq:trunc-est-cons-lin} we suppose that the ideal coarse-grid operator satisfies the approximation
\begin{align} \label{eq:trunc-est-int-cons-ideal} 
\begin{split}
&\bb{e}^{n+m-1}(t_{n+m})
- 
\Psi_{\rm ideal}^{n, m \delta t} 
\bb{e}^n(t_n)
\approx
\\
&\hspace{4ex}
\Bigg[
\sum_{j = 0}^{m-1}
\Big( \delta t \wh{{\cal E}}^{n+j} - \wh{e}_{\rm RK} \big( \delta t \wh{L}^{n+j} \big)^{q+1} \Big)
\Bigg]
\bb{e}^{n+m-1}(t_{n+m}).
\end{split}
\end{align}

\subsection{Error estimates for reconstructions}
\label{SMsec:est-reconstructions}

We conclude \cref{SMsec:ideal-err-est} with the following lemma: An error estimate for the left-shifted polynomial reconstructions that are described Sections \ref{sec:reconstruction} and \ref{SMsec:reconstruction-overview}.
\begin{lemma}[Error estimate for shifted linear reconstruction using $k$ cells]
\label{SMlem:est-lin-rec}
Consider the polynomial reconstructions $\big( \wt{R}^{\ell} \bb{u} \big)_i$ and $\big( {R}^{\ell} \bb{u} \big)_i$ of the function $u(x)$ at the left- and right-hand interfaces of cell ${\cal I}_i$, respectively, as given in \eqref{eq:reconstuct-lin}.
Recall that, for fixed \textit{left-shift} $\ell \in \{0, \ldots, k - 1\}$, these reconstructions use the $k$ cell averages of $u$ in the stencil $S^{\ell}_i := \{  {\cal I}_{i - \ell}, \ldots , {\cal I}_{ i - \ell + ( k - 1 ) } \}$.
Suppose that $u(x)$ is $k+1$ times differentiable over the cells in $S^{\ell}_i$. 
Then, these reconstructions satisfy
\begin{align} \label{eq:lin-rec-est-LHS}
\big( \wt{R}^{\ell} \bb{u} \big)_i
&=
u( x_{i - 1/2} ) 
- 
\frac{(-h)^k \wt{\zeta}^{\ell}}{(k+1)!}
\frac{\d^k u}{\d x^k} \bigg|_{\xi_{i,\ell}(x_{i - 1/2})}, 
\quad
\wt{\zeta}^{\ell} = (-1)^{\ell} \ell ! (k - \ell) !,
\\
\label{eq:lin-rec-est-RHS}
\big( R^{\ell} \bb{u} \big)_i
&=
u( x_{i + 1/2} ) 
- 
\frac{(-h)^k \zeta^{\ell} }{(k+1)!} 
\frac{\d^k u}{\d x^k} \bigg|_{\xi_{i,\ell}(x_{i + 1/2})},
\quad
\zeta^{\ell} = (-1)^{\ell+1} (\ell+1)! (k - \ell - 1)!,
\end{align}
with $\xi_{i,\ell}(x_{i \pm 1/2})$ a pair of (unknown) points in the interior of the stencil, $\xi_{i,\ell}(x_{i \pm 1/2}) \in (x_{i - \ell - 1/2}, x_{i - \ell + (k - 1) + 1/2})$.

\end{lemma}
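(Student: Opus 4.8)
The plan is to build on the primitive-function framework of \cref{SMsec:reconstruction-overview} and reduce each reconstruction error to the nodal error of the \emph{derivative} of a polynomial interpolant of the primitive. Recall from \eqref{eq:interp-poly-derivative} that the reconstruction polynomial $q_{k-1}$ underlying both $\wt{R}^{\ell}$ and $R^{\ell}$ satisfies $q_{k-1}(x) \equiv Q_k'(x)$, where $Q_k$ is the degree-$k$ polynomial interpolating the primitive $U$ (with $U' = u$) at the $k+1$ cell interfaces of the stencil $S^{\ell}_i$. Writing these equispaced nodes as $y_s = x_{i - \ell - 1/2} + s h$ for $s = 0, \ldots, k$, the first observation is that the two reconstruction points are \emph{themselves} interpolation nodes, $x_{i-1/2} = y_{\ell}$ and $x_{i+1/2} = y_{\ell+1}$, which holds precisely because $\ell \in \{0, \ldots, k-1\}$. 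Hence each reconstruction error equals the value of $Q_k' - U'$ at a node of $Q_k$.

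The key step is to evaluate the derivative of the interpolation remainder at a node. From \cref{SMlem:standard-poly-interp-est} we have $U(x) - Q_k(x) = \tfrac{1}{(k+1)!}\,\omega(x)\,U^{(k+1)}(\varphi(x))$ with $\omega(x) = \prod_{s=0}^{k}(x - y_s)$, but differentiating this Lagrange form directly is awkward because $\varphi(x)$ is only implicitly defined. I would instead use the exact divided-difference form of the remainder, $U(x) - Q_k(x) = U[y_0,\ldots,y_k,x]\,\omega(x)$; differentiating and evaluating at $x = y_m$ annihilates the term carrying $\omega(y_m)=0$, leaving $U'(y_m) - Q_k'(y_m) = U[y_0,\ldots,y_k,y_m]\,\omega'(y_m)$. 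The confluent divided difference of order $k+1$ then equals $U^{(k+1)}(\eta)/(k+1)!$ for some $\eta$ in the open node interval, by the mean-value property of divided differences (valid since $u$ is $k+1$ times differentiable). This gives
\[
q_{k-1}(y_m) = u(y_m) - \frac{\omega'(y_m)}{(k+1)!}\,U^{(k+1)}(\eta),
\]
and using $U^{(k+1)} = u^{(k)} = \tfrac{\d^k u}{\d x^k}$ identifies $\eta$ with the interior point $\xi_{i,\ell}(x_{i\pm1/2}) \in (x_{i-\ell-1/2}, x_{i-\ell+(k-1)+1/2})$.

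It then remains to evaluate $\omega'(y_m) = \prod_{s \ne m}(y_m - y_s) = h^{k}\prod_{s \ne m}(m - s)$ at the two relevant nodes. For the left interface $m = \ell$, splitting the product over $s < \ell$ and $s > \ell$ yields $\ell!$ and $(-1)^{k-\ell}(k-\ell)!$, so that $\prod_{s\ne\ell}(\ell - s) = (-1)^{k-\ell}\ell!(k-\ell)!$; matching against $(-h)^k = (-1)^k h^k$ and using $(-1)^{k-\ell} = (-1)^{k+\ell}$ recovers exactly $\wt{\zeta}^{\ell} = (-1)^{\ell}\ell!(k-\ell)!$ and hence \eqref{eq:lin-rec-est-LHS}. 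For the right interface $m = \ell+1$ the same splitting gives $(\ell+1)!$ and $(-1)^{k-\ell-1}(k-\ell-1)!$, producing $\zeta^{\ell} = (-1)^{\ell+1}(\ell+1)!(k-\ell-1)!$ and \eqref{eq:lin-rec-est-RHS}.

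The main obstacle I expect lies in the second step: correctly handling the derivative of the interpolation remainder at a node. The naive route of differentiating the Lagrange form of \cref{SMlem:standard-poly-interp-est} stumbles on the unknown $x$-dependence of $\varphi(x)$, so the cleanest rigorous path is the confluent divided-difference argument sketched above, which never requires differentiating $\varphi$. Everything else is bookkeeping: verifying that the reconstruction points sit at node indices $\ell$ and $\ell+1$, and evaluating two elementary products of integers to pin down the constants $\wt{\zeta}^{\ell}, \zeta^{\ell}$ together with their signs.
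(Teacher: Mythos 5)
Your proposal is correct and follows the same architecture as the paper's proof: identify the reconstruction polynomial as the derivative of the degree-$k$ interpolant of the primitive $U$, observe that $x_{i\mp 1/2}$ are the $\ell$th and $(\ell+1)$st interpolation nodes, differentiate the interpolation remainder, use that the node polynomial vanishes there, and evaluate $\prod_{s\ne m}(y_m-y_s)$ to extract $\wt{\zeta}^{\ell}$ and $\zeta^{\ell}$ (your sign bookkeeping checks out against \eqref{eq:lin-rec-est-LHS}--\eqref{eq:lin-rec-est-RHS}). The one genuine difference is the technical step you flag: the paper differentiates the mean-value form of the remainder from \cref{SMlem:standard-poly-interp-est} directly, producing a chain-rule term $\frac{\d \xi_{\ell}}{\d x}$ in \eqref{eq:linear-reconstruction-est-aux} that presupposes the intermediate point is a differentiable function of $x$, and then kills that term using $K(x_{\pm 1/2})=0$; your divided-difference form $U(x)-Q_k(x)=U[y_0,\dots,y_k,x]\,\omega(x)$ reaches the same nodal identity without ever differentiating the intermediate point, which is the more rigorous route (a limit of $\omega(x)/(x-y_m)$ times a continuous divided difference suffices, followed by the mean-value property of divided differences). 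The end computation of the constants is identical in both treatments, so the two proofs buy the same result; yours simply closes a small rigor gap in the paper's.
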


\begin{proof}
Because the reconstruction polynomial is the derivative of an interpolation polynomial (see \cref{SMsec:reconstruction-overview}), this proof more-or-less follows the same steps as one used to derive the error for finite-difference approximations to a first derivative (these too are derivatives of interpolation polynomials); see e.g., \cite[Section 8.1]{Hesthaven_2017} or \cite[Lemma 2]{DeSterck_etal_2023_MOL}.

Recall that for fixed $\ell$ that
$\big( \wt{R}^{\ell} \bb{u} \big)_i =q_{i}^{\ell}(x_{i-1/2})$ and $ \big( R^{\ell} \bb{u} \big)_i = q_{i}^{\ell}(x_{i+1/2})$, where $q_i^{\ell}(x) = \d Q_i^{\ell} / \d x$ is a degree at most $k-1$ polynomial recovering the cell averages of $e(x)$ over $k \geq 1$ cells $\{ {\cal I}_{i - \ell}, \ldots, {\cal I}_{i - \ell + k -1} \}$.
Moreover, $Q_i^{\ell}$ is the unique polynomial of degree at most $k$ that interpolates $U(x) := \int^x u(\xi) \d \xi$ at the $k+1$ interfaces of the cells in the above stencil: $Q_i^{\ell}(x_{j+1/2}) = U(x_{j+1/2})$ at points $\{ x_{j+1/2} \}_{j = i - \ell - 1}^{i - \ell - 1 - k}$.

For notational simplicity, let us drop the $i$ subscripts. 
Because $u$ is sufficiently smooth, we can invoke \cref{SMlem:standard-poly-interp-est} to see that, for any $x \in [x_{i - \ell - 1/2}, x_{i - \ell + k - 1/2}]$
\begin{align} \label{eq:linear-reconstruction-prim-est}
U(x) - Q^{\ell}(x) = \frac{K(x)}{(k+1)!}  \frac{\d^{k+1} U}{\d x^{k+1}} \bigg|_{\xi_{\ell}(x)},
\quad
K(x) := \prod \limits_{j = -\ell - 1}^{-\ell - 1 + k} (x - x_{j+1/2}),
\end{align}
with $\xi_{\ell}(x)$ some (unknown) point in the interior of the interpolation stencil, $\xi_{\ell}(x) \in (x_{i - \ell - 1/2}, x_{i - \ell + k - 1/2})$.
Since $\d U / \d x = u$ and we desire an estimate for $u$, we differentiate \eqref{eq:linear-reconstruction-prim-est} to get
%
\begin{align} \label{eq:linear-reconstruction-est-aux}
u(x) - q^{\ell}(x) 
= 
\frac{1}{(k+1)!} \frac{\d K}{\d x} \bigg|_{x} \frac{\d^{k} u}{\d x^{k}} \bigg|_{\xi_{\ell}(x)} 
+ 
\frac{K(x)}{(k+1)!} \frac{\d^{k+1} u}{\d x^{k+1}} \bigg|_{\xi_{\ell}(x)} \frac{\d \xi_{\ell}}{\d x} \bigg|_{x}.
\end{align}
To arrive at the estimates \eqref{eq:lin-rec-est-LHS} and \eqref{eq:lin-rec-est-RHS}, all that is required is to evaluate \eqref{eq:linear-reconstruction-est-aux} at $x = x_{-1/2}$ and $x = x_{1/2}$, respectively.
To this end, notice from \eqref{eq:linear-reconstruction-prim-est} that $K(x_{\pm 1/2}) = 0$, and, thus, the second added on the right-hand side of \eqref{eq:linear-reconstruction-est-aux} vanishes when the estimate is evaluated at $x_{\pm 1/2}$.
Next, from \eqref{eq:linear-reconstruction-prim-est} we have that
\begin{align}
\frac{\d K}{\d x} = \sum \limits_{s = -\ell - 1}^{-\ell - 1 + k} 
\hspace{2ex}
\prod \limits_{\substack{j = -\ell - 1 \\ \neq s}}^{-\ell - 1 + k} (x - x_{j+1/2}).
\end{align}
It is not difficult to show that $\d K (x_{-1/2}) / \d x = (-h)^{k} \wt{\zeta}^{\ell}$, and $\d K  (x_{1/2}) / \d x = (-h)^{k} {\zeta}_{\ell}$.
%
%
Evaluating \eqref{eq:linear-reconstruction-est-aux} at $x_{\pm 1/2}$, and plugging in the above values concludes the proof.
\end{proof}
%


\section{MGRIT solution of linear conservation laws using standard, linear method-of-lines discretization}
\label{SMsec:MGRIT-linear-standard}

In this section, we consider the MGRIT solution of the linear conservation law \eqref{eq:cons-lin} when it is discretized with standard, linear method-of-lines  discretizations, and the wave-speed $\alpha(x,t)$ is some given function, readily computable for any $x, t$.
We are \underline{not} considering the scenario described in the main paper in which the PDE \eqref{eq:cons-lin} is discretized with a linearized method-of-lines discretization, and $\alpha(x, t)$ is the linearized wave-speed of \eqref{eq:cons-law}.

The motivation for this section is twofold. 
First, it generalizes our previous work from \cite{DeSterck_etal_2023_MOL}, which used MGRIT to solve method-of-lines discretizations of constant wave-speed advection problems, because we now solve variable wave-speed conservation laws. 
Second, and most relevant to the main paper, our development in the next section, \cref{SMsec:ideal-err-est-linearized}, of the truncation error operator ${\cal T}_{\rm ideal}^{n}$ used in \eqref{eq:Psi} for the MGRIT solution of linearized method-of-lines discretizations is heavily inspired by the development of the ${\cal T}_{\rm ideal}^{n}$ in this section for standard, linear method-of-lines discretizations.

The remainder of this section is organized as follows.
\Cref{SMsec:MGRIT-linear-standard-disc} gives details of the fine-grid discretization and presents error estimates required to approximate the truncation error.
\Cref{SMsec:Psi-MGRIT-linear-standard} presents the coarse-grid operator, then numerical MGRIT results follow in \cref{SMsec:num-res-MGRIT-linear-standard}.
%

\subsection{Fine-grid discretization and required error estimates}
\label{SMsec:MGRIT-linear-standard-disc}

Suppose that the PDE \eqref{eq:cons-lin} is discretized (on the fine grid) with the method-of-lines discretizations outlined in Section \ref{sec:discretization} with the following specific details.

\begin{itemize}

\item The LF numerical flux \eqref{eq:LFF-lin} is used wherein the wave-speed reconstructions $\alpha_{i+1/2}^{\pm}$ are replaced with exact evaluations of the given wave-speed function $\alpha(x, t)$. Dropping temporal dependence for readability, the numerical flux is
\begin{align} 
\label{eq:LFF-lin-exact-eval}
\wh{f}_{i+1/2}
&=
\frac{1}{2}
\left[
\alpha(x_{i+1/2}) \big( e_{i+1/2}^{-} + e_{i+1/2}^{+} \big)
+
\nu_{i+1/2} \big( e_{i+1/2}^{-} - e_{i+1/2}^{+} \big)
\right],
\\
\label{eq:LFF-lin-exact-eval-err-form}
&=
\alpha(x_{i+1/2}) e(x_{i+1/2}) + \wh{E}_{i+1/2},
\end{align}
where, recall from \cref{ass:E-est}, $\wh{E}_{i+1/2}$ is the error in the flux.

\item The reconstructions $e_{i+1/2}^{\pm}$ in the numerical flux \eqref{eq:LFF-lin-exact-eval} are computed with polynomial reconstruction on central stencils using $2k-1$ cell averages (specifics of polynomial reconstructions can be recalled from Sections \ref{sec:reconstruction} and \ref{SMsec:reconstruction-overview}). 
Thus, for sufficiently smooth $e(x)$, the reconstructions are of order $p = 2k-1$, $k = 1, 2, \ldots$. 

\item If using GLF, the dissipation coefficient in \eqref{eq:LFF-lin-exact-eval} is $\nu^{\rm (global)} = \max_{x, t} |\alpha(x, t)|$, with the range of $x,t$ taken as the whole space-time domain, and if using LLF $\nu_{i+1/2}^{n} = |\alpha(x_{i+1/2}, t_n)|$.

\end{itemize} 

With details of the discretization specified, we now estimate the error term in \eqref{eq:LFF-lin-exact-eval-err-form} such that the analysis from \cref{SMsec:ideal-err-est} can be applied to estimate the truncation error of the ideal coarse-grid operator.
To this end, consider the following corollary of \cref{SMlem:est-lin-rec} to express the reconstructions used in \eqref{eq:LFF-lin-exact-eval}.
\begin{corollary}[Error estimate for central, linear reconstructions using $2k-1$ cells]
\label{SMcor:est-rec-optimal}
Let $e_{i, \pm 1/2}$ be polynomial reconstructions of the function $e(x_{i \pm 1/2})$ using central stencils containing $2k - 1$ cells, $S(i) = \{ {\cal I}_{i-(k-1)}, \ldots, {\cal I}_{i+(k-1)} \}$.
Suppose that $e(x)$ is $2k$ times differentiable over the cells in this stencil.
Then, 
\begin{align} \label{eq:lin-rec-big-est-LHS}
e_{i, - 1/2}
&=
e ( x_{i - 1/2} ) 
- 
(-1)^k h^{2k-1}
\frac{ k ! (k-1)!  }{(2k)!}
\frac{\d^{2k-1} e}{\d x^{2k-1}} \bigg|_{\xi_{i}(x_{i - 1/2})}, 
\\
\label{eq:lin-rec-big-est-RHS}
e_{i, + 1/2}
&=
e ( x_{i + 1/2} ) 
- 
(-1)^{k+1} h^{2k-1}
\frac{ k ! (k-1)!  }{(2k)!}
\frac{\d^{2k-1} e}{\d x^{2k-1}} \bigg|_{\xi_{i}(x_{i + 1/2})},
\end{align}
with $\xi_{i}(x_{i \pm 1/2})$ a pair of (unknown) points in the interior of the stencil, $\xi_{i}(x_{i \pm 1/2}) \in \big( x_{i - (k-1) - 1/2}, x_{i + (k-1) + 1/2} \big)$.
\end{corollary}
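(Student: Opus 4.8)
The plan is to derive the corollary as a direct specialization of Lemma~\ref{SMlem:est-lin-rec}, so the only real content is parameter substitution and sign bookkeeping. The first thing I would flag is a notational collision: in the lemma the symbol $k$ counts the cells of the stencil, whereas in the corollary $k$ is a separate parameter and the central stencil $S(i)$ contains $2k-1$ cells. Accordingly I would apply the lemma with its cell-count set to $2k-1$ and with left-shift $\ell = k-1$, since the central stencil $\{ {\cal I}_{i-(k-1)}, \ldots, {\cal I}_{i+(k-1)} \}$ has its left-most cell $k-1$ positions to the left of ${\cal I}_i$. Under this substitution the lemma's derivative $\d^k u / \d x^k$ becomes $\d^{2k-1} e / \d x^{2k-1}$, the denominator $(k+1)!$ becomes $(2k)!$, and the unknown evaluation point lands in $\big( x_{i-(k-1)-1/2}, x_{i+(k-1)+1/2} \big)$, each matching the corollary verbatim. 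I would also note that the hypothesis ``$e$ is $2k$ times differentiable on the stencil'' is exactly what the lemma asks for once its cell-count equals $2k-1$, and that the lemma's function name $u$ versus the corollary's $e$ is immaterial.

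Next I would evaluate the sign-and-factorial coefficients at these parameters. The lemma's left-interface coefficient $\wt{\zeta}^{\ell} = (-1)^{\ell}\ell!(k-\ell)!$ becomes, at cell-count $2k-1$ and $\ell = k-1$, $\wt{\zeta}^{k-1} = (-1)^{k-1}(k-1)!\,k!$; the right-interface coefficient $\zeta^{\ell} = (-1)^{\ell+1}(\ell+1)!(k-\ell-1)!$ becomes $\zeta^{k-1} = (-1)^{k}k!\,(k-1)!$. The remaining ingredient is the factor $(-h)^{2k-1}$; because $2k-1$ is odd this equals $-h^{2k-1}$, and it is the interaction of this minus sign with the parities of $\wt{\zeta}^{k-1}$ and $\zeta^{k-1}$ that ultimately produces the $(-1)^k$ and $(-1)^{k+1}$ appearing in \eqref{eq:lin-rec-big-est-LHS} and \eqref{eq:lin-rec-big-est-RHS}.

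Then I would assemble the two estimates. For the left interface, $e_{i,-1/2}$ is the lemma's $\big(\wt{R}^{k-1}\,\cdot\,\big)_i$ applied to the cell averages $\bb{e}$, whose error term is $-\,(-h)^{2k-1}\wt{\zeta}^{k-1}/(2k)!$ times the $(2k-1)$st derivative; substituting the values above and using $(-1)^{k-1} = -(-1)^k$ collapses this to $-\,(-1)^k h^{2k-1}\,k!(k-1)!/(2k)!$ times the derivative, which is exactly \eqref{eq:lin-rec-big-est-LHS}. The identical computation for $e_{i,+1/2} = \big(R^{k-1}\,\cdot\,\big)_i$, now using $(-1)^{k} = -(-1)^{k+1}$, yields \eqref{eq:lin-rec-big-est-RHS}.

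There is no genuine analytical obstacle here, since the lemma carries all the approximation-theoretic weight; the entire difficulty is clerical. The step I expect to be most error-prone is tracking how three independent sources of sign---the overall minus in the lemma's error term, the odd power $(-h)^{2k-1} = -h^{2k-1}$, and the parities $(-1)^{k-1}$ and $(-1)^{k}$ of $\wt{\zeta}^{k-1}$ and $\zeta^{k-1}$---combine into the stated $(-1)^k$ and $(-1)^{k+1}$. To guard against a sign slip I would verify the result against the small case $k=2$, where the stencil has three cells and the formulas should reproduce the standard centered third-order reconstruction, before finalizing the argument.
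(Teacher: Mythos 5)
Your proposal is correct and follows exactly the paper's route: the paper's proof is simply ``substitute cell count $k \mapsto 2k-1$ and left shift $\ell = k-1$ into Lemma~\ref{SMlem:est-lin-rec} and simplify,'' and your sign/factorial bookkeeping (including $(-h)^{2k-1} = -h^{2k-1}$ combining with $(-1)^{k-1}$ and $(-1)^{k}$ to give the stated $(-1)^k$ and $(-1)^{k+1}$) carries out that simplification correctly. Nothing further is needed.
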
 

\begin{proof}
%
%
The central stencil considered here corresponds to the stencil from \cref{SMlem:est-lin-rec} with a number cells $k \mapsto 2k-1$, and left shift $\ell = k-1$.
Plugging these parameters into the results of \cref{SMlem:est-lin-rec} and simplifying gives the claim.
\end{proof}

Now we estimate the error term of the numerical flux, as it appears in \eqref{eq:LFF-lin-exact-eval-err-form}.
\begin{lemma}[Error estimate for numerical flux using central linear reconstructions]
\label{SMlem:num-flux-est}
Consider the numerical flux \eqref{eq:LFF-lin-exact-eval}, and suppose that the polynomial reconstructions $e_{i+1/2}^{\pm}$ use central stencils containing $2k - 1$ cells.
Suppose that $e(x)$ is sufficiently smooth over the stencils in question.
Then, the error of the numerical flux, as defined in \eqref{eq:LFF-lin-exact-eval-err-form}, satisfies
\begin{align}
\wh{E}_{i+1/2}
=
\frac{\nu_{i + 1/2}}{2}
\big(
e_{i+1/2}^{-} - e_{i+1/2}^{+}
\big)
+
{\cal O}(h^{2k}),
\end{align}
with
\begin{align} \label{eq:e-diff-est}
e_{i+1/2}^{-} - e_{i+1/2}^{+}
=
(-1)^k h^{2k-1} 
\frac{2 (k-1)! k! }{(2k)!} 
\frac{\d^{2k-1} e}{\d x^{2k-1}} \bigg|_{x_{i + 1/2}}
+
{\cal O}(h^{2k}).
\end{align}

\end{lemma}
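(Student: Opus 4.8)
The plan is to work directly from the definition of the flux error. Subtracting $\alpha(x_{i+1/2}) e(x_{i+1/2})$ from the flux \eqref{eq:LFF-lin-exact-eval} and comparing with \eqref{eq:LFF-lin-exact-eval-err-form}, the error splits cleanly into a \emph{central} (averaging) contribution and a \emph{dissipative} contribution,
\[
\wh{E}_{i+1/2}
=
\alpha(x_{i+1/2})
\left[ \frac{e_{i+1/2}^- + e_{i+1/2}^+}{2} - e(x_{i+1/2}) \right]
+
\frac{\nu_{i+1/2}}{2} \big( e_{i+1/2}^- - e_{i+1/2}^+ \big).
\]
The whole task then reduces to estimating the two bracketed reconstruction combinations. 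First I would recall that $e_{i+1/2}^- = e_{i,+1/2}$ and $e_{i+1/2}^+ = e_{i+1,-1/2}$, so that both reconstruct $e$ at the common point $x_{i+1/2}$, and apply \cref{SMcor:est-rec-optimal}: the right-interface estimate \eqref{eq:lin-rec-big-est-RHS} for $e_{i+1/2}^-$ and the left-interface estimate \eqref{eq:lin-rec-big-est-LHS} (shifted to cell $i+1$) for $e_{i+1/2}^+$.

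The decisive observation is that the leading $h^{2k-1}$ error terms of the two reconstructions carry \emph{opposite} parity factors, $(-1)^k$ versus $(-1)^{k+1}$. In the difference these reinforce, while in the average they cancel at leading order. Concretely, writing $C := k!(k-1)!/(2k)!$ and letting $D_i, D_{i+1}$ denote $\partial_x^{2k-1} e$ evaluated at the unknown interior stencil points $\xi_i(x_{i+1/2}), \xi_{i+1}(x_{i+1/2})$, I would obtain $e_{i+1/2}^- - e_{i+1/2}^+ = (-1)^k h^{2k-1} C (D_i + D_{i+1})$. Since each interior point lies within ${\cal O}(h)$ of $x_{i+1/2}$, a Taylor expansion gives $D_i = D_{i+1} = \partial_x^{2k-1} e|_{x_{i+1/2}} + {\cal O}(h)$, so that $D_i + D_{i+1} = 2 \, \partial_x^{2k-1} e|_{x_{i+1/2}} + {\cal O}(h)$; this is exactly \eqref{eq:e-diff-est}.

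For the averaging term I would run the same expansion: the sum reproduces $2 e(x_{i+1/2})$ plus $(-1)^k h^{2k-1} C (D_i - D_{i+1})$. The identical Taylor argument now yields $D_i - D_{i+1} = {\cal O}(h)$, so the bracketed averaging quantity is ${\cal O}(h^{2k})$ rather than ${\cal O}(h^{2k-1})$ — one order is gained precisely because of the sign cancellation. Feeding both estimates back into the split expression, the central contribution is absorbed into ${\cal O}(h^{2k})$ and the dissipative contribution is retained verbatim, giving the claimed form of $\wh{E}_{i+1/2}$.

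The main obstacle is purely careful sign and stencil bookkeeping: one must verify which of the two interface estimates from \cref{SMcor:est-rec-optimal} applies to each of $e_{i+1/2}^{\pm}$ and confirm that the parity factors genuinely flip between them, since it is exactly this flip that produces the leading-order cancellation in the average while preserving the leading term in the difference. The only analytic point to check is that replacing the unknown evaluation points $\xi_i, \xi_{i+1}$ by $x_{i+1/2}$ costs only ${\cal O}(h)$, which follows from the assumed smoothness of $e$ together with the localization of these points to the $(2k-1)$-cell stencil; no estimate harder than \cref{SMcor:est-rec-optimal} is needed.
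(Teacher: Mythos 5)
Your proposal is correct and follows essentially the same route as the paper's proof: both split the flux error into the averaging and dissipative parts, apply \cref{SMcor:est-rec-optimal} to $e_{i+1/2}^- = e_{i,+1/2}$ and $e_{i+1/2}^+ = e_{i+1,-1/2}$, and exploit the parity flip so that the leading $h^{2k-1}$ terms cancel in the sum (the paper invokes the mean value theorem where you invoke a Taylor expansion, but both rest on $\xi_{i+1}-\xi_i = {\cal O}(h)$) while reinforcing in the difference. Your sign bookkeeping matches the paper's equations exactly.
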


\begin{proof}
%
%
Using the notation of \cref{SMcor:est-rec-optimal}, the reconstructions considered in the lemma are $e_{i+1/2}^{-} = e_{i, +1/2}$, and $e_{i+1/2}^{+} = e_{i+1, -1/2}$. First consider the sum $e_{i+1/2}^{-} + e_{i+1/2}^{+}$ in the flux \eqref{eq:LFF-lin-exact-eval}; plugging in the estimates from \cref{SMcor:est-rec-optimal} we have
\begin{align} \label{eq:flux-est-rec-sum}
\begin{split}
&e_{i+1/2}^{-} + e_{i+1/2}^{+}
=
2 e(x_{i+1/2})
\\
&\quad 
+
(-1)^{k+1} h^{2k-1}
\frac{ k ! (k-1)!  }{(2k)!}
\left[
\frac{\d^{2k-1} e}{\d x^{2k-1}} \bigg|_{\xi_{i+1}(x_{i + 1/2})}
-
\frac{\d^{2k-1} e}{\d x^{2k-1}} \bigg|_{\xi_{i}(x_{i + 1/2})}
\right],
\end{split}
\end{align}
recalling that the points $\xi_{i}(x_{i + 1/2}) $ and $\xi_{i+1}(x_{i + 1/2})$ are not known.
However, by assumption of smoothness of $e$, from the mean value theorem we have that
\begin{align}
\frac{\d^{2k-1} e}{\d x^{2k-1}} \bigg|_{\xi_{i+1}(x_{i + 1/2})}
-
\frac{\d^{2k-1} e}{\d x^{2k-1}} \bigg|_{\xi_{i}(x_{i + 1/2})}
=
\frac{\d^{2k} e}{\d x^{2k}} \bigg|_c
\big[
\xi_{i+1}(x_{i + 1/2}) - \xi_{i}(x_{i + 1/2})
\big]
\end{align}
for some $\min \big( \xi_{i+1}(x_{i + 1/2}), \xi_{i}(x_{i + 1/2}) \big) < c < \max\big( \xi_{i+1}(x_{i + 1/2}), \xi_{i}(x_{i + 1/2}) \big)$.
Next, note that $\xi_{i+1}(x_{i + 1/2}) - \xi_{i}(x_{i + 1/2}) = {\cal O}(h)$ because both of these points are contained within an interval of length ${\cal O}(h)$.
Plugging into \eqref{eq:flux-est-rec-sum} gives $e_{i+1/2}^{-} + e_{i+1/2}^{+}
=
2 e(x_{i+1/2}) + {\cal O}(h^{2k})$.

Now consider the difference $e_{i+1/2}^{-} - e_{i+1/2}^{+}$ in the numerical flux \eqref{eq:LFF-lin-exact-eval}. 
Using the estimates from \cref{SMcor:est-rec-optimal} we find
\begin{align} 
e_{i+1/2}^{-} - e_{i+1/2}^{+}
&=
(-1)^{k} h^{2k-1}
\frac{ k ! (k-1)!  }{(2k)!}
\left[
\frac{\d^{2k-1} e}{\d x^{2k-1}} \bigg|_{\xi_{i+1}(x_{i + 1/2})}
+
\frac{\d^{2k-1} e}{\d x^{2k-1}} \bigg|_{\xi_{i}(x_{i + 1/2})}
\right],
\\
\label{eq:flux-est-rec-diff}
&=
(-1)^{k} h^{2k-1}
\frac{ 2 (k-1)!  k !  }{(2k)!}
\left[ 
\frac{\d^{2k-1} e}{\d x^{2k-1}} \bigg|_{x_{i + 1/2}}
+
{\cal O}(h)
\right],
\end{align}
with the second expression following by Taylor expansion, again, due to the fact that $\xi_{i+1}(x_{i + 1/2}) + {\cal O}(h) = \xi_{i}(x_{i + 1/2}) + {\cal O}(h) = x_{i + 1/2}$.
Plugging \eqref{eq:flux-est-rec-diff} and the earlier result of $e_{i+1/2}^{-} + e_{i+1/2}^{+}
=
2 e(x_{i+1/2}) + {\cal O}(h^{2k})$ into \eqref{eq:LFF-lin-exact-eval} concludes the proof.
\end{proof}

\subsection{Modified semi-Lagrangian coarse-grid operator}
\label{SMsec:Psi-MGRIT-linear-standard}

For a a two-level MGRIT solve, we use a coarse-grid operator that has the same structure as that in \eqref{eq:Psi}, repeated here for convenience:
\begin{align} \label{eq:Psi-copy}
\Psi^{n}
=
\Big[
I 
+ 
{\cal T}_{\rm ideal}^{n}
-
{\cal T}_{\rm direct}^{n}
\Big]^{-1}
 {\cal S}^{n, m\delta t}_{p, 1_*}
 \approx
  \Psi_{\rm ideal}^{n}.
\end{align}
The approximate, direct truncation operator ${\cal T}_{\rm direct}^{n}$ is given by \eqref{eq:Tcal-SL}---more generally, see \cref{SMsec:SL-trun-err} for details on how this approximation arises.
We now discuss our choice for the approximate the ideal truncation error operator ${\cal T}_{\rm ideal}^{n}$.
Other details on the coarse-grid operator \eqref{eq:Psi-copy} can be found in Section \ref{sec:Psi-overview}.



Recall \cref{eq:trunc-est-int-cons-ideal}, which is an approximate error estimate for a method-of-lines discretization for an approximation to the PDE at hand \eqref{eq:cons-lin} in which the wave-speed function $\alpha(x, t)$ is frozen somewhere inside each of the time intervals $t \in [t_{n+j}, t_{n+1+j}]$, $j \in \{0, \ldots, m-1 \}$.
Based on this error estimate, which applies approximately to the problem at hand, we choose ${\cal T}_{\rm ideal}^{n}$ to take the form
\begin{align} \label{eq:T-ideal-approx}
{\cal T}_{\rm ideal}^{n}
=
\textrm{approximation}
\bigg(
\sum_{j = 0}^{m-1}
\Big( \delta t \wh{{\cal E}}^{n+j} - \wh{e}_{\rm RK} \big( \delta t \wh{L}^{n+j} \big)^{q+1} \Big)
\bigg).
\end{align}
The ``approximation'' here represents the fact that the terms in the sum need to be approximated, and that the choice for doing so is not unique.
Recall that $\wh{L}^{n+j} \in \mathbb{R}^{n_x \times n_x}$ is the spatial discretization at some time $t = t_{n+j} + \delta t \vartheta$, and $\wh{{\cal E}}^{n+j} \in \mathbb{R}^{n_x \times n_x}$ is the associated spatial discretization error at this time; see \cref{ass:E-est}.
The question at hand is how to approximate $\wh{L}^{n+j}$ and $\wh{{\cal E}}^{n+j}$.
It is perhaps simplest to start with $\wh{{\cal E}}^{n+j}$.

\textbf{Approximating the spatial discretization error.} The exact form of $\wh{{\cal E}}^{n+j}$ depends on the numerical flux.
Recall from \cref{SMlem:num-flux-est} of \cref{SMsec:MGRIT-linear-standard-disc} that the numerical flux at hand satisfies the estimate (dropping temporal superscripts for readability)
\begin{align}
\wh{f}_{i+1/2}
=
\alpha(x_{i+1/2}) e(x_{i+1/2})
+
\nu_{i + 1/2}
(-1)^k h^{2k-1} 
\frac{k! (k-1)!}{(2k)!} 
\frac{\d^{2k-1} e}{\d x^{2k-1}} \bigg|_{x_{i + 1/2}}
+
{\cal O}(h^{2k}).
\end{align}
Thus, based on the definition of $\wh{{\cal E}}$ given in \cref{ass:E-est}, we find that it satisfies
\begin{align}
\big( \wh{{\cal E}} \bb{e} \big)_i
&=
-\frac{\alpha(x_{i+1/2}) e(x_{i+1/2}) - \alpha(x_{i-1/2}) e(x_{i-1/2})}{h}
+
\frac{\wh{f}_{i+1/2} - \wh{f}_{i-1/2}}{h},
\\
&\approx
(-1)^{k+1} h^{2k-2} 
\frac{k! (k-1)!}{(2k)!} 
\left[
\nu_{i + 1/2}
\frac{\d^{2k-1} e}{\d x^{2k-1}} \bigg|_{x_{i + 1/2}}
-
\nu_{i - 1/2}
\frac{\d^{2k-1} e}{\d x^{2k-1}} \bigg|_{x_{i - 1/2}}
\right].
\end{align}
Whether or not the terms enumerated here accurately capture the leading-order terms in the $\big( \wh{{\cal E}} \bb{e} \big)_i$ depends on the smoothness of the numerical flux.\footnote{That is, without further analysis, it is not clear whether the difference of the ${\cal O}(h^{2k})$ terms present in both $\wh{f}_{i+1/2}$ and $\wh{f}_{i-1/2}$ leads to terms which are of the same size as those enumerated here, or whether they are one order smaller.}
In any event, we find this expression is sufficient for our purposes. 
We then implement numerically an approximation to this formula derived from approximating the degree $2k-1 = p$ derivatives with finite differences.
Specifically, we use  
\begin{align} \label{eq:E-approx-optimal-linear}
\sum_{j = 0}^{m-1}
\wh{{\cal E}}^{n+j} 
\approx
(-1)^{k+1} h^{p} 
\frac{k! (k-1)!}{(2k)!} 
{\cal D}_1
\diag 
\bigg(
\sum \limits_{j=0}^{m-1}
\bm{\nu}^{n + j}  
\bigg)
{\cal D}_{p}^\top,
\end{align}
where $\bm{\nu}^{n+j} = \big( \nu_{1+1/2}^{n+j}, \nu_{2+1/2}^{n+j}, \ldots, \nu_{n_x+1/2}^{n+j} \big) \in \mathbb{R}^{n_x}$ is the vector of dissipation coefficients used in the LF numerical flux \eqref{eq:LFF-lin} at time $t_{n + j} + \delta t \vartheta$.
Recall from \cref{SMsec:cons-lin-approx} that to develop a truncation error estimate for the linear problem, its time dependence is frozen somewhere in the interval $[t_n, t_n + \delta t]$, and the parameter $\vartheta \in [0, 1]$ controls the specific freezing point.

\textbf{Approximating powers of the spatial discretization.}
It is less obvious how to approximate $\big( \wh{L}^{n+j} \big)^{q+1}$, the term in \eqref{eq:T-ideal-approx} corresponding to the ($q+1$)st power of the spatial discretization matrix. 
We do not want to compute this exactly, since the implementation of the discretization does not even require computing $\wh{L}^{n+j}$ as a matrix, so, explicitly forming it and then raising it to the ($q+1$)st power seems too expensive.
Thus, we seek to cheaply approximate it in a way that is consistent with the approximation that we have for the spatial error matrix in \eqref{eq:E-approx-optimal-linear}.

To this end, let us recall what in fact that spatial discretization approximates.
Ignoring temporal dependence for notational simplicity, we have that 
\begin{align}
\big( \wh{L} \bb{e} \big)_i
&=
-\frac{\alpha(x_{i+1/2}) e(x_{i+1/2}) - \alpha(x_{i-1/2}) e(x_{i-1/2})}{h}
+
\rm{h.o.t}.,
\\
&=
-\frac{\partial}{\partial x}
\Big(
\alpha(x)
e(x)
\Big)
\bigg|_{x_i}
+
\rm{h.o.t}.
\end{align}
That is, even though $\wh{L}$ is not designed to approximate a derivative of $\alpha e$ (as it would if we were using a FD method rather than an FV method), to lowest order it in fact does.
Thus, for the $q+1$ powers of $\wh{L}$ we must have something like:
\begin{align}
\Big( (\wh{L})^{q+1} \bb{e} \Big)_i
=
(-1)^{q+1}
\underbrace{
\frac{\partial}{\partial x}
\Big(
\alpha(x)
\Big(
\cdots
\frac{\partial}{\partial x}
\Big(
\alpha(x)
}_{q \textrm{ terms}}
\Big(
\frac{\partial}{\partial x}
\Big(
\alpha(x)
e(x)
\Big)
\Big)
\Big)
\cdots
\Big)
\Big)
\bigg|_{x_i}
+
\rm{h.o.t}.
\end{align}
One could now approximate this by replacing the derivatives with FD approximations, this is still too expensive for our purposes, however.
Instead, we seek an approximation that uses only the largest derivative of $e$. 
This choice is loosely motivated by our previous results in \cite{DeSterck_etal_2023_SL}, in which we found it was only necessary (in terms of MGRIT convergence) to crudely approximate the truncation error by ensuring that the dominant dissipation term is captured.
By the product rule, the term with the largest derivative of $e$ is$  $
\begin{align}
\Big( \wh{L}^{q+1} \bb{e} \Big)_i
\approx
(-1)^{q+1}
\frac{\partial}{\partial x}
\Big( \alpha^{q+1}
\frac{\partial^{q} e}{\partial x^{q}}
\Big)
\bigg|_{x_i}
=
(-1)^{q+1}
\Big(
{\cal D}_1
\diag ( \bm{\alpha} )^{q+1}
{\cal D}_q^\top
\bb{e}
\Big)_i
+
\rm{h.o.t}.
\end{align}
In this final expression, $\bm{\alpha} = \big( \alpha_{1 + 1/2}, \ldots, \alpha_{n_x + 1/2} \big)^\top \in \mathbb{R}^{n_x}$ is a vector comprised of the wave-speed function evaluated at cell interfaces and powers of it are computed element-wise.
Thus, for the second term in \eqref{eq:T-ideal-approx} we use the approximation
\begin{align} \label{eq:L-powers-approx}
\sum \limits_{j = 0}^{m-1}
\big( \wh{L}^{n+j} \big)^{q+1}
\approx
(-1)^{q+1}
{\cal D}_1
\diag \bigg( \sum \limits_{j = 0}^{m-1} \big( \bm{\alpha}^{n+j} \big)^{q+1} \bigg)
{\cal D}_q^\top,
\end{align}
where $\bm{\alpha}^{n+j} \in \mathbb{R}^{n_x}$ is the wave-speed function evaluated at cell interfaces at time $t_{n+j} + \delta t \vartheta$.

\textbf{Piecing it all together.}
Plugging \eqref{eq:E-approx-optimal-linear} and \eqref{eq:L-powers-approx} into \eqref{eq:T-ideal-approx} results in the form of ${\cal T}_{\rm ideal}^{n}$ that we implement numerically:
\begin{align} \label{eq:T-ideal-linear}
\begin{split}
{\cal T}_{\rm ideal}^{n}
&=
{\cal D}_1
\bigg[
h^{p}  \delta t 
(-1)^{k+1} 
\frac{k! (k-1)!}{(2k)!} 
\diag 
\bigg(
\sum \limits_{j=0}^{m-1}
\bm{\nu}^{n + j}  
\bigg)
{\cal D}_{p}^\top
\\
&\hspace{12ex} 
+
(-\delta t)^{q+1}
\wh{e}_{\rm RK}
\diag \bigg( \sum \limits_{j = 0}^{m-1} \big( \bm{\alpha}^{n+j} \big)^{q+1} \bigg)
{\cal D}_q^\top
\bigg].
\end{split}
\end{align}

\subsection{Numerical results}
\label{SMsec:num-res-MGRIT-linear-standard}

\renewcommand{\fd}{./figures/}
\renewcommand{\vs}{2}
\renewcommand{\hs}{4}
\renewcommand\fs{0.28}

\begin{figure}[h!]
\centerline{
\includegraphics[scale=\fs]{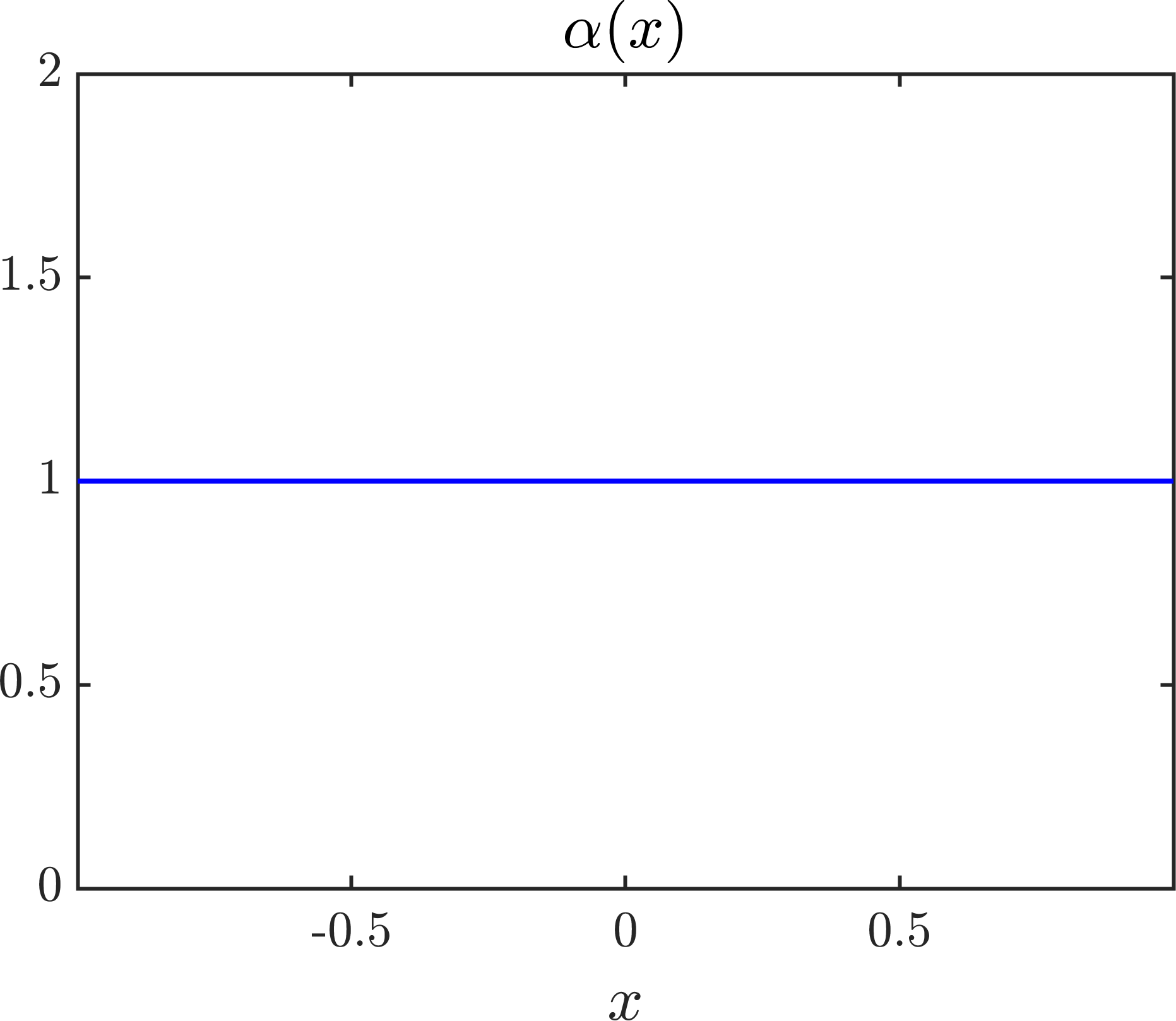}
\hspace{\hs ex}
\includegraphics[scale=\fs]{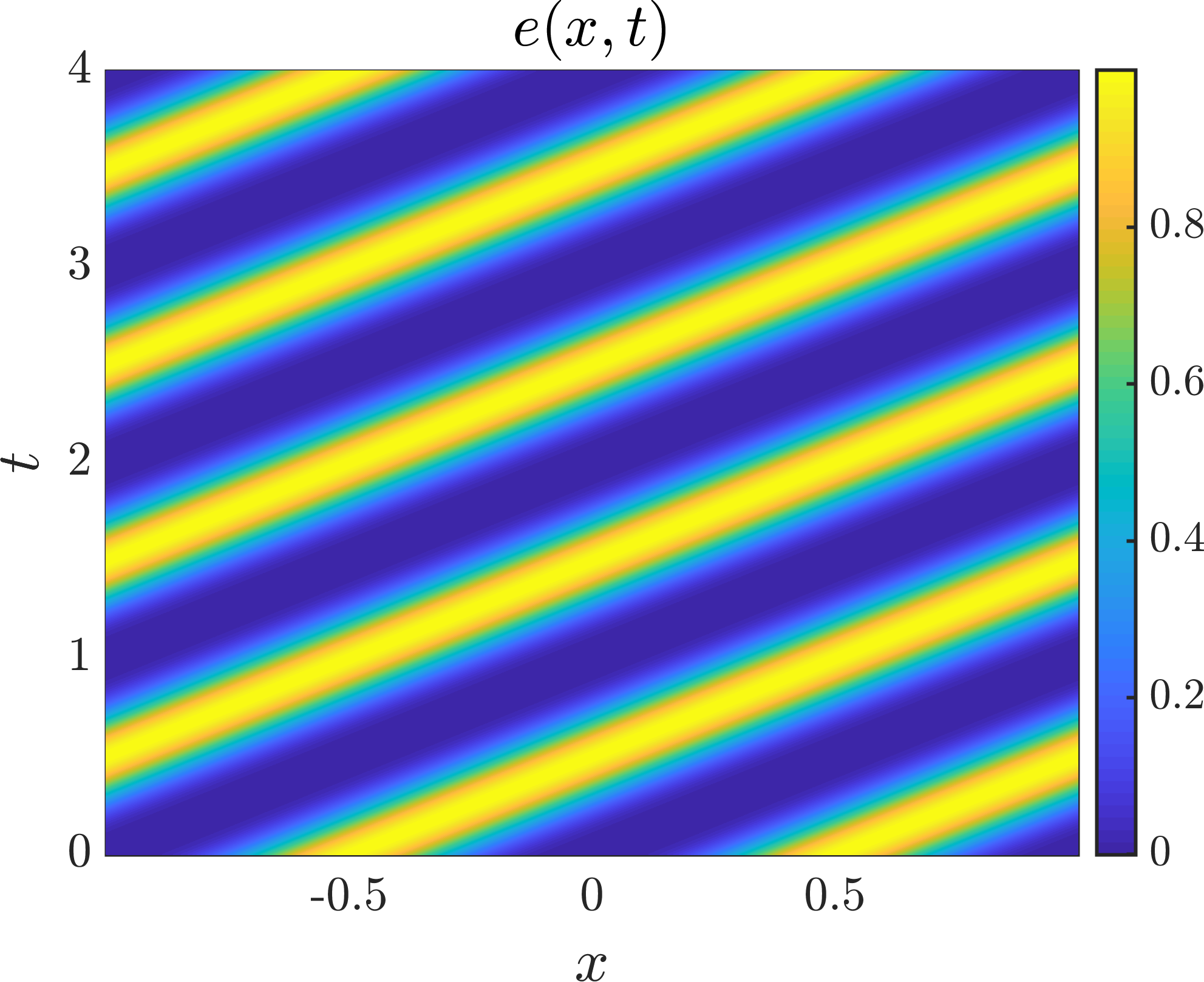}
}
\vspace{\vs ex}
\centerline{
\includegraphics[scale=\fs]{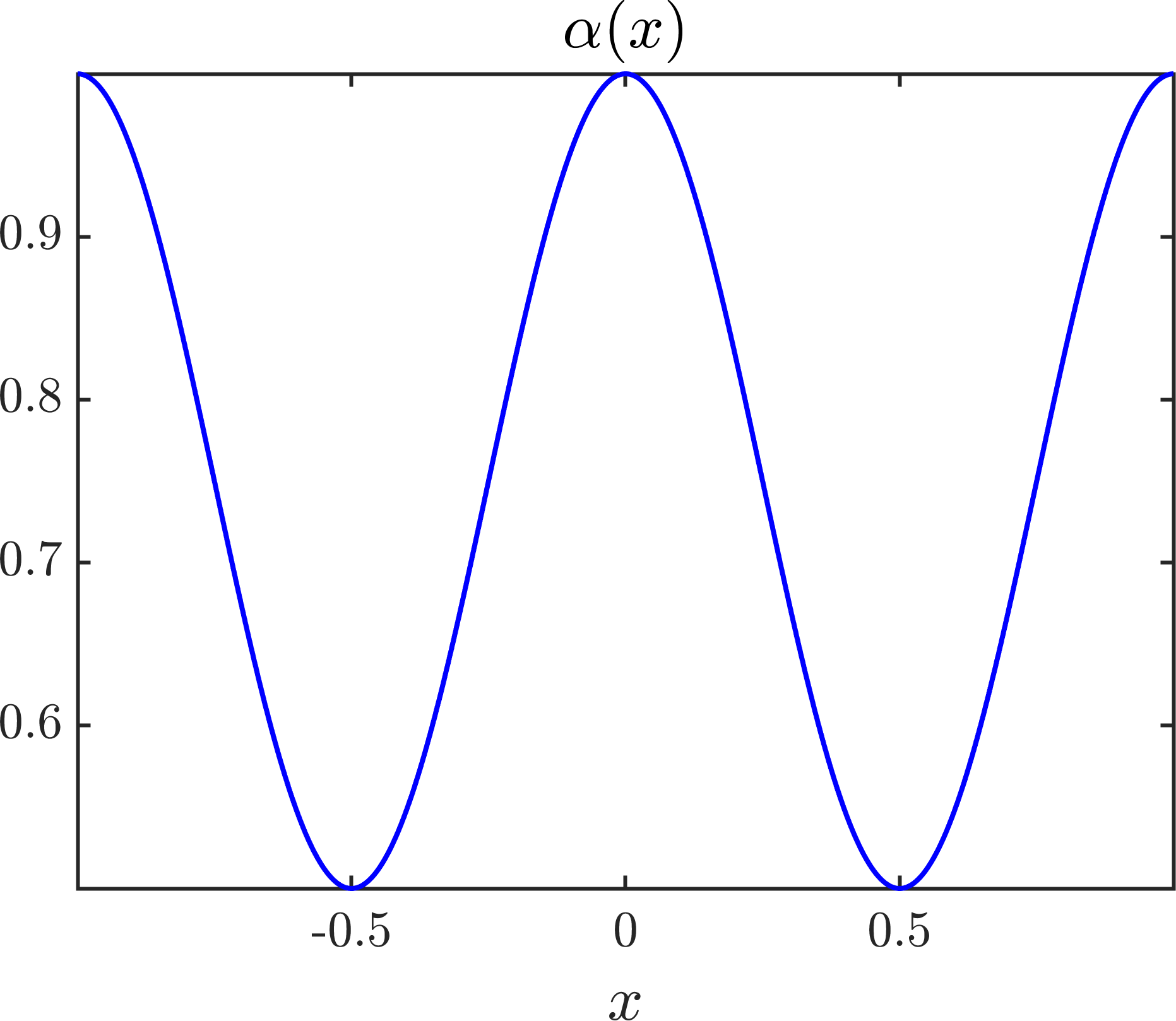}
\hspace{\hs ex}
\includegraphics[scale=\fs]{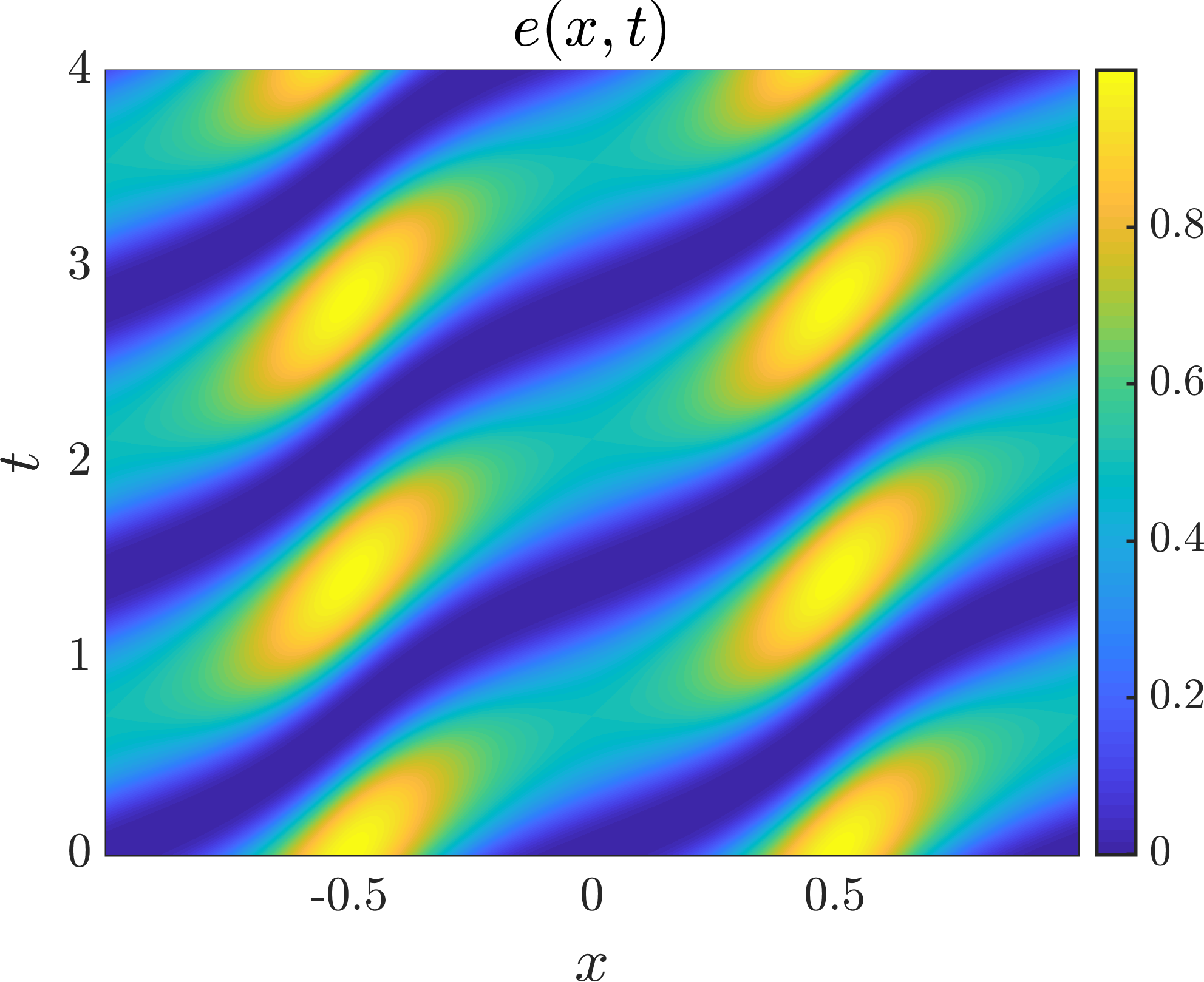}
}
\vspace{\vs ex}
\centerline{
\includegraphics[scale=\fs]{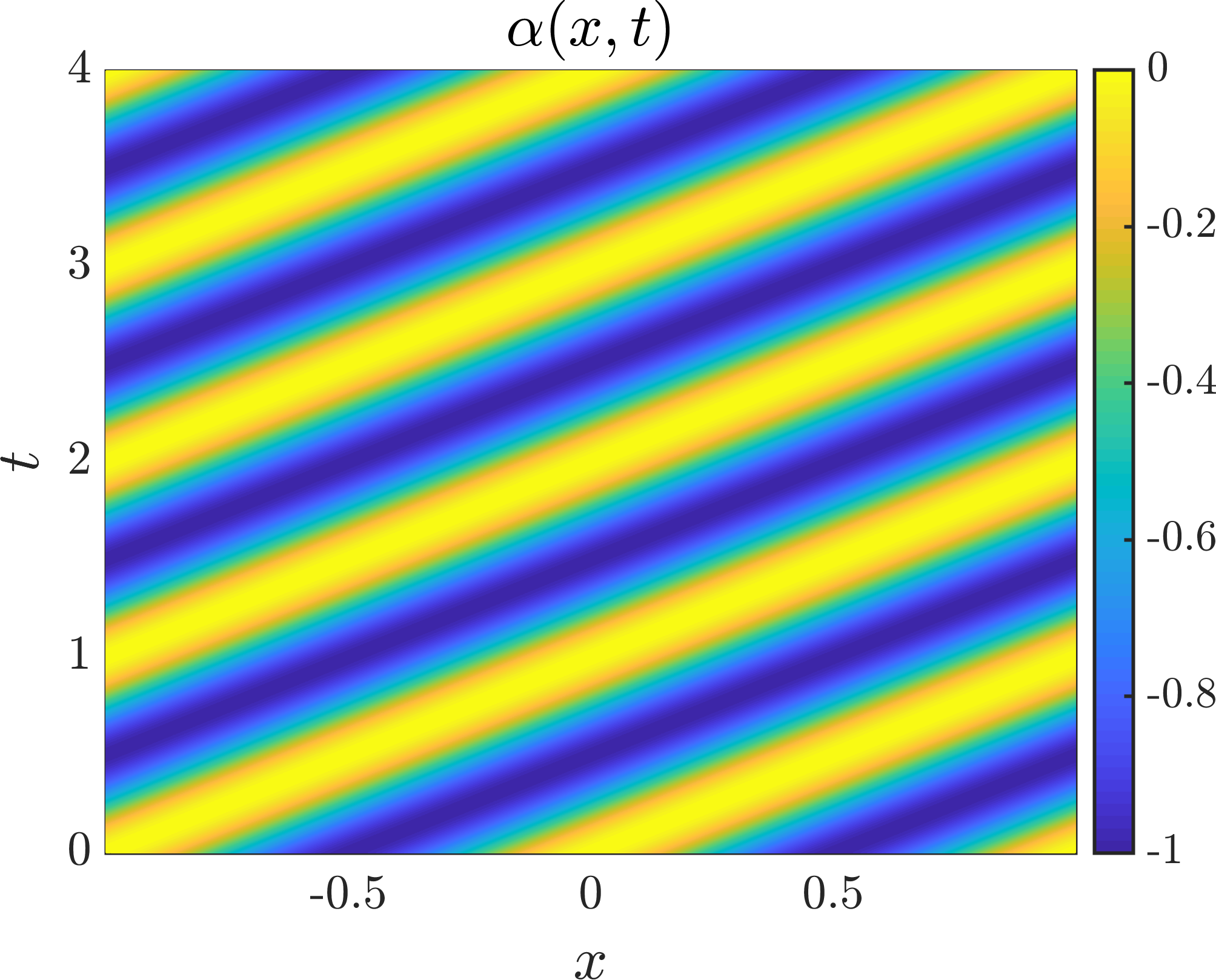}
\hspace{\hs ex}
\includegraphics[scale=\fs]{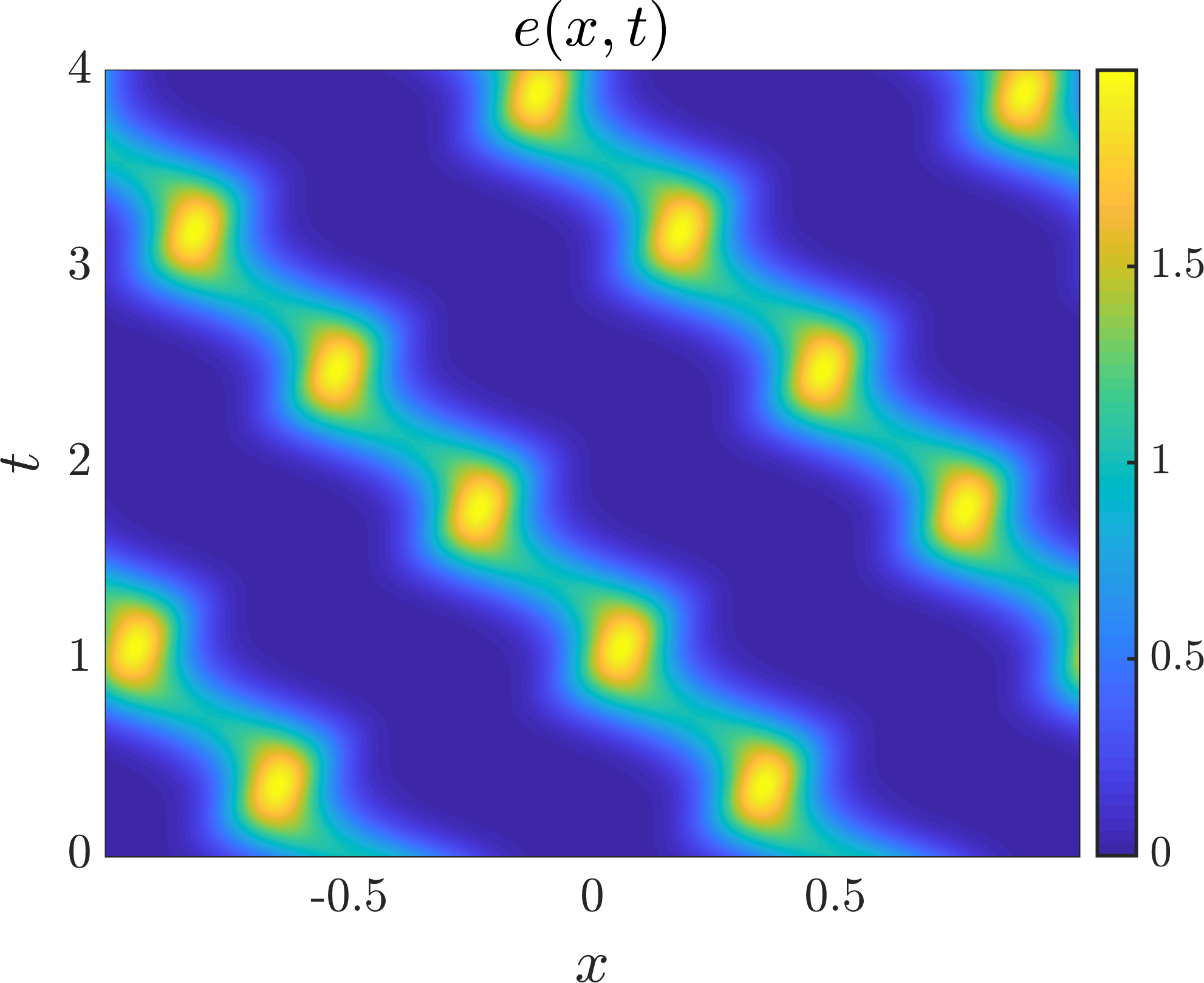}
}
\vspace{\vs ex}
\centerline{
\includegraphics[scale=\fs]{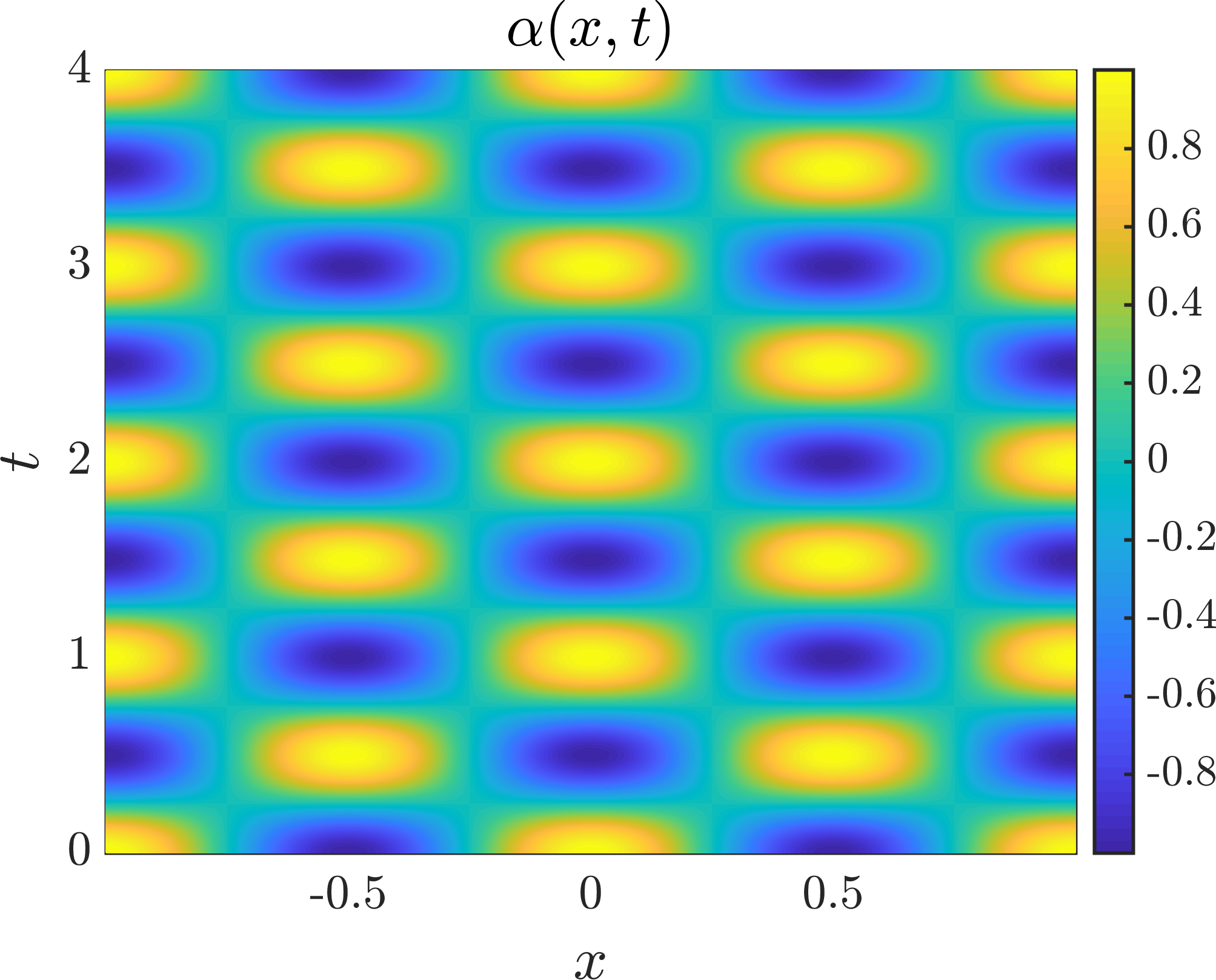}
\hspace{\hs ex}
\includegraphics[scale=\fs]{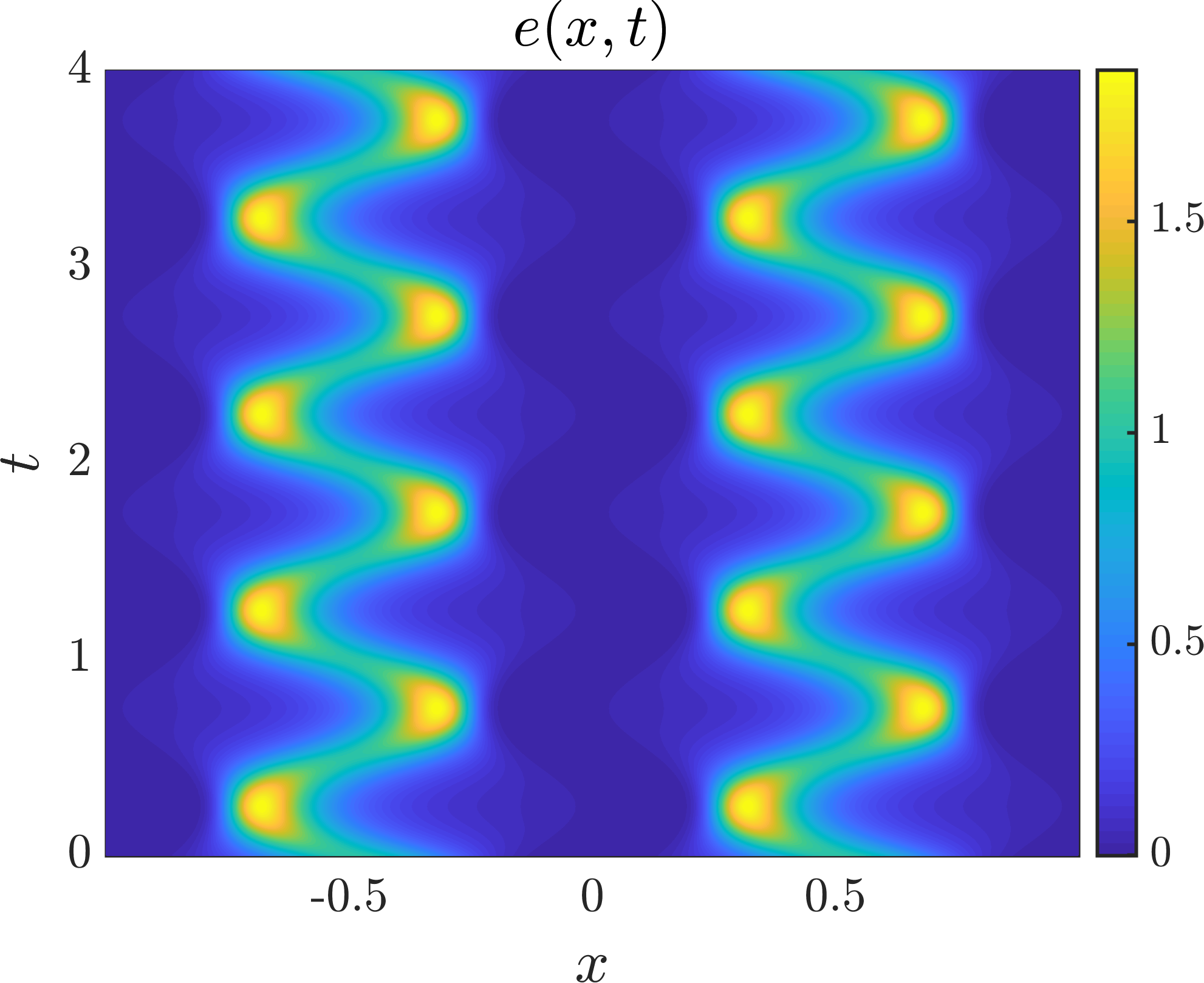}
}
\caption{Numerical test problems for MGRIT applied to solve the linear conservation law \eqref{eq:cons-lin} discretized with standard linear method-of-lines discretization (\cref{SMsec:MGRIT-linear-standard}).
Left column: The wave-speed function $\alpha(x, t)$. Right column: Space-time contour of the corresponding solution $e(x, t)$ of the PDE \eqref{eq:cons-lin}.
From top to bottom, the wave-speed functions are: 
$\alpha(x) = 1$, 
$\alpha(x) = \frac{1}{2} \big( 1 + \cos^2(\pi x) \big)$, 
$\alpha(x, t) = - \sin^2 \big( \pi (x - t) \big)$, and
$\alpha(x, t) = \cos(2 \pi x) \cos(2 \pi t)$, respectively. 
\label{SMfig:MGRIT-lin-test-prob}
}
\end{figure}

In this section, we show results for MGRIT used to solve discretizations of the PDE \eqref{eq:cons-lin} for four different wave-speed functions $\alpha$. The different wave-speed functions are shown in \cref{SMfig:MGRIT-lin-test-prob}, along with contours of the corresponding PDE solution in space-time. 
In all cases, the PDE initial condition is $e(x, 0) = \sin^4 (\pi x)$, the time domain is $t \in [0, 4]$, and the (fine-grid) time-step is $\delta t = 0.85 h$.

\renewcommand{\fd}{./figures//}
\renewcommand{\vs}{2}
\renewcommand{\hs}{4}
\renewcommand\fs{0.28}

\begin{figure}[h!]
\centerline{
\includegraphics[scale=\fs]{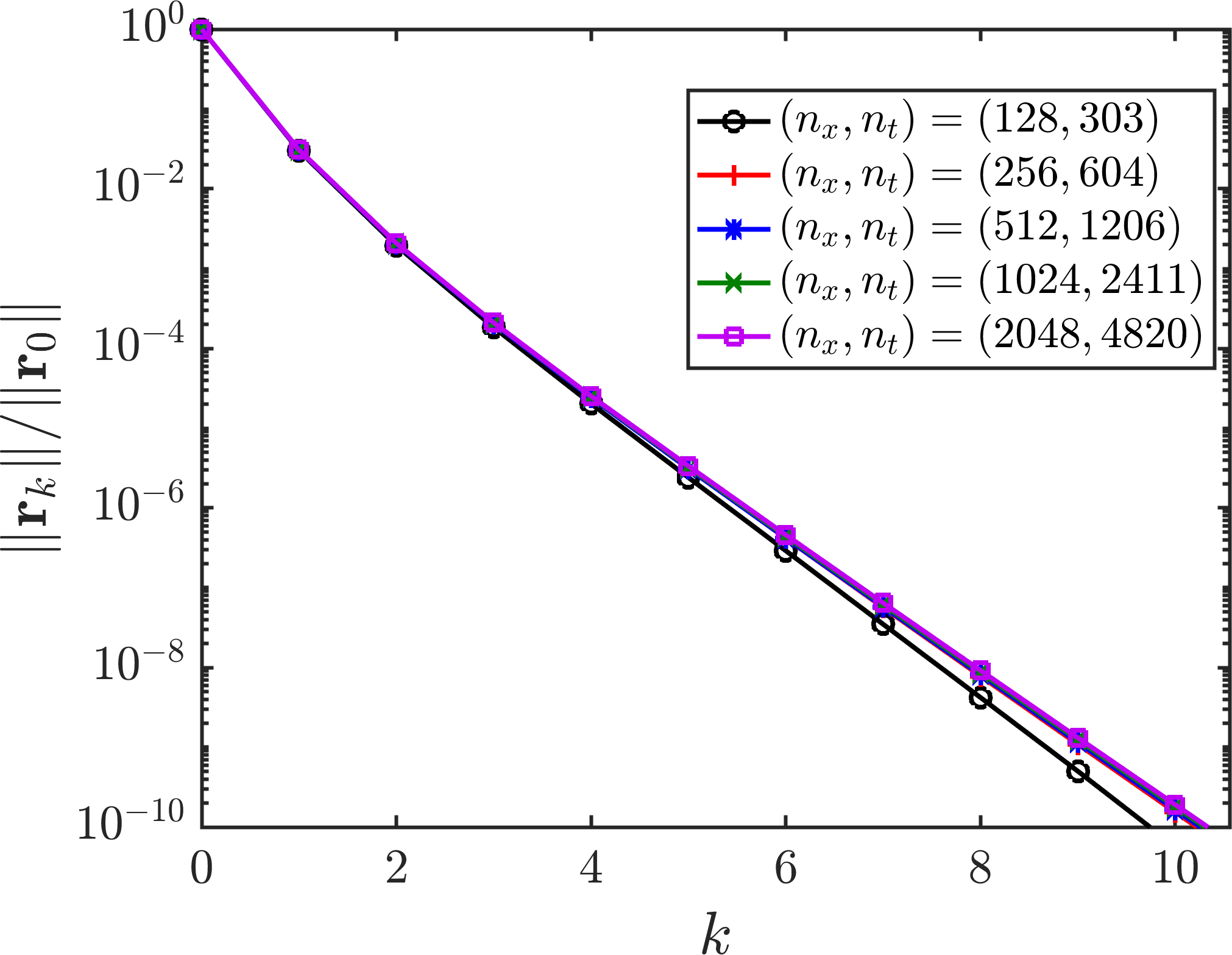}
\hspace{\hs ex}
\includegraphics[scale=\fs]{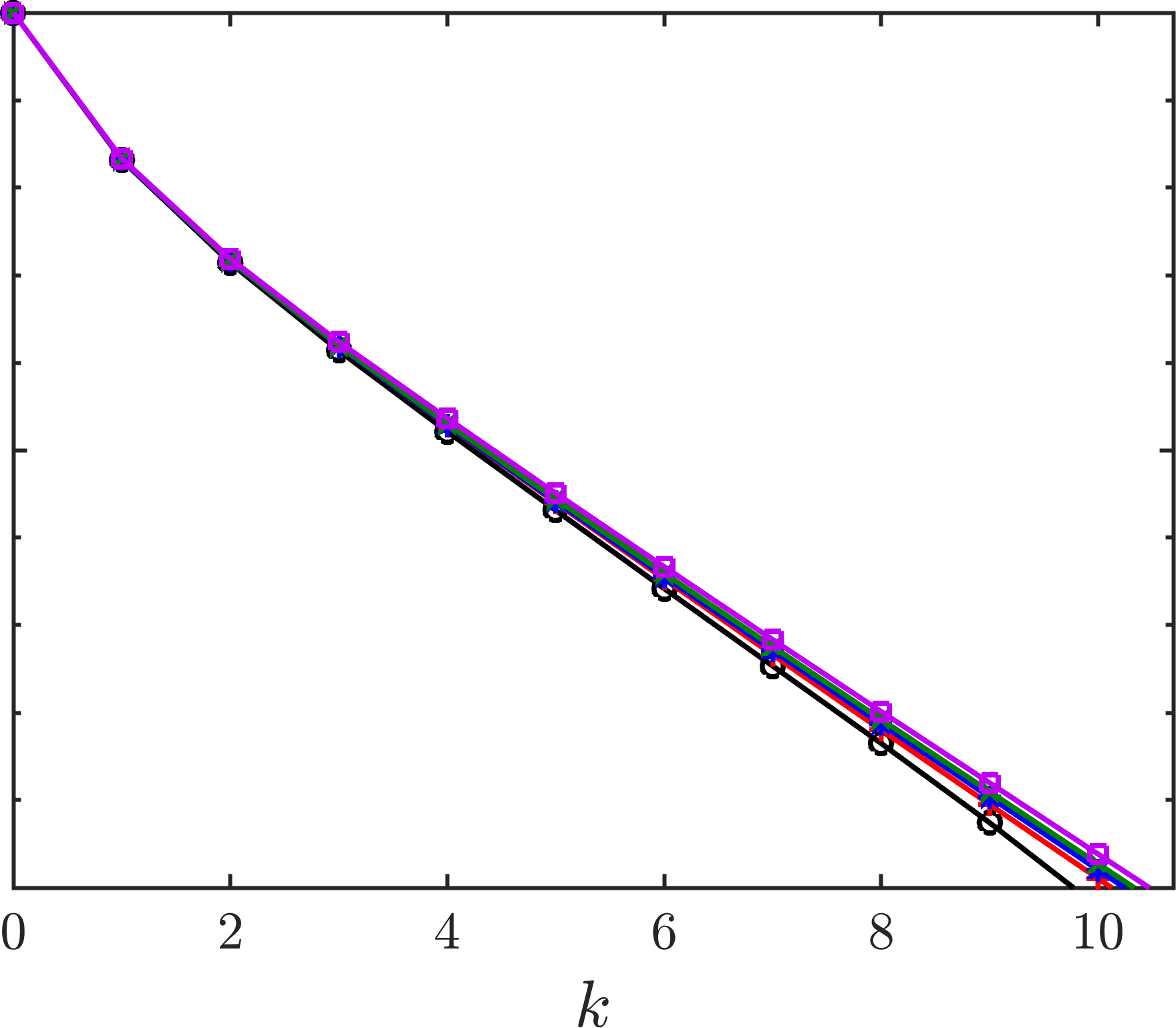}
}
\vspace{\vs ex}
\centerline{
\includegraphics[scale=\fs]{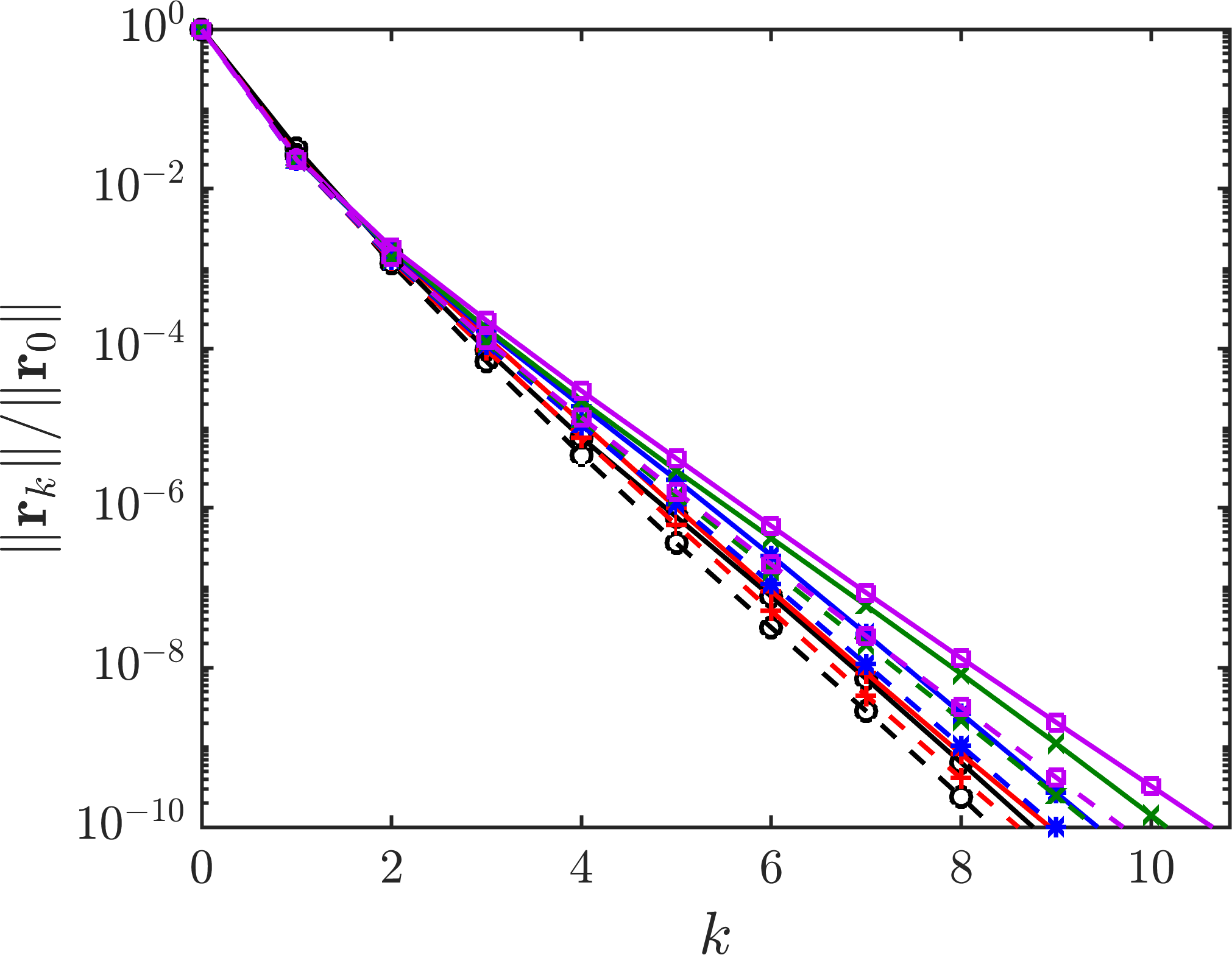}
\hspace{\hs ex}
\includegraphics[scale=\fs]{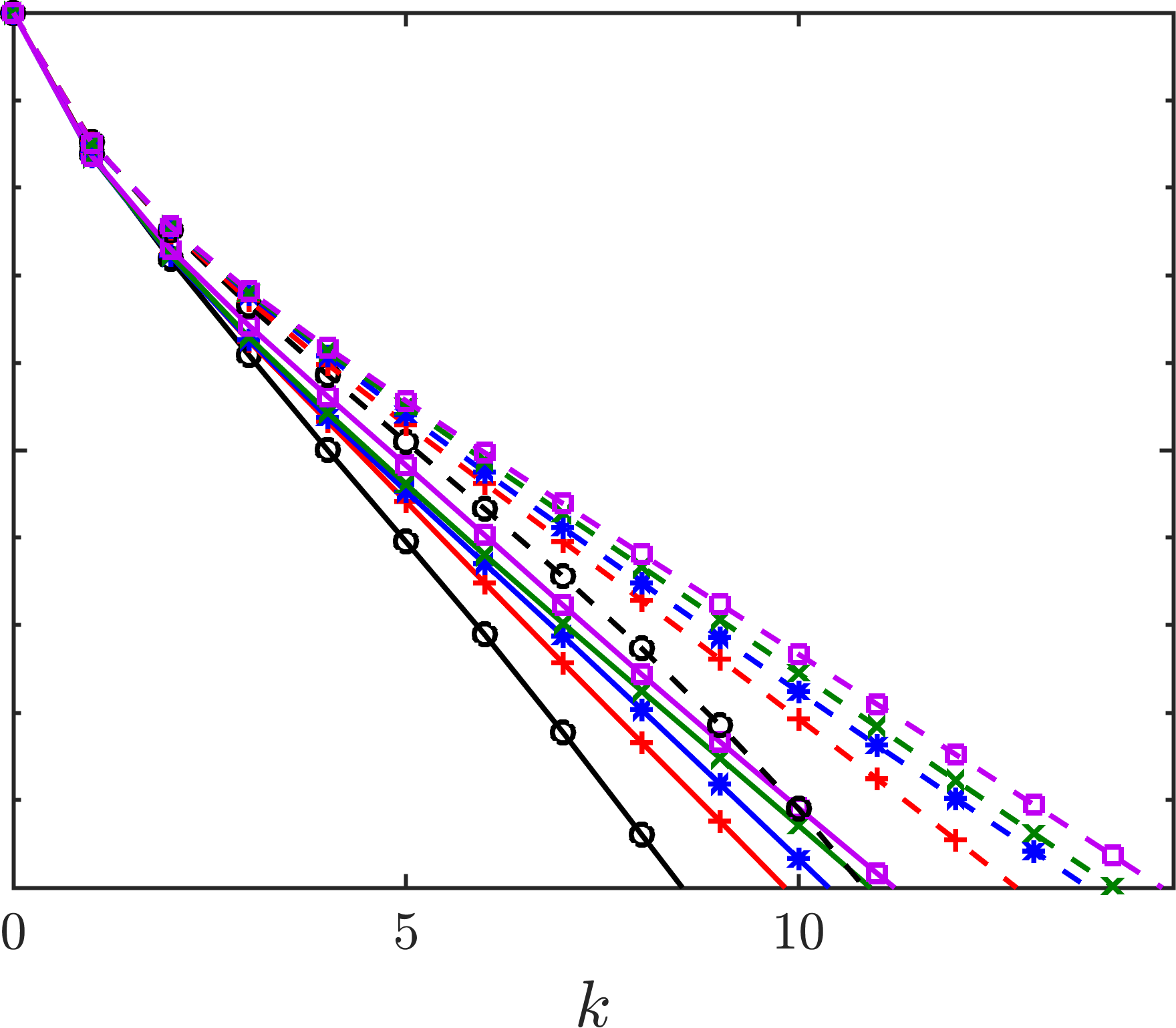}
}
\vspace{\vs ex}
\centerline{
\includegraphics[scale=\fs]{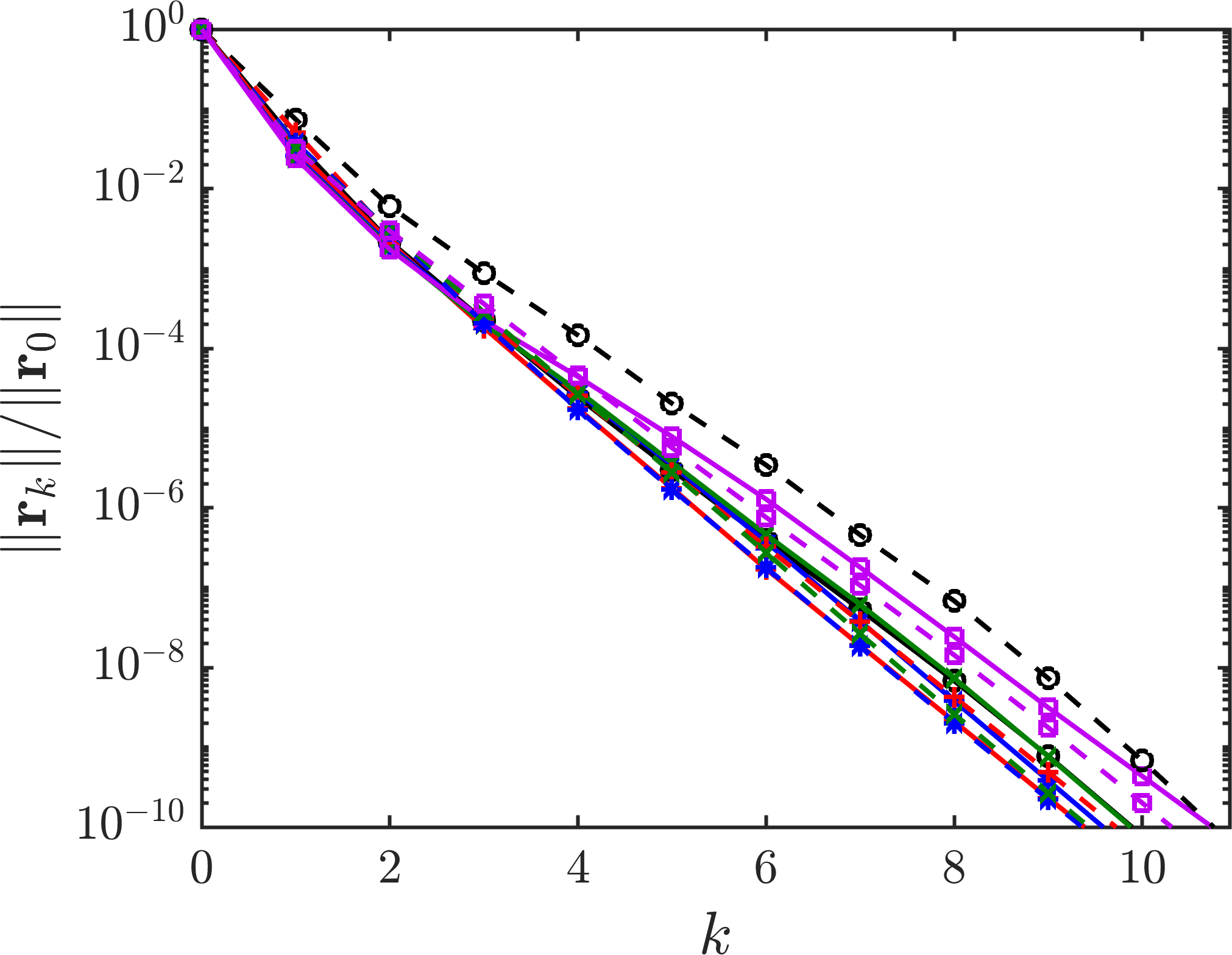}
\hspace{\hs ex}
\includegraphics[scale=\fs]{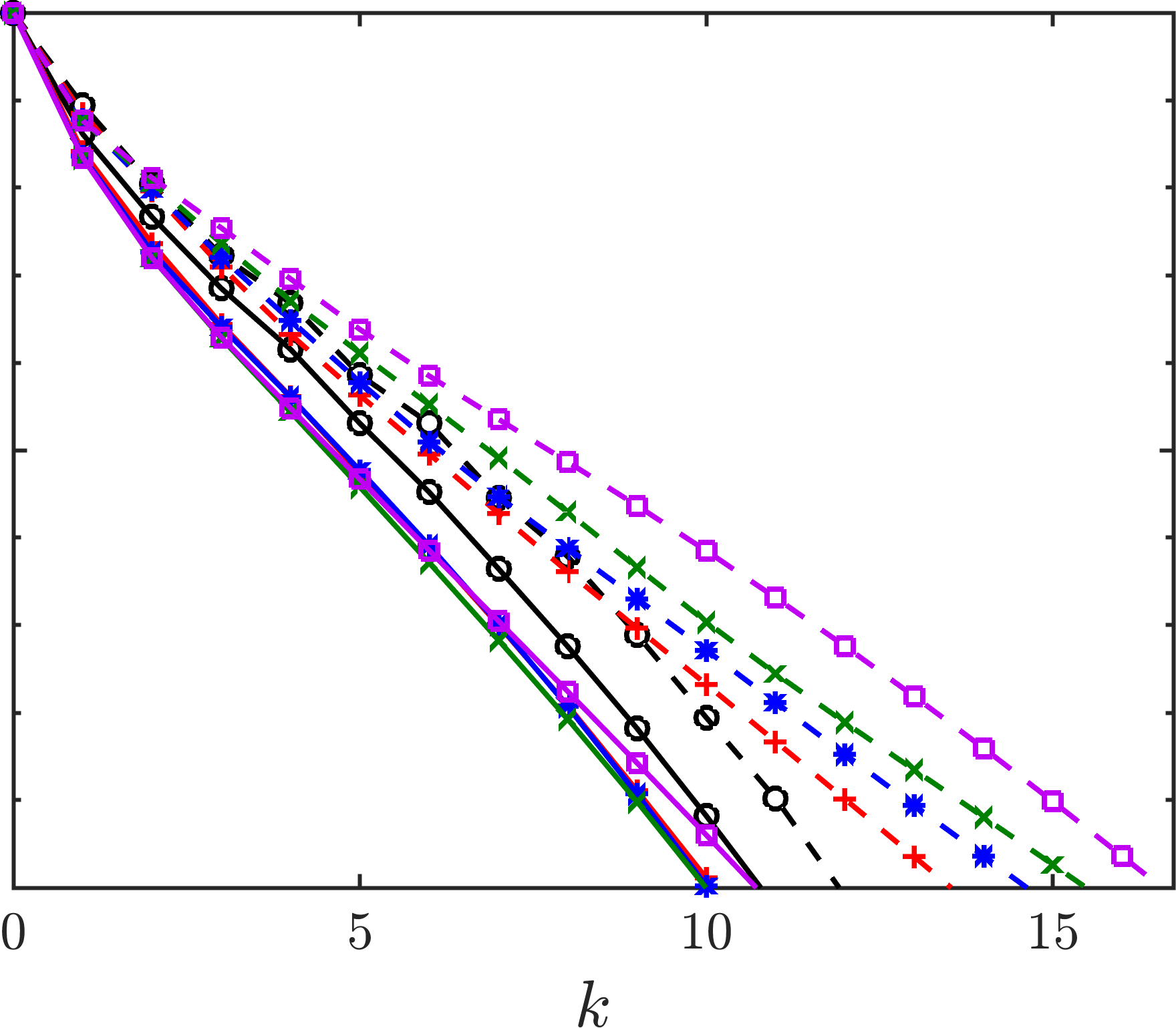}
}
\vspace{\vs ex}
\centerline{
\includegraphics[scale=\fs]{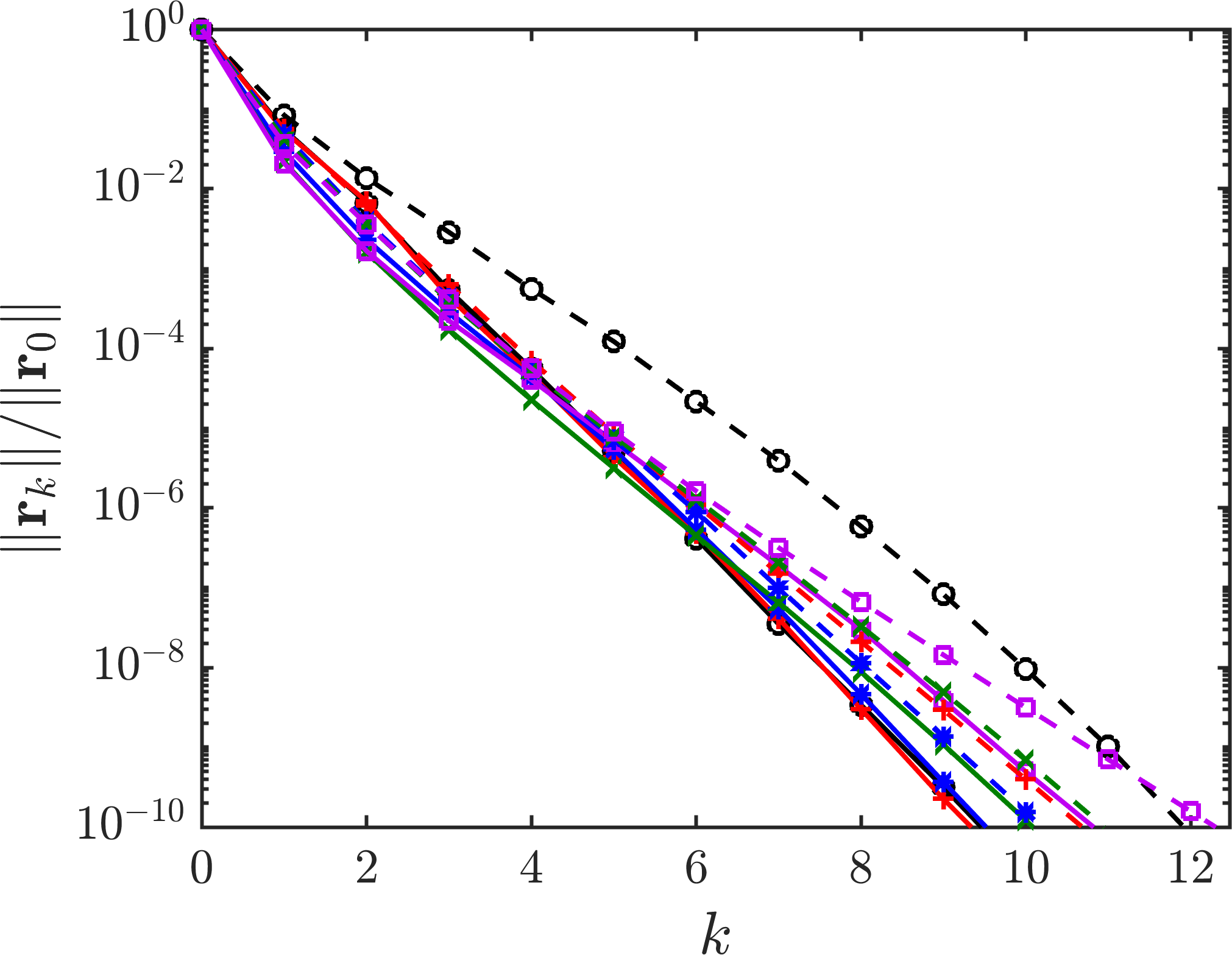}
\hspace{\hs ex}
\includegraphics[scale=\fs]{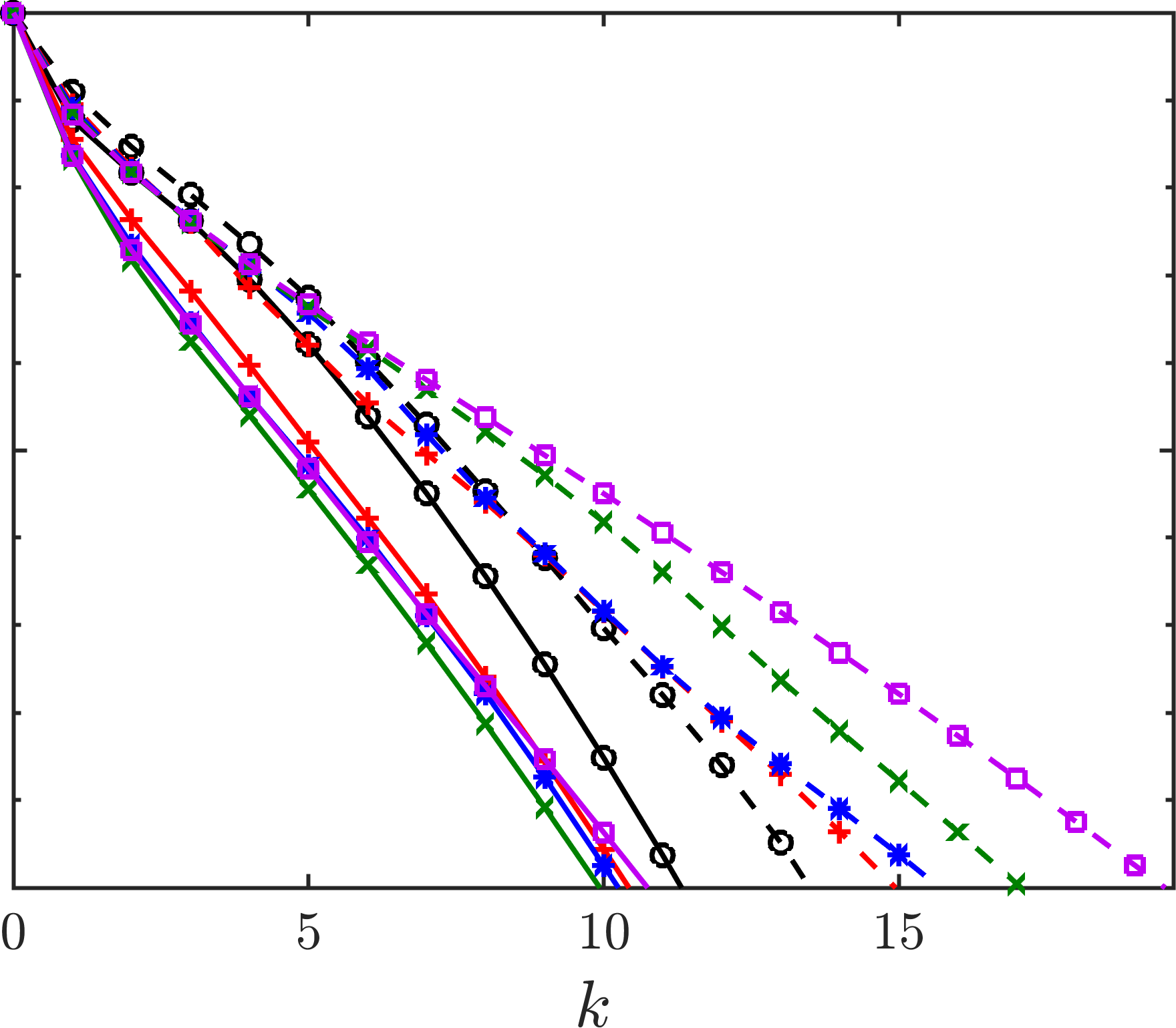}
}
\caption{Residual histories for MGRIT V-cycles applied to solve the linear conservation law \eqref{eq:cons-lin} discretized with standard linear method-of-lines discretization (\cref{SMsec:MGRIT-linear-standard}).
Left column: 1st-order accurate discretizations. Right column: 3rd-order accurate discretizations.
Each of the four rows corresponds with the test problem shown in the corresponding row of \cref{SMfig:MGRIT-lin-test-prob}.
Solid lines correspond to GLF numerical fluxes, and dashed lines correspond to  LLF numerical fluxes. 
Note that for the constant wave-speed problem (top row), the GLF and LLF numerical fluxes are identical (see discussion in \cref{SMsec:MGRIT-linear-standard-disc}).
\label{SMfig:MGRIT-lin-num-res}
}
\end{figure}

\renewcommand{\fd}{./figures/}
\renewcommand{\vs}{2}
\renewcommand{\hs}{4}
\renewcommand\fs{0.28}

\begin{figure}[h!]
\centerline{
\includegraphics[scale=\fs]{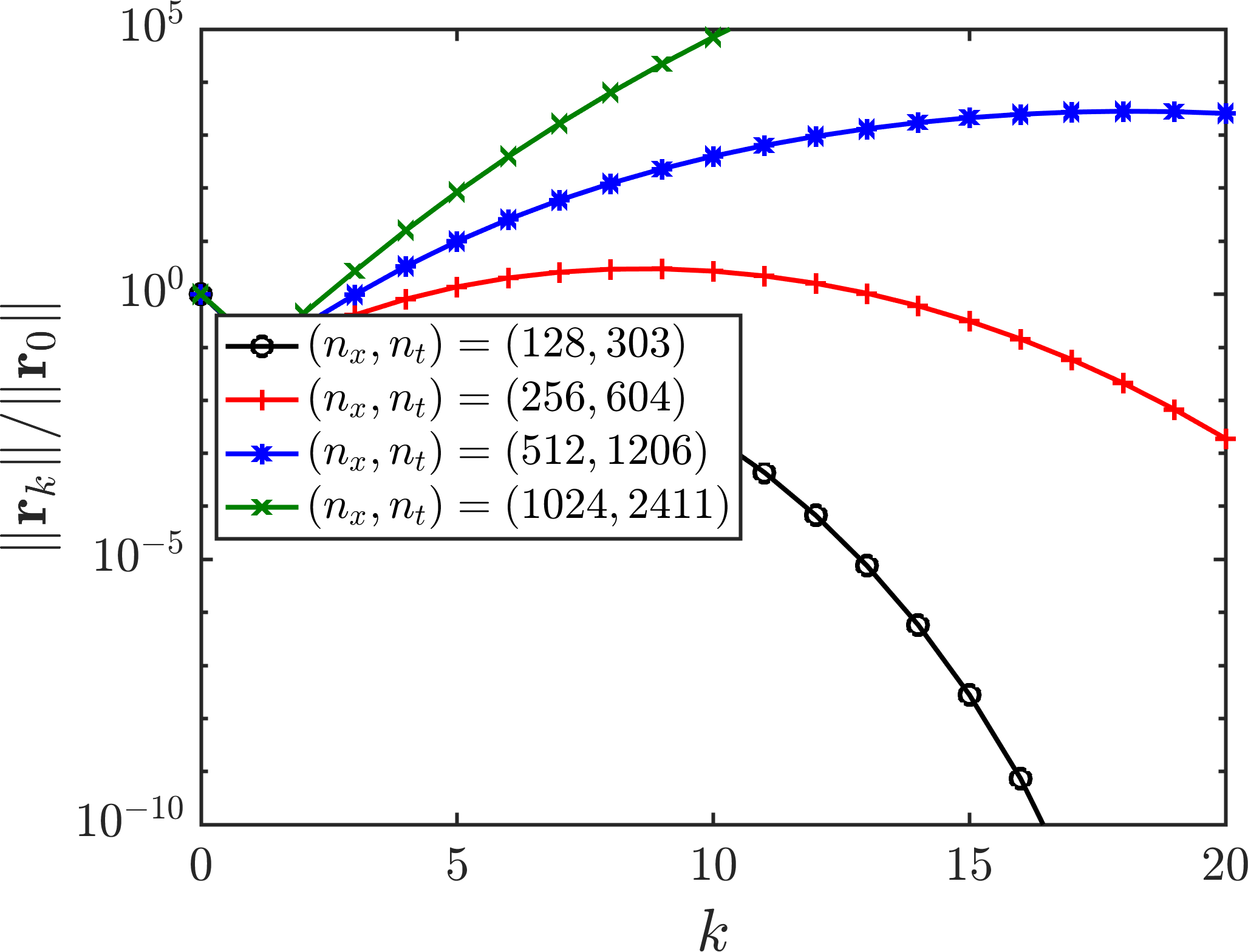}
\hspace{\hs ex}
\includegraphics[scale=\fs]{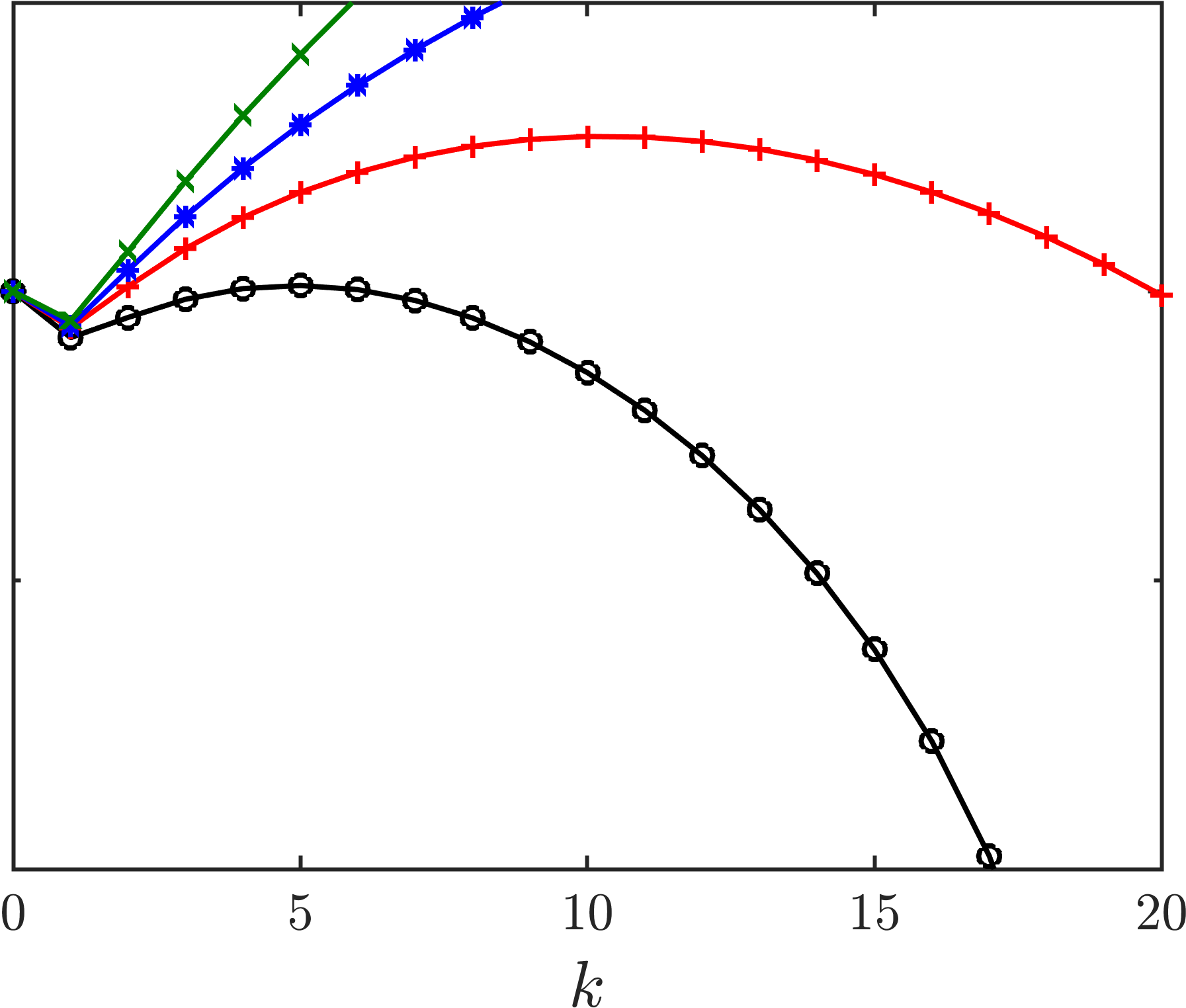}
}
\vspace{\vs ex}
\centerline{
\includegraphics[scale=\fs]{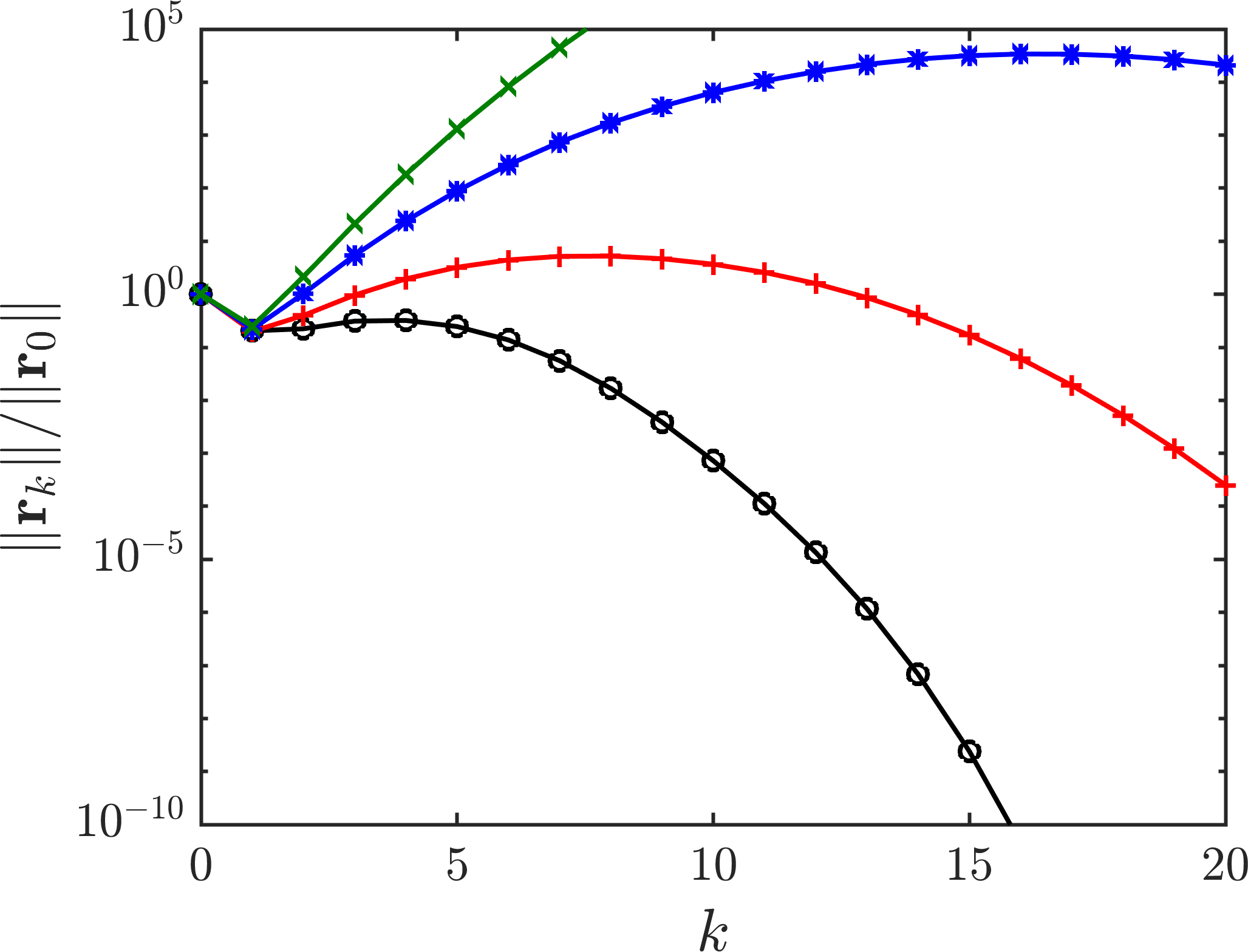}
\hspace{\hs ex}
\includegraphics[scale=\fs]{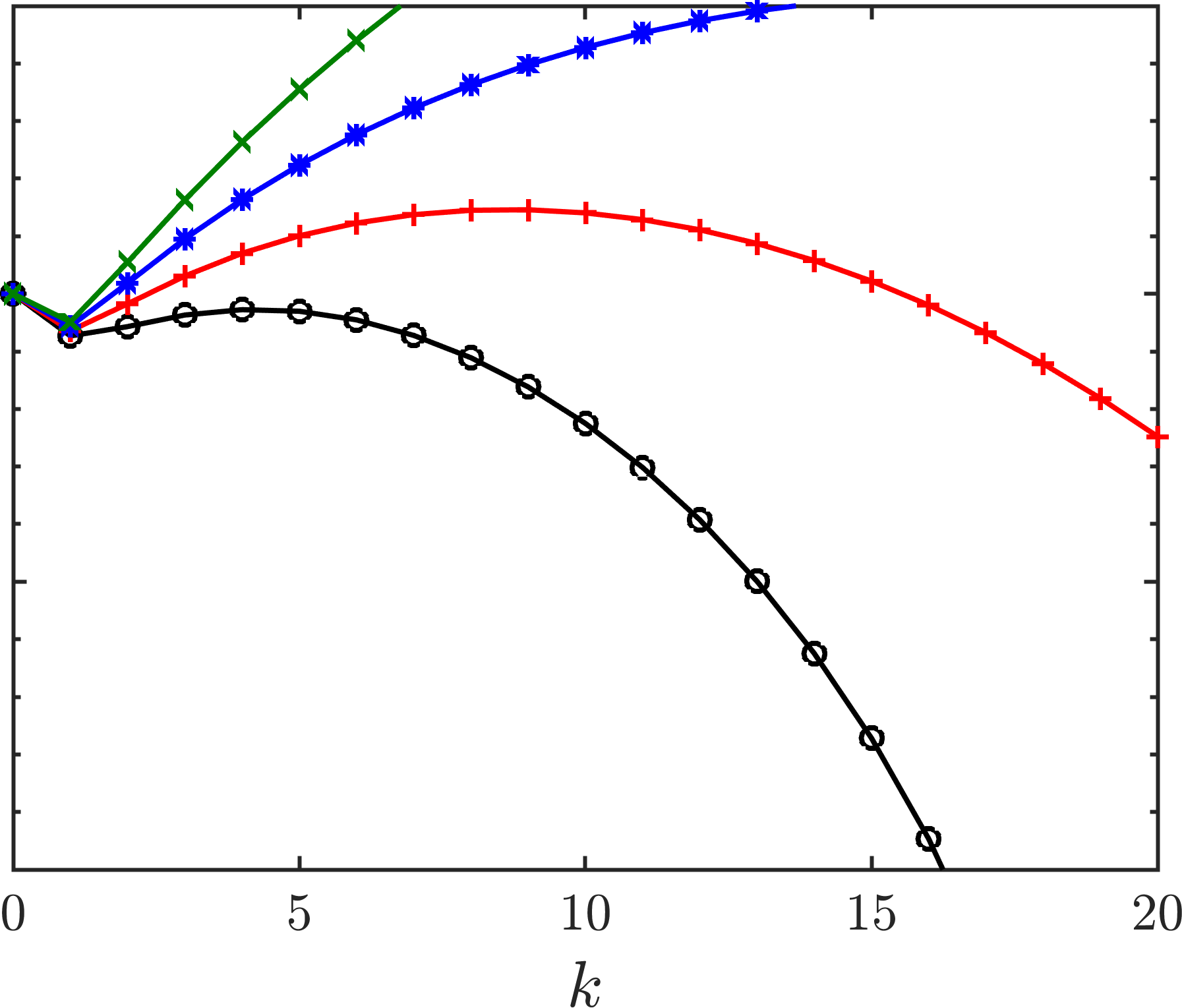}
}
\caption{Residual histories for MGRIT V-cycles applied to solve the linear conservation law \eqref{eq:cons-lin} discretized with standard linear method-of-lines discretization (\cref{SMsec:MGRIT-linear-standard}).
The test problems here are the same as the GLF problems considered in the first two rows of \cref{SMfig:MGRIT-lin-num-res}, except that the coarse-grid operator does not include a truncation error correction.
Note: All vertical axes are on the same scale ranging from $10^{-10} \to 10^{5}$, while those in \cref{SMfig:MGRIT-lin-num-res} range from $10^{-10} \to 1$.
\label{SMfig:MGRIT-lin-num-res-NO-CORRECT}
}
\end{figure}

MGRIT residual history plots are then shown in \cref{SMfig:MGRIT-lin-num-res}.
These results are for multilevel V-cycles, using a coarsening factor of $m = 8$ on all levels, and coarsening is continued until fewer than two points in time would result. 
FCF-relaxation is used on all levels.
Coarse-grid operators on deeper levels in the hierarchy have the same structure as the operator \eqref{eq:Psi-copy} on level two, with the truncation errors being computed recursively (see discussion in \cite[Section 3.4]{DeSterck_etal_2023_SL}).
All coarse-grid linear systems are approximately solved with GMRES, which is iterated until residual is reduced in norm by by 0.01 or until 10 iterations are reached.
The parameter $\vartheta$ that specifies exactly where inside each time interval the wave-speed and dissipation coefficients is frozen is set to zero.
The initial MGRIT iterate is a vector with uniformly random entries, and MGRIT is iterated until the 2-norm of the space-time residual is reduced by at least 10 orders of magnitude. 

Overall, MGRIT convergence is fast, and, for the most part, iteration counts are roughly constant independent of problem size. 
Interestingly, for the 1st-order discretizations (left column), there is not much difference in MGRIT convergence between the GLF and LLF numerical fluxes. 
For the 3rd-order discretizations (right column), MGRIT convergence appears slightly slower for the LLF flux than the GLF flux. 

For comparative purposes, we include in \cref{SMfig:MGRIT-lin-num-res-NO-CORRECT} MGRIT residual histories for a subset of the same problems from \cref{SMfig:MGRIT-lin-num-res}, except that no approximate truncation error correction has been used on coarse levels. 
That is, the coarse-grid operator used in the tests in \cref{SMfig:MGRIT-lin-num-res-NO-CORRECT} is \eqref{eq:Psi-copy} with ${\cal T}_{\rm ideal}^{n} = {\cal T}_{\rm direct}^{n} = 0$, or simply the rediscretized semi-Lagrangian method, $\Psi^{n} = {\cal S}^{n, m\delta t}_{p, 1_*}$.
Evidently, the approximate truncation error correction substantially improves the convergence rate of MGRIT, and appears necessary for the solver to converge in a number of iterations $k \ll n_t / (2m)$---the number of iterations at which FCF-relaxation sequentially propagates the initial condition across the entire time domain \cite{Falgout_etal_2014}.
%

\section{Approximating truncation error for linearized method-of-lines discretization}
\label{SMsec:ideal-err-est-linearized}

In this section, we describe our approach for approximating the truncation error operator ${\cal T}_{\rm ideal}^{n}$ in the coarse-grid operator \eqref{eq:Psi}, in the context where the fine-grid operator is a linearized method-of-lines discretization.
Recall that the particular details of the linearized problem are given in Section \ref{sec:linearization}.
This situation is distinct from that in \cref{SMsec:MGRIT-linear-standard}, which developed an approximation of ${\cal T}_{\rm ideal}^{n}$ in the context of the fine-grid operator being a standard, linear method-of-lines discretization.
That being said, the heuristics we use in this section here are heavily inspired by the work in \cref{SMsec:MGRIT-linear-standard} given the excellent results in \cref{SMfig:MGRIT-lin-num-res}; we strongly recommend reading \cref{SMsec:MGRIT-linear-standard} before the current section.

As in \cref{SMsec:MGRIT-linear-standard}, we suppose that the approximate truncation error operator takes the form
\begin{align} \label{eq:T-ideal-approx-copy}
{\cal T}_{\rm ideal}^{n}
=
\textrm{approximation}
\bigg(
\sum_{j = 0}^{m-1}
\Big( \delta t \wh{{\cal E}}^{n+j} - \wh{e}_{\rm RK} \big( \delta t \wh{L}^{n+j} \big)^{p+1} \Big)
\bigg).
\end{align}
Note that the truncation error operator in \cref{SMsec:MGRIT-linear-standard} was written generally for a $q$th-order ERK method, but here we have fixed $q = p = 2k-1$ since we only consider discretizations of the nonlinear PDE \eqref{eq:cons-law} with equal spatial and temporal orders of accuracy.

The rest of this section is organized as follows. \Cref{SMsec:ideal-err-est-linearized-1} presents an approximation for spatial discretization errors $\wh{{\cal E}}^{n+j}$ in \eqref{eq:T-ideal-approx-copy}. 
\Cref{SMsec:ideal-err-est-linearized-2} presents an approximation for the powers of the spatial discretization $\big( \wh{L}^{n+j} \big)^{p+1}$  appearing in \eqref{eq:T-ideal-approx-copy}. 
The final approximation for \eqref{eq:T-ideal-approx-copy} is presented in \cref{SMsec:ideal-err-est-linearized-3}.

\subsection{Approximating the spatial discretization error}
\label{SMsec:ideal-err-est-linearized-1}

Estimating rigorously the error term $\wh{E}^{n+j}$ in the present setting is quite complicated. 
So, we instead pursue a heuristic approach based the result in \cref{SMlem:num-flux-est}. Recall that \cref{SMlem:num-flux-est} estimates the error of the LF numerical flux \eqref{eq:LFF-lin} when: (i) reconstructions of the wave-speed in the flux are replaced with exact evaluations of the wave-speed, and (ii) the linear reconstructions $e_{i+1/2}^\pm$ use standard weights.
This differs from the present setting, in which the LF numerical flux \eqref{eq:LFF-lin} uses: (i) reconstructions of the wave-speed that are based on the current linearization point (see Section \ref{sec:linearization-II}), and (ii) the linear reconstructions $e_{i+1/2}^\pm$ use non-standard weights (see Section \ref{sec:linearization-III}).
Nonetheless, we suppose that the dominant error term in the numerical flux takes the same form as it does in the case of the standard linear discretization described above, and as is analyzed in \cref{SMlem:num-flux-est} of  \cref{SMsec:MGRIT-linear-standard}. 
That is, we suppose (dropping temporal superscripts for readability):\footnote{It is possible to analyze the error $\wh{E}_{i+1/2}$ more rigorously than this; however, we do not present such details here since they do not appear necessary or helpful for our purposes.}
\begin{align} \label{eq:E-linearized-approx-0}
\wh{E}_{i+1/2} \approx \frac{\nu_{i+1/2}}{2} \big( e_{i+1/2}^- - e_{i+1/2}^+ \big).
\end{align}

Thus, our first step for developing an expression for \eqref{eq:T-ideal-approx-copy} is to estimate the difference $e_{i+1/2}^- - e_{i+1/2}^+$ in \eqref{eq:E-linearized-approx-0}. 
Recall from Section \ref{sec:linearization-III} that the exact form of the linearized reconstructions $e_{i+1/2}^\pm$ depends on the order of the reconstructions $u_{i+1/2}^\pm$ in the outer nonlinear iteration and on the linearization scheme being used.
Specifically, if 1st-order reconstructions are used then $ e_{i+1/2}^\pm$ are just standard linear reconstructions.
For higher-order reconstructions $u_{i+1/2}^\pm$, the reconstructions are nonlinear since they depend on $\bb{u}$ (supposing WENO weights are used, that is). 
The linearized reconstructions $e_{i+1/2}^\pm$ are then either: (i) approximate gradients of $u_{i+1/2}^\pm$ if the approximate Newton linearization \eqref{eq:weighted-reconstruct-grad-FD} is used; or (ii) weighted reconstructions using the same weights as were used to compute $u_{i+1/2}^\pm$ if the Picard linearization \eqref{eq:weighted-reconstruct-grad-zero} is used.

We only provide an estimate for $e_{i+1/2}^- - e_{i+1/2}^+$ under the assumption that a Picard linearization has been used because it is much simpler than the approximate Newton case. We can also cover the case of 1st-order reconstructions in this case, supposing the ``WENO'' weights for $k = 1$ are simply $w^{0} = \wt{w}^{0} = 1$.
However, note that in our numerical tests, we use the same formula for ${\cal T}_{\rm ideal}^{n}$ regardless of whether in fact Picard or Newton linearization of the WENO weights is being used.

Recalling \eqref{eq:weighted-reconstruct-grad-zero}, we assume the linearized reconstructions are given by
\begin{align} \label{eq:weighted-reconstruct-grad-zero-copy}
e_{i+1/2}^-
=
\sum \limits_{\ell =0}^{k-1} 
b_i^{\ell} ( \bb{u} ) 
\big( R^{\ell} \bb{e} \big)_i,
\quad
e_{i+1/2}^+
=
\sum \limits_{\ell =0}^{k-1} 
\wt{b}_{i+1}^{\ell} ( \bb{u} ) 
\big( \wt{R}^{\ell} \bb{e} \big)_{i+1}.
\end{align}
Our starting point for estimating $e_{i+1/2}^- - e_{i+1/2}^+$ is the following corollary of \cref{SMlem:est-lin-rec}, which considered estimates for the standard polynomial reconstructions $\big( \wt{R}^{\ell} \bb{e} \big)_{i}, \big( {R}^{\ell} \bb{e} \big)_{i}$.
\begin{corollary}[Error estimate for weighted reconstructions using $2k-1$ cells]
\label{SMcor:est-rec-weight}
Consider weighted reconstructions of $e(x_{i \pm 1/2})$ that combine polynomial reconstructions of $e(x_{i \pm 1/2})$ on the $k$ stencils $\{ {\cal I}_{i - \ell}, \ldots, {\cal I}_{i - \ell + (k-1)} \}_{\ell = 0}^{k-1}$ using weights $\big\{ \wt{b}_{i}^{\ell} \big\}_{\ell = 0}^{k-1}$, $\big\{ b_{i}^{\ell} \big\}_{\ell = 0}^{k-1}$.
Suppose the weights are consistent, $\sum_{\ell} b_i^{\ell} = \sum_{\ell} \wt{b}_i^{\ell} = 1$, and that $e(x)$ is sufficiently smooth over all cells in the stencil.
Then,
\begin{align}
\sum \limits_{\ell = 0}^{k-1} \wt{b}_i^{\ell} \big( \wt{R}^{\ell} \bb{e} \big)_i
=
e(x_{i - 1/2}) 
- 
\frac{(-h)^k}{(k+1)!} \sum \limits_{\ell = 0}^{k-1} 
\left[
\wt{b}_{i}^{\ell} \, \wt{\zeta}^{\ell} \, \frac{\d^k e}{\d x^k} \bigg|_{\xi_{i,\ell}(x_{i-1/2})}
\right], 
\end{align}
and
\begin{align}
\sum \limits_{\ell = 0}^{k-1} {b}_i^{\ell} \big( {R}^{\ell} \bb{e} \big)_i
=
e(x_{i + 1/2}) 
- 
\frac{(-h)^k}{(k+1)!} \sum \limits_{\ell = 0}^{k-1} 
\left[
b_{i}^{\ell} \, \zeta^{\ell} \, \frac{\d^k e}{\d x^k} \bigg|_{\xi_{i,\ell}(x_{i+1/2})}
\right].
\end{align}
\end{corollary}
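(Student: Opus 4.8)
The plan is to treat this as an immediate consequence of \cref{SMlem:est-lin-rec}, leveraging linearity of the weighted sums together with the partition-of-unity (consistency) hypothesis on the weights. No new estimate is required; the work is purely bookkeeping, and the reconstruction error analysis has already been done one stencil at a time in \cref{SMlem:est-lin-rec}.

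First I would apply \cref{SMlem:est-lin-rec} with $u \mapsto e$ to each individual shifted reconstruction appearing inside the weighted combinations. Since $e(x)$ is assumed sufficiently smooth over every cell in the union stencil, the lemma applies for each fixed left-shift $\ell \in \{0, \ldots, k-1\}$, giving $\big( \wt{R}^{\ell} \bb{e} \big)_i = e(x_{i-1/2}) - \frac{(-h)^k \wt{\zeta}^{\ell}}{(k+1)!} \frac{\d^k e}{\d x^k}\big|_{\xi_{i,\ell}(x_{i-1/2})}$ and the analogous expression for $\big( R^{\ell} \bb{e} \big)_i$ at the right interface. Here each mean-value point $\xi_{i,\ell}$ genuinely depends on $\ell$, a fact I will need to keep track of.

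Next I would substitute these termwise expansions into the weighted sums $\sum_{\ell} \wt{b}_i^{\ell} \big( \wt{R}^{\ell} \bb{e} \big)_i$ and $\sum_{\ell} b_i^{\ell} \big( R^{\ell} \bb{e} \big)_i$ and split each sum by linearity into a leading-order contribution and an error contribution. The leading-order contribution factors as $e(x_{i-1/2}) \sum_{\ell} \wt{b}_i^{\ell}$ (respectively $e(x_{i+1/2}) \sum_{\ell} b_i^{\ell}$), and invoking the consistency assumption $\sum_{\ell} \wt{b}_i^{\ell} = \sum_{\ell} b_i^{\ell} = 1$ collapses this to exactly $e(x_{i \mp 1/2})$. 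The surviving error contribution is then precisely $-\frac{(-h)^k}{(k+1)!} \sum_{\ell} \wt{b}_i^{\ell} \, \wt{\zeta}^{\ell} \, \frac{\d^k e}{\d x^k}\big|_{\xi_{i,\ell}(x_{i-1/2})}$ (and its right-interface counterpart), which matches the claimed estimates.

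I do not expect a genuine obstacle here, as the result follows directly from the single-stencil lemma. The only point deserving care, which I would flag explicitly, is that the unknown interior points $\xi_{i,\ell}$ are distinct for each shift $\ell$, so the weighted error terms cannot be consolidated into one derivative evaluation via a further mean-value argument; they must be left as a sum over $\ell$. This is exactly why the statement retains the summed form of the error, and it is also why the consistency hypothesis on the weights (rather than any cancellation among the $\wt{\zeta}^{\ell}$) is the essential ingredient that produces the clean leading term $e(x_{i \mp 1/2})$.
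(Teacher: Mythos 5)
Your proposal is correct and matches the paper's proof of \cref{SMcor:est-rec-weight}, which likewise substitutes the per-stencil estimates from \cref{SMlem:est-lin-rec} into the weighted sums and invokes the consistency $\sum_{\ell} b_i^{\ell} = \sum_{\ell} \wt{b}_i^{\ell} = 1$ to recover the leading term. Your added remark that the mean-value points $\xi_{i,\ell}$ depend on $\ell$ and so the error must remain a sum is a correct and worthwhile observation, consistent with how the paper handles this afterwards.
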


\begin{proof}
The result follows by substituting the error estimates of $\big( \wt{R}^{\ell} \bb{e} \big)_i$ and $\big( {R}^{\ell} \bb{e} \big)_i$ given in \cref{SMlem:est-lin-rec} and then using the consistency of the weights.
\end{proof}

Plugging in the results from this corollary, we have
\begin{align} 
e_{i+1/2}^- - e_{i+1/2}^+
&=
\frac{(-h)^k}{(k+1)!} \sum \limits_{\ell = 0}^{k-1} 
\left[
\wt{b}_{i+1}^{\ell} \, \wt{\zeta}^{\ell} \, \frac{\d^k e}{\d x^k} \bigg|_{\xi_{i+1,\ell}(x_{i+1/2})}
-
b_{i}^{\ell} \, \zeta^{\ell} \, \frac{\d^k e}{\d x^k} \bigg|_{\xi_{i,\ell}(x_{i+1/2})}
\right],
\\
\label{eq:e-diff-est-linearization}
&\approx
\frac{(-h)^k}{(k+1)!} \sum \limits_{\ell = 0}^{k-1} 
\Big[
\wt{b}_{i+1}^{\ell} \, \wt{\zeta}^{\ell} 
-
b_{i}^{\ell} \, \zeta^{\ell} 
\Big]
\frac{\d^k e}{\d x^k} \bigg|_{\xi_{\ell}},
\end{align}
where $\{ \xi_{\ell} \}$ are unknown points. 
The approximation made here is that, for fixed $\ell$, we can replace the two unknown points $\xi_{i,\ell}(x_{i+1/2})$ and $\xi_{i+1,\ell}(x_{i+1/2})$ by single unknown point $\xi_{\ell}$.
This is true to at least lowest order because $\xi_{i,\ell}(x_{i+1/2})$ and $\xi_{i+1,\ell}(x_{i+1/2})$ are at most a distance of ${\cal O}(h)$ apart (see \cref{SMlem:est-lin-rec} for why this is true).

Recalling that ultimately we want to discretize $\wh{E}_{i+1/2} \approx \tfrac{\nu_{i+1/2}}{2} \big( e_{i+1/2}^- - e_{i+1/2}^+ \big)$ from \eqref{eq:E-linearized-approx-0}, our next approximation is to replace the unknown points $\{ \xi_{\ell} \}$ in \eqref{eq:e-diff-est-linearization} with a set of equidistant points such that we can discretize with finite differences the derivatives involved. 
For $k = 1$, we simply set $\xi_{\ell} = x_{i+1/2}$. Recalling that the reconstruction weights are solution independent for $k = 1$ with $b_i = \wt{b}_i = 1$, plugging into \eqref{eq:E-linearized-approx-0} gives
\begin{align} \label{eq:Ewh-k=1}
\wh{E}_{i+1/2} 
\approx 
\frac{\nu_{i+1/2}}{2} \big( e_{i+1/2}^- - e_{i+1/2}^+ \big)
\approx
-h \frac{\nu_{i+1/2}}{2}
\frac{\d e}{\d x} \bigg|_{x_{i+1/2}}
\quad
{\rm for}
\quad
k = 1. 
\end{align}

Now consider $k > 1$; in principle this could be done for arbitrary $k$, but since we only present numerical results for $k = 2$, let us just consider that case.
This case requires slightly more care because we have no reason to expect that $\xi_0$ and $\xi_1$ are a distance of $h$ apart.
Introducing in \eqref{eq:e-diff-est-linearization} the ansatz that $\xi_0 = \xi_1 - h$ requires us to make the following adjustment:
\begin{align} \label{eq:e-diff-est-linearization-k=2}
e_{i+1/2}^- - e_{i+1/2}^+
&\approx
\beta 
\frac{(-h)^k}{(k+1)!} \bigg|_{k = 2} \sum \limits_{\ell = 0}^{1} 
\Big[
\wt{b}_{i+1}^{\ell} \, \wt{\zeta}^{\ell} 
-
b_{i}^{\ell} \, \zeta^{\ell} 
\Big]
\frac{\d^2 e}{\d x^2} \bigg|_{\xi_0 - \ell h}.
\end{align}
That is, we introduce the unknown coefficient $\beta$, which can be understood by appealing to the mean value theorem.
%
%
To determine $\beta$, we enforce that when the optimal linear weights are used in \eqref{eq:e-diff-est-linearization-k=2}, i.e., $b^{\ell} = d^{\ell}, \wt{b}^{\ell} = \wt{d}^{\ell}$, that the result is the same as \eqref{eq:e-diff-est} when $k = 2$. Recall that \eqref{eq:e-diff-est} estimates $e_{i+1/2}^- - e_{i+1/2}^+$ when $e_{i+1/2}^\pm$ are weighted reconstructions using optimal linear weights.
Setting the resulting two expressions equal after rearrangement gives
\begin{align}
\frac{\d^3 e}{\d x^3} \bigg|_{x_{i + 1/2}}
=
-\beta
\frac{1}{h} \frac{4}{3} 
\left[
\frac{\d^2 e}{\d x^2} \bigg|_{\xi_0}
-
\frac{\d^2 e}{\d x^2} \bigg|_{\xi_0 - h}
\right]
+
\textrm{h.o.t.}
\end{align}
Evidently, this is satisfied with $\beta = -\frac{3}{4}$ and $\xi_0 = x_{i+1}$.
Plugging the results into \eqref{eq:e-diff-est-linearization-k=2} and then \eqref{eq:E-linearized-approx-0} we arrive at:
\begin{align} \label{eq:Ewh-k=2}
\wh{E}_{i+1/2}
\approx
-\frac{3}{4}
\frac{h^2}{6} 
\frac{\nu_{i+1/2}}{2}
\bigg[
\Big(
2 \wt{b}_{i+1}^{0} 
+
b_{i}^{0} 
\Big)
\frac{\d^2 e}{\d x^2} \bigg|_{x_{i+1}}
-
\Big(
\wt{b}_{i+1}^{1}
+
2 b_{i}^{1} 
\Big)
\frac{\d^2 e}{\d x^2} \bigg|_{x_{i}}
\bigg]
\quad {\rm for} \quad k = 2.
\end{align}

Recall from \cref{ass:E-est} that $\big( \wh{{\cal E}} \bb{e} \big)_i = \big(\wh{E}_{i+1/2} - \wh{E}_{i-1/2} \big) / h$.
Thus, from \eqref{eq:Ewh-k=1} and \eqref{eq:Ewh-k=2} we have the following approximations
\begin{align} \label{eq:Ewh_sum}
\sum \limits_{j = 0}^{m-1} \wh{{\cal E}}^{n+j}
=
\begin{cases}
&\displaystyle{\frac{h}{2}}
{\cal D}_1 \diag 
\bigg( 
\sum \limits_{j = 0}^{m-1} \bm{\nu}^{n+j} 
\bigg) 
{\cal D}_1^\top, 
\quad k = 1,\\
\begin{split}
&\frac{3}{4} \frac{h^2}{12} 
{\cal D}_1
\bigg[
\diag
\bigg( 
\sum \limits_{j = 0}^{m-1} \bm{\nu}^{n+j}  \odot \bm{\gamma}^{1,n+j} 
\bigg) 
{\cal D}_2^{1}
\\
&\hspace{8ex}
-
\diag
\bigg( 
\sum \limits_{j = 0}^{m-1} \bm{\nu}^{n+j}  \odot \bm{\gamma}^{0,n+j} 
\bigg) 
{\cal D}_2^{0}
\bigg],
\end{split}
\quad k = 2.
\end{cases}
\end{align} 
Here, $({\cal D}_1 \bb{e})_i = (\bar{e}_i - \bar{e}_{i-1})/h$, $({\cal D}_2^{0} \bb{e})_{i} = (\bar{e}_i - 2\bar{e}_{i+1} + \bar{e}_{i+2})/h^2$, and $({\cal D}_2^{1} \bb{e})_{i} = (\bar{e}_{i-1} - 2 \bar{e}_{i} + \bar{e}_{i+1})/h^2$. 
The vector $\bm{\nu}^{n+j} = \big( \nu_{1+1/2}^{n+j}, \nu_{2+1/2}^{n+j}, \ldots, \nu_{n_x+1/2}^{n+j} \big) \in \mathbb{R}^{n_x}$ is the vector of dissipation coefficients used in the LF numerical flux \eqref{eq:LFF-lin} at time $t_{n + j} + \delta t \vartheta$. Recall from \cref{SMsec:cons-lin-approx} that to develop a truncation error estimate for the linearized problem, its time dependence on $[t_n, t_n + \delta t]$ is frozen at the point $t_n + \delta t \vartheta$, with $\vartheta \in [0, 1]$. 
Also, we have defined the shorthands
\begin{align} \label{eq:gamma-sh-def}
\gamma_i^{0, n} := 2 \wt{b}_{i+1}^{0, n} 
+
b_{i}^{0, n},
\quad
\gamma_i^{1, n} := \wt{b}_{i+1}^{1, n}
+
2 b_{i}^{1, n},
\end{align}
where, e.g., $b_i^{1, n}$ means the reconstruction coefficient $b_i^{1}$ from time $t_{n} + \delta t \vartheta$.
Further, ``$\odot$'' denotes element-wise product between vectors.
Notice that the expressions in \eqref{eq:Ewh_sum} are, by design, equivalent those in \eqref{eq:E-approx-optimal-linear} when the general weights $\{ \wt{b}^{\ell} \}$, $\{ {b}^{\ell} \}$ are replaced with the optimal linear weights $\{ \wt{d}^{\ell} \}$, $\{ {d}^{\ell} \}$.

\subsection{Approximating powers of the spatial discretization}
\label{SMsec:ideal-err-est-linearized-2}
Recalling the form of the approximate truncation error operator 
${\cal T}_{\rm ideal}^{n}$ in \eqref{eq:T-ideal-approx-copy}, we need also to estimate powers of the spatial discretization matrix.
To approximate these we use a similar strategy to that used previously in \cref{SMsec:Psi-MGRIT-linear-standard} in the context of standard linear spatial discretizations; see \eqref{eq:L-powers-approx}.
Specifically, for the $k = 1$ we now use an expression that is identical to that in \eqref{eq:L-powers-approx}:
\begin{align} \label{eq:L-powers-approx-k=1}
\sum \limits_{j = 0}^{m-1}
\big( \wh{L}^{n+j} \big)^{2}
\approx
{\cal D}_1
\diag \bigg( \sum \limits_{j = 0}^{m-1} \big( \bm{\alpha}^{n+j} \big)^{2} \bigg)
{\cal D}_1^\top
\quad
\textrm{for}
\quad 
k = 1,
\end{align}
where $\bm{\alpha}^{n+j} = \big( \alpha_{1 + 1/2}^{n+j}, \ldots, \alpha_{n_x + 1/2}^{n+j} \big)^\top \in \mathbb{R}^{n_x}$ is the vector of interfacial wave-speeds at time $t_{n+j} + \delta t \vartheta$, and powers of it are computed element-wise.

However, for $k = 2$ we do something slightly different than \eqref{eq:L-powers-approx}.
Namely, rather than using the FD discretization ${\cal D}_3$ that appears in \eqref{eq:L-powers-approx}, we use a FD discretization of the 3rd-derivative that makes use of the WENO weights, analogous to what we did in \eqref{eq:Ewh-k=2}. 
This results in
\begin{align} \label{eq:L-powers-approx-k=2}
\begin{split}
\sum \limits_{j = 0}^{m-1}
\big( \wh{L}^{n+j} \big)^{4}
&\approx
-\frac{3}{4} \frac{1}{h}
{\cal D}_1
\bigg[
\diag
\bigg( 
\sum \limits_{j = 0}^{m-1} \big( \bm{\alpha}^{n+j} \big)^4 \odot \bm{\gamma}^{1,n+j} 
\bigg) 
{\cal D}_2^{1}
\\
&\hspace{8ex}
-
\diag
\bigg( 
\sum \limits_{j = 0}^{m-1} \big( \bm{\alpha}^{n+j} \big)^4  \odot \bm{\gamma}^{0,n+j} 
\bigg) 
{\cal D}_2^{0}
\bigg]
\quad
\textrm{for}
\quad 
k = 2,
\end{split}
\end{align}
with elements of $\bm{\gamma}^{*,n+j}$ defined in \eqref{eq:gamma-sh-def}.

\subsection{Piecing it all together}
\label{SMsec:ideal-err-est-linearized-3}
Now we produce our approximation for ${\cal T}_{\rm ideal}^{n}$ based on  \eqref{eq:T-ideal-approx-copy}.
This requires the spatial discretization estimates from \eqref{eq:Ewh_sum}, and powers of the spatial discretization given in \eqref{eq:L-powers-approx-k=1}  and \eqref{eq:L-powers-approx-k=2}.
Piecing these together results in the following:
\begin{align}
{\cal T}_{\rm ideal}^{n}
=
\begin{cases}
{\cal D}_1 \diag 
\bigg( 
\sum \limits_{j = 0}^{m-1} 
\bm{\beta}^{n+j} 
\bigg) 
{\cal D}_1^\top, 
\quad k = 1,
\\[3ex]
{\cal D}_1
\bigg[
\diag
\bigg( 
\sum \limits_{j = 0}^{m-1} \bm{\beta}^{0,n+j} 
\bigg) 
{\cal D}_2^{0}
-
\diag
\bigg( 
\sum \limits_{j = 0}^{m-1} \bm{\beta}^{1,n+j} 
\bigg) 
{\cal D}_2^{1}
\bigg],
\quad k = 2.
\end{cases}
\end{align} 
The vectors $\bm{\beta}^{n}, \bm{\beta}^{0, n}, \bm{\beta}^{1, n} \in \mathbb{R}^{n_x}$ are defined element-wise by
\begin{align}
\beta_{i}^{n}
&:=
\left[
\wh{e}_{\rm RK} \Big( \delta t \alpha^{n}_{i+1/2} \Big)^2
+
\frac{h \delta t}{2} \nu^{n}_{i+1/2}
\right],
\\
\beta_{i}^{*, n}
&:=
\frac{3}{4} \left[
-
\frac{\wh{e}_{\rm RK}}{h} \Big( \delta t \alpha^{n}_{i+1/2} \Big)^4
+
\frac{h^2 \delta t }{12} \nu^{n}_{i+1/2} 
\right]
\gamma_i^{*, n}. 
\end{align}
Recall from \cref{SMsec:cons-lin-approx} that the ERK constants are $\wh{e}_{\rm RK} = -1/2$ (for the forward Euler method \eqref{eq:ERK1}, as used when $k = 1$), and $\wh{e}_{\rm RK} = -1/4!$ (for the 3rd-order ERK method \eqref{eq:ERK3}, as used when $k = 2$).
We remark that, by design, when the general weights $\{ \wt{b}^{\ell} \}$, $\{ {b}^{\ell} \}$ are replaced with the optimal linear weights $\{ \wt{d}^{\ell} \}$, $\{ {d}^{\ell} \}$ the truncation error operators here reduce to those used in the case of a standard linear discretization given in \eqref{eq:T-ideal-linear} (modulo the fact that the wave-speeds and dissipation coefficients here are  evaluated in terms of the linearization point).

\section{Over-solving: Discretization and algebraic error for a Burgers problem}
\label{SMsec:over-solving}


In this section, we expand upon the contents of \cref{rem:os}.
We consider the algebraic error of the iterates produced by  Algorithm \ref{alg:richardson} for a Burgers problem from the main paper, and we relate these to the discretization error.
To measure discretization error we require the exact solution of the Burgers problem.
Recall that in the main paper we work on the periodic spatial domain $x \in (-1,1)$; however, it is simpler to compute and express the solution on the non-periodic, unbounded domain $x \geq -1$.
Further recall from Section \ref{sec:model-probs} that the initial condition used for Burgers equation \eqref{eq:burgers} is
\begin{align}
u_0(x) 
= 
\begin{cases}
0, 
\quad
& -1 \leq x \leq -\tfrac{1}{2},
\\
1, 
\quad
&-\tfrac{1}{2} < x < 0,
\\
0,
\quad
& \hphantom{-} 0 \leq x.
\end{cases}
\end{align}
Then, for times $t > 0$, the associated exact solution of \eqref{eq:burgers} can be computed by using the method-of-characteristics, and the fact that the shock speed at a given point is the average of the solution on either side of it.
Carrying out these calculations, we find the exact solution is given by
\begin{align} \label{eq:burgers-exact-sol}
u_{\textrm{exact}}(x, t) = 
\begin{cases}
u_{\textrm{pre-merge}}(x, t), 
\quad 
&0 < t < 1, 
\\
u_{\textrm{post-merge}}(x, t), 
\quad 
&1 \leq t,
\end{cases}
\end{align}
where $u_{\textrm{pre-merge}}$ and $u_{\textrm{post-merge}}$ represent the solution before and after the rarefaction wave has run into the shock wave.
These functions are given by
\begin{align}
u_{\textrm{pre-merge}}(x) 
= 
\begin{cases}
0, 
\quad 
&-1 \leq x \leq -\tfrac{1}{2}, \\
\tfrac{x+ 1/2}{t}, 
\quad 
&-\tfrac{1}{2} \leq x \leq x_{\textrm{front}}(t), \\
1, 
\quad 
&x_{\textrm{front}}(t) \leq x \leq x_{\textrm{shock}}(t), 
\\
0, 
\quad 
&x_{\textrm{shock}} < x,
\end{cases}
\end{align}
and
\begin{align}
u_{\textrm{post-merge}}(x) 
= 
\begin{cases}
0, 
\quad 
&-1 \leq x \leq -\tfrac{1}{2}, \\
\tfrac{x+1/2}{t}, 
\quad 
&-\tfrac{1}{2} \leq x \leq x_{\textrm{shock}}(t), \\
0,
\quad   
& x_{\textrm{shock}}(t) < x,
\end{cases}
\end{align}
respectively. Here $x_{\textrm{shock}}$ and $x_{\textrm{front}}$ are the locations of the shock and front of the rarefaction wave, respectively, and are given by
\begin{align}
x_{\textrm{shock}}(t)
=
\begin{cases}
\tfrac{t}{2}, 
\quad
&0 < t \leq 1, 
\\
\sqrt{t} - \tfrac{1}{2}, 
\quad
&1 \leq t, 
\end{cases},
\quad
x_{\textrm{front}}(t)
=
\begin{cases}
t - \tfrac{1}{2},
\quad
&0 < t \leq 1, 
\\
x_{\textrm{shock}}(t),
\quad
& 1 \leq t.
\end{cases}
\end{align}
It is implementationally trivial to extend this solution to the periodic domain $x \in (-1, 1)$ for times $t < 4$ (at $t = 4$ in the periodic problem, the shock runs into the base of the rarefaction wave).

\renewcommand{\fd}{./figures/}
\renewcommand{\hs}{4}
\renewcommand{\vs}{1}
\renewcommand\fs{0.325}
\begin{figure}[t!]
\centerline{
\includegraphics[scale=\fs]{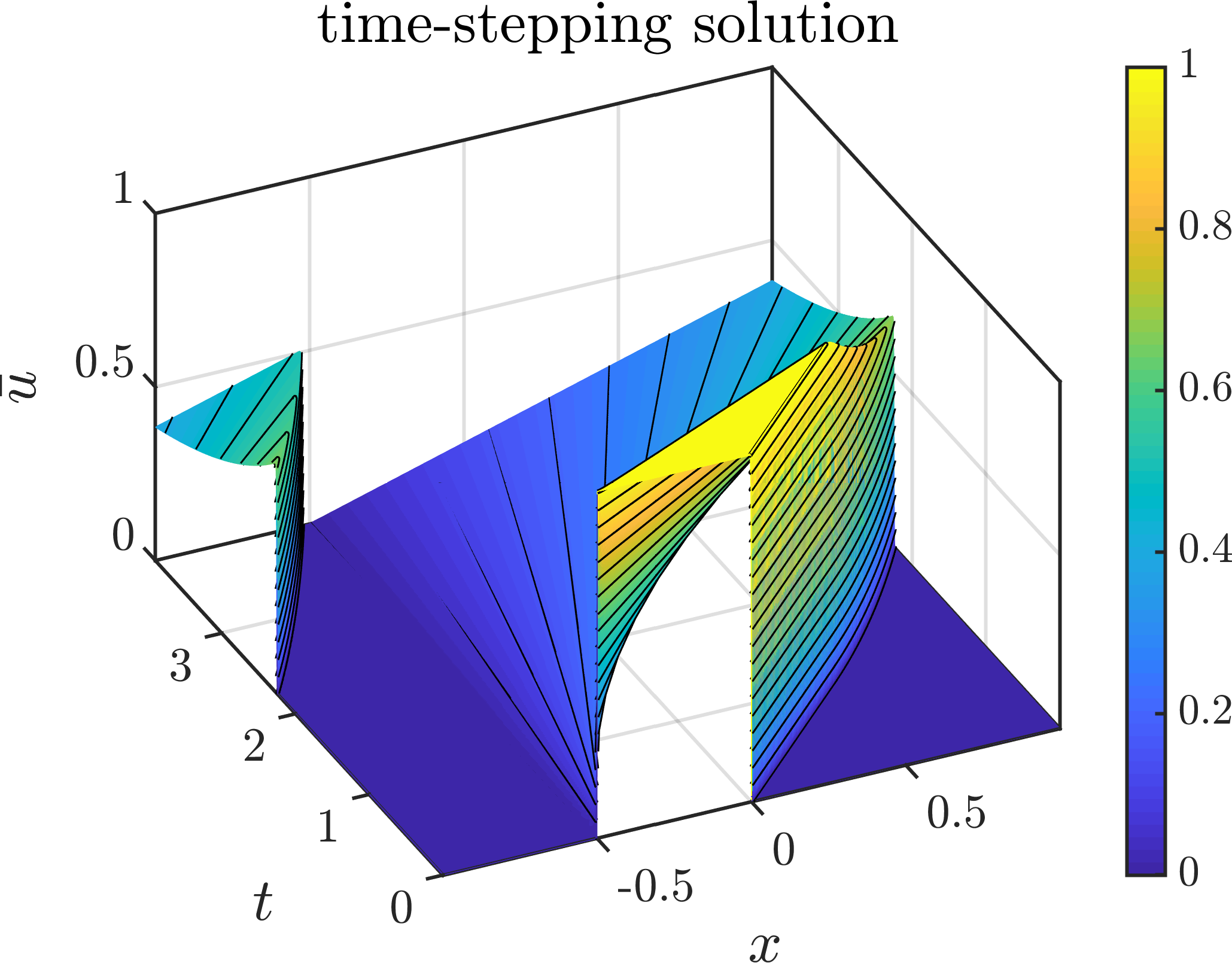}
\hspace{\hs ex}
\includegraphics[scale=\fs]{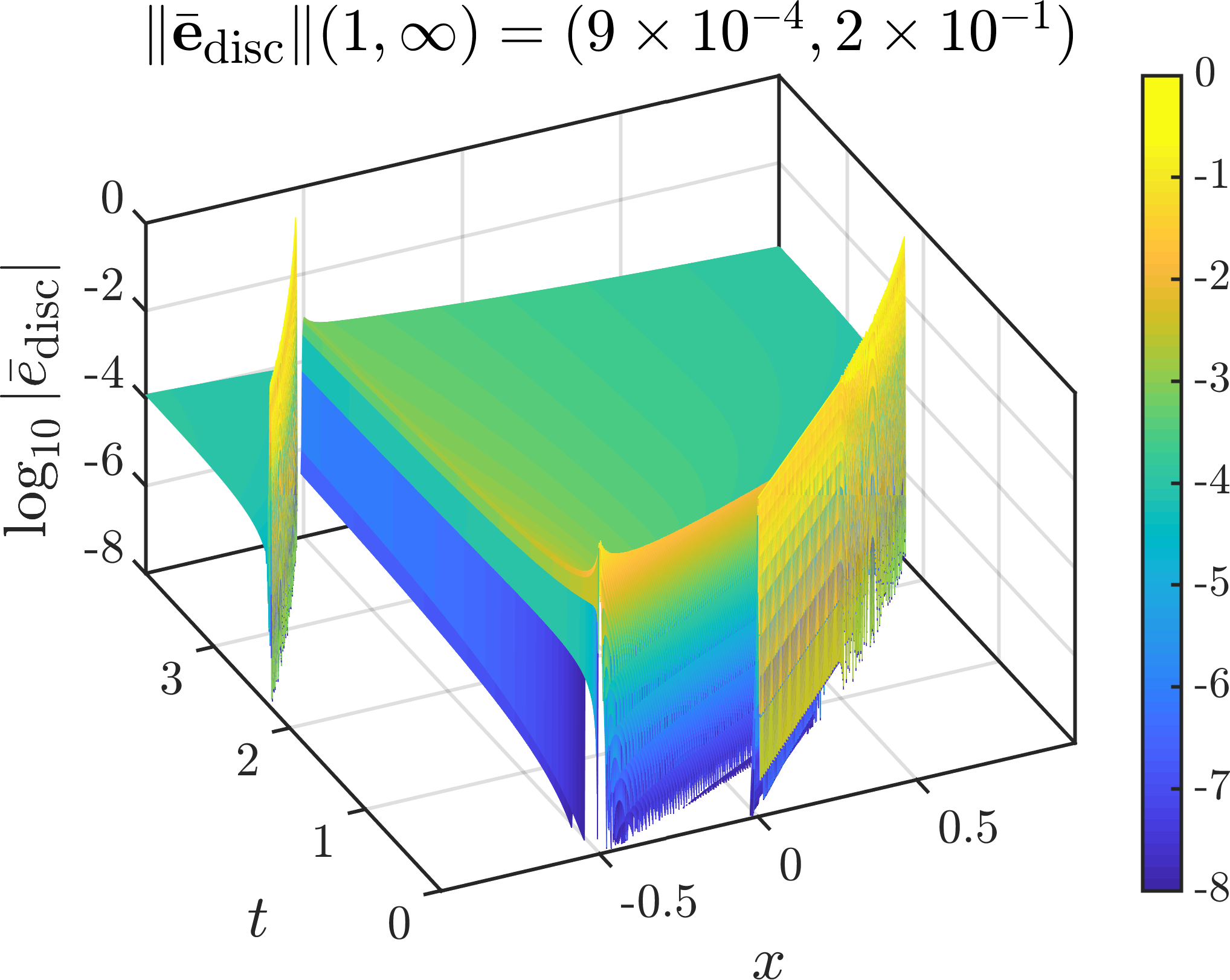}
}
\caption{Left: Discretized space-time solution of Burgers equation \eqref{eq:burgers} obtained by time-stepping on a mesh with $n_x = 1024$ FV cells, and using a LLF numerical flux, and 3rd-order WENO reconstructions.
Right: Associated discretization error in $\log_{10}$ scale as computed via the exact solution \eqref{eq:burgers-exact-sol}.
The discrete $L^1$- and $L^{\infty}$-norms of the discretization error are included in the plot's title.
To aid with visualization, cell-wise errors less than $10^{-8}$ have been masked to white.
\label{SMfig:os-exact-sol}
}
\end{figure}

We now consider a specific numerical example with $n_x = 1024$ FV cells in space. A LLF numerical flux is used with 3rd-order WENO reconstructions. 
In the left panel of \cref{SMfig:os-exact-sol} the space-time solution obtained by sequential time-stepping is shown. The right panel of \cref{SMfig:os-exact-sol} shows the associated discretization error, as computed with the exact solution \eqref{eq:burgers-exact-sol}.
More specifically, the discretization error at time $t_n$ in the $i$th FV cell is defined as $\bar{e}_{\textrm{disc},i}^n := \bar{u}_{\textrm{exact},i}(t_n) - \bar{u}_i^n$, where $\bar{u}_i^n$ is the time-stepping solution, and $\bar{u}_{\textrm{exact},i}(t_n)$ is the cell average of the exact PDE solution \eqref{eq:burgers-exact-sol}.
The discretization error is essentially zero in regions where the solution is constant. It is on the order of $0.2$ along the shock, and it is also relatively large along the front of the rarefaction wave before it merges into the shock.
It is also somewhat large along the base of the rarefaction wave (i.e., around the line $x = -0.5$).
Thus, unsurprisingly, the discretization error is greatest where the true solution lacks regularity.

Now we use our iterative parallel-in-time solver Algorithm \ref{alg:richardson} to solve the discretized problem.
The solver setting are used as in the main paper, including a single MGRIT iteration to approximately solve the linearized problem and the use of nonlinear F-relaxation. A Newton linearization is used for the WENO weights. 
At the $k$th iteration iteration, define the algebraic error as $\bar{e}_{\textrm{alg},i}^n := \bar{u}_i^n - \big( \bb{u}_{k}^n \big)_{i}$, where $\bar{u}_i^n$ is again the exact solution of the discretized problem obtained via sequential time-stepping, and $\big( \bb{u}_{k}^n \big)_i$ is the approximation at iteration $k$ of the discretized problem.
The algebraic error for iterations $k = 0, 2, 4, 6$ is shown in \cref{SMfig:os-rich}.\footnote{Note that the error is shown for the $k$th iterate after nonlinear F-relaxation has been applied to it; that is, in the notation of  Algorithm \ref{alg:richardson}, the algebraic error is actually shown for $\wt{\bm{u}}_k$ rather than $\bb{u}_k$.}

\renewcommand{\fd}{./figures/}
\renewcommand{\hs}{4}
\renewcommand{\vs}{2}
\renewcommand\fs{0.325}
\begin{figure}[t!]
\centerline{
\includegraphics[scale=\fs]{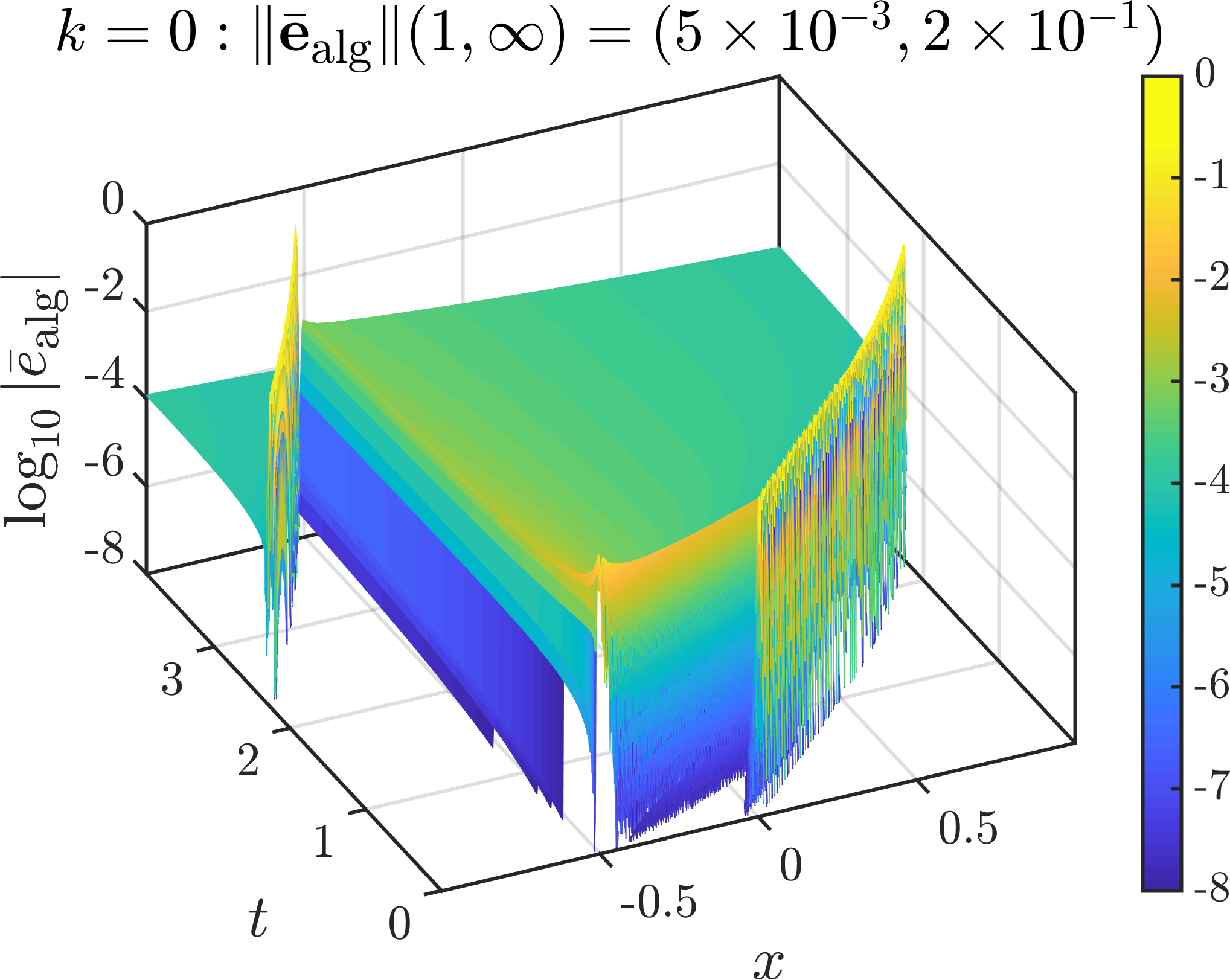}
\hspace{\hs ex}
\includegraphics[scale=\fs]{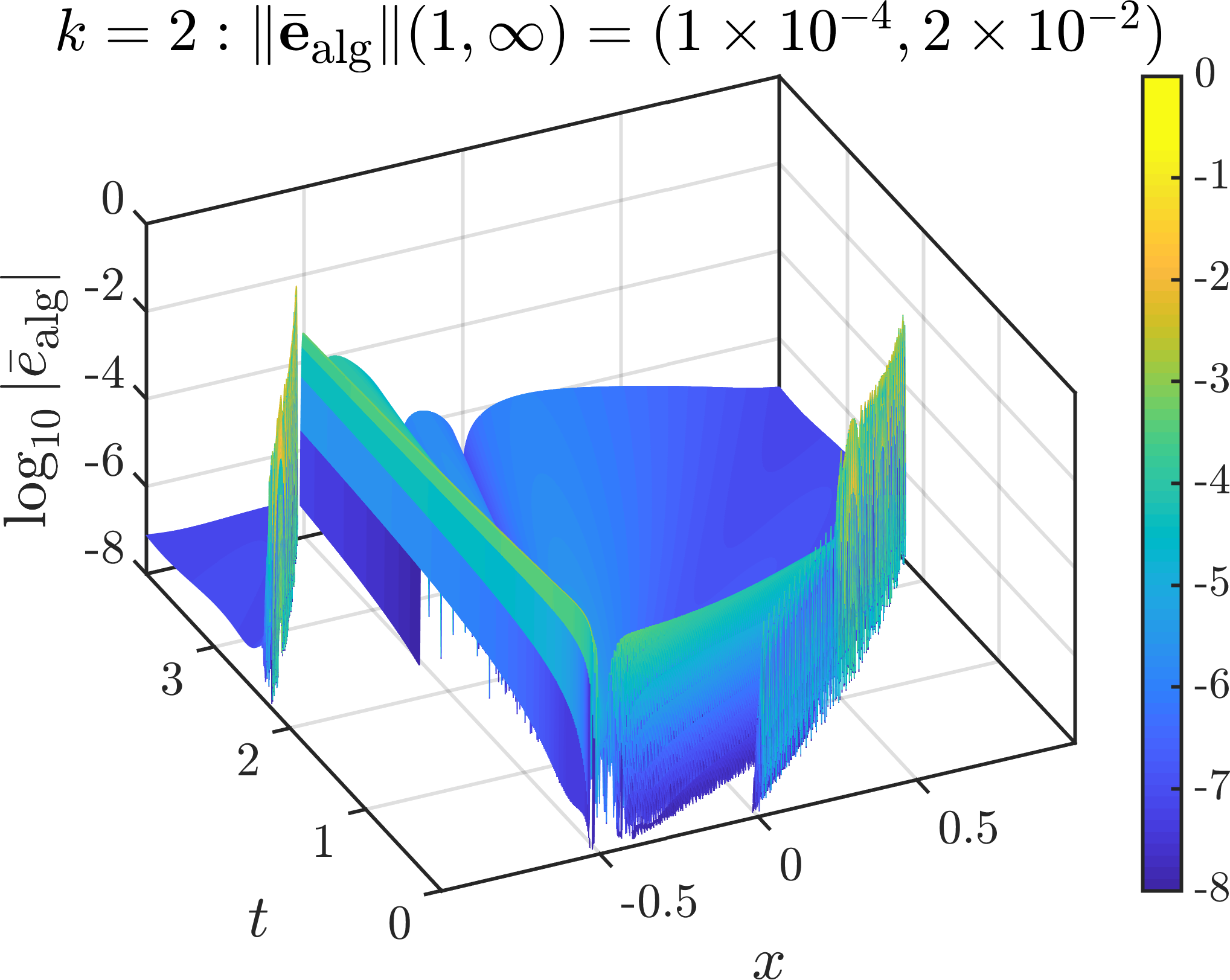}
}
\vspace{\vs ex}
\centerline{
\includegraphics[scale=\fs]{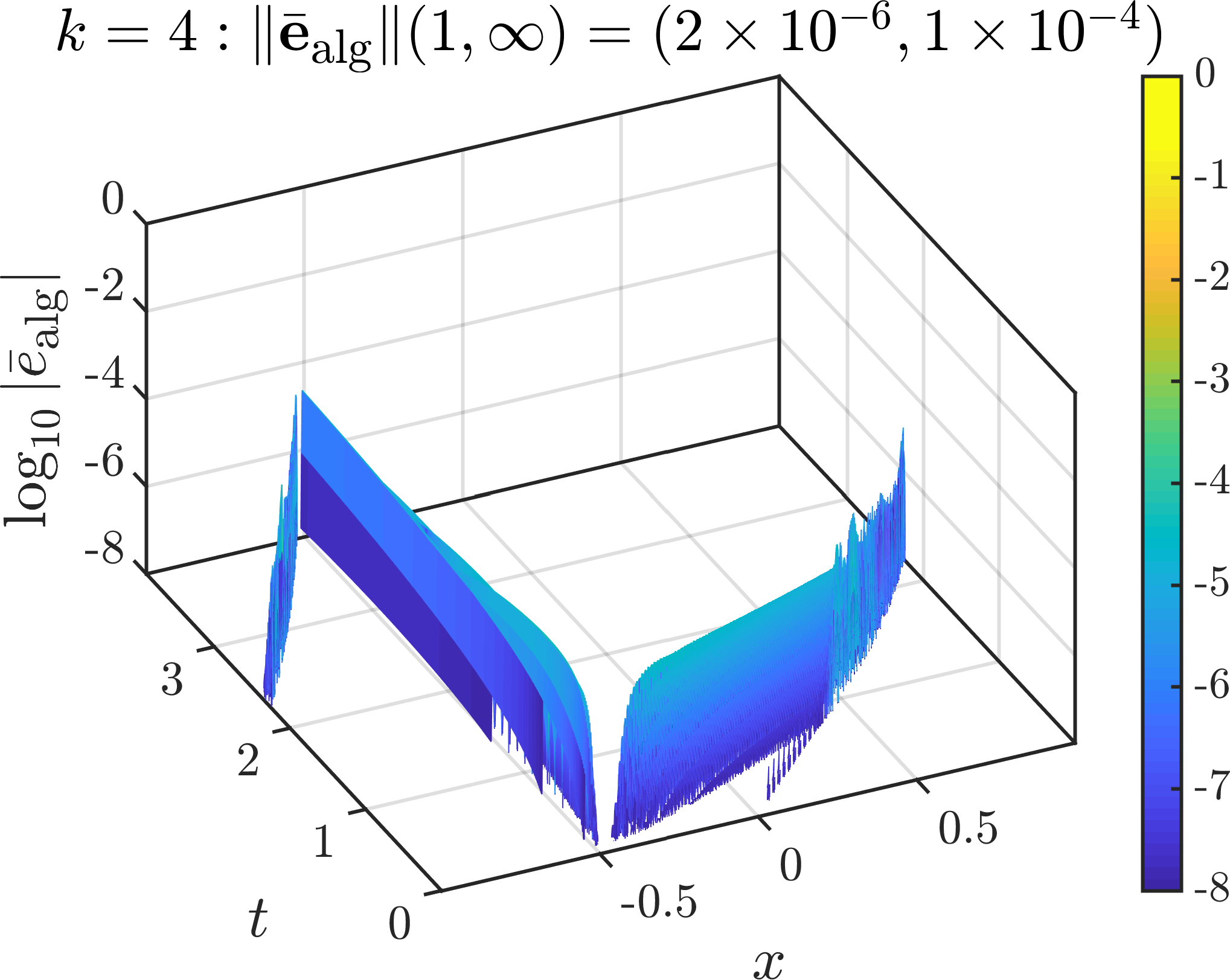}
\hspace{\hs ex}
\includegraphics[scale=\fs]{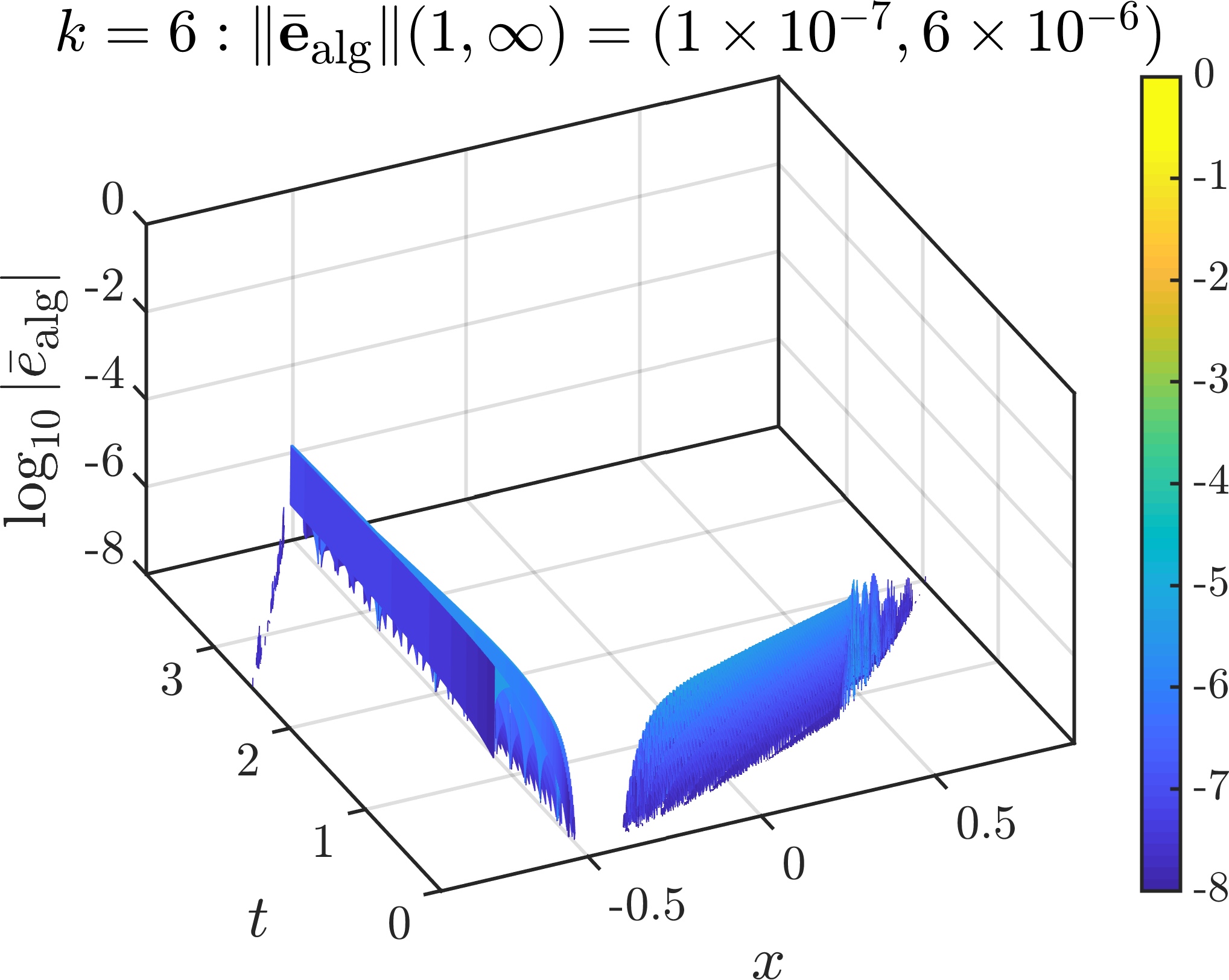}
}
\caption{Algebraic error in $\log_{10}$ scale of the iterates $\bb{u}_k$ produced by Algorithm \ref{alg:richardson} for the discretized problem from \cref{SMfig:os-exact-sol}. The error is shown for iterations $k = 0, 2, 4, 6$, as indicated in the title of each plot. Note the discrete $L^{1}$- and $L^{\infty}$-norms of each algebraic error is also shown in the title.
To aid with visualization, cell-wise errors less than $10^{-8}$ have been masked to white.
Note that the same colour scale is used for all plots.
\label{SMfig:os-rich}
}
\end{figure}

Considering the top left plot in \cref{SMfig:os-rich}, the algebraic error for the initial iterate has a very similar structure to the discretization error shown in the right of \cref{SMfig:os-exact-sol}. Recall that the initial iterate is obtained by interpolating the exact discretized solution from an $n_x = 512$ mesh. The key difference between the $n_x = 1024$ solution compared to $n_x = 512$ solution is that the distance that the shock and the base of the rarefaction wave are smeared across is halved. This is why the algebraic error is largest in these regions.

As the iteration proceeds, the algebraic error decreases rapidly, both in $L^1$- and $L^{\infty}$-norm---the fact that the $L^{\infty}$-norm goes to zero shows that we really do converge point-wise to the sequential time-stepping solution.
The algebraic error appears to be reduced much more slowly where the solution lacks regularity. We suspect this is because the linearization is least accurate in these regions where the gradient is large, or does not even exist.
Ironically, the primary effect of later iterations (e.g., $k \geq 4$) is to reduce the algebraic error in non-smooth regions, despite the fact that this is where the discretization is least accurate (see right panel of \cref{SMfig:os-exact-sol}).\footnote{This behavior of algebraic error rapidly reducing in smooth regions while more slowly decreasing in non-smooth regions is typical in our experience, even for problems with more complicated structure in smooth regions. For example, this occurs for the Buckley--Leverett test problem used in the paper (see the right of Figure \ref{fig:test-prob}), and for several Burgers problems we have tested where a shock develops from smooth initial conditions.}
In other words, these later iterations do not result in an approximation that more accurately approximates the true PDE solution.
In fact, the algebraic error at iteration $k = 2$ is already a factor of 10 smaller than the discretization error both in $L^1$- and $L^{\infty}$-norm.
Therefore, the $k = 2$ iterate is very likely just as good an approximation to the true PDE solution as the exact time-stepping solution is. 
We note, however, at iteration $k = 2$ that the algebraic error is not point-wise smaller than the discretization error everywhere in the domain. For example, there are neighbourhoods around the shock and base of the rarefaction wave where the discretization error is essentially machine zero, while the algebraic error is many orders of magnitude larger, even though the algebraic error is still relatively small in these regions, especially when compared to the discretization error in neighboring cells that are in or adjacent to the shock or the base of the rarefaction wave.


\end{document}